\begin{document}

\title{Structurable equivalence relations and $\@L_{\omega_1\omega}$ interpretations}
\author{Rishi Banerjee and Ruiyuan Chen}
\date{}
\maketitle

\begin{abstract}
We show that the category of countable Borel equivalence relations (CBERs) is dually equivalent to the category of countable $\@L_{\omega_1\omega}$ theories which admit a one-sorted interpretation of a particular theory we call $\@T_\LN \sqcup \@T_\sep$ that witnesses embeddability into $2^\#N$ and the Lusin--Novikov uniformization theorem.
This allows problems about Borel combinatorial structures on CBERs to be translated into syntactic definability problems in $\@L_{\omega_1\omega}$, modulo the extra structure provided by $\@T_\LN \sqcup \@T_\sep$, thereby formalizing a folklore intuition in locally countable Borel combinatorics.
We illustrate this with a catalogue of the precise interpretability relations between several standard classes of structures commonly used in Borel combinatorics, such as Feldman--Moore $\omega$-colorings and the Slaman--Steel marker lemma.
We also generalize this correspondence to locally countable Borel groupoids and theories interpreting $\@T_\LN$, which admit a characterization analogous to that of Hjorth--Kechris for essentially countable isomorphism relations.
\let\thefootnote=\relax
\footnotetext{2020 \emph{Mathematics Subject Classification}:
    03E15, 
    03C15. 
}
\footnotetext{\emph{Key words and phrases}:
    countable Borel equivalence relation,
    Borel groupoid,
    infinitary logic,
    interpretation,
    Lusin--Novikov,
    Lopez-Escobar.
}
\end{abstract}

\tableofcontents

\section{Introduction}
\label{sec:intro}

This paper is a contribution to the global theory of locally countable Borel combinatorics and equivalence relations and connections with countable model theory.
A \defn{countable Borel equivalence relation (CBER)} $E$ on a standard Borel space $X$ is a Borel equivalence relation $E \subseteq X^2$ such that all $E$-classes are countable.
Over the past thirty years, CBERs have been widely studied in descriptive set theory and adjacent areas, such as group theory, ergodic theory, and combinatorics.
See \cite{Kcber} for a comprehensive survey.

\subsection{Structurability}

An important aspect of the theory of CBERs consists of analyzing the Borel ways of assigning a combinatorial structure to each $E$-class.
For instance, a CBER is called \defn{smooth} iff there is a Borel way to pick a single point from each class.
A CBER is called \defn{hyperfinite} iff it is the orbit equivalence relation of a Borel action $\#Z \actson X$ (see \cite{DJK}); this amounts to a \emph{transitive} $\#Z$-action on each class.
Smooth and hyperfinite CBERs are the ``simplest'' CBERs.
On the other hand, every class of every CBER may be given the structure of a locally finite connected graph in a Borel way \cite{JKL}; and every such Borel locally finite graph admits a Borel $\omega$-coloring \cite{KST}.

In general, given a CBER $(X,E)$, a Borel family of first-order structures (group actions, graphs, etc.)\ on each $E$-class $C \in X/E$ is called a \defn{structuring} of $E$; if one exists, $E$ is called \defn{structurable} (by group actions, graphs, etc.).
See \cref{def:str} for the precise definition, including the meaning of ``Borel family''.
Structurability provides a global framework for comparing and organizing ``all (locally countable) Borel combinatorics problems''; see \cite{Kturing}, \cite{JKL}, \cite{CK}.

It is a basic folklore intuition that doing locally countable Borel combinatorics often amounts to ``countable combinatorics, done canonically/in a uniform Borel way''; see e.g., \cite[\S1.E]{CPTT}.
For instance, not every CBER is hyperfinite, even though obviously every nonempty countable set admits a transitive $\#Z$-action, because there is no ``canonical'' way to choose said action.
On the other hand, after picking a single distinguished point, it becomes possible to define a ``sufficiently canonical'' such action (or indeed any other structure, subject to cardinality constraints).

In this paper, extending the work of \cite{CK}, we develop a correspondence between the theory of structurability of CBERs and countable model theory that allows such intuitions to be made precise.
A weak form of this correspondence concerns existence of structurings:

\begin{theorem}[see \cref{thm:str-impl-interp}]
\label{intro:thm:str-impl-interp}
Let $\@K, \@K'$ be two classes of countable first-order structures, axiomatized by respective countable $\@L_{\omega_1\omega}$ theories $\@T, \@T'$.
The following are equivalent:
\begin{enumerate}[label=(\roman*)]
\item \label{intro:thm:str-impl-interp:str}
Every CBER structurable by structures in $\@K$ is also structurable by structures in $\@K'$.
\item \label{intro:thm:str-impl-interp:interp}
We may uniformly define a countable $\@K'$-structure $\@M'$ from every countable $\@K$-structure $\@M$, equipped with two additional pieces of structure, namely
\begin{enumerate*}[label=(\arabic*)]
\item  a countable family $(U_i)_{i \in \#N}$ of subsets separating distinct points and
\item  a countable family $(f_i)_{i \in \#N}$ of unary functions whose graphs cover $M^2$.
\end{enumerate*}
These definitions are expressible by $\@L_{\omega_1\omega}$ formulas independent of $\@M$.
\end{enumerate}
\end{theorem}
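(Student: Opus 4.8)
The plan is to route both implications through a single bridge: a correspondence between Borel structurings of a suitable \emph{universal} CBER $E^*$ that carries a model of $\@T^+ := \@T \sqcup \@T_\LN \sqcup \@T_\sep$ on each class, and $\@L_{\omega_1\omega}$-definable operations on models of $\@T^+$, furnished by the Lopez--Escobar theorem. The two extra pieces of structure are no accident: on an arbitrary CBER they are exactly what the Lusin--Novikov uniformization theorem (a cover of $E$ by countably many Borel partial functions, giving the $(f_i)$) and a Borel embedding $X \hookrightarrow 2^\#N$ (giving separating Borel sets $(U_i)$) supply for free. Throughout I use that satisfaction of a fixed $\@L_{\omega_1\omega}$ formula is Borel in the structure, so that $\@L_{\omega_1\omega}$-definable relations over a Borel structuring are again Borel.

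For \ref{intro:thm:str-impl-interp:interp} $\Rightarrow$ \ref{intro:thm:str-impl-interp:str}, let $E$ on $X$ be structurable by $\@K$ and fix a Borel $\@K$-structuring. Restricting a Borel embedding $X \hookrightarrow 2^\#N$ to each class gives a Borel family $(U_i)$ separating points within each class, i.e.\ a model of $\@T_\sep$; and writing $E = \bigcup_i \mathrm{graph}(f_i)$ with $f_i$ Borel partial functions (Lusin--Novikov) gives, on each class $C$, unary functions whose graphs cover $C^2$, i.e.\ a model of $\@T_\LN$. Together with the $\@K$-structuring this is a Borel $\@T^+$-structuring of $E$. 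Feeding it into the formulas of \ref{intro:thm:str-impl-interp:interp} defines, class by class, a $\@K'$-structure; by Borelness of $\@L_{\omega_1\omega}$-satisfaction this structuring is Borel, witnessing that $E$ is structurable by $\@K'$.

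For the converse \ref{intro:thm:str-impl-interp:str} $\Rightarrow$ \ref{intro:thm:str-impl-interp:interp}, build $E^*$ as follows. Let $X^*$ be the standard Borel space of pointed models $(\@M, m)$ with $\@M \models \@T^+$ on a universe contained in $\#N$ and $m \in M$, realized canonically: using the separating predicates, map $(\@M, m)$ to the structure transported along the injection $n \mapsto (U_i^{\@M}(n))_i$ into $2^\#N$, together with the code of $m$. Because $\@T_\sep$ forces every model to be rigid and pins down this realization independently of the labeling of the universe, two pointed models receive the same code iff they are isomorphic by a (necessarily unique) point-preserving isomorphism; by Lusin--Souslin the space $Z^*$ of codes is standard Borel. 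Let $E^*$ on $Z^*$ identify codes coming from a common $\@M$. Each $E^*$-class is a copy of the universe of some $\@M \models \@T^+$, so $E^*$ is a CBER, and equipping each class with the reduct to the language of $\@K$ makes it structurable by $\@K$.

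By hypothesis \ref{intro:thm:str-impl-interp:str}, $E^*$ is then structurable by $\@K'$, giving a Borel assignment of a $\@K'$-structure to each $E^*$-class. Here is the crucial point, and the \textbf{main obstacle}: an a priori Borel structuring need not respect isomorphism, whereas the formulas we seek must. This is exactly what separation buys us. Since the classes of $E^*$ \emph{are} the isomorphism types of pointed $\@T^+$-models --- distinct labelings having been collapsed by the canonical realization --- any structuring of $E^*$ is automatically $\cong$-invariant; transporting it back along $n \mapsto (U_i^{\@M}(n))_i$ yields a $\cong$-equivariant Borel operation $\@M \mapsto \@M'$ sending each model of $\@T^+$ to a $\@K'$-structure on the same universe. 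Applying the Lopez--Escobar theorem to each of the ($\cong$-invariant, Borel) relations ``$\bar x$ lies in the interpretation of a given symbol of $\@M'$'' produces $\@L_{\omega_1\omega}$ formulas over $\@T^+$ defining $\@M'$ from $\@M$ independently of $\@M$, precisely the interpretation of \ref{intro:thm:str-impl-interp:interp}. The remaining verifications --- that the realization and $E^*$ are Borel, and that every isomorphism type is realized so the formulas are valid on all of $\@T^+$ --- are routine.
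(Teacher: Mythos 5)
Your strategy does track the paper's: the paper proves this as \cref{thm:str-impl-interp} by combining the bijection between $\@T$-structurings of $E$ and interpretations $\@T \to \@T_E$ with the characterization (\cref{thm:scott-LNsep}) of Scott theories of CBERs as exactly the theories interpreting $\@T_\LN \sqcup \@T_\sep$; the hard direction of that characterization builds a CBER on the space $\@S_1(\@T \sqcup \@T_\LN \sqcup \@T_\sep)$ of isomorphism types of pointed models, which is precisely your $Z^*$. Your (ii)$\Rightarrow$(i) direction is fine.

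There is, however, a genuine gap in (i)$\Rightarrow$(ii), at the step where you assert that $Z^*$ is standard Borel. You derive this from rigidity plus the canonical realization via the separating predicates, citing Lusin--Souslin. Neither ingredient suffices. Your ``code'' of $(\@M,m)$ is a structure whose universe is the countable set $u_\@M(M)\subseteq 2^\#N$; but the collection of countable subsets of $2^\#N$ is the quotient of $(2^\#N)^\#N$ by the ``same range'' relation $=^+$, which is not smooth, so these codes do not live in any evident standard Borel space, and Lusin--Souslin does not apply: the coding map on $\Mod_\#N^1(\@T^+)$ is far from injective, since its fibers are the isomorphism classes, i.e.\ uncountable $\Sym(\#N)$-orbits (rigidity makes the action free, not the orbits small). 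The decisive symptom is that your argument uses only $\@T_\sep$ and never touches the Lusin--Novikov functions, yet the conclusion fails for $\@T_\sep$ alone: its pointed models are rigid and canonically realized in $2^\#N$, but their isomorphism relation is Borel bireducible with $=^+$ and hence non-smooth, so $\@S_1(\@T_\sep)$ is \emph{not} standard Borel.

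The $f_i$'s are exactly what repairs this. From the distinguished point $m$ one gets the canonical enumeration $i\mapsto u_\@M(f_i^\@M(m))$ of $u_\@M(M)$, which is an isomorphism invariant of $(\@M,m)$; recording also the truth values $R^\@M(f_{i_1}^\@M(m),\dotsc,f_{i_n}^\@M(m))$ for each symbol $R$ and each tuple of indices turns the code into a point of a genuine standard Borel space (a product of copies of $(2^\#N)^\#N$ and $2^{\#N^n}$), and this map is a Borel reduction of isomorphism of pointed models to equality, giving smoothness. (The paper achieves the same thing syntactically in \cref{thm:interp-LN}: with the $f_i$ in the language one has quantifier elimination for formulas with a free variable, so countably many one-variable formulas pin down each pointed model.) With that repair, the rest of your argument --- countability of the $E^*$-classes, transport back along $u_\@M$, and Lopez--Escobar --- goes through.
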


Here $\@L_{\omega_1\omega}$ is the countably infinitary first-order logic, with countable Boolean connectives $\bigwedge, \bigvee, \neg$ as well as the usual (finitary) quantifiers $\forall, \exists$; see \cite{Marker} and \cref{sec:lo1o} below.
For a class of structures to be $\@L_{\omega_1\omega}$-definable means equivalently, in semantic terms, that said class is Borel in the standard Borel space of structures on a fixed countable set, by the Lopez-Escobar \cref{thm:lopez-escobar}.
Given a countable $\@L_{\omega_1\omega}$ theory $\@T$, by a \defn{$\@T$-structuring} we mean a structuring by models of $\@T$.

\begin{example}
\label{ex:finsub-pt}
The class $\@K'$ of sets equipped with a single distinguished point may be axiomatized by the empty (always true) theory $\@T_\pt = \emptyset$ in the first-order language $\{c\}$ with a single constant.
The class $\@K$ of sets equipped with a finite nonempty subset $D$ may be axiomatized by the theory
\begin{align*}
\@T_\finsub :=
    \bigvee_{n \ge 1} \exists z_0, \dotsc, z_{n-1}\, \forall w\, ( D(w) <-> \bigvee_{i < n} (w = z_i)),
\end{align*}
where now $D$ is a unary relation.
Clearly, given a set $M$ equipped with a finite nonempty set $D \subseteq M$, there is in general no way to canonically pick a single distinguished element $c \in M$; thus we cannot uniformly define a model $(M,c) \in \@K'$ of $\@T_\pt$ from a model $(M,D) \in \@K$ of $\@T_\finsub$.
(Formally, this is because $(M,D)$ may have automorphisms with no fixed points.)

However, it is well-known that given a CBER $E$, if there is a Borel way to pick a finite nonempty set in each class, i.e., $E$ is $\@T_\finsub$-structurable, then in fact $E$ is smooth, i.e., $\@T_\pt$-structurable.
This is because, on the one hand, given a finite nonempty set $D$ as well as a linear order $<$, we may canonically pick the least element $c$ of $D$:
\begin{align}
\label{ex:finsub-pt:pt}
c = x  \coloniff  D(x) \wedge \forall y\, ((y < x) -> \neg D(y)).
\end{align}
In other words, we may define a model of $\@T_\pt$ given a model of $\@T_\finsub \sqcup \@T_\LO$, where $\@T_\LO$ is the theory of linear orders.
On the other hand, every CBER $E \subseteq X^2$ is $\@T_\LO$-structurable, because e.g., we may assume the standard Borel space $X$ to be embedded in $2^\#N$, and then take the lexicographical order on each $E$-class.
The embedding $X -> 2^\#N$ is given by the indicator functions of countably many $U_i \subseteq X$ separating points (the subbasic clopens); the lexicographical order is then given by
\begin{align}
\label{ex:finsub-pt:LO}
x < y  \coloniff  \bigvee_{i \in \#N} (\neg U_i(x) \wedge U_i(y) \wedge \bigwedge_{j < i} (U_j(x) <-> U_j(y))).
\end{align}
By substituting this latter formula \cref{ex:finsub-pt:LO} for the symbol $<$ in \cref{ex:finsub-pt:pt}, we obtain a formula $\phi(x)$ uniformly defining a single point from a finite nonempty set $D$ as well as a countable separating family $(U_i)_{i \in \#N}$ as in \cref{intro:thm:str-impl-interp}\cref{intro:thm:str-impl-interp:interp}, thereby witnessing that $\@T_\finsub$-structurability implies $\@T_\pt$-structurability.
\end{example}

Based on anecdotal evidence, \cref{intro:thm:str-impl-interp} seems to be more-or-less folklore among the Borel combinatorics community; at least, it seems widely believed that some statement along these lines ought to be true.
However, to our knowledge, such a formal statement or proof has not appeared before in the literature.

\subsection{Interpretations}
\label{sec:intro-interp}

Our main results in this paper show that there is a ``complete correspondence'' between semantic concepts in the realm of CBERs and structurability, and ``syntactic definitions'' of such concepts in the realm of the infinitary logic $\@L_{\omega_1\omega}$ and countable model theory.
From this correspondence, \cref{intro:thm:str-impl-interp} and similar results will follow as immediate consequences.

\Cref{intro:thm:str-impl-interp}\cref{intro:thm:str-impl-interp:interp} says that ``in every model of $\@T$, we may uniformly define a model of $\@T'$''.
In general, given two $\@L_{\omega_1\omega}$ theories $\@T, \@T'$ (in two different languages, which we assume to be relational for simplicity), by an \defn{interpretation} $\alpha : \@T' -> \@T$, we mean a family of $\@L_{\omega_1\omega}$ formulas $\alpha(R)(x_0,\dotsc,x_{n-1})$ in the language of $\@T$, for each $n$-ary relation symbol $R$ in the language of $\@T'$, such that these formulas define a model of $\@T'$ in every model of $\@T$.
See \cref{sec:interp} for details and alternate formulations.%
\footnote{\label{ft:interp}%
In this paper, we focus on \emph{one-sorted} interpretations (sometimes called \emph{definitions} in model theory), which define a uniform construction of a $\@T'$-model on the same underlying set of a $\@T$-model.
The more general model-theoretic interpretations into imaginaries will play a minor role only; see \cref{sec:intro-esscount}.}
For example, the above formula \cref{ex:finsub-pt:pt} yields an interpretation
\begin{equation}
\label{eq:finsub-pt:interp-pt}
\@T_\pt --> \@T_\finsub \sqcup \@T_\LO.
\end{equation}

Now \cref{intro:thm:str-impl-interp}\cref{intro:thm:str-impl-interp:interp} does not assert the existence of an interpretation $\@T' -> \@T$, but rather the existence of an interpretation $\@T' -> \@T \sqcup \@T_\LN \sqcup \@T_\sep$ into the theory $\@T$ expanded with two additional pieces of structure.
Namely, $\@T_\sep$ is the \defn{theory of countable separating families}, in the language with countably many unary relations $\{U_i\}_{i \in \#N}$ asserting that they separate distinct points (see \cref{def:Tsep}); and $\@T_\LN$ is the \defn{theory of Lusin--Novikov functions}, in the language with countably many unary functions $\{f_i\}_{i \in \#N}$ asserting that they cover all pairs (see \cref{def:TLN}).
Intuitively, the theory $\@T_\LN \sqcup \@T_\sep$ describes combinatorial structure which is available ``for free'' on every CBER, by the Lusin--Novikov theorem and standard Borelness of the underlying space.
For example, the above formula \cref{ex:finsub-pt:LO} yields an interpretation
\begin{equation}
\label{eq:finsub-pt:interp-LO}
\@T_\LO --> \@T_\sep,
\end{equation}
hence also $\@T_\finsub \sqcup \@T_\LO -> \@T_\finsub \sqcup \@T_\sep$, which may be composed with the above interpretation \cref{eq:finsub-pt:interp-pt} to yield an interpretation $\@T_\pt -> \@T_\finsub \sqcup \@T_\sep$ describing how a Borel transversal of a CBER may be constructed from a Borel choice of a finite subset in each class (and a countable separating family, which is available for free on a CBER).

We thus have a category of countable $\@L_{\omega_1\omega}$ theories, with interpretations between them as morphisms.
Within this category, there is a distinguished theory $\@T_\LN \sqcup \@T_\sep$, such that the theories admitting an interpretation from it capture structurability of CBERs.
We show that in fact, such theories are ``equivalent'' to CBERs:

\begin{theorem}[see \cref{thm:scott-LNsep}]
\label{intro:thm:scott-LNsep}
We have a dual equivalence of categories
\begin{align*}
\{\text{CBERs, class-bijective homomorphisms}\} &\simeq \{\text{countable $\@L_{\omega_1\omega}$ theories interpreting $\@T_\LN \sqcup \@T_\sep$}\} \\
E &|-> \@T_E.
\end{align*}
Moreover, $\@T$-structurings of a CBER $E$ are in natural bijection with interpretations $\@T -> \@T_E$.
\end{theorem}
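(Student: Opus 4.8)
The plan is to exhibit the dual equivalence by constructing an explicit quasi-inverse $\@T \mapsto E_{\@T}$ to the assignment $E \mapsto \@T_E$ and checking that the two composites are naturally isomorphic to the respective identities. The conceptual picture organizing everything is that a model of $\@T_\LN \sqcup \@T_\sep$ is precisely an abstract copy of a single $E$-class equipped with the structure that the ambient standard Borel space and the Lusin--Novikov theorem provide ``for free'': the separating family $(U_i)$ embeds the class into $2^\#N$, and the covering functions $(f_i)$ witness that every pair of class members is linked. I would first record the key \emph{rigidity} observation: since $(U_i)$ separates points, every model of $\@T_\sep$ --- a fortiori of $\@T_\LN \sqcup \@T_\sep$, and of any $\@T_E$ --- is rigid, as an automorphism must preserve each $U_i$ and hence fix every point. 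Rigidity is what makes the correspondence a genuine equivalence rather than an equivalence only up to automorphism, and I expect to invoke it repeatedly. For the object assignment I would define $\@T_E$ to be an $\@L_{\omega_1\omega}$-theory axiomatizing the isomorphism-closure of the Borel family of class-structures $M_C := (C, (U_i \cap C)_i, (f_i|_C)_i)$ for $C \in X/E$; by rigidity this closure is Borel (not merely analytic), hence axiomatizable by the Lopez-Escobar \cref{thm:lopez-escobar}, and one checks it is independent of the auxiliary choices up to interpretation-equivalence.

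For the quasi-inverse, observe that because the $(f_i)$ cover all pairs, in any model $M$ of $\@T_\LN$ every element equals $f_i(a)$ for some $i$, so a pointed model $(M,a)$ is generated by its basepoint and admits a canonical Borel code. I would then let $E_{\@T}$ be the change-of-basepoint relation on the standard Borel space of pointed models of $\@T$: two pointed models are $E_{\@T}$-related iff their underlying unpointed models are isomorphic, uniquely so by rigidity. Each class of $E_{\@T}$ is then in canonical bijection with the single model it arises from, so $E_{\@T}$ is a CBER whose classes are exactly the models of $\@T$. Applied to $\@T = \@T_E$ this visibly returns $E$ (the pointed class-structures of $E$ are just the points of $X$), supplying one of the two natural isomorphisms.

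The heart of the theorem is the ``moreover'' clause, which I would establish first and reuse throughout. Given a countable $\@L_{\omega_1\omega}$-theory $\@T$ and a CBER $E$, an interpretation $\@T \to \@T_E$ is by definition a family of formulas defining a $\@T$-model inside each class-structure $M_C$; since formulas are Borel by Lopez-Escobar, this is exactly a Borel, hence valid, $\@T$-structuring of $E$. Conversely, any $\@T$-structuring assigns a $\@T$-structure to each class in a Borel, isomorphism-invariant way depending only on the underlying class together with its free $\@T_\LN \sqcup \@T_\sep$-data; by Lopez-Escobar relativized to this free structure --- exactly the definability-from-the-free-structure phenomenon underlying \cref{intro:thm:str-impl-interp} --- such an assignment is given by $\@L_{\omega_1\omega}$-formulas, i.e., an interpretation. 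These two passages are mutually inverse, yielding the natural bijection.

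Full faithfulness then follows by specializing the ``moreover'' and using rigidity. A class-bijective homomorphism $h : E \to F$ transports, along the bijection $[x]_E \to [h(x)]_F$, the free structure of each $F$-class back onto the corresponding $E$-class; this transported structure is definable from the $E$-class-structure since $h$ is Borel, giving an interpretation $\@T_F \to \@T_E$. Conversely an interpretation $\@T_F \to \@T_E$ defines in each $E$-class a model of $\@T_F$, which by rigidity is canonically identified with a genuine $F$-class, and the resulting Borel assignment of points is the unique class-bijective homomorphism inducing it. I expect the main obstacles to be precisely (i) essential surjectivity, i.e.\ verifying $\@T_{E_{\@T}} \equiv \@T$: the CBER $E_{\@T}$ carries its own canonical $\@T_\LN \sqcup \@T_\sep$-structure from its standard Borel space and Lusin--Novikov, which need not literally coincide with the interpreted free structure packaged inside $\@T$, so one must show the two are mutually interpretable, again through the ``moreover'' clause and rigidity; and (ii) checking Borel-measurability and uniqueness of the homomorphism recovered from an interpretation, where rigidity is essential to eliminate the automorphism ambiguity that would otherwise obstruct a well-defined Borel map.
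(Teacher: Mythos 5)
Your proposal is correct and follows essentially the same route as the paper: your quasi-inverse is the paper's construction of the CBER $\ker(\partial_1)$ on the space $\@S_1(\@T)$ of isomorphism classes of pointed models (your ``canonical Borel codes''), your ``moreover'' clause is \cref{thm:str-interp} via Lopez-Escobar, and rigidity plays exactly the role you assign it. The two points to make precise are that $E_{\@T}$ must live on the \emph{quotient} of pointed models by isomorphism, whose standard Borelness is exactly what the quantifier-elimination argument from the Lusin--Novikov functions (\cref{thm:interp-LN}) supplies, and that the final identification of $\@T$ with the Scott theory of $E_{\@T}$ must be a genuine bi-interpretation rather than mutual interpretability, which the paper obtains from the equivariant Borel bijection $\@M \mapsto \tp_\@M$ on spaces of models.
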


This is illustrated in \cref{fig:cber-interp}.
Here, a \defn{(Borel) class-bijective homomorphism} $f : (X,E) -> (Y,F)$ between two CBERs is a Borel map $f : X -> Y$ whose restriction to each $E$-class is a bijection with some $F$-class.
The theory $\@T_E$ corresponding to a CBER $E$ is the \defn{Scott theory} of $E$, which is defined uniquely up to bi-interpretability by declaring its models on any countable set $Y$ to be bijections between $Y$ and some $E$-class; see \cref{def:scott}.

The Scott theory was introduced in \cite{CK} (there called \emph{Scott sentences}), which also established much of its significance with respect to structurability.
In particular, the construction of $\@T_E$, the last assertion of \cref{intro:thm:scott-LNsep}, as well as the full faithfulness of the functor $E |-> \@T_E$ (i.e., that it is a bijection on morphisms) appeared in that paper in some form.
However, that paper was focused on the CBER-theoretic aspects of structurability, and as such, discussed the model-theoretic significance of the Scott theory in somewhat roundabout terms; in particular, only a terse overview of $\@L_{\omega_1\omega}$ interpretations was given, in the unpublished Appendix~B of the arXiv preprint of \cite{CK}.
(Henceforth, all references to \cite[Appendix~B]{CK} are to the arXiv preprint.)

In this paper, we give a detailed, self-contained introduction to $\@L_{\omega_1\omega}$ interpretations in \cref{sec:interp}, with connections to and equivalent formulations in terms of Borel $S_{\le\infty}$-spaces of countable models, Borel spaces of $\@L_{\omega_1\omega}$ types, and Boolean $\sigma$-algebras of formulas.
After reviewing several equivalent definitions of structurability in \cref{sec:str}, we then give in \cref{sec:scott} an alternate treatment of Scott theories $\@T_E$, based on the conceptual definition up to bi-interpretability given above, rather than an explicit coding as in \cite{CK}; and we show how all of the fundamental properties of Scott theories have simple derivations from this perspective.

In \cref{sec:TLN}, we prove \cref{intro:thm:scott-LNsep} (\cref{thm:scott-LNsep}).
The bulk of the proof is devoted to characterizing the essential image of the functor $E |-> \@T_E$, i.e., showing that every theory $\@T$ interpreting $\@T_\LN \sqcup \@T_\sep$ is bi-interpretable with the Scott theory of a CBER $E$.
This is the main new technical contribution of \cref{thm:scott-LNsep}, and answers a question from \cite[sentence after~B.4]{CK}.
To briefly outline the proof: the CBER $E$ is constructed on the space $\@S_1(\@T)$ of $\@L_{\omega_1\omega}$ 1-types of $\@T$ (equivalently, the space of isomorphism types of pointed models), two such types being equivalent iff they are realized in isomorphic models.
The interpretation of $\@T_\LN \sqcup \@T_\sep$ is used to verify that $E$ is indeed a CBER with Scott theory $\@T_E \cong \@T$, using a quantifier-elimination argument.


In \cref{sec:str-interp}, we show how \cref{intro:thm:str-impl-interp}, as well as syntactic formulations of all other results and constructions in structurability theory, easily follow from \cref{intro:thm:scott-LNsep}.
We stress that this latter result, about individual \emph{structurings} and \emph{interpretations}, contains much more information than the former result about \emph{structurability} and \emph{interpretability}.

For instance, whereas a typical problem concerning CBERs asks whether a CBER may be structured by e.g., graphs of a certain kind, a typical problem in Borel combinatorics asks whether e.g., a given structuring $\@M$ of a CBER by graphs admits a Borel coloring.
This may be abstractly phrased in terms of an \emph{expansion} of $\@M$ to a structuring by graphs equipped with a coloring.
Using \cref{intro:thm:scott-LNsep}, we may likewise show that all such ``Borel expandability problems'' amount to ``countable expandability, done uniformly'', thereby generalizing \cref{intro:thm:str-impl-interp}:

\begin{theorem}[see \cref{thm:str-expan-interp}]
\label{intro:thm:str-expan-interp}
Let $\@T \subseteq \@T'$ be countable $\@L_{\omega_1\omega}$ theories in languages $\@L \subseteq \@L'$.
The following are equivalent:
\begin{enumerate}[label=(\roman*)]
\item
Every $\@T$-structuring of a CBER admits an expansion to a $\@T'$-structuring.
\item
There exists an interpretation $\@T' -> \@T \sqcup \@T_\LN \sqcup \@T_\sep$ whose restriction to the language $\@L$ is ($\@T$-provably equivalent to) the identity.
\end{enumerate}
\end{theorem}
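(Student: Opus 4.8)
The plan is to deduce both implications from the natural bijection in \cref{intro:thm:scott-LNsep} between $\@T$-structurings of a CBER $E$ and interpretations $\@T -> \@T_E$, combined with the fact that $\sqcup$ is the coproduct in the category of countable $\@L_{\omega_1\omega}$ theories: an interpretation out of a disjoint union $\@T_0 \sqcup \@T_1$ to a common target is the same as a pair of interpretations out of $\@T_0$ and out of $\@T_1$. All interpretations are understood up to $\@T$-provable equivalence of their defining formulas. I will use the (essentially definitional) compatibility of this bijection with reducts: writing $j : \@T -> \@T'$ for the inclusion interpretation sending each $\@L$-symbol to itself, taking the $\@L$-reduct of a $\@T'$-structuring corresponds to precomposing the associated interpretation $\@T' -> \@T_E$ with $j$, since restricting an interpretation to the symbols of $\@L$ defines precisely the $\@L$-reduct of the resulting structuring. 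Write $i_\@T : \@T -> \@T \sqcup \@T_\LN \sqcup \@T_\sep$ for the coproduct insertion; it sends each $\@L$-symbol to itself and is thus ($\@T$-provably equivalent to) the identity on $\@L$.

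For the implication from (i) to (ii), the essential input is the essential-image half of \cref{intro:thm:scott-LNsep}: since $\@T \sqcup \@T_\LN \sqcup \@T_\sep$ interprets $\@T_\LN \sqcup \@T_\sep$ via $i_\@T$ precomposed with the evident insertion, it is bi-interpretable with the Scott theory $\@T_E$ of some CBER $E$. As bi-interpretability is an isomorphism in the category of theories, structurings and interpretations transport across it; so, identifying $\@T_E$ with $\@T \sqcup \@T_\LN \sqcup \@T_\sep$, the insertion $i_\@T : \@T -> \@T_E$ is a distinguished ``tautological'' $\@T$-structuring of $E$. By hypothesis (i) it expands to a $\@T'$-structuring, corresponding to an interpretation $\beta : \@T' -> \@T_E = \@T \sqcup \@T_\LN \sqcup \@T_\sep$ whose $\@L$-reduct is the tautological structuring, i.e.\ $\beta \circ j = i_\@T$. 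Thus $\beta$ restricts to the identity on $\@L$, which is exactly the interpretation demanded by (ii).

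For the converse implication from (ii) to (i), fix an interpretation $\beta : \@T' -> \@T \sqcup \@T_\LN \sqcup \@T_\sep$ restricting to the identity on $\@L$, i.e.\ $\beta \circ j = i_\@T$. Given an arbitrary $\@T$-structuring of a CBER $E$, corresponding to an interpretation $\alpha : \@T -> \@T_E$, I use that the Scott theory $\@T_E$ interprets $\@T_\LN \sqcup \@T_\sep$ (by \cref{intro:thm:scott-LNsep}) to fix an interpretation $\gamma : \@T_\LN \sqcup \@T_\sep -> \@T_E$. By the coproduct property, $\alpha$ and $\gamma$ assemble into $\langle \alpha, \gamma \rangle : \@T \sqcup \@T_\LN \sqcup \@T_\sep -> \@T_E$, and the composite $\langle \alpha, \gamma \rangle \circ \beta : \@T' -> \@T_E$ is a $\@T'$-structuring of $E$. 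It expands the given one, since its $\@L$-reduct corresponds to $\langle \alpha, \gamma \rangle \circ \beta \circ j = \langle \alpha, \gamma \rangle \circ i_\@T = \alpha$, the last equality again by the coproduct property.

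I expect the main obstacle to be essentially bookkeeping, the substantive content having been supplied upstream by the essential-image half of \cref{intro:thm:scott-LNsep}, which realizes $\@T \sqcup \@T_\LN \sqcup \@T_\sep$ as a Scott theory and thereby produces the tautological structuring driving the direction from (i) to (ii). The care needed is in checking that the structuring--interpretation bijection is natural with respect to reducts on the theory side, so that the $\@L$-reduct of a structuring matches precomposition with $j$, and that the semantic notion of ``expansion'' corresponds to the syntactic condition ``restriction equal to the identity'' after transport through the chosen bi-interpretation --- all consistently up to $\@T$-provable equivalence, as the category of theories requires.
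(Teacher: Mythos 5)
Your proof is correct and follows essentially the same route as the paper's: the paper likewise converts both statements via the structuring--interpretation bijection of \cref{thm:str-interp} and the characterization of Scott theories in \cref{thm:scott-LNsep}, obtains (ii) by specializing to $\@T_E := \@T \sqcup \@T_\LN \sqcup \@T_\sep$ with the tautological/inclusion structuring, and recovers (i) by assembling $\alpha$ with an interpretation $\@T_\LN \sqcup \@T_\sep \to \@T_E$ into a map out of the coproduct and composing with $\beta$. Your $\langle\alpha,\gamma\rangle$ is exactly the paper's $\delta$, and your bookkeeping about reducts corresponding to precomposition matches the functoriality identities recorded after \cref{def:str-interp}.
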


We illustrate this in the aforementioned example of graph colorings in \cref{thm:kst-lfcolor}.

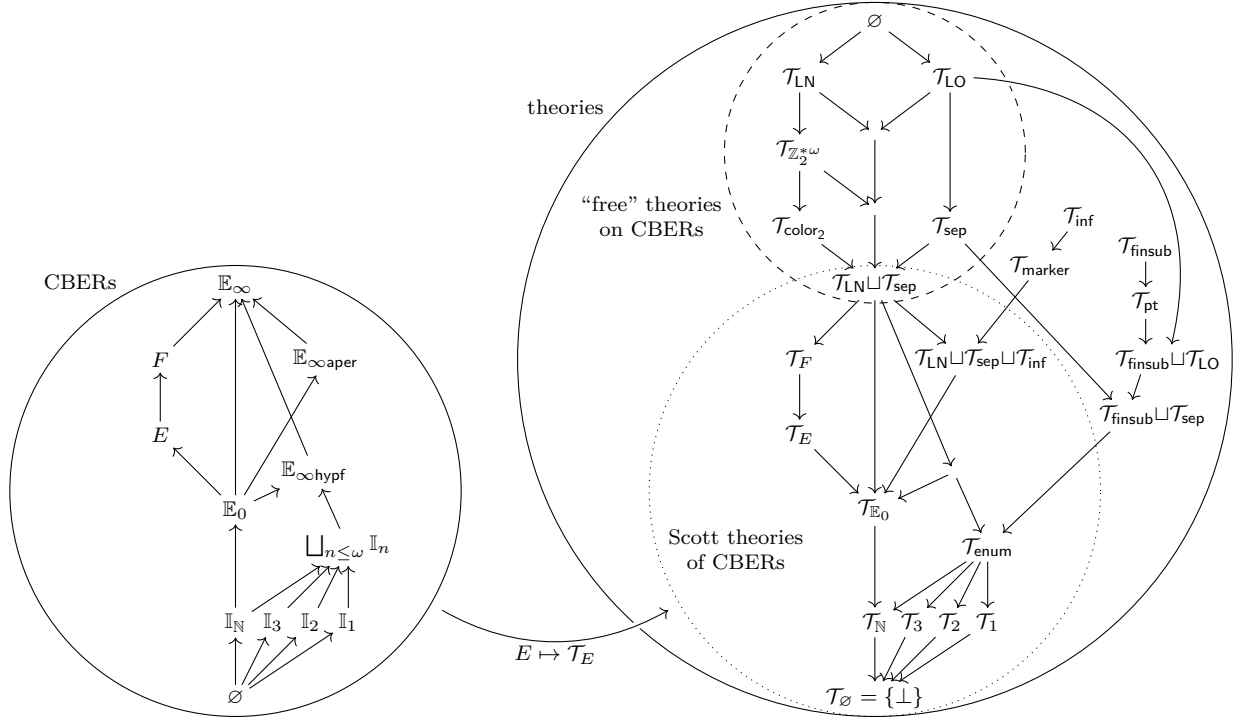
\begin{figure}
\centering
\newcommand*\CBERup{1}  
\begin{tikzpicture}[
    every node/.append style={font=\scriptsize, inner sep=2pt},
]
\begin{scope}[y={(0,\CBERup)}]
\node[CBER] (Eempty)
    {\emptyset};
\node[CBER] (EIN) at ($(Eempty) + (0,1)$)
    {\#I_\#N}
    edge[<-] (Eempty);
\node[CBER] (EI3) at ($(EIN) + (0.5,0)$)
    {\#I_3}
    edge[<-] (Eempty);
\node[CBER] (EI2) at ($(EI3) + (0.5,0)$)
    {\#I_2}
    edge[<-] (Eempty);
\node[CBER] (EI1) at ($(EI2) + (0.5,0)$)
    {\#I_1}
    edge[<-] (Eempty);
\node[CBER] (EI) at ($(EI1) + (0,1)$)
    {\bigsqcup_{n \le \omega} \#I_n}
    edge[<-] (EIN)
    edge[<-] (EI3)
    edge[<-] (EI2)
    edge[<-] (EI1);
\node[CBER] (E0) at ($(EIN) + (0,1.5)$)
    {\#E_0}
    edge[<-] (EIN);
\node[CBER] (Einf) at ($(E0) + (0,3)$)
    {\#E_\infty}
    edge[<-] (E0);
\node[CBER] (Einfh) at ($(EI) + (-0.45,1)$)
    {\#E_{\infty\!{hypf}}}
    edge[<-] (E0)
    edge[<-] (EI)
    edge[->] (Einf);
\node[CBER] (E) at ($(E0) + (-1,1)$)
    {E}
    edge[<-] (E0);
\node[CBER] (F) at ($(E0) + (-1,2)$)
    {F}
    edge[<-] (E)
    edge[->] (Einf);
\node[CBER] (Einfa) at ($(Einf) + (1.2,-1)$)
    {\#E_{\infty\!{aper}}}
    edge[<-] (E0)
    edge[->] (Einf);
\node(CBERs)[circle, draw, inner sep=0pt, minimum width=6cm, label={-\CBERup*240:CBERs}] at ($(Eempty)!0.5!(Einf)$) {};
\end{scope}

\begin{scope}[y={(0,-\CBERup)}]
\node[theory] (Tfalse) at ($(Eempty) + (8.5,0)$)
    {\@T_\emptyset = \{\bot\}};
\node[theory] (TN) at ($(Tfalse) - (Eempty) + (EIN)$)
    {\@T_\#N}
    edge[->] (Tfalse);
\node[theory] (T3) at ($(Tfalse) - (Eempty) + (EI3)$)
    {\@T_3}
    edge[->] (Tfalse);
\node[theory] (T2) at ($(Tfalse) - (Eempty) + (EI2)$)
    {\@T_2}
    edge[->] (Tfalse);
\node[theory] (T1) at ($(Tfalse) - (Eempty) + (EI1)$)
    {\@T_1}
    edge[->] (Tfalse);
\node[theory] (Tenum) at ($(Tfalse) - (Eempty) + (EI)$)
    {\@T_\enum}
    edge[->] (TN)
    edge[->] (T3)
    edge[->] (T2)
    edge[->] (T1);
\node[theory] (TE0) at ($(Tfalse) - (Eempty) + (E0)$)
    {\@T_{\#E_0}}
    edge[->] (TN);
\node[theory] (TLNsep) at ($(Tfalse) - (Eempty) + (Einf)$)
    {\@T_\LN {\sqcup} \@T_\sep}
    edge[->] (TE0);
\node[theory] (TEinfh) at ($(Tfalse) - (Eempty) + (Einfh)$)
    {}
    edge[->] (TE0)
    edge[->] (Tenum)
    edge[<-] (TLNsep);
\node[theory] (TE) at ($(Tfalse) - (Eempty) + (E)$)
    {\@T_E}
    edge[->] (TE0);
\node[theory] (TF) at ($(Tfalse) - (Eempty) + (F)$)
    {\@T_F}
    edge[->] (TE)
    edge[<-] (TLNsep);
\node[theory] (TLNsepinf) at ($(Tfalse) - (Eempty) + (Einfa)$)
    {\@T_\LN {\sqcup} \@T_\sep {\sqcup} \mathrlap{\@T_\inf}}
    edge[->] (TE0)
    edge[<-] (TLNsep);
\node(scotts)[circle, draw, dotted, inner sep=0pt, minimum width=6cm, label={[anchor=west,align=center,xshift={.5ex}]{-\CBERup*165}:Scott theories\\of CBERs}] at ($(Tfalse)!0.5!(TLNsep)$) {};
\node[theory] (TLNinv) at ($(TLNsep) + (-1, -.75)$)
    {\@T_{\coloreq2}}
    edge[->] (TLNsep);
\node[theory] (TFMLO) at ($(TLNsep) + (0, -1)$)
    {}
    edge[->] (TLNsep);
\node[theory] (TFMinv) at ($(TLNinv) - (TLNsep) + (TFMLO)$)
    {\@T_{\#Z_2^{*\omega}}}
    edge[->] (TLNinv)
    edge[->] (TFMLO);
\node[theory] (TLNLO) at ($(TFMLO) - (TLNsep) + (TFMLO)$)
    {}
    edge[->] (TFMLO);
\node[theory] (TLN) at ($(TFMinv) - (TFMLO) + (TLNLO)$)
    {\@T_\LN}
    edge[->] (TFMinv)
    edge[->] (TLNLO);
\node(Tsep) at ($(TLNsep) + (1, -.75)$)
    {$\@T_\sep$}
    edge[->] (TLNsep);
\node(TLO) at (Tsep |- TLN)
    {$\@T_\LO$}
    edge[->] (TLNLO)
    edge[->] (Tsep);
\node(Tempty) at ($(TLN) + (TLO) - (TLNLO)$)
    {$\emptyset$}
    edge[->] (TLN)
    edge[->] (TLO);
\node[circle, draw, dashed, inner sep=0pt, minimum width=4cm, label={[anchor=east,align=center,inner sep=1ex,xshift={-\CBERup*.25ex}]{-\CBERup*155}:``free'' theories\\on CBERs}] at ($(TLNsep)!0.5!(Tempty)$) {};
\node[theory] (Tmarker) at ($(TLNsepinf) + (1, -1.25)$)
    {\@T_\marker}
    edge[->] (TLNsepinf);
\node[theory] (Tinf) at ($(Tmarker) + (0.52, -0.65)$)
    {\@T_\inf}
    edge[->] (Tmarker);
\node[theory] (Tseppt) at ($(Tenum) + (1.85, -1.75)$)
    {\qquad\; \@T_\finsub {\sqcup} \@T_\sep}
    edge[->] (Tenum)
    edge[<-] (Tsep);
\node[theory] (TLOpt) at ($(Tseppt) + (0.25, -0.75)$)
    {\qquad \@T_\finsub {\sqcup} \@T_\LO}
    edge[->] (Tseppt);
\path (TLOpt.{\CBERup*30})
    edge[<-, cross, bend left={-\CBERup*50}, looseness=1.2] (TLO);
\node[theory] (Tpt) at ($(TLOpt) + (0., -0.75)$)
    {\@T_\pt}
    edge[->] (TLOpt);
\node[theory] (Tfinsub) at ($(Tpt) + (0., -0.75)$)
    {\@T_\finsub}
    edge[->] (Tpt);
\node(theories)[circle, draw, inner sep=0pt, minimum width=9.5cm, label={[anchor=east,align=center,xshift=-1ex]{-\CBERup*225}:theories}] at ($(Tfalse)!0.5!(Tempty)$) {};
\end{scope}

\draw[shorten=1ex, ->, cross] (CBERs) to[bend left={-\CBERup*30}, "$E \mapsto \@T_E$"{anchor={\CBERup*90}}] (scotts);
\end{tikzpicture}
\caption{Hierarchies of CBERs under class-bijective homomorphisms, and $\@L_{\omega_1\omega}$ theories under interpretations, with embedding from former to latter via Scott theories.}
\label{fig:cber-interp}
\end{figure}

\subsection{Comparing the strengths of Borel combinatorial structures}

By \cref{intro:thm:str-impl-interp}, every positive result relating two classes of CBERs defined by structurability may in principle be formulated entirely in terms of countable structures, without mentioning Borelness or CBERs.
In fact, in practice many well-known constructions in Borel combinatorics already amount to $\@L_{\omega_1\omega}$ interpretations between the relevant theories.
We illustrate this in \cref{sec:examples} with several examples along the lines of \cref{ex:finsub-pt}, reformulating the standard proofs of e.g.,
the Feldman--Moore theorem~\cite{FM} (\cref{thm:FM}),
its generalization by Kechris--Miller~\cite{KM} to an $\omega$-coloring of the intersection graph on finite subsets (\cref{thm:KM-color}),
and the Slaman--Steel marker lemma~\cite{SlStr} (\cref{thm:marker})
into explicit $\@L_{\omega_1\omega}$ interpretations.

Along the way, we also make a catalogue of \emph{non-interpretability} between the relevant theories.
When the theories in question, such as the three aforementioned, describe structure which is available ``for free'' on every CBER, this means that the construction of said structure on a CBER must necessarily make use of the ``free'' structure on CBERs provided by $\@T_\LN \sqcup \@T_\sep$.
By comparing different theories in this way, we may make precise the idea that certain results in Borel combinatorics are ``stronger'', or ``more specific to CBERs'', than others.

To illustrate this, consider the Feldman--Moore theorem~\cite{FM}, which states that every CBER is the orbit equivalence relation of a countable Borel group action.
However, the original proof showed the \emph{a priori} stronger statement that every CBER is a countable union of Borel involutions, in which form the result is often used in Borel combinatorics (see e.g., \cite[3.4]{Kcber}).
We give the following analysis of these and other forms of the Feldman--Moore theorem, which we state here in somewhat informal terms; see the quoted results for more precise statements:

\begin{theorem}[see \cref{thm:FM,thm:FMbij,ex:Zline-cex,ex:Ztrans-cex,ex:color3-cex}]
\label{intro:thm:FM}
Let $E \subseteq X^2$ be a CBER.
Consider the following 3 forms of the Feldman--Moore theorem:
\begin{enumerate}[label=(\roman*)]
\item \label{intro:thm:FM:FMbij}
$E$ is induced by a Borel action of the free group $\#F_\omega \actson X$.
\item \label{intro:thm:FM:FMinv}
$E$ is induced by a Borel action of a group generated by involutions $\#Z_2^{*\omega} \actson X$.
\item \label{intro:thm:FM:LNinv}
$E = \bigcup_i f_i$ can be covered by the graphs of countably many Borel involutions $f_i : X -> X$.
\end{enumerate}
Then:
\begin{enumerate}[label=(\alph*)]
\item \label{intro:thm:FM:FMbij-LN}
It is possible to prove \cref{intro:thm:FM:FMbij} using only the Lusin--Novikov theorem applied to $E$.
\item \label{intro:thm:FM:FMinv-FMbij}
It is not possible to prove \cref{intro:thm:FM:FMinv} or \cref{intro:thm:FM:LNinv} using only Lusin--Novikov (equivalently, using \cref{intro:thm:FM:FMbij}), without also using a countable separating family of Borel subsets $U_i \subseteq X$.
\item \label{intro:thm:FM:LNinv-FMinv}
It is also not possible to prove \cref{intro:thm:FM:LNinv} using only \cref{intro:thm:FM:FMinv}.
Thus, the 3 versions of Feldman--Moore are ``strictly increasing'' in strength.
\end{enumerate}
\end{theorem}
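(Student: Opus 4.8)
The plan is to translate each of the three claims into a statement about (non-)interpretability between the relevant ``free'' theories, and then settle these by direct model theory, the correspondence developed in the paper (\cref{intro:thm:scott-LNsep,intro:thm:str-impl-interp} and the principle that interpretations are exactly uniform Borel constructions) being what licenses the translation. Write $\@T_{\#F_\omega}$, $\@T_{\#Z_2^{*\omega}}$, and $\@T_{\mathrm{inv}}$ for the theories axiomatizing a transitive action of $\#F_\omega$, a transitive action of $\#Z_2^{*\omega}$, and a covering by countably many involutions, so that forms \cref{intro:thm:FM:FMbij,intro:thm:FM:FMinv,intro:thm:FM:LNinv} assert $\@T_{\#F_\omega}$-, $\@T_{\#Z_2^{*\omega}}$-, resp.\ $\@T_{\mathrm{inv}}$-structurability. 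Here ``provable using only Lusin--Novikov'' means an interpretation factoring through $\@T_\LN$. Thus \cref{intro:thm:FM:FMbij-LN} becomes $\@T_{\#F_\omega} \to \@T_\LN$ (with the trivial converse making the two bi-interpretable, which is what legitimizes the parenthetical in \cref{intro:thm:FM:FMinv-FMbij}); \cref{intro:thm:FM:FMinv-FMbij} becomes $\@T_{\#Z_2^{*\omega}} \not\to \@T_\LN$ (hence $\@T_{\mathrm{inv}} \not\to \@T_\LN$) together with $\@T_{\mathrm{inv}} \to \@T_\LN \sqcup \@T_\sep$; and \cref{intro:thm:FM:LNinv-FMinv} becomes $\@T_{\mathrm{inv}} \not\to \@T_{\#Z_2^{*\omega}}$ together with $\@T_{\#Z_2^{*\omega}} \to \@T_{\mathrm{inv}}$. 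The key tool for the negative parts is that any $\@L_{\omega_1\omega}$-definable relation or function on a countable model $M$ is invariant under $\mathrm{Aut}(M)$; hence to refute $\@T' \to \@T$ it suffices to exhibit a single countable $M \models \@T$ carrying no $\mathrm{Aut}(M)$-invariant model of $\@T'$.

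For the positive interpretations I would argue as follows. The interpretation $\@T_\LN \to \@T_{\#F_\omega}$ is immediate, since the countably many functions $w \cdot (-)$, $w \in \#F_\omega$, cover all pairs by transitivity. For the converse $\@T_{\#F_\omega} \to \@T_\LN$, I would formalize the standard completion of Lusin--Novikov functions to total bijections as an $\@L_{\omega_1\omega}$-interpretation: starting from covering functions $(f_i)_{i \in \#N}$, carry out a canonical back-and-forth matching the domain- and range-deficient points of each partial injection, resolving every choice by the \emph{well-order on the index set $\#N$} (which, unlike a linear order on $M$ itself, is available for free). Since automorphisms of $M$ commute with each $f_i$ and fix the indices, this yields $\mathrm{Aut}(M)$-invariant bijections generating a transitive action. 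For $\@T_{\mathrm{inv}} \to \@T_\LN \sqcup \@T_\sep$, the separating family supplies a linear order on each model (\cref{ex:finsub-pt:LO}), against which the classical Feldman--Moore construction of a covering by involutions is a routine $\@L_{\omega_1\omega}$-definition; composing with $\@T_{\#Z_2^{*\omega}} \to \@T_{\mathrm{inv}}$ handles form~\cref{intro:thm:FM:FMinv} as well. Finally $\@T_{\#Z_2^{*\omega}} \to \@T_{\mathrm{inv}}$ is the universal property of the free product: a covering family of involutions $(g_j)$ furnishes a homomorphism $\#Z_2^{*\omega} \to \mathrm{Sym}(M)$, transitive since the $g_j$ already cover all pairs.

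The negative parts rest on two counterexamples. For \cref{intro:thm:FM:FMinv-FMbij}, take $M_b = (\#Z, (t_k)_{k \in \#Z})$ with $t_k(x) = x + k$; this models $\@T_\LN$, and $\mathrm{Aut}(M_b)$ is exactly the group of translations. Any $\mathrm{Aut}(M_b)$-invariant involution $\sigma$ commutes with $t_1$, so $\sigma = t_d$ for $d = \sigma(0)$, whence $\sigma^2 = t_{2d} = \mathrm{id}$ forces $d = 0$ and $\sigma = \mathrm{id}$; thus no invariant transitive $\#Z_2^{*\omega}$-action exists, giving $\@T_{\#Z_2^{*\omega}} \not\to \@T_\LN$, and a fortiori $\@T_{\mathrm{inv}} \not\to \@T_\LN$ since $\@T_{\#Z_2^{*\omega}} \to \@T_{\mathrm{inv}}$. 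For \cref{intro:thm:FM:LNinv-FMinv}, take $M_c = D_\infty = \#Z_2 * \#Z_2 \hookrightarrow \#Z_2^{*\omega}$ with its left-regular action (a transitive model of $\@T_{\#Z_2^{*\omega}}$); then $\mathrm{Aut}(M_c)$ is the group of right translations, so every definable function lies in its centralizer, i.e.\ is a left translation $x \mapsto \gamma x$. Such a map is an involution only when $\gamma$ is a reflection, and then sends $x$ to $y$ only if $y x^{-1}$ is a reflection; since $yx^{-1}$ may be a nontrivial translation, no family of invariant involutions covers all pairs, giving $\@T_{\mathrm{inv}} \not\to \@T_{\#Z_2^{*\omega}}$. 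Combining the positive and negative parts yields the strictly increasing hierarchy $\@T_\LN \simeq \@T_{\#F_\omega} < \@T_{\#Z_2^{*\omega}} < \@T_{\mathrm{inv}} \le \@T_\LN \sqcup \@T_\sep$.

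The step I expect to be the main obstacle is the definable bijectivization in \cref{intro:thm:FM:FMbij-LN}: one must verify that completing the Lusin--Novikov functions to bijections can be carried out \emph{entirely uniformly}, making all matching choices canonical by appeal to the index order alone and never to an order on $M$ (whose presence would collapse the very distinction with \cref{intro:thm:FM:FMinv} that we are after). On the negative side the creative input is locating models $M_b, M_c$ whose automorphism groups are rich enough that the \emph{only} invariant involutions are translations, respectively reflections, and confirming that these genuinely fail to cover --- not merely that the naive constructions break down.
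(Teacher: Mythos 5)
Your overall strategy is the paper's own: recast each clause as a (non-)interpretability statement among $\@T_\LN$, $\@T_\FMbij$, $\@T_\FMinv$, $\@T_\LNinv$, prove the positive parts by explicit $\@L_{\omega_1\omega}$ interpretations, and refute the negative parts by exhibiting a model of the source theory whose automorphism group is incompatible with any invariant model of the target theory. Your two counterexamples are essentially the ones in \cref{ex:Ztrans-cex,ex:Zline-cex} (the translation action of $\#Z$ on itself; the regular action of the infinite dihedral group, which is exactly the paper's colored line on $\#Z$ up to relabeling), and the automorphism computations you give for them are correct and complete. So the negative half of the theorem is in good shape.

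The gaps are in the two positive interpretations. First, for \cref{intro:thm:FM:FMbij-LN} you propose to \emph{extend} each partial injection $f_{ij} = f_i \cap f_j^{-1}$ to a total bijection by a back-and-forth matching of its domain- and range-deficient points, canonized by the well-order on the index set. This cannot work: for the partial injection $f(n) = n+1$ on $M = \#N$ one has $\dom(f) = M$ but $\im(f) \ne M$, so the deficient sets have different sizes and \emph{no} extension to a bijection exists, invariant or not; and even when the deficient sets are equinumerous there is in general no $\Aut(M)$-invariant matching between them (the index order lives on $\#N$, not on $M$). The paper's \cref{thm:FMbij} avoids this entirely: it does not extend $f_{ij}$ to one bijection but \emph{covers} it by two total bijections $g,h$ with $f_{ij} \subseteq g \cup h$, defined canonically from the orbit decomposition of $f_{ij}$ into periodic, finite-line, bi-infinite, forward-infinite, and backward-infinite orbits (on a forward-infinite ray $x, f(x), f^2(x), \dotsc$ one takes the two interleaved pairings $f^{2k} \leftrightarrow f^{2k+1}$ and $f^{2k+1} \leftrightarrow f^{2k+2}$). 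This ``cover by two bijections'' idea is the missing ingredient, and you correctly identified this step as the crux.

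Second, for the interpretation $\@T_\LNinv -> \@T_\LN \sqcup \@T_\sep$ you say the separating family ``supplies a linear order, against which the classical Feldman--Moore construction is routine.'' As stated this would give an interpretation factoring through $\@T_\LN \sqcup \@T_\LO$, which is impossible: your own model $M_b$, augmented with the usual order on $\#Z$ (which is preserved by all translations), is a model of $\@T_\LN \sqcup \@T_\LO$ admitting no invariant covering family of involutions — this is precisely \cref{ex:Ztrans-cex} combined with $\@T_\FMinv -> \@T_\LNinv$. The classical construction (\cref{thm:FM}) must use the separating predicates themselves, splitting each partial injection into pieces $f_{ijk}(x)=y \coloniff f_{ij}(x)=y \wedge U_k(x) \wedge \neg U_k(y)$ so that each piece has disjoint domain and range before symmetrizing; the derived lexicographic order does not suffice. (A final terminological note: (a) plus its trivial converse gives \emph{mutual} interpretability of $\@T_\FMbij$ and $\@T_\LN$, which is all the parenthetical in \cref{intro:thm:FM:FMinv-FMbij} needs; in this paper ``bi-interpretable'' is reserved for the stronger condition that the composites are identities.)
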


Here, for example, \cref{intro:thm:FM:LNinv-FMinv} means precisely that the theory $\@T_{\#Z_2^{*\omega}}$ of transitive $\#Z_2^{*\omega}$-actions (structurability by which formalizes the statement of \cref{intro:thm:FM:FMinv}) does not interpret the theory $\@T_\coloreq2$ of edge $\omega$-colorings of the complete graph (which formalizes \cref{intro:thm:FM:LNinv}).
Note that, as \cref{intro:thm:FM:FMbij}--\cref{intro:thm:FM:LNinv} are all true (by the Feldman--Moore theorem), by \cref{intro:thm:str-impl-interp}, the three respective theories do interpret each other when combined with the theory $\@T_\sep$.
Nonetheless, the above result allows us to make precise the idea that e.g., \cref{intro:thm:FM:LNinv} is a ``strictly stronger'' statement of the Feldman--Moore theorem than \cref{intro:thm:FM:FMbij}.

\Cref{fig:cber-interp} shows various other theories describing common Borel combinatorial structures, including several available ``for free'' on a CBER, whose strengths we may distinguish in a similar manner; a more detailed diagram is shown in \cref{fig:cber-interp-big}.

\subsection{Locally countable Borel groupoids}
\label{sec:intro-gpd}

A \defn{groupoid} $(X,G)$ is, in short, a category with invertible morphisms, consisting of a space of objects $X$ and a space of morphisms $G$, together with operations (maps) specifying the domain and codomain of a morphism, as well as composition, identity, and inverse morphisms.
A groupoid can be viewed as a common generalization of a group (when $X = 1$), an equivalence relation (when $G \subseteq X^2$), and a group action (when equipped with a ``fibration'' to the acting group $\Gamma$).

A \defn{locally countable Borel groupoid} is a groupoid in which $X, G$ are standard Borel spaces, all of the groupoid operations are Borel maps, and each object is the domain of only countably many morphisms; see \cref{def:gpd}.
Such groupoids generalize CBERs and countable Borel group actions, and admit a largely analogous structural theory, with natural analogues of concepts such as amenability (see e.g., \cite{TW}), treeability, graphings and cost (see \cite{Ueda}, \cite{Alvarez}, \cite{Carderi}).
Nonetheless, to our knowledge, a general theory of structurability for groupoids, akin to \cite{CK} for CBERs, has not been considered before in the literature.

In \cref{sec:gpd}, we develop the beginnings of such a theory.
Given a locally countable Borel groupoid $(X,G)$ and an $\@L_{\omega_1\omega}$ theory $\@T$, by a \defn{$\@T$-structuring} of $G$, we mean a Borel right-translation-invariant family $(\@M_x)_{x \in X}$ of models of $\@T$ on the set of morphisms with domain $x$, for each $x \in X$; see \cref{def:gpd-str}.
This is the natural generalization of structurability for CBERs, with the expected examples: e.g, a groupoid will be ``treeable'' (structurable by trees) iff it is the free groupoid generated by a Borel multigraph (so in particular, a ``treeable'' group in this sense is a free group; see \cref{ex:gpd-str-tree}).
We prove the following analogue of \cref{intro:thm:scott-LNsep}:

\begin{theorem}[see \cref{thm:gpd-scott-LN,thm:gpd-scott-equiv}]
\label{intro:thm:gpd-scott-LN}
We have a dual equivalence of categories
\begin{align*}
\{\text{locally countable Borel groupoids}\} &\simeq \{\text{countable $\@L_{\omega_1\omega}$ theories interpreting $\@T_\LN$}\} \\
G &|-> \@T_G.
\end{align*}
Moreover, $\@T$-structurings of a locally countable Borel groupoid $G$ are in natural bijection with interpretations $\@T -> \@T_G$.
\end{theorem}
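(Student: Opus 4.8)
The plan is to follow the proof of the CBER case \cref{thm:scott-LNsep} almost verbatim, with the pair groupoid of pointed models playing the role of the pair groupoid of an $E$-class, and with the rigidity forced by $\@T_\LN$ taking over the role played for CBERs by the separating family $\@T_\sep$. Write $G_x$ for the countable set of morphisms of $(X,G)$ with domain $x$, which contains the identity $1_x$. First I would define the functor $G |-> \@T_G$ exactly as for CBERs, via the Scott-theory machinery of \cref{sec:scott} (\cref{def:scott}): declare the Borel $S_\infty$-space of models of $\@T_G$ to be the space of bijections $e : Y -> G_x$ (ranging over $x \in X$) equipped with the right-translation-invariant groupoid structure that $G_x$ carries, so that $\@T_G$ is a countable $\@L_{\omega_1\omega}$ theory, well-defined up to bi-interpretability. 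With this definition the ``moreover'' is essentially formal, just as for CBERs: an interpretation $\@T -> \@T_G$ evaluates its defining formulas in each model $G_x$ to produce a $\@T$-structure, and since those formulas see only the groupoid-canonical structure — which is preserved by every right-translation $R_g : G_y -> G_x$ — the resulting family $(\@M_x)_x$ is automatically right-translation-invariant, i.e.\ a $\@T$-structuring; conversely a right-invariant Borel structuring, read through an enumeration, defines such an interpretation. Full faithfulness of $G |-> \@T_G$ (on the appropriate Borel functors, those bijective on each $G_x$) then follows by the same dictionary between homomorphisms and interpretations as in \cref{sec:scott}.

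The substantive part is the essential image (\cref{thm:gpd-scott-equiv}): every countable $\@L_{\omega_1\omega}$ theory $\@T$ admitting an interpretation $\@T_\LN -> \@T$ is bi-interpretable with the Scott theory $\@T_G$ of some locally countable Borel groupoid $G$. The first observation is that the interpreted Lusin--Novikov functions force rigidity of pointed models: in any $\@M \models \@T$ the graphs of the $f_i$ cover $M^2$, so every element is of the form $f_i(a)$, whence any automorphism of $\@M$ fixing a single point $a$ fixes all of $M$; thus every pointed model $(\@M, a)$ is rigid. Consequently the map sending $(\@M,a)$ to the $\@L$-diagram of $\@M$ read off along the surjection $i |-> f_i(a)$ is a Borel parametrization separating isomorphism types, exhibiting the space $\@S_1(\@T)$ of $1$-types, and likewise the space $\@S_2(\@T)$ of $2$-types, as standard Borel. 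I would then take $G$ to be the \emph{pair groupoid on pointed models}: objects $\@S_1(\@T) = \{(\@M,a)\}$, morphisms $\@S_2(\@T) = \{(\@M,a,b)\}$ with $(\@M,a,b) : (\@M,a) -> (\@M,b)$, composition $(\@M,a,b)\cdot(\@M,b,c) = (\@M,a,c)$, identities $(\@M,a,a)$, and inverses $(\@M,a,b)^{-1} = (\@M,b,a)$. Rigidity makes all of these operations well-defined and Borel, and $G_{(\@M,a)} = \{(\@M,a,b) : b \in M\}$ is in canonical bijection with $M$, hence countable; so $G$ is a locally countable Borel groupoid.

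Finally I would check $\@T_G \cong \@T$ by a quantifier-elimination argument as in \cref{sec:TLN}: under the bijection $G_{(\@M,a)} \cong M$ the morphism $(\@M,a,b)$ corresponds to $b$, and the right-translation-invariant structure recorded by $\@T_G$ is exactly the $\@T$-structure of $\@M$ — note that the identity $1_{(\@M,a)} = (\@M,a,a)$, corresponding to $a$, is \emph{not} right-invariant and is therefore correctly forgotten by $\@T_G$ — yielding mutually inverse interpretations $\@T -> \@T_G$ and $\@T_G -> \@T$. I expect this essential-image step to be the main obstacle. Its key conceptual point, and the reason $\@T_\sep$ is absent from the statement in contrast to \cref{thm:scott-LNsep}, is that the entire construction stays at the level of (rigid) pointed and doubly-pointed models, where $\@T_\LN$ alone already provides a standard Borel parametrization. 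For CBERs one must additionally pass to the orbit equivalence relation of this groupoid, identifying pointed models with isomorphic \emph{unpointed} reducts; making that identification Borel amounts to smoothness of the unpointed isomorphism relation, which is precisely what the canonical embedding supplied by $\@T_\sep$ guarantees. The remaining verifications — the groupoid axioms, naturality of all the constructions, and that the two interpretations compose to the identity up to provable equivalence — are routine and parallel the CBER development.
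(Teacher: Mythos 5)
Your proposal follows essentially the same route as the paper: $\@T_G$ is defined via $G$-torsors (\cref{def:gpd-scott}), the bijection with structurings and the full faithfulness are the formal Yoneda-style arguments of \cref{thm:gpd-str-interp,thm:gpd-str-fib}, and for the essential image the paper constructs exactly your pair groupoid on pointed models --- objects $\@S_1(\@T)$, morphisms $\@S_2(\@T)$ --- only packaged as the groupoid whose nerve is $(\@S_n(\@T))_{n \ge 1}$, with the well-definedness and Borelness of composition (your rigidity/amalgamation step) absorbed into the Grothendieck--Segal condition (\cref{thm:gpd-scott-LN}). One correction to your closing aside: $\@T_\sep$ in \cref{thm:scott-LNsep} does \emph{not} supply smoothness of unpointed isomorphism (the Scott theory of $\#E_\infty$ interprets $\@T_\sep$ yet has badly non-smooth $\cong$); its role is to force \emph{unpointed} models to be rigid, so that the groupoid you build has trivial isotropy and is already an equivalence relation --- no passage to an orbit equivalence relation occurs, and indeed taking that quotient would destroy bi-interpretability by discarding the isotropy groups (cf.\ \cref{rmk:gpd-scott-rigid}).
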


Here the morphisms in the left category are Borel \defn{fibrations} $G -> H$ between groupoids, i.e., functors exhibiting $G$ as the action groupoid of a Borel action of $H$; see \cref{ex:gpd-action}.
The \defn{Scott theory} $\@T_G$ associated to a locally countable Borel groupoid $G$ is defined by declaring its models on a set $Y$ to be fibrations $\#I_Y -> G$ from the indiscrete groupoid $\#I_Y = Y^2$; see \cref{def:gpd-scott}.

Just as \cref{intro:thm:str-impl-interp} shows that $\@T_\LN \sqcup \@T_\sep$ describes all of the combinatorial structure available ``for free'' on a CBER, it follows analogously from \cref{intro:thm:gpd-scott-LN} that $\@T_\LN$ describes precisely the structure available ``for free'' on a locally countable Borel groupoid.
In other words, every Borel combinatorial construction on groupoids may in principle be done ``countably, in a uniform way'', where ``uniform'' here has a more restrictive meaning than in the case of CBERs (namely, we still have access ``for free'' to Lusin--Novikov functions $\{f_i\}_{i \in \#N}$, but no longer have a countable separating family $\{U_i\}_{i \in \#N}$).
See \cref{thm:gpd-str-impl-interp} for the precise statement.

For instance, recall that by \cref{intro:thm:FM}, the Feldman--Moore theorem in the weak form of ``generated by a countable group action'' can be proved using only Lusin--Novikov, but the stronger form ``generated by involutions'' cannot.
It follows that we have a weak ``Feldman--Moore theorem for groupoids'', which says that every locally countable Borel groupoid $(X,G)$ admits an action of a countable group $\Gamma \actson G$ on the space of morphisms, which commutes with right multiplication and whose orbits are precisely all morphisms with a fixed domain.
Equivalently, such data correspond to a countable subgroup of the \defn{Borel full group} $[G]$ which covers $G$; in this form, the ``Feldman--Moore theorem for groupoids'' was first stated (without proof) in \cite{TW}.
See \cref{ex:gpd-str-FM}.

The route we take to \cref{intro:thm:gpd-scott-LN} (\cref{thm:gpd-scott-LN}) is somewhat more involved than in the case of CBERs, but we believe it provides some insight that may be of independent interest.
Given a theory $\@T$ interpreting $\@T_\LN$, we consider the sequence of $\@L_{\omega_1\omega}$ type spaces $\@S_n(\@T)$ for each $n \ge 1$.
We show that these form a \defn{locally countable standard Borel simplicial set}, satisfying the so-called \emph{Grothendieck--Segal condition}, that ensures it is the simplicial nerve of a groupoid.
We will review the relevant concepts from simplicial homotopy theory in \cref{sec:simplicial}.

\subsection{Essential countability and imaginaries}
\label{sec:intro-esscount}

In \cref{sec:esscount}, we briefly discuss the significance of Scott theories of CBERs and locally countable Borel groupoids within the broader context of countable model theory.

In \cite{HK}, Hjorth--Kechris gave a model-theoretic characterization of those countable $\@L_{\omega_1\omega}$ theories $\@T$ whose isomorphism relation $\cong$ is Borel bireducible to a CBER (such $\cong$ are called \defn{essentially countable}).
Namely, every countable model $\@M$ of $\@T$ must be $\aleph_0$-categorical with respect to a countable fragment $\@F$ of $\@L_{\omega_1\omega}$ after fixing finitely many constants $\vec{a} \in M^n$.

In \cite{Cscc,Cgpd}, following earlier work by Harrison-Trainor--Miller--Montalbán \cite{HMM}, Chen showed that \emph{arbitrary} countable $\@L_{\omega_1\omega}$ theories are ``equivalent'' to their Borel groupoids of countable models.
More precisely, we have an equivalence of 2-categories
\begin{align*}
\yesnumber
\label{intro:eq:scc}
\{\text{theories}\} &\simeq \{\text{groupoids}\} \\
\@T &|-> \{\text{countable models of $\@T$}\},
\end{align*}
where the left-hand side consists of countable $\@L_{\omega_1\omega}$ theories and interpretations between them as morphisms (and definable isomorphisms as 2-cells), and the right-hand side consists of standard Borel groupoids obeying a certain topological condition, with Borel functors between them (and Borel natural isomorphisms between those).
Here the \emph{interpretations} between theories $\@T' -> \@T$ are allowed to be more general than those considered above (see \cref{sec:intro-interp}): they describe a uniform construction of a model of $\@T'$ from each model $\@M$ of $\@T$, but living not necessarily on the same underlying set, but rather on a set ``uniformly defined'' from $\@M$.
This ``uniformly defined set'' is the model-theoretic notion of an \defn{imaginary} (see \cite[Ch.~7]{Hodges}), adapted to the $\@L_{\omega_1\omega}$ setting (namely, a definable quotient of a countable disjoint union of definable sets).

In light of \cref{intro:eq:scc}, the aforementioned Hjorth--Kechris result may be regarded as characterizing those theories $\@T$ whose groupoid of models is Borel equivalent to a groupoid each of whose connected components has only countably many objects (namely, isomorphic models).
Note that such a groupoid still need not be a locally countable Borel groupoid; it will be one iff each model moreover has only countably many automorphisms.
From this and \cref{thm:gpd-scott-LN}, we derive:

\begin{theorem}[see \cref{thm:esscaut-LN}]
The above correspondence \cref{intro:eq:scc} restricts to an equivalence between theories obeying the following equivalent conditions, and locally countable Borel groupoids:
\begin{enumerate}[label=(\roman*)]
\item
Every countable model has only countably many automorphisms, and the isomorphism relation $\cong$ is essentially countable.
\item
There is a countable fragment $\@F$ of $\@L_{\omega_1\omega}$, such that every countable model becomes rigid and $\@F$-categorical after fixing finitely many constants $\vec{a}$.
\end{enumerate}
The inverse is given (up to bi-interpretability) by the Scott theory $G |-> \@T_G$ of a groupoid.

Under this correspondence, the CBERs correspond to theories all of whose models are rigid.
\end{theorem}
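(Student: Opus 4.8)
The plan is to obtain this theorem by \emph{restricting} the already-established correspondence \cref{intro:eq:scc} between countable $\@L_{\omega_1\omega}$ theories and their Borel groupoids of countable models, using the Scott-theory equivalence \cref{thm:gpd-scott-LN} to pin down the essential image. The organizing principle is a dictionary: under \cref{intro:eq:scc}, the isotropy group of the model groupoid at a countable model $\@M$ is exactly $\mathrm{Aut}(\@M)$, and the orbit equivalence relation is exactly the isomorphism relation $\cong$. Consequently the model groupoid of $\@T$ is Borel-equivalent to a locally countable Borel groupoid iff (a) every $\mathrm{Aut}(\@M)$ is countable and (b) $\cong$ is essentially countable, which is precisely condition (i). Both conditions are visibly necessary; the content lies in their sufficiency, addressed below.

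First I would dispatch the equivalence of (i) and (ii), which is purely model-theoretic. The essential-countability half is the Hjorth--Kechris theorem \cite{HK}: $\cong$ is essentially countable iff there is a countable fragment $\@F$ of $\@L_{\omega_1\omega}$ making every countable model $\@F$-categorical after naming finitely many constants. For the automorphism half, I would use that $\mathrm{Aut}(\@M)$, topologized by pointwise convergence, is a Polish group, and that a Polish group is countable iff it is discrete iff some basic neighborhood of the identity is trivial; since the basic neighborhoods of the identity are the pointwise stabilizers of finite tuples, this says exactly that $\@M$ becomes rigid after naming some finite tuple $\vec a$. As both rigidity and $\@F$-categoricity persist when $\vec a$ is enlarged (and $\@F$ is enlarged, for categoricity), one may amalgamate the two tuples and fragments to realize rigidity and $\@F$-categoricity simultaneously, yielding (i) $\Leftrightarrow$ (ii).

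Next I would verify the two halves of the restricted equivalence. That $G \mapsto \@T_G$ lands among the theories satisfying (i) follows from \cref{thm:gpd-scott-LN} together with a computation from the definition of $\@T_G$: the models of $\@T_G$ are the fibrations $\#I_Y \to G$, whose automorphism groups embed into the isotropy groups of $G$ (hence are countable, as $G$ is locally countable) and whose isomorphism relation is the orbit equivalence relation of $G$ (a CBER, hence essentially countable). For the other half, given $\@T$ satisfying (ii) I would build a locally countable Borel groupoid $G$ Borel-equivalent to the model groupoid of $\@T$; then \cref{thm:gpd-scott-LN}, the round-trip identifying $G$ with the model groupoid of $\@T_G$, and full faithfulness of \cref{intro:eq:scc} together give that $\@T$ is bi-interpretable with $\@T_G$, identifying $G \mapsto \@T_G$ as the inverse.

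The main obstacle is exactly this construction of $G$ from (ii). The difficulty is that the rigidifying, $\@F$-categorical tuple $\vec a$ cannot be chosen in a Borel-uniform way across models, so one cannot simply add constants to the language; instead I expect to pass to the $\@L_{\omega_1\omega}$ type spaces $\@S_n(\@T)$ as in the proof of \cref{thm:gpd-scott-LN}, using $\@F$-categoricity to guarantee they are standard Borel and rigidity-after-constants to guarantee that the resulting groupoid is locally countable. Finally, for the last assertion, I would note that a locally countable Borel groupoid is (equivalent to) a CBER exactly when its isotropy is trivial, which under the dictionary $\text{isotropy} = \mathrm{Aut}(\@M)$ means precisely that every model is rigid; restricting the correspondence to such theories recovers the CBER equivalence of \cref{intro:thm:scott-LNsep}, with the interpretation of $\@T_\sep$ appearing as the syntactic trace of trivial isotropy in the presence of essential countability.
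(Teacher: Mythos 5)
Your reduction of (i)$\iff$(ii) (Hjorth--Kechris for the categoricity half, the Baire-category/discreteness argument for the automorphism half, then amalgamating tuples and fragments) and your verification that $G \mapsto \@T_G$ lands among theories satisfying (i) both match the paper (\cref{thm:hjorth-kechris}, \cref{rmk:caut}, \cref{rmk:gpd-scott-torsor}). The genuine gap is in the step you yourself flag as the main obstacle, and the fix you propose there would fail. You want to run the proof of \cref{thm:gpd-scott-LN} on the type spaces $\@S_n(\@T)$ of $\@T$ itself; but that proof needs condition \labelcref{thm:gpd-scott-LN}\cref{thm:gpd-scott-LN:logic} --- \emph{one-pointed} models rigid and categorical via \emph{one-variable} formulas --- which is exactly what makes $\@S_1(\@T)$ standard Borel and makes $(\@S_n(\@T))_{n\ge1}$ satisfy the Grothendieck--Segal condition from level $1$, so that $\@S_1,\@S_2$ can serve as objects and morphisms of a groupoid. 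Condition (ii) only gives rigidity and $\@F$-categoricity after naming a tuple whose length depends on the model, so only $\@S_n(\@T)$ for large $n$ is controlled (\cref{thm:types-smooth} propagates standard Borelness upward in $n$, not downward), and the bottom of the tower is unusable: already for the theory of ``two disjoint $\#Z$-lines'' (which satisfies (i) and (ii) with tuples of length $2$), pointed models are not rigid and the Segal condition fails at $\@S_3 \to \@S_2\times_{\@S_1}\@S_2$. Moreover, even granting a correct $G$, ``$\@T$ bi-interpretable with $\@T_G$'' cannot mean a one-sorted bi-interpretation, since the underlying sets differ; the statement's ``up to bi-interpretability'' is \emph{imaginary} bi-interpretability (condition (v) of \cref{thm:esscaut-LN}), and your proposal never engages with imaginaries, which are the crux.

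The paper fills this step by first making the rigidifying tuples uniformly definable: rigidity of $n$-pointed models is expressible by a formula (\cref{thm:caut}), and the uniform form of Hjorth--Kechris (\cref{rmk:hjorth-kechris}, \cref{rmk:esscaut}) yields formulas $\phi_n$ such that every model contains a tuple satisfying some $\phi_n$ and every such tuple rigidifies and categorifies. The imaginary sort $\top(x)\times\bigsqcup_n\phi_n(x_0,\dotsc,x_{n-1})$ then carries a one-sorted expansion $\@T'$ of $\@T$ to which \cref{thm:gpd-scott-LN} \emph{does} apply, yielding an imaginary bi-interpretation $\@T \cong \@T' \cong \@T_G$. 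Alternatively --- and closer to your architecture --- the remark following \cref{thm:esscaut-LN} gives a soft route: take $G$ to be the restriction of the groupoid of models to a countable complete Borel section (which exists by essential countability and is locally countable because automorphism groups are countable), then invoke the 2-equivalence \cref{intro:eq:scc} to upgrade the Borel equivalence of groupoids to an imaginary bi-interpretation. Either repair requires an ingredient your proposal is missing: definable good tuples assembled into an imaginary, or the complete-section restriction together with the 2-categorical machinery of \cite{Cscc}.
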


We expect that $\@L_{\omega_1\omega}$ imaginaries and the more general notion of interpretations into them ought to play a broader role in Borel structurability theory as well.
Indeed, many well-known ``uniform Borel combinatorial constructions'' produce structures on a new underlying set derived from an existing structure.
For example, this is the case for arguments showing closure of a given class of structurable CBERs under Borel reducibility (e.g., the treeable CBERs \cite[3.3]{JKL}).
It would be interesting to have analogues of \cref{intro:thm:str-impl-interp,intro:thm:scott-LNsep} for Borel reducibility, inclusions of subequivalence relations, class-injective homomorphisms (see \cite[\S5]{CK}), etc.; such analogues would presumably involve interpretations into imaginaries.
We leave such questions for future work.

\paragraph*{Acknowledgments}

We would like to thank Alexander Kechris and Anush Tserunyan for encouraging us to pursue this line of work, Robin Tucker-Drob for pointing out the connection to \cite{TW}, and Matthew Harrison-Trainor and Alexander Kechris for helpful comments and suggestions.
R.C.\ was supported by NSF grant DMS-2224709.

\section{Preliminaries}
\label{sec:prelim}

\subsection{Descriptive set theory}
\label{sec:dst}

For background on the descriptive set-theoretic notions we will use, see \cite{Kcdst}, \cite{GaoIDST}.

By a \defn{Borel space}, we will mean what is often called a \defn{measurable space}: a set $X$ equipped with a $\sigma$-algebra $\@B(X) \subseteq \@P(X)$.
A \defn{standard Borel space} is one which is Borel isomorphic to a Borel subspace of the Cantor space $2^\#N$ with the product topology (or equivalently, any Polish space).
A \defn{Borel map} between Borel spaces is one such that the preimage of every Borel set is Borel.

For a group action $G \actson X$ on a standard Borel space $X$, the \defn{orbit equivalence relation} $\#E_G^X$ is
\begin{align*}
x \mathrel{\#E_G^X} y  \coloniff  \exists g \in G\, (g \cdot x = y).
\end{align*}
All of the Borel spaces we consider will be quotients of standard Borel spaces $X$ by orbit equivalence relations $E = \#E_G^X \subseteq X^2$ of Borel actions of Polish groups $G \actson X$.
Recall that the quotient Borel structure $\@B(X/E)$ consists of the subsets whose lift in $X$ is $E$-invariant Borel.

An important fact about maps between such quotient spaces is

\begin{proposition}[{see \cite[5.4.6, 5.2.3, 5.4.12]{GaoIDST}, \cite[2.8]{Cgpd}}]
\label{thm:polgrpact}
Let $X/E, Y/F$ be two quotient spaces of Polish group actions, and $f : X -> Y$ be a Borel map which descends to an injection $X/E `-> Y/F$ (a \defn{Borel reduction}).
\begin{enumerate}[label=(\alph*)]
\item
Then $f$ descends to a Borel embedding $X/E `-> Y/F$.
In other words, each $E$-invariant Borel set in $X$ is the preimage of an $F$-invariant Borel set in $Y$.
\item
If moreover $F \subseteq Y^2$ is Borel, then the image of $X/E$ in $Y/F$ is Borel, i.e., the image saturation $[f(X)]_F \subseteq Y$ is Borel.
Moreover, the inverse map between the quotient spaces $[f(X)]_F/F \cong X/E$ has a Borel lift $[f(X)]_F -> X$.
\end{enumerate}
\end{proposition}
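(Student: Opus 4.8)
The plan is to work throughout with the standard machinery of Polish group actions, writing $E = \#E_G^X$ and $F = \#E_H^Y$ for the two orbit equivalence relations of Borel actions $G \actson X$ and $H \actson Y$ of Polish groups. Recall that $F$ is then analytic (a projection of the Borel graph of the action), so saturations $[S]_F$ of analytic sets $S$ are again analytic.

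For part (a), the key tool is the \emph{invariant separation theorem}: any two disjoint $F$-invariant analytic subsets of $Y$ can be separated by an $F$-invariant Borel set. I would derive this from the Lusin separation theorem together with the topological Vaught transform: if a Borel $B_0$ separates disjoint invariant analytic sets, with $P \subseteq B_0$ and $Q \cap B_0 = \emptyset$, then the Vaught transform $B_0^{*H} := \{y : \{h \in H : h \cdot y \in B_0\} \text{ is comeager}\}$ is an $F$-invariant Borel set that still satisfies $P \subseteq B_0^{*H}$ and $Q \cap B_0^{*H} = \emptyset$ (for $y \in P$ the witnessing set is all of $H$, for $y \in Q$ it is empty). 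Granting this, I fix an $E$-invariant Borel set $A \subseteq X$ and put $P := [f(A)]_F$ and $Q := [f(X \setminus A)]_F$, both $F$-invariant analytic. They are disjoint: a common point would produce $a \in A$ and $b \in X \setminus A$ with $f(a) \mathrel F f(b)$, hence $a \mathrel E b$ by injectivity on classes, contradicting $E$-invariance of $A$. Separating them by an $F$-invariant Borel set $B$ yields $f^{-1}(B) = A$, since $f(A) \subseteq B$ and $f(X \setminus A) \cap B = \emptyset$. As $A$ was an arbitrary $E$-invariant Borel set and the descent $\bar f$ is injective, this exhibits $\bar f : X/E \hookrightarrow Y/F$ as a Borel embedding.

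For part (b), now assuming $F$ Borel, I would construct the inverse directly. Form the set $\Gamma := \{(y,x) \in Y \times X : f(x) \mathrel F y\}$, which is Borel precisely because $F$ is Borel, and let $G$ act on it by $g \cdot (y,x) := (y, g \cdot x)$, a Borel action of a Polish group. Injectivity on classes shows each fiber $\Gamma^y = \{x : f(x) \mathrel F y\}$ is empty or a single $E$-class, so the orbit of $(y,x)$ in $\Gamma$ is $\{y\} \times (G \cdot x)$ and the projection $\pi_Y : \Gamma \to Y$ is a genuine Borel reduction of $\#E_G^\Gamma$ to equality on $Y$. Thus $\#E_G^\Gamma$ is a \emph{smooth} orbit equivalence relation of a Polish group action, and hence admits a Borel transversal $T \subseteq \Gamma$ by the Becker--Kechris (Burgess) theorem. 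Then $\pi_Y|_T$ is an injective Borel map with image exactly $[f(X)]_F$, so by the Lusin--Souslin theorem that image is Borel and the inverse of $\pi_Y|_T$ is Borel; composing it with the projection $\Gamma \to X$ gives the required Borel lift $[f(X)]_F \to X$ of the inverse of $[f(X)]_F/F \cong X/E$.

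The genuine content of part (a) is concentrated in the invariant separation theorem — upgrading an arbitrary Borel separator to an invariant one — which is exactly where the Vaught transform, and thereby the Polish group structure, is essential rather than bare Lusin separation. In part (b) the crux is the observation that injectivity on classes forces $\#E_G^\Gamma$ to be smooth, after which the nontrivial input is that smooth orbit equivalence relations of Polish group actions (unlike smooth Borel equivalence relations in general) admit Borel transversals; the Borelness of $\Gamma$, and hence the entire argument, is the precise point at which the hypothesis that $F$ is Borel is used.
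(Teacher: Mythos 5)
The paper does not prove this proposition; it cites it from the standard references (Gao and Becker--Kechris style arguments), and your write-up is a correct reconstruction of exactly those proofs: part (a) via Lusin separation of the disjoint invariant analytic saturations $[f(A)]_F$ and $[f(X\setminus A)]_F$ upgraded to an invariant separator by the Vaught transform, and part (b) via the Borel set $\Gamma$, the observation that class-injectivity makes $\#E_G^\Gamma$ smooth, Burgess's transversal theorem, and Lusin--Souslin. All the steps check out, including the points where the Polish-group hypothesis (invariance of $B_0^{*H}$, existence of Borel transversals for smooth orbit equivalence relations) and the Borelness of $F$ (Borelness of $\Gamma$) are genuinely used.
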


An equivalence relation $E \subseteq X^2$ on a standard Borel space $X$ is \defn{smooth} if $X/E$ has countably many Borel sets which separate points, or equivalently, $X/E$ is Borel isomorphic to an analytic set in a standard Borel space.
A \defn{Borel transversal} of $E$ is a Borel subset $D \subseteq X$ containing exactly one point from each $E$-class.
If $E$ is an analytic equivalence relation, and has a Borel transversal $D$, then the quotient space $X/E$ is standard Borel, being Borel isomorphic to $D$.
If $X/E$ is standard Borel, then clearly $E$ is smooth.
Neither of these two implications reverses in general; however, they do reverse if $E$ is induced by a Polish group action, as a consequence of \cref{thm:polgrpact}.

Another special fact about orbit equivalence relations of Polish group actions $E \subseteq X^2$ is that each orbit is Borel, i.e., each point in $X/E$ is Borel; see \cite[15.14]{Kcdst}, \cite[3.3.2]{GaoIDST}.

\medskip
A \defn{countable Borel equivalence relation (CBER)} is a Borel equivalence relation $E \subseteq X^2$ all of whose classes $[x]_E \subseteq X$ are countable.
Equivalently, they are precisely orbit equivalence relations of Borel actions of countable groups, by the \defn{Feldman--Moore theorem} \cite[Th.~1]{FM} (see also \cite[7.1.4]{GaoIDST}).
See \cite{Kcber} for a comprehensive survey of the theory of CBERs.

The fundamental tool underlying the theory of CBERs is

\begin{theorem}[{Lusin--Novikov; see \cite[18.10]{Kcdst}}]
\label{thm:lusin-novikov}
Let $f : X -> Y$ be a countable-to-1 Borel map between standard Borel spaces.
Then $X = \bigcup_i X_i$ for countably many Borel sets $X_i \subseteq X$ such that $f$ restricts to a bijection $f : X_i \cong f(X)$.
\end{theorem}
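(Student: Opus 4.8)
The plan is to reduce the statement to the \emph{uniformization} form of the Lusin--Novikov theorem and to treat the latter as the technical core. Concretely, consider the ``flipped graph'' $R := \{(y,x) \in Y \times X : f(x) = y\}$, which is Borel since $f$ is Borel, and whose vertical section over each $y \in Y$ is exactly the fiber $f^{-1}(y)$, hence countable by hypothesis. The uniformization form, which I label $(\ast)$, asserts that such an $R$ is a countable union $R = \bigcup_i \mathrm{graph}(g_i)$ of graphs of partial Borel functions $g_i \colon Y \rightharpoonup X$; after a routine disjointification (intersect each graph with the complements of the earlier ones) we may take these graphs pairwise disjoint, so that for each $y \in f(X)$ the defined values $g_i(y)$ enumerate the fiber $f^{-1}(y)$ without repetition. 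As a byproduct, $f(X) = \bigcup_i \mathrm{dom}(g_i)$ is Borel, each $\mathrm{dom}(g_i)$ being the injective Borel image of a Borel set.

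From here the desired form follows by passing from partial sections to total ones. Define $g \colon f(X) \to X$ by $g(y) := g_{i(y)}(y)$, where $i(y) := \min\{i : y \in \mathrm{dom}(g_i)\}$; this minimum exists for every $y \in f(X)$, since the fiber is nonempty, and $i$ is Borel, so $g$ is a total Borel section of $f$ over $f(X)$. Now set $\tilde g_i(y) := g_i(y)$ for $y \in \mathrm{dom}(g_i)$ and $\tilde g_i(y) := g(y)$ otherwise; each $\tilde g_i \colon f(X) \to X$ is a total Borel function satisfying $f \circ \tilde g_i = \mathrm{id}_{f(X)}$. Taking $X_i := \mathrm{ran}(\tilde g_i)$, which is Borel as the injective Borel image of the Borel set $f(X)$, the identity $f \circ \tilde g_i = \mathrm{id}$ exhibits $f$ as a left inverse of $\tilde g_i$, so $f$ restricts to a bijection $X_i \cong f(X)$; and $\bigcup_i X_i = X$ because every $x \in X$ equals $g_i(f(x))$ for some $i$. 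This is exactly the claimed decomposition.

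It remains to prove $(\ast)$, which is where the real content lies. I would argue by a Cantor--Bendixson derivative on the sections. After identifying $X$ Borel-isomorphically with a subspace of $2^\#N$, call a point $x \in R_y$ \emph{isolated in its section} if some basic clopen $N_s$ (indexed by a finite binary string $s$) meets $R_y$ in exactly $\{x\}$; for each fixed $s$, the map sending $y$ to that unique point, where it exists, is a partial Borel function, so the isolated points are covered by countably many Borel graphs. Removing them yields a Borel set whose sections are the derivatives, and one iterates transfinitely, peeling off countably many Borel graphs at each successor stage. Since each $R_y$ is countable, its derivative vanishes at some countable ordinal; the crux is that these ordinals are \emph{uniformly bounded} below $\omega_1$. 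This boundedness is the main obstacle, and the standard route around it is a $\Sigma^1_1$-boundedness argument: the relation ``the rank of $R_y$ exceeds $\xi$'' is analytic uniformly in a code for $\xi$, so it cannot hold for unboundedly many countable $\xi$ on a set all of whose sections are countable. Granting a bound $\xi_0 < \omega_1$, the process terminates after $\xi_0$ stages, each contributing countably many Borel graphs, whose union covers $R$, as required.
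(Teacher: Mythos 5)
Your reduction of the theorem to the uniformization form $(\ast)$ --- flip the graph, uniformize by countably many partial Borel functions, disjointify, totalize via a least-index section, and take each $X_i$ to be the (injective, hence Borel) range of a total section --- is correct and standard; the appeals to Lusin--Souslin for Borelness of the domains and ranges are legitimate and not circular. The paper itself takes the theorem as a black box (citing \cite[18.10]{Kcdst}), so everything rests on your proof of $(\ast)$, and there the argument has a genuine gap --- in fact two.

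First, the sections $R_y$ are countable \emph{Borel} sets, not closed sets, so the Cantor--Bendixson process can stall: a countable dense-in-itself section (e.g.\ a homeomorphic copy of the rationals sitting inside $2^\omega$) has no relatively isolated points at all, so the first derivative removes nothing and the transfinite iteration never terminates, no matter what boundedness argument you supply afterwards. (The derivative argument does work when $R$ is closed with compact fibers, where projections of closed sets are closed --- but that is a much weaker statement.) Second, even where the derivative does act, the pieces you peel off are not obviously Borel: the graph $\{(y,x) : R_y \cap N_s = \{x\}\}$ is a Borel set minus the analytic set $\{(y,x) \in R \cap (Y \times N_s) : \exists x' \neq x\; ((y,x') \in R \cap (Y \times N_s))\}$, so a priori it is only the difference of a Borel set and a $\Sigma^1_1$ set. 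Establishing that such sets are Borel for an arbitrary Borel $R$ with countable sections is essentially equivalent to the theorem being proved, so asserting that ``the map sending $y$ to that unique point is a partial Borel function'' begs the question. The standard proofs circumvent both obstacles by abandoning the ambient topology in favor of effective descriptive set theory (every element of a countable $\Sigma^1_1(y)$ set is $\Delta^1_1(y)$, plus boundedness of Borel codes) or a reflection argument; your closing $\Sigma^1_1$-boundedness remark addresses only the secondary issue of bounding the rank, not these two failures.
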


\begin{corollary}
\label{thm:lusin-novikov-cber}
Let $E \subseteq X^2$ be a CBER on a standard Borel space $X$.
\begin{enumerate}[label=(\alph*)]
\item \label{thm:lusin-novikov-cber:LN}
There are countably many Borel functions $f_i : X -> X$ such that $E = \bigcup_i f_i$.
\item \label{thm:lusin-novikov-cber:enum}
For each $N \le \omega$, $X_N := \{x \in X \mid \abs{[x]_E} = N\}$ is Borel; and there is a Borel family of enumerations $(g_x : \abs{[x]_E} \cong [x]_E)$, meaning that $(x,i) |-> g_x(i) : \bigsqcup_{N \le \omega} (X_N \times N) -> X$ is Borel.
\end{enumerate}
\end{corollary}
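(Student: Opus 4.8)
The plan is to derive both parts directly from the Lusin--Novikov \cref{thm:lusin-novikov}, with part~\cref{thm:lusin-novikov-cber:enum} built on top of the functions produced in part~\cref{thm:lusin-novikov-cber:LN}.

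For \cref{thm:lusin-novikov-cber:LN}, I would apply \cref{thm:lusin-novikov} to the first-coordinate projection $p : E -> X$, $(x,y) |-> x$. Since $E \subseteq X^2$ is Borel, it is itself a standard Borel space; the fiber $p^{-1}(x) = \{x\} \times [x]_E$ is countable because $E$ is a CBER, so $p$ is countable-to-one; and $p$ is onto $X$ by reflexivity of $E$, so $p(E) = X$. Lusin--Novikov then gives $E = \bigcup_i E_i$ with each $E_i \subseteq E$ Borel and $p$ restricting to a bijection $E_i \cong p(E) = X$. Each $E_i$ is therefore the graph of the Borel function $f_i := p_2 \circ (p|_{E_i})^{-1} : X -> X$, where $p_2$ is the second projection and $(p|_{E_i})^{-1}$ is Borel because a Borel bijection between standard Borel spaces has Borel inverse. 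Then $E = \bigcup_i E_i = \bigcup_i f_i$, identifying each $f_i$ with its graph.

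For \cref{thm:lusin-novikov-cber:enum}, I would exploit that the functions $f_i$ from part~\cref{thm:lusin-novikov-cber:LN} enumerate each class, i.e., $[x]_E = \{f_i(x) \mid i \in \#N\}$. The Borelness of $X_N$ follows by writing $\{x \mid \abs{[x]_E} \ge N\}$ as the countable union, over strictly increasing tuples $i_0 < \dotsb < i_{N-1}$, of the Borel sets on which $f_{i_0}(x), \dotsc, f_{i_{N-1}}(x)$ are pairwise distinct; then $X_N = \{x \mid \abs{[x]_E} \ge N\} \setminus \{x \mid \abs{[x]_E} \ge N+1\}$ is Borel for finite $N$, and $X_\omega = \bigcap_N \{x \mid \abs{[x]_E} \ge N\}$ is Borel. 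For the family of enumerations I would use the standard ``first occurrence'' device: call $i$ a first occurrence at $x$ when $f_i(x) \ne f_j(x)$ for all $j < i$, list the first occurrences in increasing order as $i_0 < i_1 < \dotsb$, and set $g_x(k) := f_{i_k}(x)$. By construction the first-occurrence values are exactly the distinct members of $[x]_E$, so $g_x$ is a bijection from $\abs{[x]_E}$ onto $[x]_E$ with the correct domain; and the first-occurrence relation $\{(x,i) \mid \forall j < i\,(f_i(x) \ne f_j(x))\}$ is Borel, whence $(x,k) |-> g_x(k)$ is Borel on $\bigsqcup_{N \le \omega} (X_N \times N)$.

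The computations are all routine; the only points requiring care are the appeal to Borel invertibility of $p|_{E_i}$ to ensure each $f_i$ is genuinely Borel, and the bookkeeping in the first-occurrence construction guaranteeing that $g_x$ has domain exactly $\abs{[x]_E}$ with a uniformly Borel dependence on $x$. I do not expect any serious obstacle beyond this.
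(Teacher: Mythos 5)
Your proof is correct and follows exactly the route the paper indicates: part (a) by applying Lusin--Novikov to the first projection $E \to X$, and part (b) by the standard first-occurrence disjointification of the resulting functions (the paper simply cites \cite[18.15]{Kcdst} for this step). No gaps; the appeal to Borel invertibility of the restricted projection and the Borelness of the first-occurrence relation are exactly the points that need checking, and you handle both.
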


Here \cref{thm:lusin-novikov-cber:LN} follows from Lusin--Novikov applied to the first projection $E -> X$, while \cref{thm:lusin-novikov-cber:enum} is an easy modification; see \cite[18.15]{Kcdst}.
The Lusin--Novikov theorem, especially \cref{thm:lusin-novikov-cber:LN} above, will also play a central role in our analysis of Borel structurability in this paper; see \cref{sec:TLN}.

\subsection{Countable first-order logic $\mathcal{L}_{\omega_1\omega}$}
\label{sec:lo1o}

Let $\@L$ be a first-order language.
The infinitary logic $\@L_{\omega_1\omega}$ is the extension of the usual finitary first-order logic by allowing countable conjunctions $\bigwedge$ and disjunctions $\bigvee$ in formulas (in addition to the usual connectives $->, \neg$, quantifiers $\forall, \exists$, and equality $=$).
See \cite[Ch.~11]{GaoIDST}, \cite{Marker}.

Throughout, when we say ``formula'', ``theory'', etc., we mean in $\@L_{\omega_1\omega}$.
By convention, we require each $\@L_{\omega_1\omega}$ formula $\phi(x_0,\dotsc,x_{n-1})$ to contain only finitely many free variables $x_0,\dotsc,x_{n-1}$, and we identify formulas up to renaming of bound variables.

There is a completeness theorem for $\@L_{\omega_1\omega}$ (for various deductive systems \cite{Karp}, \cite{LE}), which say that an $\@L_{\omega_1\omega}$ sentence has a proof from a \emph{countable} theory iff it is true in every \emph{countable} model.
Thus, we may interchangeably speak of provable or semantic truth.

Unless otherwise specified, by default we assume languages $\@L$, theories $\@T$, and models $\@M$ to be countable.
Note that we might as well then replace theories $\@T$ with single sentences $\bigwedge \@T$; however, we find it more convenient for expositional purposes to allow theories with multiple axioms.

\begin{convention}
\label{cvt:relational}
We will assume that formally, all languages are relational.
In examples, we will often want to consider languages with function symbols; formally, we understand this to mean that each $n$-ary function symbol $f$ is replaced with an $(n+1)$-ary relation symbol $F$, with axioms added to the theory under consideration that say that $F$ is the graph of a function.
Thus for example, a constant (nullary function) $c$ is replaced with a unary relation $C$ subject to the axiom $\exists! x\, C(x)$.
\end{convention}

\begin{notation}
For a language $\@L$, we write $\@L_{\omega_1\omega}$ for the set of all $\@L_{\omega_1\omega}$ formulas.
Occasionally, we will also write $\@L_{\omega_1\omega}^n$ for the set of formulas $\phi(x_0,\dotsc,x_{n-1})$ with $n$ free variables, and similarly $\@L^n$ for the set of $n$-ary relation symbols in $\@L$.
\end{notation}

\subsection{Spaces of countable models}

\begin{definition}
\label{def:mod}
Fix a countable language $\@L$, assumed to be relational as per \cref{cvt:relational}.
For a countable set $Y$, the \defn{space of $\@L$-structures on $Y$} is the product standard Borel space
\begin{equation*}
\Mod_Y(\@L) := \prod_{\substack{n \in \#N \\ R \in \@L^n}} 2^{Y^n} = \set[\big]{\@M = (R^\@M)_{R \in \@L}}{\forall R \in \@L^n\, (R^\@M \subseteq Y^n)}.
\end{equation*}
More generally, for an $\@L_{\omega_1\omega}$ theory $\@T$,
\begin{equation*}
\Mod_Y(\@T) = \Mod_Y(\@L, \@T) := \set[\big]{\@M \in \Mod_Y(\@L)}{\@M |= \@T}.
\end{equation*}
By an easy induction, for each $\@L_{\omega_1\omega}$ formula $\phi(x_0,\dotsc,x_{n-1})$, $\Mod_Y^n(\phi) \subseteq \Mod_Y^n(\@L)$ is Borel.
Thus, for a countable theory $\@T$, $\Mod_Y(\@T) \subseteq \Mod_Y(\@L)$ is Borel.
See \cite[\S16.C]{Kcdst}, \cite[Ch.~11]{GaoIDST}.

Note that the space of models of $\@T$ expanded with $n$ constants is then
\begin{equation*}
\Mod_Y^n(\@T) := \Mod_Y(\@L \sqcup \{x_0, \dotsc, x_{n-1}\}, \@T) \cong \Mod_Y(\@T) \times Y^n = \set{(\@M, \vec{a})}{\@M \in \Mod_Y(\@T),\, \vec{a} \in Y^n}.
\end{equation*}
We call $(\@M, \vec{a}) \in \Mod_Y^n(\@T)$ an \defn{$n$-pointed model}, or a \defn{pointed model} for $n = 1$.
\end{definition}

\begin{definition}
For two sets $Y, Z$, let $\Sym(Y, Z) \subseteq Z^Y$ denote the set of all bijections $g : Y \cong Z$; when $Y = Z$, this is the \defn{symmetric group} $\Sym(Y) := \Sym(Y, Y)$ (often denoted $S_Y$).
For $Y, Z$ countable, $\Sym(Y, Z)$ is a $G_\delta$ subset of $Z^Y$ (with $Z$ discrete); hence $\Sym(Y)$ is a Polish group.

The \defn{logic action} on the space of structures is given by, for any $Y, Z$:%
\footnote{Formally, this is an action of the groupoid of countable sets, with hom-sets $\Sym(Y, Z)$, on the bundle of $\Mod_Y(\@L)$'s.}
\begin{align*}
\Sym(Y, Z) \times \Mod_Y(\@L) &--> \Mod_Z(\@L) \\
(g : Y \cong Z,\, \@M = (R^\@M)_{R \in \@L}) &|--> g \cdot \@M := (g(R^\@M))_{R \in \@L}.
\end{align*}
That is, for a structure $\@M$ on $Y$, $g \cdot \@M$ is the unique structure on $Z$ such that $g : \@M \cong g \cdot \@M$.

Thus, a class $\@K$ of countable structures, on various underlying sets $Y$, is invariant under the logic action (between those $\Mod_Y(\@L)$'s) iff it is closed under isomorphisms.
\end{definition}

Clearly, for any theory $\@T$, $\Mod_Y(\@T) \subseteq \Mod_Y(\@L)$ is closed under isomorphisms.
Conversely:

\begin{theorem}[{Lopez-Escobar \cite{LE}; see also \cite[16.8]{Kcdst}, \cite[11.3.6]{GaoIDST}}]
\label{thm:lopez-escobar}
For any countable set $Y$, a subset $\@K \subseteq \Mod_Y(\@L)$ is axiomatizable by an $\@L_{\omega_1\omega}$ sentence (equivalently, a countable theory) iff it is Borel and closed under isomorphisms.
\end{theorem}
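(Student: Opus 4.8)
The plan is to prove the two directions separately: the forward implication (axiomatizable $\Rightarrow$ Borel and invariant) is routine, while the converse is the substance. For the forward direction, note first that a countable theory $\@T$ is equivalent to the single sentence $\bigwedge \@T$, so it suffices to treat one sentence $\phi$; then $\@K = \Mod_Y(\phi)$ is Borel, as already observed in \cref{def:mod} by induction on the complexity of $\phi$. It is closed under isomorphism because satisfaction of an $\@L_{\omega_1\omega}$ sentence is preserved by isomorphisms: a straightforward induction shows that if $g : \@M \cong \@M'$ then $\@M \models \phi \iff \@M' \models \phi$. Equivalently, $\Mod_Y(\phi)$ is invariant under the logic action of $\Sym(Y)$.

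For the converse, suppose $\@K \subseteq X := \Mod_Y(\@L)$ is Borel and invariant under the logic action of the Polish group $G := \Sym(Y)$, which acts continuously on the (Polish) space $X$ whose topology is generated by the clopen sets $\{\@M : R^\@M(\vec a)\}$ for $R \in \@L^k$ and $\vec a \in Y^k$. For a finite tuple $\vec a$ of distinct elements of $Y$, write $G_{\vec a} \le G$ for its pointwise stabilizer, an open subgroup; these form a neighborhood basis at the identity. The main device is the Vaught transform: for Borel $A \subseteq X$ and open $V \le G$, let $A^{\triangle V}$ (resp.\ $A^{*V}$) be the set of $\@M$ such that $\{g \in V : g \cdot \@M \in A\}$ is non-meager (resp.\ comeager) in $V$. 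I would use the standard facts, due to Vaught, that these transforms send Borel sets to Borel sets, that $\triangle$ commutes with countable unions and $*$ with countable intersections, and that they are interchanged by complementation, $(X \setminus A)^{\triangle V} = X \setminus A^{*V}$.

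The core is an inductive lemma: to every Borel $A \subseteq X$ and every $n$, assign $\@L_{\omega_1\omega}$ formulas $\phi_A^\triangle(\vec x)$ and $\phi_A^*(\vec x)$ in $n$ free variables so that, for every $\@M$ and every injective $\vec a \in Y^n$, $\@M \models \phi_A^\triangle(\vec a) \iff \@M \in A^{\triangle G_{\vec a}}$, and likewise for $*$. I would run the induction on the way $A$ is built from basic clopen sets. The base case is immediate: a basic clopen set referring only to the coordinates of $\vec a$ is $G_{\vec a}$-invariant and defined by a quantifier-free formula. Negation swaps $\triangle$ and $*$ via the duality above, a countable union of $\triangle$-transforms is handled by $\bigvee$, and dually a countable intersection of $*$-transforms by $\bigwedge$. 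The crux --- and the step I expect to be the main obstacle --- is where the quantifiers are introduced: to relate $A^{\triangle G_{\vec a}}$ to the transforms $A^{\triangle G_{\vec a b}}$ over stabilizers of one-longer tuples, one uses the Baire-category characterization that a Borel subset of $G_{\vec a}$ is non-meager exactly when it is relatively comeager in some basic open coset. Since the cosets of $G_{\vec a b}$ inside $G_{\vec a}$ are indexed by the possible images of the new point, this expresses $\@M \in A^{\triangle G_{\vec a}}$ as the existence of a witnessing point, which becomes an honest quantifier $\exists x_n$ together with a disjunction over $Y$; iterating lets the finitely many free variables catch up to the countably many points that a general Borel $A$ may reference. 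Checking that this maneuver preserves the defining property across the induction is the technical heart.

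Finally, apply the lemma to the invariant set $A = \@K$ with $n = 0$, so $G_{\vec a} = G$: since $\@K$ is $G$-invariant, $\{g \in G : g \cdot \@M \in \@K\}$ equals $G$ or $\emptyset$ according as $\@M \in \@K$ or not, whence $\@K^{\triangle G} = \@K$. The sentence $\phi_{\@K}^\triangle$ then satisfies $\@M \models \phi_{\@K}^\triangle \iff \@M \in \@K$, i.e.\ $\@K = \Mod_Y(\phi_{\@K}^\triangle)$, completing the proof.
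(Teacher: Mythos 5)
The paper does not prove this theorem itself: it is quoted as a known result, with the proof deferred to the cited references \cite[16.8]{Kcdst} and \cite[11.3.6]{GaoIDST}. Your argument is precisely the standard Vaught-transform proof given there --- a simultaneous induction assigning formulas to the $\triangle$- and $*$-transforms of Borel sets over point-stabilizers $G_{\vec a}$, with the Baire-category coset decomposition of $G_{\vec a}$ into cosets of $G_{\vec a b}$ supplying the quantifiers, and the final application $\@K^{\triangle G}=\@K$ for invariant $\@K$ --- and the outline is correct.
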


A special case is \defn{Scott's isomorphism theorem}: the isomorphism class of a single countable structure, being Borel (as an orbit of a Polish group action), is axiomatized by a sentence.

We also note the following easy generalized formulations of Lopez-Escobar's theorem:

\begin{corollary}
For any countable set $Y$ and $n \in \#N$, a subset $\@K \subseteq \Mod_Y^n(\@L)$ is axiomatizable by a formula $\phi(x_0,\dotsc,x_{n-1})$ iff it is Borel and closed under isomorphisms (of $n$-pointed models).
\end{corollary}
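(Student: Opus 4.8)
The plan is to deduce this from the unpointed Lopez-Escobar theorem (\cref{thm:lopez-escobar}) by absorbing the $n$ marked points into the language as constants. Recall from \cref{cvt:relational} that a constant $c$ is formally a unary relation $C$ subject to the axiom $\exists!\, x\, C(x)$; accordingly, set $\@L' := \@L \sqcup \{c_0, \dotsc, c_{n-1}\}$, so that an $n$-pointed $\@L$-structure $(\@M, \vec{a})$ on $Y$ is the same thing as an $\@L'$-structure on $Y$ in which each $c_i$ is interpreted as $a_i$. Concretely, the Borel isomorphism $\Mod_Y^n(\@L) \cong \Mod_Y(\@L) \times Y^n$ of \cref{def:mod} identifies $\Mod_Y^n(\@L)$ with the ``genuine-constant'' locus $\Mod_Y(\@L', \chi) \subseteq \Mod_Y(\@L')$, where $\chi := \bigwedge_{i < n} \exists!\, x\, C_i(x)$; and under this identification an isomorphism of $n$-pointed models is precisely an isomorphism of the corresponding $\@L'$-structures.

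For the nontrivial direction, suppose $\@K \subseteq \Mod_Y^n(\@L)$ is Borel and closed under isomorphisms of $n$-pointed models. Viewing $\@K$ inside $\Mod_Y(\@L')$ via the above identification, it is Borel (a Borel subset of the Borel subspace $\Mod_Y(\chi)$) and closed under isomorphisms of $\@L'$-structures, being invariant within the isomorphism-invariant set $\Mod_Y(\chi)$ and empty outside it. Applying \cref{thm:lopez-escobar} in the language $\@L'$ then yields an $\@L'_{\omega_1\omega}$ sentence $\sigma$ with $\@K = \Mod_Y(\sigma)$; replacing $\sigma$ by $\sigma \wedge \chi$, I may assume every model of $\sigma$ interprets the $c_i$ as genuine constants. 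It remains to convert $\sigma$ into an $\@L$-formula $\phi(x_0, \dotsc, x_{n-1})$. The idea is the syntactic substitution replacing each atomic subformula $C_i(t)$ by the equality $t = x_i$ (after renaming bound variables to avoid the new free variables $x_i$); since this commutes with $\neg$, the countable connectives $\bigwedge, \bigvee$, and the quantifiers $\forall, \exists$, it produces a well-formed $\@L_{\omega_1\omega}$ formula $\phi$ with free variables among $x_0, \dotsc, x_{n-1}$. A routine induction on subformulas shows that for every $\@M \in \Mod_Y(\@L)$ and $\vec{a} \in Y^n$, the $\@L'$-structure $(\@M, \vec{a})$ satisfies $\sigma$ iff $\@M \models \phi(\vec{a})$, the only nonvacuous base case being $C_i(t)$, where membership $t \in C_i^{(\@M, \vec{a})} = \{a_i\}$ is equivalent to $t = a_i$. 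Hence $\@K = \Mod_Y^n(\phi)$, as desired.

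The converse direction is immediate and would be dispatched first: $\Mod_Y^n(\phi)$ is Borel by \cref{def:mod}, and satisfaction of a formula is preserved under isomorphisms (sending $\vec{a} \mapsto g(\vec{a})$), so $\Mod_Y^n(\phi)$ is closed under isomorphisms of $n$-pointed models. Overall the argument is bookkeeping layered on top of the unpointed theorem, and I do not anticipate a genuine obstacle; the only point requiring care is the interaction with the relational convention of \cref{cvt:relational}, namely verifying that --- modulo the uniqueness axiom $\chi$ --- the relation symbol $C_i$ encoding the $i$-th constant may be faithfully traded for the free variable $x_i$. This is the one place where I would write out the inductive verification in full rather than leaving it to the reader.
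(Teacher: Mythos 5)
Your proof is correct and is essentially the argument the paper intends: the paper gives no explicit proof of this corollary, but its very definition of $\Mod_Y^n(\@T)$ as $\Mod_Y(\@L \sqcup \{x_0,\dotsc,x_{n-1}\},\@T)$ already encodes the reduction to the unpointed Lopez--Escobar theorem via adding $n$ constants, which is exactly your strategy. The only extra content in your write-up is the careful syntactic trade of the constant-encoding relations $C_i$ for free variables $x_i$, which is sound bookkeeping forced by the relational convention.
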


\begin{corollary}
\label{thm:lopez-escobar-global}
A class $\@K$ of countable models on \emph{all} countable underlying sets $Y$ is axiomatizable by a sentence (or countable theory) iff it is closed under isomorphisms and $\@K \cap \Mod_Y(\@L) \subseteq \Mod_Y(\@L)$ is Borel for each countable set $Y$.
Similarly for a class $\@K$ of $n$-pointed models.
\end{corollary}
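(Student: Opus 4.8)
The plan is to reduce the ``global'' statement---about a class $\@K$ of models on \emph{all} countable underlying sets simultaneously---to the single-set Lopez-Escobar \cref{thm:lopez-escobar}, which is already in hand. The forward direction is immediate: if $\@K$ is axiomatized by a sentence $\sigma$, then $\@K$ is visibly closed under isomorphisms (truth of a sentence is isomorphism-invariant), and for each fixed countable $Y$ we have $\@K \cap \Mod_Y(\@L) = \Mod_Y(\sigma)$, which is Borel by the remarks in \cref{def:mod}. So the content is entirely in the converse.

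For the converse, suppose $\@K$ is closed under isomorphisms and $\@K \cap \Mod_Y(\@L)$ is Borel for every countable $Y$. First I would reduce to a single canonical underlying set. Every countable set $Y$ is either finite (in bijection with some $n \in \#N$) or in bijection with $\#N$; fix once and for all a representative set from each isomorphism type, say $\{0,\dotsc,n-1\}$ for each $n$ and $\#N$ itself. Because $\@K$ is closed under isomorphism, it is determined by its intersections with $\Mod_Y(\@L)$ as $Y$ ranges over these countably many representatives: a structure on an arbitrary $Y$ lies in $\@K$ iff its transport along any bijection to the chosen representative of the same cardinality does. By hypothesis each such intersection $\@K_Y := \@K \cap \Mod_Y(\@L)$ is Borel and, being an intersection of the isomorphism-closed $\@K$ with $\Mod_Y(\@L)$, is closed under the logic action of $\Sym(Y)$ on $\Mod_Y(\@L)$. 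Thus \cref{thm:lopez-escobar} applies to each representative $Y$ and yields a sentence $\sigma_Y$ with $\Mod_Y(\sigma_Y) = \@K_Y$.

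The remaining step is to assemble these countably many sentences into a single sentence (or countable theory) axiomatizing $\@K$ uniformly across all cardinalities. I would use sentences $\tau_N$, for $N \le \omega$, expressing ``the universe has exactly $N$ elements'' (these are standard $\@L_{\omega_1\omega}$ sentences: for finite $N$ one writes the usual $\exists$-$\forall$ sentence, and $\tau_\omega$ is the conjunction of the negations of all the finite ones together with a sentence asserting infinitude). Then $\sigma := \bigvee_{N \le \omega} (\tau_N \wedge \sigma_N)$ is a single $\@L_{\omega_1\omega}$ sentence, where $\sigma_N$ is the sentence obtained above for the representative of cardinality $N$; its models on any $Y$ are exactly the structures whose cardinality $N$ makes $\sigma_N$ true, which by construction and isomorphism-invariance are precisely the members of $\@K$. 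The only point requiring a little care---and the main (if mild) obstacle---is to confirm that a sentence built for one representative underlying set correctly picks out $\@K$ on \emph{every} set of that cardinality; this is exactly where isomorphism-closure of $\@K$ is used, transporting membership along an arbitrary bijection to the representative. The $n$-pointed case is identical after replacing $\@L$ by $\@L \sqcup \{x_0,\dotsc,x_{n-1}\}$ and $\Mod_Y$ by $\Mod_Y^n$, appealing to the preceding corollary in place of \cref{thm:lopez-escobar}.
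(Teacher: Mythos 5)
Your proof is correct and matches the paper's argument exactly: apply the single-set Lopez-Escobar theorem to one representative set of each countable cardinality $N \le \omega$ to obtain sentences $\sigma_N$, then combine them into $\bigvee_{N \le \omega}(\text{``there are exactly $N$ elements''} \wedge \sigma_N)$, using isomorphism-closure to transport membership to arbitrary underlying sets. No differences worth noting.
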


This follows from applying \cref{thm:lopez-escobar} to $Y = 0, 1, 2, \dotsc, \#N$ to get sentences $\phi_Y$, and then combining them into a sentence $\bigvee_{Y \le \omega} (\text{``there are exactly $Y$ many elements''} \wedge \phi_Y)$.

\subsection{Spaces of $\mathcal{L}_{\omega_1\omega}$ types}

\begin{definition}
\label{def:types}
Let $(\@L,\@T)$ be a countable theory, and let $n \in \#N$.
For $\@M |= \@T$ and $\vec{a} \in M^n$, the \defn{($\@L_{\omega_1\omega}$) type of $\vec{a}$} will mean the complete $\@L_{\omega_1\omega}$ theory of the $n$-pointed structure%
\footnote{We will never consider finitary first-order (i.e., $\@L_{\omega\omega}$) types in this paper.}
\begin{align*}
\tp(\@M, \vec{a}) := \set*{\phi \in \@L_{\omega_1\omega}^n}{\phi^\@M(\vec{a})}.
\end{align*}
The \defn{Borel space of ($\@L_{\omega_1\omega}$) $n$-types} $\@S_n(\@T)$ is the set of all $\tp(\@M, \vec{a})$ over all $n$-tuples $\vec{a}$ in all countable models $\@M$ of $\@T$,
equipped with the Borel $\sigma$-algebra
\begin{align*}
\@B(\@S_n(\@T)) := \set*{\sqsqbr\phi}{\phi \in \@L_{\omega_1\omega}^n},
    \quad \text{where } \sqsqbr\phi := \set*{p \in \@S_n(\@T)}{\phi \in p}.
\end{align*}
Note that by the completeness theorem, we thus have an isomorphism of $\sigma$-algebras
\begin{align*}
\sqsqbr{-} : \@L_{\omega_1\omega}^n/\@T := \{\text{formulas $\phi(x_0,\dotsc,x_{n-1})$ mod $\@T$-provable equivalence}\} \cong \@B(\@S_n(\@T));
\end{align*}
an alternate definition of $\@S_n(\@T)$ is the space of principal $\sigma$-ultrafilters in $\@L_{\omega_1\omega}^n/\@T$.%
\footnote{In this paper, we only consider principal types realized in countable models, since they are all we will need.
In general $\@L_{\omega_1\omega}$ theories, there is a more general notion of type, namely an arbitrary $\sigma$-ultrafilter of formulas, which is not necessarily realizable in any model due to the failure of the completeness theorem for uncountable theories.}
\end{definition}


\begin{corollary}[of Lopez-Escobar]
\label{thm:mod-types}
The maps $\tp$ induce an isomorphism of Borel spaces
\begin{align*}
\tp : (\bigsqcup_Y \Mod_Y^n(\@T))/({\cong}) &\cong \@S_n(\@T).
\end{align*}
\end{corollary}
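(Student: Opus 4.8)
The plan is to check that the map $\tp$ is a well-defined bijection of sets carrying the Borel $\sigma$-algebra on the quotient $(\bigsqcup_Y \Mod_Y^n(\@T))/({\cong})$ isomorphically onto $\@B(\@S_n(\@T))$; since both sides are quotients of standard Borel spaces by orbit equivalence relations of Polish group actions, it suffices by \cref{thm:polgrpact} to produce a Borel reduction in one direction and verify it descends to a bijection, the Borel structure then coming for free. First I would note that by definition $\@S_n(\@T)$ is the set of \emph{all} types $\tp(\@M,\vec a)$ realized in countable models, so $\tp$ is surjective by construction. For injectivity on the quotient, the key point is that two $n$-pointed countable models realize the same complete $\@L_{\omega_1\omega}$ type iff they are isomorphic: this is exactly Scott's isomorphism theorem (noted after \cref{thm:lopez-escobar}), which says the isomorphism class of a countable structure is pinned down by a single $\@L_{\omega_1\omega}$ sentence, hence by its complete type. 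Thus $\tp(\@M,\vec a) = \tp(\@N,\vec b)$ forces $(\@M,\vec a) \cong (\@N,\vec b)$, and conversely isomorphic pointed models plainly satisfy the same formulas, so $\tp$ descends to a well-defined bijection on $\cong$-classes.

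Next I would address Borel measurability in both directions. The forward direction amounts to showing that for each formula $\phi \in \@L_{\omega_1\omega}^n$, the preimage $\tp^{-1}(\sqsqbr\phi)$ is Borel in the quotient, i.e.\ that its lift $\set{(\@M,\vec a)}{\phi^\@M(\vec a)}$ is a Borel $\cong$-invariant subset of $\bigsqcup_Y \Mod_Y^n(\@T)$. Invariance is immediate since $\phi$ is isomorphism-invariant, and Borelness is exactly the remark in \cref{def:mod} that $\Mod_Y^n(\phi)$ is Borel, assembled over the countably many underlying sets $Y \le \omega$ as in \cref{thm:lopez-escobar-global}. For the converse, I must check that every Borel $\cong$-invariant subset of the quotient arises as some $\tp^{-1}(\sqsqbr\phi)$; but this is precisely the content of the global Lopez-Escobar theorem (\cref{thm:lopez-escobar-global}), which says a Borel isomorphism-invariant class of $n$-pointed models is axiomatizable by a formula $\phi(x_0,\dotsc,x_{n-1})$. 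Hence the Borel sets of the quotient are exactly the $\tp^{-1}(\sqsqbr\phi)$, matching the definition $\@B(\@S_n(\@T)) = \set{\sqsqbr\phi}{\phi \in \@L_{\omega_1\omega}^n}$, so $\tp$ is a Borel isomorphism of $\sigma$-algebras.

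The main obstacle, such as it is, lies in justifying that the two sides really are quotients of standard Borel spaces by Polish group actions so that \cref{thm:polgrpact} applies, and in being careful about the disjoint union over all countable $Y$. The source $\bigsqcup_Y \Mod_Y^n(\@T)$ is a countable disjoint union (over $Y = 0,1,2,\dotsc,\#N$) of standard Borel spaces, each carrying the logic action of the Polish group $\Sym(Y)$, and $\cong$ is the induced orbit equivalence relation; I would remark that it is harmless to restrict to these canonical representatives of each cardinality since every countable set is bijective with one of them, and a bijection transports structures and types identically. With this bookkeeping in place, the argument is essentially a repackaging of Lopez-Escobar and Scott: Scott gives injectivity of $\tp$ on $\cong$-classes, the definition of $\@S_n(\@T)$ gives surjectivity, and the two directions of Lopez-Escobar (forward via \cref{def:mod}, backward via \cref{thm:lopez-escobar-global}) identify the Borel structures. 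I do not expect any genuinely hard step; the work is entirely in matching up definitions and invoking the cited theorems.
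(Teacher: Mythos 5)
Your argument is correct and matches the route the paper intends (it states this as an immediate corollary without writing out a proof): surjectivity from the definition of $\@S_n(\@T)$, injectivity on $\cong$-classes from Scott's isomorphism theorem in its pointed form, and the identification of $\sigma$-algebras from the two directions of Lopez-Escobar, with the disjoint union handled by restricting to representatives $Y \le \omega$. The opening appeal to \cref{thm:polgrpact} is unnecessary (and slightly circular, since $\@S_n(\@T)$ is only exhibited as a Polish-group-action quotient \emph{via} this corollary), but your subsequent direct verification that the Borel sets of the quotient are exactly the $\tp^{-1}(\sqsqbr{\phi})$ makes the argument self-contained.
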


Here, the disjoint union may be taken over any representative family of sets $Y$ of each countable cardinality, e.g., the ordinals $Y \le \omega$.
The left-hand side is then the disjoint union of the quotient Borel spaces $\Mod_Y^n(\@T)/\Sym(Y)$, for $Y \le \omega$, each of which is a quotient of a Polish group action.

Recall (\cref{sec:dst}) that such a quotient is standard Borel iff the $\sigma$-algebra of invariant Borel sets in $\Mod_Y^n(\@T)$ is countably generated.
By Lopez-Escobar, this means that there are countably many formulas $\phi(x_0,\dotsc,x_{n-1})$ separating all non-isomorphic $n$-pointed models.

\begin{definition}
For a set $\@F \subseteq \@L_{\omega_1\omega}$ of formulas, an $n$-pointed model $(\@M, \vec{a})$ is \defn{$\@F$-categorical} if it is the only countable model of $\tp(\@M, \vec{a}) \cap \@F$, up to isomorphism.
\end{definition}

\begin{corollary}[{of Lopez-Escobar; see \cite[11.5.8]{GaoIDST}}]
\label{thm:mod-smooth-cat}
For any countable $\@L_{\omega_1\omega}$ theory $\@T$ and $n \in \#N$, the following are equivalent:
\begin{enumerate}[label=(\roman*)]
\item  The space of $n$-types $\@S_n(\@T)$ is standard Borel.
\item  Isomorphism of $n$-pointed models of $\@T$ is a smooth equivalence relation.
\item  There are countably many formulas $\@F \subseteq \@L_{\omega_1\omega}^n$ such that every $n$-pointed model is $\@F$-categorical.
\end{enumerate}
\end{corollary}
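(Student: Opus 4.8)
The plan is to read off all three conditions from the single identification in \cref{thm:mod-types} of $\@S_n(\@T)$ with a quotient of Polish group actions, combined with the descriptive-set-theoretic facts recalled in \cref{sec:dst} and the definition of the Borel structure on the type space.

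For the equivalence of \textup{(i)} and \textup{(ii)}: by \cref{thm:mod-types}, $\tp$ is a Borel isomorphism $(\bigsqcup_{Y \le \omega} \Mod_Y^n(\@T))/({\cong}) \cong \@S_n(\@T)$, so condition \textup{(ii)} (smoothness of $\cong$) concerns the orbit equivalence relation on the standard Borel space $\bigsqcup_{Y \le \omega} \Mod_Y^n(\@T)$, whose restriction to each summand $\Mod_Y^n(\@T)$ is the orbit equivalence relation of the logic action of the Polish group $\Sym(Y)$. I would invoke the fact recalled after \cref{thm:polgrpact} that for an orbit equivalence relation of a Polish group action the quotient is standard Borel iff the relation is smooth, apply it summand by summand, and use that a countable disjoint union of standard Borel (resp.\ smooth) pieces is again standard Borel (resp.\ smooth). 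This gives \textup{(i)} $\iff$ \textup{(ii)}; note that standard Borel always implies smooth (countably separated), so the Polish-group hypothesis is only needed for the converse.

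For the equivalence of \textup{(ii)} and \textup{(iii)}: the key point is that, by the definition of $\@B(\@S_n(\@T))$ and the closure of $\@L_{\omega_1\omega}^n$ under countable Boolean operations, \emph{every} Borel subset of $\@S_n(\@T)$ has the form $\sqsqbr\phi$. Hence \textup{(ii)}, which unwinds to ``$\@S_n(\@T)$ has countably many Borel sets separating points'', is equivalent to the existence of a countable $\@F \subseteq \@L_{\omega_1\omega}^n$ with the sets $\{\sqsqbr\phi \mid \phi \in \@F\}$ separating points. It then remains to match ``$\@F$-formulas separate points'' with ``every pointed model is $\@F$-categorical'', using that points of $\@S_n(\@T)$ are complete types $p = \tp(\@M,\vec a)$ with $p = q$ iff the realizing pointed models are isomorphic (again \cref{thm:mod-types}). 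If the $\@F$-formulas separate points, then --- after closing $\@F$ under negation, which keeps it countable and preserves separation --- two pointed models with the same $\@F$-type are isomorphic, i.e.\ each is $\@F$-categorical; conversely, if every pointed model is $\@F$-categorical and $p \ne q$, the non-isomorphic realizers must disagree on some $\phi \in \tp(\@M,\vec a) \cap \@F$, so $\sqsqbr\phi$ separates $p$ from $q$.

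The main obstacle --- indeed the only step beyond bookkeeping --- is this last translation between ``countably many formulas separate isomorphism types'' and ``$\@F$-categoricity'': one must be careful to distinguish a pointed model \emph{satisfying every formula in} $\tp(\@M,\vec a) \cap \@F$ from having the \emph{same} $\@F$-type, and it is the closure of $\@F$ under negation together with completeness of types that reconciles the two. Everything else is a direct assembly of \cref{thm:mod-types}, the definition of the type-space Borel structure, and the smooth-versus-standard-Borel dichotomy for Polish group actions.
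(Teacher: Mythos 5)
Your proof is correct and follows essentially the same route the paper takes (the paper gives no explicit proof, but the two remarks immediately preceding the corollary — that a Polish-group-action quotient is standard Borel iff the invariant Borel $\sigma$-algebra is countably generated, and that Lopez-Escobar identifies invariant Borel sets with formulas — are exactly your argument, combined with \cref{thm:mod-types}). Your added care in closing $\@F$ under negation to pass between ``$\@F$ separates isomorphism types'' and ``every $n$-pointed model is $\@F$-categorical'' is a genuine point that the paper's terse statement glosses over, and it is handled correctly.
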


\begin{theorem}[Becker--Kechris]
\label{thm:types-borel}
For a countable theory $\@T$, isomorphism ${\cong} \subseteq \Mod_Y^n(\@T)^2$ of $n$-pointed models is a Borel equivalence relation iff isomorphism ${\cong} \subseteq \Mod_Y^{n+1}(\@T)^2$ of $(n+1)$-pointed models is.
\end{theorem}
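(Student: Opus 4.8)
The plan is to prove the two implications separately, first reducing to the case $Y = \#N$: when $Y$ is finite, $\Sym(Y)$ is finite and every orbit equivalence relation is clopen, so both conditions hold trivially. Write $E_m$ for the isomorphism relation on $\Mod_Y^m(\@T)$, i.e.\ the orbit equivalence relation of the logic action of $G := \Sym(\#N)$ on $\Mod_Y^m(\@T) \cong \Mod_Y(\@T) \times \#N^m$, where $g \cdot (\@M, \vec a, c) = (g \cdot \@M, g(\vec a), g(c))$. The easy direction is $E_{n+1}$ Borel $\Rightarrow E_n$ Borel: forgetting the last coordinate gives $(\@M, \vec a) \mathrel{E_n} (\@M', \vec b) \iff \exists c \in \#N\, \exists d \in \#N\, [(\@M, \vec a, c) \mathrel{E_{n+1}} (\@M', \vec b, d)]$, where the forward direction sends $c$ to $d := g(c)$ along any witnessing isomorphism $g$ and the backward direction forgets $c,d$. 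For each fixed $(c,d)$, appending the constants is a Borel map $\Mod_Y^n(\@T)^2 \to \Mod_Y^{n+1}(\@T)^2$, so the right-hand side is a countable union of Borel preimages of $E_{n+1}$, hence Borel.

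For the hard direction, $E_n$ Borel $\Rightarrow E_{n+1}$ Borel, I would invoke the back-and-forth analysis of Borel-ness for $\Sym(\#N)$-orbit equivalence relations. For each arity $m$ and countable ordinal $\alpha$, let $\equiv^m_\alpha$ be the $\alpha$-th back-and-forth relation on $\Mod_Y^m(\@T)$: $\equiv^m_0$ identifies $m$-pointed models with the same atomic type, $\equiv^m_{\alpha+1}$ is defined from $\equiv^{m+1}_\alpha$ by the usual one-step condition (``for every $c$ there is $d$ with $(\@M,\vec a, c) \equiv^{m+1}_\alpha (\@M', \vec b, d)$, and vice versa''), and $\equiv^m_\lambda = \bigcap_{\alpha<\lambda}\equiv^m_\alpha$ at limits. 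By transfinite induction on $\alpha < \omega_1$ (defining all arities at each stage), every $\equiv^m_\alpha$ is Borel, since the successor clause is a $\forall_c \exists_d$ over the countable set $\#N$ and limits are countable intersections. Moreover $E_m = \bigcap_{\alpha<\omega_1} \equiv^m_\alpha$: any isomorphism witnesses $\equiv^m_\alpha$ for all $\alpha$, and conversely two countable models that are $\equiv^m_\alpha$-equivalent for all $\alpha < \omega_1$ are isomorphic by Scott's back-and-forth theorem. The key input is the characterization that $E_m$ is Borel iff this decreasing sequence stabilizes at a countable stage, i.e.\ $E_m = \equiv^m_\alpha$ for some $\alpha < \omega_1$ --- equivalently, the Scott ranks of the $m$-pointed models of $\@T$ are bounded below $\omega_1$. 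Granting this, suppose $E_n = \equiv^n_\alpha$. A routine Scott-rank computation shows that naming one additional constant raises the rank by at most a fixed countable amount, because the $\mathrm{Aut}(\@M,\vec a, c)$-orbits on tuples correspond to $\mathrm{Aut}(\@M,\vec a)$-orbits on tuples with one coordinate fixed to $c$, and enlarging a tuple by one coordinate raises the back-and-forth rank needed to separate orbits boundedly. Hence the Scott ranks of the $(n{+}1)$-pointed models are bounded by some $\alpha' < \omega_1$ (say $\alpha' = \alpha + \omega$), so $E_{n+1} = \equiv^{n+1}_{\alpha'}$ is Borel.

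I expect the main obstacle to be the two boundedness facts that carry the genuine content. First is the ``Borel $\Rightarrow$ stabilizes'' implication: from $E_m$ Borel (so $\neg E_m = \bigcup_\alpha \neg\equiv^m_\alpha$ is $\mathbf\Sigma^1_1$, exhausted by an increasing $\omega_1$-sequence of Borel sets) one must conclude the exhaustion halts at a countable stage, which is exactly where a $\mathbf\Sigma^1_1$-boundedness argument (the Scott-rank assignment being a $\mathbf\Pi^1_1$-rank on $\neg E_m$) is needed; this is the heart of the Becker--Kechris theorem and is the step I would cite rather than reprove. Second is pinning down the precise shift between the back-and-forth levels (or Scott ranks) of $n$- and $(n{+}1)$-pointed models; the easy direction and the Borel-ness of each $\equiv^m_\alpha$ are by contrast purely formal.
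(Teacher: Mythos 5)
Your proposal is correct and follows the same basic decomposition as the paper: the direction ``$(n{+}1)$-pointed Borel $\Rightarrow$ $n$-pointed Borel'' is elementary, and the converse carries the content. For the elementary direction the paper uses a slightly different (equally easy) device when $n \ge 1$ --- the diagonal embedding $(\@M,\vec a) \mapsto (\@M,\vec a,a_{n-1})$ exhibits $\cong$ on $\Mod_Y^n(\@T)$ as a Borel preimage of $\cong$ on $\Mod_Y^{n+1}(\@T)$ --- reserving your countable-union-over-constants argument for $n=0$; your version handles all $n$ uniformly and is fine (minor quibble: the orbit equivalence relation of a finite group acting by homeomorphisms on $\Mod_Y(\@L)$ is closed, not clopen, but Borel is all you need). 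For the hard direction the paper simply cites \cite[7.1.4]{BK} (Borelness of orbit equivalence relations passes to closed subgroups, applied to a point stabilizer in $\Sym(Y)$), whereas you reconstruct the $S_\infty$-specific argument behind such a result: Borelness of $\cong$ on an invariant Borel class is equivalent to a countable bound on Scott ranks, and naming one additional constant does not increase Scott rank, since $\Aut(\@M,\vec a,c)$-orbits of tuples are exactly the $\Aut(\@M,\vec a)$-orbits of tuples whose first coordinate is $c$ (so in fact no ``$+\,\omega$'' shift is even needed). Both routes bottom out in the same $\Sigma^1_1$-boundedness argument, which you correctly identify as the step to cite rather than reprove. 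One point of care: the transfer step requires ``Scott ranks of all $n$-pointed models of $\@T$ are bounded,'' a statement about tuples of every length inside those models, not merely the cross-model identity $E_n=\equiv^n_\alpha$ for $n$-tuples; the two are equivalent, but only via the full boundedness theorem, so you should quote that theorem in its Scott-rank form rather than try to derive the needed bound from $E_n=\equiv^n_\alpha$ alone.
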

\begin{proof}
The forward direction is \cite[7.1.4]{BK}.
For the converse, when $n \ge 1$, the logic action on $\Mod_Y(\@T) \times Y^n$ diagonally embeds into the logic action on $\Mod_Y(\@T) \times Y^{n+1}$ (by duplicating the last coordinate, say).
When $n = 0$,
use instead that
\begin{equation*}
\hspace{-3ex}
\@M \cong \@N \iff \exists a \in M,\, b \in N\, ((\@M,a) \cong (\@N,b))
                    \OR (M = N = \emptyset \AND \forall P \in \@L^0\, (P^\@M \iff P^\@N)).
\hspace{-2ex}
\qedhere
\end{equation*}
\end{proof}

\begin{corollary}
\label{thm:types-smooth}
If $\@S_n(\@T)$ is standard Borel, then so is $\@S_{n+1}(\@T)$.
\end{corollary}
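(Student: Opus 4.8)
The plan is to deduce this from \cref{thm:mod-smooth-cat}, by showing that smoothness of isomorphism is preserved when one constant is added to the language. Fix a countably infinite set $Y$. By \cref{thm:mod-smooth-cat}, the hypothesis that $\@S_n(\@T)$ is standard Borel says exactly that isomorphism $\cong$ of $n$-pointed models on $Y$ is smooth, and it suffices to prove that isomorphism $\cong$ of $(n+1)$-pointed models on $Y$ is smooth. Consider the $\Sym(Y)$-equivariant forgetful map $\pi : \Mod_Y^{n+1}(\@T) -> \Mod_Y^n(\@T)$, which has countable fibers (each $\cong Y$). Composing with $\tp : \Mod_Y^n(\@T) -> \@S_n(\@T)$ into the \emph{standard Borel} space $\@S_n(\@T)$ gives a Borel map whose kernel $\approx$ (``isomorphic after forgetting the last point'') is therefore a smooth equivalence relation on $\Mod_Y^{n+1}(\@T)$. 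Clearly ${\cong} \subseteq {\approx}$, and $\cong$ has \emph{countable index} in $\approx$: within a single $\approx$-class, i.e.\ among all $(n+1)$-pointed models with a fixed isomorphism type $(\@M_0,\vec a_0)$ of $n$-reduct, the $\cong$-classes are precisely the orbits of the countable set $Y$ under $\operatorname{Aut}(\@M_0,\vec a_0)$, of which there are only countably many.

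Thus the problem reduces to the following general fact, which I expect to be the main obstacle: \emph{a Borel equivalence relation $F$ of countable index inside a smooth equivalence relation $E$ (meaning $F \subseteq E$ with only countably many $F$-classes in each $E$-class) is itself smooth.} Here $F = {\cong}$ is Borel by the Becker--Kechris \cref{thm:types-borel} (the hypothesis makes $\cong$ of $n$-pointed models smooth, hence Borel), and $E = {\approx}$ is smooth as above. I would prove this fact via the Glimm--Effros dichotomy \cite{BK}: if $F$ were not smooth, there would be a continuous embedding $\theta : 2^\#N `-> \Mod_Y^{n+1}(\@T)$ of $\#E_0$ into $F$, i.e.\ with $x \mathrel{\#E_0} y <-> \theta(x) \mathrel{F} \theta(y)$. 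Let $g := \tp \circ \pi \circ \theta : 2^\#N -> \@S_n(\@T)$ be the Borel map obtained by following $\theta$ with the smooth invariant of $\approx$, so that $\ker g = \theta^{-1}(\approx)$. Since $F \subseteq E = \approx$, we get $\#E_0 \subseteq \ker g$; and since each $\approx$-class contains only countably many $F$-classes, each fiber $g^{-1}(s)$ contains only countably many $\#E_0$-classes (which inject into the $F$-classes in the corresponding $\approx$-class), hence is countable. Thus $g$ is countable-to-one.

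Applying the Lusin--Novikov \cref{thm:lusin-novikov} to $g$, we may write $2^\#N = \bigsqcup_i Z_i$ with each $g|_{Z_i}$ injective. Because $\#E_0 \subseteq \ker g$, injectivity of $g$ on $Z_i$ forces each $Z_i$ to meet every $\#E_0$-class at most once; that is, the $Z_i$ are countably many Borel partial transversals covering $2^\#N$. But then $T := \bigcup_i \bigl(Z_i \setminus [\bigcup_{j<i} Z_j]_{\#E_0}\bigr)$ is a Borel transversal of $\#E_0$ (using that $\#E_0$-saturations of Borel sets are Borel, as $\#E_0$ is a CBER, by \cref{thm:lusin-novikov-cber}), contradicting the non-smoothness of $\#E_0$. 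Hence $F = {\cong}$ is smooth, and by \cref{thm:mod-smooth-cat} again, $\@S_{n+1}(\@T)$ is standard Borel. The content of the argument lies entirely in this countable-index lemma, i.e.\ in upgrading ``Borel'' (furnished by Becker--Kechris) to ``smooth'': a Borel subrelation of a smooth relation need not be smooth without the index hypothesis—witness $\#E_0 \subseteq X^2$—and it is precisely the interaction of countable index with Lusin--Novikov that excludes a copy of $\#E_0$ and forces smoothness.
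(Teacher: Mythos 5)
Your argument is correct, but it takes a genuinely different route from the paper's. The paper's proof is a two-line direct construction: since isomorphism of $n$-pointed models is a smooth orbit equivalence relation of a Polish group action, it admits a Borel transversal $D \subseteq \Mod_Y^n(\@T)$ (\cref{thm:polgrpact}); one then obtains a Borel transversal of isomorphism of $(n+1)$-pointed models by taking those $(\@M,\vec{a},a_n)$ with $(\@M,\vec{a}) \in D$ and $a_n$ least in its $\Aut(\@M,\vec{a})$-orbit (using $Y \le \omega$), Borelness of this selection resting on \cref{thm:types-borel} exactly as in your argument. You instead isolate the general lemma that a Borel subequivalence relation of countable index inside a smooth equivalence relation is smooth, and prove it by the Harrington--Kechris--Louveau dichotomy plus Lusin--Novikov; your reduction to this lemma (the $\approx$-classes split into countably many $\cong$-classes indexed by $Y/\Aut(\@M_0,\vec{a}_0)$) is sound, and the dichotomy argument itself is correct. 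What your approach buys is a reusable, structure-free lemma that makes no use of the well-ordering of $Y$ or of the transversal characterization of smoothness for Polish group actions; what it costs is invoking a much heavier tool (the $\#E_0$ dichotomy, which is due to Harrington--Kechris--Louveau rather than \cite{BK}) where a direct least-element selection suffices. Two trivial points you elide: finite $Y$ must also be handled for \cref{thm:mod-smooth-cat} to apply to $\@S_{n+1}(\@T)$ (immediate, since $\Sym(Y)$ is then finite), and the Borelness of $\cong$ on $(n+1)$-pointed models that HKL requires is exactly the Becker--Kechris input you cite.
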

\begin{proof}
Suppose isomorphism ${\cong} \subseteq \Mod_Y^n(\@T)^2$ of $n$-pointed models is smooth; we show that isomorphism ${\cong} \subseteq \Mod_Y^{n+1}(\@T)^2$ of $(n+1)$-pointed models is smooth.
We may assume $Y \le \omega$.
Let $D \subseteq \Mod_Y^n(\@T)$ be a Borel transversal of ${\cong} \subseteq \Mod_Y^n(\@T)^2$.
Then
\begin{align*}
\set[\big]{(\@M,a_0,\dotsc,a_n)}
          {(\@M,a_0,\dotsc,a_{n-1}) \in D \AND \forall b_n < a_n ((\@M,a_0,\dotsc,a_{n-1},b_n) \not\cong (\@M,a_0,\dotsc,a_n))}
\end{align*}
is a Borel transversal of ${\cong} \subseteq \Mod_Y^{n+1}(\@T)^2$.
\end{proof}

\begin{remark}
\label{rmk:types-simplicial}
The Borel spaces $(\@S_n(\@T))_{n < \omega}$ are related by Borel \defn{projection maps}
\begin{equation*}
\dotsb --->{\partial_3} \@S_2(\@T) --->{\partial_2} \@S_1(\@T) --->{\partial_1} \@S_0(\@T)
\end{equation*}
where, for an $n$-type $p \in \@S_n(\@T)$, $\partial_n(p)$ consists of the formulas $\phi(x_0,\dotsc,x_{n-2})$ in $n-1$ variables which belong to $p$ when regarded as having an extra variable $x_{n-1}$; thus
\begin{align}
\label{eq:types-subst}
\partial_n^{-1}(\sqsqbr{\phi(x_0,\dotsc,x_{n-2})}) = \sqsqbr{\phi(x_0,\dotsc,x_{n-1})}.
\end{align}
In other words, via \cref{thm:mod-types}, $\partial_n$ descends from the coordinate projection
\begin{align*}
\pi_n : \bigsqcup_Y \Mod_Y^n(\@T) --> \bigsqcup_Y \Mod_Y^{n-1}(\@T)
\end{align*}
omitting the last coordinate, which is clearly countable-to-1 and maps (invariant) Borel sets to (invariant) Borel sets; thus $\partial_n$ is also countable-to-1 and maps Borel sets onto Borel sets.
Namely,
\begin{align}
\label{eq:types-exists}
\partial_n(\sqsqbr{\phi(x_0,\dotsc,x_{n-1})}) = \sqsqbr{\exists x_{n-1}\, \phi(x_0,\dotsc,x_{n-1})}.
\end{align}

In fact there are other canonical ``projection maps'' between the $\@S_n(\@T)$'s.
Instead of regarding an $(n-1)$-ary formula as $n$-ary via the inclusion of variables $n-1 `-> n$, we may consider a variable substitution from $m$ variables to $n$ variables given by an arbitrary map $s : m -> n$, which induces
\begin{gather*}
\begin{aligned}
\partial_s : \@S_n(\@T) &--> \@S_m(\@T) \\
p &|--> \{\phi(x_0,\dotsc,x_{m-1}) \mid \phi(x_{s(0)}, \dotsc, x_{s(m-1)}) \in p\}
\end{aligned} \\
\shortintertext{with}
\partial_s^{-1}(\sqsqbr{\phi(x_0,\dotsc,x_{m-1})}) = \sqsqbr{\phi(x_{s(0)}, \dotsc, x_{s(m-1)})}.
\end{gather*}
This descends from the corresponding map $\pi_s : \bigsqcup_Y \Mod_Y^n(\@T) -> \bigsqcup_Y \Mod_Y^m(\@T)$, and is again countable-to-1 and maps Borel sets to Borel sets.
When $s$ is the inclusion $n-1 `-> n$, $\partial_s$ is the coordinate projection $\partial_n$ from above.
When $s : n ->> n-1$ is instead the surjection collapsing $n-1 |-> n-2$ (and fixing other elements), then $\partial_s$ is the diagonal embedding duplicating the last two coordinates, and image under $\partial_s$ is given by
\begin{align}
\label{eq:types-eq}
\partial_s(\sqsqbr{\phi(x_0,\dotsc,x_{n-2})}) = \sqsqbr{\phi(x_0,\dotsc,x_{n-2}) \wedge (x_{n-2} = x_{n-1})}.
\end{align}
For a general $s : m -> n$ between finite ordinals $m, n < \omega$, we may write $s$ as a composition of such inclusions and surjections as well as bijective permutations; thus $\partial_s$ may be understood as a combination of the above cases, using the obvious fact
\begin{align*}
\partial_{s \circ t} = \partial_t \circ \partial_s
\quad \text{(also $\partial_\id = \id$)}.
\end{align*}

(Category-theoretically, we get that $n |-> \@S_n(\@T)$ forms a contravariant functor from the category of finite ordinals to the category of Borel spaces and countable-to-1 Borel maps with Borel images.
Such a functor is sometimes called an \emph{augmented symmetric simplicial set} in relation to homotopy theory; we will make use of this connection in \cref{sec:gpd} below.
The family $(\@S_n(\@T))_n$ is also dual to the Lindenbaum--Tarski hyperdoctrine from \cite[\S B.4]{CK}; see \cref{rmk:interp-hyperdoctrine,rmk:interp-types}.)
\end{remark}

\subsection{Interpretations in $\mathcal{L}_{\omega_1\omega}$}
\label{sec:interp}

\begin{definition}
\label{def:interp}
Let $\@L, \@L'$ be languages, assumed to be relational as per \cref{cvt:relational}.

An \defn{interpretation} $\alpha : \@L -> \@L'$ is a map taking each $n$-ary relation $R \in \@L$ to an $n$-ary formula $\alpha(R)(x_0,\dotsc,x_{n-1}) \in \@L'_{\omega_1\omega}$.\cref{ft:interp}
Such $\alpha$ provides a syntactic recipe for constructing $\@L$-structures from $\@L'$-structures: given an $\@L'$-structure $\@M$, its \defn{$\alpha$-reduct} is the $\@L$-structure $\alpha^*(\@M)$ given by
\begin{equation*}
R^{\alpha^*(\@M)} := \alpha(R)^\@M
\end{equation*}
for each relation symbol $R \in \@L$.
(The name ``reduct'' comes from the case when $\alpha$ is the inclusion of a sublanguage $\@L \subseteq \@L'$.)
When the underlying set $Y$ of $\@M$ is countable, this yields a map
\begin{align*}
\alpha^* = \alpha^*_Y : \Mod_Y(\@L') &--> \Mod_Y(\@L)
\end{align*}
which is easily seen to be Borel and equivariant under the logic action:
\begin{align*}
\alpha^*_Z(g \cdot \@M) = g \cdot \alpha^*_Y(\@M)
\end{align*}
for any $\@M \in \Mod_Y(\@L')$ and bijection $g \in \Sym(Y, Z)$.

Given $\alpha : \@L -> \@L'$, we extend $\alpha$ to all $\@L$-formulas inductively in the obvious way:
\begin{align*}
\alpha(R(x_0,\dotsc,x_{n-1})) &:= \alpha(R)(x_0,\dotsc,x_{n-1}), \\
\alpha(\bigvee_i \phi_i) &:= \bigvee_i \alpha(\phi_i), \\
\alpha(\exists x\, \phi) &:= \exists x\, \alpha(\phi),
\end{align*}
etc.
Semantically, this means that $\alpha(\phi)$ is an $\@L'$-formula such that in every $\@L'$-structure $\@M$,
\begin{align*}
\alpha(\phi)^\@M = \phi^{\alpha^*(\@M)}.
\end{align*}

Now given an $\@L_{\omega_1\omega}$ theory $\@T$ and $\@L'_{\omega_1\omega}$ theory $\@T'$, we say that $\alpha$ is an \defn{interpretation of $\@T$ in $\@T'$}, written $\alpha : \@T -> \@T'$ or $\alpha : (\@L,\@T) -> (\@L',\@T')$, if it preserves provable truth:
\begin{align*}
\@T' |- \alpha(\@T).
\end{align*}
For $\@T'$ countable, by the completeness theorem this means equivalently that $\alpha^*_Y$ restricts to a map
\begin{align}
\label{eq:interp-mod}
\alpha^*_Y : \Mod_Y(\@T') --> \Mod_Y(\@T)
\end{align}
for every countable set $Y$, which is again Borel and equivariant under the logic action.

Two interpretations $\alpha, \beta : \@T -> \@T'$ are \defn{$\@T'$-provably equivalent} if $\@T' |- \alpha(R) <-> \beta(R)$ for every symbol $R \in \@L$, or equivalently (if $\@T'$ is countable) $\alpha^* = \beta^*$.
We normally identify interpretations up to provable equivalence; that is, an ``interpretation $\alpha : \@T -> \@T'$'' will really mean a $\@T'$-provable equivalence class of interpretations.

If there exists an interpretation $\@T -> \@T'$, we say that $\@T$ is \defn{interpretable} in $\@T'$, or that $\@T'$ \defn{interprets} $\@T$.
There is an obvious composition of interpretations $\@T -> \@T' -> \@T''$; thus interpretability is a preorder on the class of all theories, while interpretations themselves form a category.
If an interpretation has an inverse, we call it a \defn{bi-interpretation}; if there is a bi-interpretation between $\@T, \@T'$, we call them \defn{bi-interpretable}, written $\@T \cong \@T'$.
Note that this is stronger than the existence of interpretations both ways, which we call \defn{mutually interpretable}.
\end{definition}

See the introduction (\cref{ex:finsub-pt}, \cref{eq:finsub-pt:interp-pt,eq:finsub-pt:interp-LO}) for examples of interpretations $\@T_\pt -> \@T_\finsub \sqcup \@T_\LO$ and $\@T_\LO -> \@T_\sep$.
In the rest of this subsection, we give some equivalent reformulations of the definition of interpretation, which are more abstract, but easier to work with when developing the general theory.

\begin{remark}
\label{rmk:interp-hyperdoctrine}
Conceptually, an interpretation $\alpha : \@T -> \@T'$ may be viewed as a family of maps
\begin{align*}
(\alpha_n : \@L_{\omega_1\omega}^n/\@T --> \@L_{\omega_1\omega}'^n/\@T')_{n < \omega},
\end{align*}
where $\@L_{\omega_1\omega}^n/\@T$ denotes $\@T$-provable equivalence classes of $n$-ary $\@L_{\omega_1\omega}$ formulas (\cref{def:types}), which commutes with variable substitutions and the logical operations:
\begin{eqalign*}
\alpha_n([\phi(x_{s(0)},\dotsc,x_{s(m-1)})]) &= [\alpha_m(\phi)(x_{s(0)},\dotsc,x_{s(m-1)})] \quad \text{for $s : m -> n$}, \\
\alpha_2([x_0 = x_1]) &= [x_0 = x_1], \\
\alpha_n([\bigvee_i \phi_i]) &= \bigvee_i \alpha_n([\phi_i]), \\
\alpha_n([\exists x_n\, \phi(x_0,\dotsc,x_n)]) &= \exists x_n\, \alpha_{n+1}([\phi(x_0,\dotsc,x_n)]),
\end{eqalign*}
etc.
This ensures that $\alpha$ is determined by its values on atomic formulas, i.e., the symbols in $\@L$.

In other words, $\alpha$ is a homomorphism of multi-sorted structures $(\@L_{\omega_1\omega}^n/\@T)_{n < \omega} -> (\@L_{\omega_1\omega}'^n/\@T')_{n < \omega}$, consisting of sequences of Boolean $\sigma$-algebras which are furthermore equipped with operations between the various algebras corresponding to variable substitution (for each $s : m -> n$), $\exists$ (from the $(n+1)$th algebra to the $n$th) and a constant ``$x_0=x_1$'' (in the $2$nd algebra).
These structures are the \defn{Lindenbaum--Tarski hyperdoctrines} of the theories, called ``$\omega_1\omega$-Boolean algebras'' in \cite[\S B.4]{CK}; see there as well as \cite{Jacobs} for more information on hyperdoctrines.
\end{remark}

\begin{example}
\label{ex:interp-mod}
For a theory $(\@L,\@T)$, a countable model $\@M \in \Mod_Y(\@T)$ may be regarded as a special case of an interpretation.
Indeed, note that by the usual definition of the \emph{interpretation} of first-order logic in a structure, $\@M$ amounts to a map from formulas to relations
\begin{eqalign*}
\alpha : \@L_{\omega_1\omega}^n &--> 2^{Y^n} \\
\phi &|--> \phi^\@M,
\end{eqalign*}
preserving countable Boolean connectives, variable substitution, quantifiers and $=$, and $\@T$-truth.

Now the Boolean $\sigma$-algebra $2^{Y^n}$ is canonically isomorphic to the algebra of $n$-ary formulas modulo equivalence for the theory $\@T_Y$ of \defn{$Y$-enumerated sets}, with language $\@L_Y$ consisting of constant symbols $c_y$ for each $y \in Y$ and axioms saying that these form a bijection with $Y$:
\begin{align*}
\@T_Y := \brace[\big]{
    \bigwedge_{y \ne z \in Y} (c_y \ne c_z), \;
    \forall x\, \bigvee_{y \in Y} (x = c_y)}.
\end{align*}
Indeed, every $A \subseteq Y^n$ gives a formula $\phi_A(x_0,\dotsc,x_{n-1}) := \bigvee_{\vec{y} \in A} (\vec{x} = c_{\vec{y}})$; and it is easy to see that $\@T_Y$ has quantifier elimination, using the above axioms to rewrite $\exists x\, \phi(x)$ to $\bigvee_{y \in Y} \phi(c_y)$, whence every $n$-ary formula is equivalent modulo $\@T_Y$ to a quantifier-free formula, hence to some $\phi_A$ since the above axioms also easily yield that the formulas $\vec{x} = c_{\vec{y}}$ are the atoms of the $\sigma$-algebra.
It is also easy to see that these isomorphisms $2^{Y^n} \cong (\@L_Y)_{\omega_1\omega}^n/\@T_Y$ preserve substitution, quantifiers, and $=$.

Thus $\alpha$, i.e., the model $\@M$, may be regarded as an interpretation $\alpha : \@T -> \@T_Y$.
Semantically, $\alpha^*$ turns a model $\@N \in \Mod_Z(\@T_Y)$, i.e., a bijection $g_\@N : y |-> c_y^\@N : Y \cong Z$, into the model $\alpha^*(\@N) = g_\@N \cdot \@M \in \Mod_Z(\@T)$.
\end{example}

\begin{remark}
\label{rmk:interp-types}
The Lindenbaum--Tarski hyperdoctrine of a theory $(\@L,\@T)$ is dual to the sequence of type spaces $(\@S_n(\@T))_{n < \omega}$, with the canonical projection maps $\partial_s$ (induced by arbitrary substitutions $s$) between them, from \cref{rmk:types-simplicial}.
Indeed, the algebra $\@L_{\omega_1\omega}^n/\@T$ of $\@T$-equivalence classes of formulas is isomorphic to the Borel $\sigma$-algebra $\@B(\@S_n(\@T))$ by \cref{def:types}; and by \cref{eq:types-subst,eq:types-exists,eq:types-eq}, variable substitution, $\exists$ and $=$ correspond to preimage and image under the $\partial_s$.

Combined with \cref{rmk:interp-hyperdoctrine}, this yields several equivalent ways to describe the concept of an interpretation $\alpha : (\@L,\@T) -> (\@L',\@T')$ between theories:
\begin{enumerate}[label=(\roman*)]
\item \label{rmk:interp-types:interp}
maps $\alpha : \@L -> \@L'_{\omega_1\omega}$ such that $\@T' |- \alpha(\@T)$, modulo $\@T'$-provable equivalence;
\item \label{rmk:interp-types:homom}
homomorphisms of hyperdoctrines $(\alpha_n : \@L_{\omega_1\omega}^n/\@T -> \@L_{\omega_1\omega}'^n/\@T')_{n < \omega}$;
\item \label{rmk:interp-types:types}
families of Borel maps $(\alpha^*_n : \@S_n(\@T') -> \@S_n(\@T))_{n < \omega}$, commuting with projections $\partial_s : \@S_n -> \@S_m$:
\begin{align*}
\alpha^*_m \circ \partial_s &= \partial_s \circ \alpha^*_n : \@S_n(\@T') -> \@S_m(\@T), \\
\shortintertext{and also obeying}
(\alpha^*_m)^{-1} \circ \partial_s &= \partial_s \circ (\alpha^*_n)^{-1} : \@B(\@S_n(\@T)) -> \@B(\@S_m(\@T'))
\end{align*}
for each $s : m -> n$ (the maps here are image/preimage in the following commutative square);
\begin{equation*}
\begin{tikzcd}
\@S_n(\@T') \dar["\alpha_n^*"'] \rar["\partial_s"] &
\@S_m(\@T') \dar["\alpha_m^*"] \\
\@S_n(\@T) \rar["\partial_s"] &
\@S_m(\@T)
\end{tikzcd}
\end{equation*}
\item \label{rmk:interp-types:mod}
families of Borel maps $(\alpha^*_Y : \Mod_Y(\@T') -> \Mod_Y(\@T))_Y$ for all countable sets $Y$, equivariant under the logic action (of $\Sym(Y,Z)$ for all countable $Y,Z$).
\end{enumerate}
The correspondence between \cref{rmk:interp-types:interp} and \cref{rmk:interp-types:homom} was described in \cref{rmk:interp-hyperdoctrine}.
The maps in \cref{rmk:interp-types:homom} are, via $\@L_{\omega_1\omega}^n/\@T \cong \@B(\@S_n(\@T))$, preimage under the maps in \cref{rmk:interp-types:types}; the conditions in \cref{rmk:interp-types:types} ensure that preimage under those maps preserves substitution, $\exists$, and $=$.
The maps in \cref{rmk:interp-types:types} are obtained from \cref{rmk:interp-types:mod} by passing to $\alpha^*_Y \times \id_{Y^n} : \Mod_Y^n(\@T') = \Mod_Y(\@T') \times Y^n -> \Mod_Y(\@T) \times Y^n = \Mod_Y^n(\@T)$ and then quotienting by the logic action, i.e., passing to types.
\end{remark}

\begin{proof}[Proof that these correspondences are inverses of each other]
Starting from $\alpha : \@T -> \@T'$ as in \cref{rmk:interp-types:interp} and \cref{rmk:interp-types:homom}, we indeed recover $\alpha$ up to $\@T'$-provable equivalence as $(\alpha^*_n)^{-1}$ by the completeness theorem; thus the composite
$\text{\cref{rmk:interp-types:interp}} ->
\text{\cref{rmk:interp-types:mod}} ->
\text{\cref{rmk:interp-types:types}} ->
\text{\cref{rmk:interp-types:homom}} \cong \text{\cref{rmk:interp-types:interp}}$ is the identity.
The passage from \cref{rmk:interp-types:types} to \cref{rmk:interp-types:homom} $\cong$ \cref{rmk:interp-types:interp} is also injective, i.e., a Borel map $\@S_n(\@T') -> \@S_n(\@T)$ is determined by its preimage map between the Borel $\sigma$-algebras, because singletons in $\@S_n(\@T)$ are Borel by the Scott isomorphism theorem.
Finally, \cref{rmk:interp-types:mod} $->$ \cref{rmk:interp-types:types} is injective: given an arbitrary equivariant family $(f_Y : \Mod_Y(\@T') -> \Mod_Y(\@T))_Y$, for each $\@M \in \Mod_Y(\@T')$, the $\@L$-structure $f_Y(\@M)$ is determined by the types $\tp(f_Y(\@M), \vec{a}) = \tp((f_Y \times \id_{Y^n})(\@M, \vec{a}))$ of all finite tuples $\vec{a} \in Y^n$.
\end{proof}

\Cref{rmk:interp-types} allows us to work with $\@L_{\omega_1\omega}$ theories in a syntax-independent way: a theory $\@T$ is completely determined, up to bi-interpretability, \cref{rmk:interp-types:homom} by the hyperdoctrine $(\@L_{\omega_1\omega}^n/\@T)_n$, which is an algebraic structure, or
\cref{rmk:interp-types:types} by the family of type spaces $(\@S_n(\@T))_n$ and projections $\partial_s$ between them, or
\cref{rmk:interp-types:mod} by the family of spaces of models $(\Mod_Y(\@T))_Y$ equipped with the logic action.
Among these, \cref{rmk:interp-types:mod} has the benefit of living entirely in the familiar category of standard Borel spaces.
This makes it possible to not only recover but even abstractly define a theory via its spaces of models:

\begin{theorem}[Lopez-Escobar, Becker--Kechris]
\label{thm:mod-equiv}
We have a dual equivalence of categories from:
\begin{itemize}
\item  countable $\@L_{\omega_1\omega}$ theories $(\@L,\@T)$, and interpretations between them as morphisms; to
\item  families of standard Borel spaces $M_Y$ over all countable sets $Y$, equipped with Borel actions $\Sym(Y, Z) \times M_Y -> M_Z$, and equivariant families of Borel maps as morphisms;%
\footnote{Formally, these are presheaves on the groupoid of sets, enriched in the category of standard Borel spaces.}
\end{itemize}
taking a theory $\@T$ to the family of spaces $\Mod_Y(\@T)$ and an interpretation $\alpha$ to $\alpha^*$.
\end{theorem}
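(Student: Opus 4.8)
The plan is to establish the dual equivalence by exhibiting the functor $\@T \mapsto (\Mod_Y(\@T))_Y$ as fully faithful and essentially surjective, assembling the ingredients already developed in \cref{rmk:interp-types} and the Lopez-Escobar and Becker--Kechris theorems. Functoriality is immediate: an interpretation $\alpha : \@T \to \@T'$ yields the equivariant family $\alpha^*$ by \cref{eq:interp-mod}, and composition of interpretations corresponds to composition of reducts since $(\beta \circ \alpha)^* = \alpha^* \circ \beta^*$ by the inductive definition of $\alpha$ on formulas. The identity interpretation goes to the identity family. So the real content is full faithfulness and essential surjectivity.

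\emph{Full faithfulness} is essentially already contained in \cref{rmk:interp-types}. For faithfulness, two interpretations $\alpha, \beta : \@T \to \@T'$ inducing the same equivariant family $\alpha^* = \beta^*$ must be $\@T'$-provably equivalent: for each relation symbol $R \in \@L$ and each countable model $\@M \models \@T'$, we have $\alpha(R)^\@M = R^{\alpha^*(\@M)} = R^{\beta^*(\@M)} = \beta(R)^\@M$, so $\@T' \vdash \alpha(R) \leftrightarrow \beta(R)$ by the completeness theorem. For fullness, given an arbitrary equivariant family of Borel maps $(f_Y : \Mod_Y(\@T') \to \Mod_Y(\@T))_Y$, I would recover an interpretation $\alpha$ realizing it: for each $n$-ary $R \in \@L$, the set $f_Y^{-1}(\Mod_Y^n(R)) \subseteq \Mod_Y^n(\@T')$ (where $f_Y$ acts on the first coordinate of $\Mod_Y^n = \Mod_Y \times Y^n$) is Borel and invariant under the logic action, hence by \cref{thm:lopez-escobar-global} is defined by an $n$-ary $\@L'$-formula $\alpha(R)(x_0,\dotsc,x_{n-1})$. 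This yields $\alpha : \@L \to \@L'_{\omega_1\omega}$ with $\alpha^* = f$ on each $\Mod_Y(\@T')$; in particular $\alpha^*$ lands in $\Mod_Y(\@T)$, so $\@T' \vdash \alpha(\@T)$ and $\alpha : \@T \to \@T'$ is a genuine interpretation. (This is exactly the content of the commented-out \cref{thm:lopez-escobar-interp}, which I would reinstate or fold into the proof.)

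\emph{Essential surjectivity} is where the Becker--Kechris theorem enters, and I expect this to be the main obstacle. Given an abstract family of standard Borel spaces $(M_Y)_Y$ with equivariant $\Sym(Y,Z)$-actions, I must produce a theory $\@T$ together with an equivariant Borel isomorphism $M_Y \cong \Mod_Y(\@T)$. The natural strategy is to view $(M_Y)_Y$ as an $S_\infty$-space (concentrating on $Y = \#N$, with the $\Sym(\#N)$-action) and invoke the classical theorem that every Borel $S_\infty$-space is Borel isomorphic, equivariantly, to the logic action on $\Mod_{\#N}(\@L)$ for a suitable language $\@L$ — which is the semantic form of Lopez-Escobar. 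The delicate point is handling \emph{all} countable cardinalities $Y \le \omega$ simultaneously and coherently, rather than just $Y = \#N$: one must check that the finite and cofinite pieces glue into a single presheaf on the groupoid of countable sets, matching the $\Mod_Y(\@L)$ construction across varying $Y$. This is precisely the bookkeeping behind \cref{thm:lopez-escobar-global}, where sentences $\phi_Y$ for each $Y \le \omega$ are combined via $\bigvee_{Y \le \omega}(\text{``exactly $Y$ elements''} \wedge \phi_Y)$. I would organize this by first embedding the whole family as a Borel-invariant subspace of some $(\Mod_Y(\@L))_Y$ using a Borel equivariant selection of a separating countable family of invariant Borel sets (available since each $M_Y$ is standard Borel and the action is Borel), then applying Lopez-Escobar fiberwise and gluing. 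The requirement that the target category consist of families satisfying the standard-Borel and presheaf conditions is exactly what makes this gluing possible, so the proof amounts to verifying that these hypotheses are the faithful semantic image of the syntactic data.
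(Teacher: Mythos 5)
Your overall architecture coincides with the paper's: full faithfulness is exactly the content of \cref{rmk:interp-types} (your fullness argument via Lopez-Escobar applied to $f_Y^{-1}(\Mod_Y^n(R))$ is the intended one), and essential surjectivity is proved by equivariantly embedding each $M_Y$ into a logic action, applying Lopez-Escobar to axiomatize the image, and gluing over cardinalities with $\bigvee_{N \le \omega}(\text{``there are exactly $N$ elements''} \wedge \phi_N)$, followed by transport to arbitrary countable $Y$ along a bijection with $\abs{Y}$.

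There is one concrete defect in the essential-surjectivity step: the mechanism you propose for the embedding --- ``a Borel equivariant selection of a separating countable family of \emph{invariant} Borel sets'' --- cannot produce an equivariant injection $M_Y \hookrightarrow \Mod_Y(\@L)$ when the $\Sym(Y)$-action is nontrivial. Invariant Borel sets factor through the orbit quotient $M_Y/\Sym(Y)$, so they may separate orbits but can never separate two points in the same orbit; equivalently, the induced map lands in the fixed points of the logic action (structures carrying only nullary data). What an equivariant embedding must additionally encode is the \emph{stabilizer} of each point, realized as the automorphism group of a structure on $Y$. For $Y = \#N$ this is precisely what the Becker--Kechris theorem \cite[2.7.3]{BK} provides, and you do invoke it, so that case is fine. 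For finite $N$, however, Becker--Kechris does not apply and your gluing sketch is left without a correct embedding; the paper fills this in explicitly, by constructing for each subgroup $H \le \Sym(N)$ a language $\@L_H$ and continuum many structures on $N$ with automorphism group exactly $H$ (making the $H$-orbits in each $N^n$ definable), and then partitioning the orbits of $M_N$ according to their stabilizers in order to match them with such structures. With that substitution, the rest of your argument goes through as written.
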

\begin{proof}
Said functor from the former category to the latter is full and faithful by the preceding remark (which boils down to Lopez-Escobar in the form of \cref{thm:mod-types}).
It remains to show it is essentially surjective, i.e., given such a family of spaces $(M_Y)_Y$ equipped with Borel actions of $\Sym(Y,Z)$, to show that they are equivariantly isomorphic to $(\Mod_Y(\@T))_Y$ for some theory $\@T$.

By \cite[2.7.3]{BK}, there is a language $\@L_\#N$ such that $M_\#N$ Borel $\Sym(\#N)$-equivariantly embeds into $\Mod_\#N(\@L_\#N)$, hence by Lopez-Escobar there is a $\@L_\#N$-sentence $\phi_\#N$ such that $\Mod_\#N(\phi_\#N) \cong M_\#N$ as $\Sym(\#N)$-spaces.
It is also easy to find languages $\@L_N$ such that $M_N$ $\Sym(N)$-equivariantly embeds into $\Mod_N(\@L_N)$ for each finite $N < \omega$, hence again by Lopez-Escobar there are sentences $\phi_N$ such that $\Mod_N(\phi_N) \cong M_N$.
For example, for each subgroup $H \le \Sym(N)$, we may create a structure $\@M_H$ in a language $\@L_H$ on the set $N$ such that the $H$-orbits in each $N^n$ become definable, so that $\Aut(\@M_H) = H$; then letting $\@L_N := \bigsqcup_{H \le \Sym(N)} \@L_H$ together with countably many nullary relations, $\Mod_N(\@L_N)$ has $2^{\aleph_0}$ many points with each possible stabilizer $H \le \Sym(N)$; now by partitioning the $\Sym(N)$-orbits of $M_N$ according to stabilizers, we may easily embed it into $\Mod_N(\@L_N)$.

So we have found Borel $\Sym(N)$-equivariant isomorphisms $\Mod_N(\phi_N) \cong M_N$ for each $N \le \omega$.
By unioning the languages (and modifying $\phi_N$ to assert the symbols from the other languages are false), we may assume the $\phi_N$ are over the same language.
By adding to $\phi_N$ a clause ``there exist exactly $N$ elements'', we may assume each $\phi_N$ has only models of size $N$.
Then $\phi := \bigvee_{N \le \omega} \phi_N$ is such that $\Mod_N(\phi) \cong M_N$ for each $N \le \omega$; call these isomorphisms $f_N$.
Then for any other countable set $Y$, define $f_Y : \Mod_Y(\phi) \cong M_Y$ by $f_Y(\@M) := g \cdot f_{\abs{Y}}(g^{-1} \cdot \@M)$, for any bijection $g : \abs{Y} \cong Y$ (where $\abs{Y} \le \omega$ is the cardinality).
Using that $f_{\abs{Y}}$ is $\Sym(\abs{Y})$-equivariant, this is easily seen to be independent of $g$ and to be equivariant with respect to the actions of all $\Sym(Y,Z)$.
\end{proof}

\subsection{Operations on theories}

We also recall here the following simple methods of combining theories; see \cite[\S B.2]{CK}.

\begin{definition}
\label{def:thy-coprod}
Given countably many theories $(\@L_i,\@T_i)$, their \defn{coproduct theory} is the disjoint union $(\bigsqcup_i \@L_i, \bigsqcup_i \@T_i)$.
Clearly, for any set $Y$, we have a canonical $\Sym(Y,Z)$-equivariant isomorphism
\begin{eqalign*}
\Mod_Y(\bigsqcup_i \@T_i) \cong \prod_i \Mod_Y(\@T_i);
\end{eqalign*}
thus, $\Mod_Y(\bigsqcup_i \@T_i)$ has the universal property of the categorical product (in the category of families of standard Borel spaces with Borel $\Sym(Y,Z)$-actions from \cref{thm:mod-equiv}).
Dually, this means $\bigsqcup_i \@T_i$ has the universal property of the coproduct: an interpretation $\bigsqcup_i \@T_i -> \@T'$ into another theory $\@T'$ is equivalently a family of interpretations $\@T_i -> \@T'$ for each $i$.

In particular, this means that $\bigsqcup_i \@T_i$ is the join (least upper bound) of the $\@T_i$ in the preorder of theories under the interpretability relation $->$.
\end{definition}


\begin{definition}
\label{def:thy-prod}
Given countably many theories $(\@L_i,\@T_i)$, their \defn{product theory} $(\bigoplus_i \@L_i, \bigoplus_i \@T_i)$ has the dual universal property that an interpretation $\@T' -> \bigoplus_i \@T_i$ is equivalently a family of interpretations $\@T' -> \@T_i$ for each $i$.
Thus, the space of models should be
\begin{eqalign*}
\Mod_Y(\bigoplus_i \@T_i) \cong \bigsqcup_i \Mod_Y(\@T_i).
\end{eqalign*}
In other words, a model of $\bigoplus_i \@T_i$ should be a specification of a unique index $i$ together with a model of $\@T_i$ (and no other data).
By \cref{thm:mod-equiv}, this suffices to define $\bigoplus_i \@T_i$ up to bi-interpretability.
For an explicit axiomatization of $\bigoplus_i \@T_i$, see \cite[6.2]{CK}.

In particular, $\bigoplus_i \@T_i$ is the meet (greatest lower bound) of the $\@T_i$ with respect to interpretability.
\end{definition}

\section{Structurability of CBERs}
\label{sec:cber}

\subsection{Structurability}
\label{sec:str}

\begin{definition}[see \cite{JKL}, \cite{CK}]
\label{def:str}
Let $E \subseteq X^2$ be a CBER, $\@T$ be a countable $\@L_{\omega_1\omega}$ theory in a countable language $\@L$.
A \defn{(Borel) $\@T$-structuring} of $E$ is a family of models $\@M = (\@M_C)_{C \in X/E}$, where for each $E$-class $C \in X/E$, we have $\@M_C \in \Mod_C(\@T)$; and ``$x |-> \@M_{[x]_E}$ is Borel'', meaning the following conditions which are equivalent by \cref{thm:str} below:
\begin{enumerate}[label=(\roman*)]
\item \label{def:str:enum}
For some (equivalently any) Borel family of enumerations $(g_x : \abs{[x]_E} \cong [x]_E)_{x \in X}$ (as in \cref{thm:lusin-novikov-cber}\cref{thm:lusin-novikov-cber:enum}), the following map is Borel:
\begin{eqalign*}
X &--> \bigsqcup_{N \le \omega} \Mod_N(\@T) \\
x &|--> g_x^{-1} \cdot \@M_{[x]_E}.
\end{eqalign*}
\item \label{def:str:param}
For any countable set $Y$, standard Borel space $Z$, Borel map $f : Z -> X$, and Borel family of bijections $(g_z : Y \cong [f(z)]_E)_{z \in Z}$, the following map is Borel:
\begin{eqalign*}
Z &--> \Mod_Y(\@T) \\
z &|--> g_z^{-1} \cdot \@M_{[f(z)]_E}.
\end{eqalign*}
\item \label{def:str:fiber}
For each $n$-ary relation symbol $R \in \@L$, the following set is a Borel subset of $X^{n+1}$:
\begin{eqalign*}
\~R^\@M &:=
\set[\big]{(x, x_0, \dotsc, x_{n-1}) \in X^{n+1}}{x \mathrel{E} x_0 \mathrel{E} \dotsb \mathrel{E} x_{n-1} \AND R^{\@M_{[x]_E}}(x_0, \dotsc, x_{n-1})}.
\end{eqalign*}
\item \label{def:str:global}
(assuming $\@L$ has no nullary relation symbols)
For each $n$-ary $R \in \@L$, the following is a Borel subset of $X^n$:
\begin{eqalign*}
R^\@M &:=
\set[\big]{(x_0, \dotsc, x_{n-1}) \in X^n}{x_0 \mathrel{E} \dotsb \mathrel{E} x_{n-1} \AND R^{\@M_{[x_0]_E}}(x_0, \dotsc, x_{n-1})}.
\end{eqalign*}
(This is often taken as the main definition, e.g., in \cite{JKL}, \cite{CK}.)
\item[(iii$'$)\rlap{ and (iv$'$)}] \hphantom{ and (iv$'$)}
\phantomitem{(iii$'$)}\label{def:str:fiber'}%
\phantomitem{(iv$'$)}\label{def:str:global'}%
Same as above, but replacing $R$ with arbitrary $\phi(x_0,\dotsc,x_{n-1}) \in \@L_{\omega_1\omega}$ (giving rise to Borel $\~\phi^\@M \subseteq X^{n+1}$, respectively $\phi^\@M \subseteq X^n$ for $n \ge 1$).
\end{enumerate}
Let $\Mod_E(\@T)$ denote the set of $\@T$-structurings of $E$.
If $\Mod_E(\@T) \ne \emptyset$, we call $E$ \defn{$\@T$-structurable}.
\end{definition}

\begin{lemma}
\label{thm:str}
The conditions above are equivalent.
\end{lemma}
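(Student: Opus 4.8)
The plan is to establish all the equivalences through a single cycle of implications built around the enumeration condition \cref{def:str:enum}, treating the primed conditions \cref{def:str:fiber',def:str:global'} and the global conditions \cref{def:str:global,def:str:global'} as routine variants of the fibered condition \cref{def:str:fiber}. Two of the needed reductions involve no descriptive set theory. First, \cref{def:str:fiber'} trivially implies \cref{def:str:fiber}, and \cref{def:str:global'} implies \cref{def:str:global}, by specializing the formula $\phi$ to an atomic $R$. Second, under the standing hypothesis that $\@L$ has no nullary symbols (so every $R$ has arity $n \ge 1$), the fibered and global sets determine each other by the Borel substitutions $R^\@M(x_0,\dotsc,x_{n-1}) \iff \~R^\@M(x_0,x_0,\dotsc,x_{n-1})$ and $\~R^\@M(x,x_0,\dotsc,x_{n-1}) \iff (x \mathrel E x_0) \wedge R^\@M(x_0,\dotsc,x_{n-1})$, the latter using that $x \mathrel E x_0$ forces $\@M_{[x]_E} = \@M_{[x_0]_E}$; since each of $\~R^\@M, R^\@M$ is a Borel preimage of the other, this gives \cref{def:str:fiber} $\iff$ \cref{def:str:global}, and the identical computation applied formula-by-formula gives \cref{def:str:fiber'} $\iff$ \cref{def:str:global'}. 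It therefore remains to prove \cref{def:str:enum,def:str:param,def:str:fiber,def:str:fiber'} equivalent.

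For these I would run the cycle \cref{def:str:param} $\Rightarrow$ \cref{def:str:enum} $\Rightarrow$ \cref{def:str:fiber'} $\Rightarrow$ \cref{def:str:fiber} $\Rightarrow$ \cref{def:str:enum}, together with \cref{def:str:enum} $\Rightarrow$ \cref{def:str:param}. Here \cref{def:str:param} $\Rightarrow$ \cref{def:str:enum} is immediate on specializing $Z := X_N$ (Borel by \cref{thm:lusin-novikov-cber}), $f$ the inclusion, $Y := N$, and $(g_z)$ the given enumeration, then gluing over the countably many cardinalities $N \le \omega$; and \cref{def:str:fiber'} $\Rightarrow$ \cref{def:str:fiber} is the trivial specialization above. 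For \cref{def:str:enum} $\Rightarrow$ \cref{def:str:fiber'}, I would observe that the map $(x,x_0,\dotsc,x_{n-1}) \mapsto (g_x^{-1} \cdot \@M_{[x]_E},\, (g_x^{-1}(x_0),\dotsc,g_x^{-1}(x_{n-1})))$ is Borel on the Borel set $\{x \mathrel E x_0 \mathrel E \dotsb \mathrel E x_{n-1}\} \subseteq X^{n+1}$, and that, by isomorphism-invariance of satisfaction, $\~\phi^\@M$ is exactly the preimage under this map of the Borel set $\bigsqcup_{N \le \omega}\Mod_N^n(\phi)$ (Borel by \cref{def:mod}), since $\phi^{\@M_{[x]_E}}(x_0,\dotsc,x_{n-1}) \iff \phi^{g_x^{-1} \cdot \@M_{[x]_E}}(g_x^{-1}(x_0),\dotsc,g_x^{-1}(x_{n-1}))$. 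Conversely, for \cref{def:str:fiber} $\Rightarrow$ \cref{def:str:enum}, the map $x \mapsto g_x^{-1} \cdot \@M_{[x]_E}$ into $\bigsqcup_N \Mod_N(\@T)$ is Borel as soon as each of its countably many coordinates is, and for fixed $N$, relation symbol $R$, and tuple $\vec{\imath} \in N^n$ one has $\vec{\imath} \in R^{g_x^{-1} \cdot \@M_{[x]_E}} \iff (x, g_x(i_0),\dotsc,g_x(i_{n-1})) \in \~R^\@M$, which is Borel in $x \in X_N$ by \cref{def:str:fiber}. As \cref{def:str:fiber'} does not mention the enumeration $(g_x)$, this cycle simultaneously proves the ``for some $\iff$ for any'' clause of \cref{def:str:enum}.

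It remains to prove \cref{def:str:enum} $\Rightarrow$ \cref{def:str:param}. Given $Y, Z, f, (g_z : Y \cong [f(z)]_E)$ as in \cref{def:str:param}, bijectivity forces $N := \abs{Y} = \abs{[f(z)]_E}$ for all $z$, so $f$ lands in $X_N$; fixing the Borel enumeration $(h_x : N \cong [x]_E)$ witnessing \cref{def:str:enum} on $X_N$, I would factor through the Borel transition cocycle $k_z := h_{f(z)}^{-1} \circ g_z \in \Sym(Y, N)$ to write $g_z^{-1} \cdot \@M_{[f(z)]_E} = k_z^{-1} \cdot (h_{f(z)}^{-1} \cdot \@M_{[f(z)]_E})$, and conclude Borelness from that of $z \mapsto h_{f(z)}^{-1} \cdot \@M_{[f(z)]_E}$ (by \cref{def:str:enum} and $f$) together with the joint Borelness of the logic action. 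I expect the only real difficulty to be bookkeeping rather than conceptual: one must track the logic action correctly through the repeated changes of enumeration --- i.e., verify the cocycle identity $g_z^{-1} = k_z^{-1} \circ h_{f(z)}^{-1}$ and its analogues --- while handling the variation of the class size $N$ uniformly, so that every map in sight remains Borel as a map into the disjoint union $\bigsqcup_{N \le \omega}$ rather than over a single fixed countable underlying set.
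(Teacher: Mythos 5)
Your proof is correct. Most of it coincides with the paper's: \cref{def:str:param}$\Rightarrow$\cref{def:str:enum} by restricting to the Borel pieces $X_N$, and \cref{def:str:enum}$\Rightarrow$\cref{def:str:fiber'} by pulling back $\Mod_N^n(\phi)$ along $(x,\vec{x}) \mapsto (g_x^{-1}\cdot\@M_{[x]_E},\, g_x^{-1}(\vec{x}))$, are exactly the paper's arguments, and your equivalences among \cref{def:str:fiber}, \cref{def:str:global}, \cref{def:str:fiber'}, \cref{def:str:global'} are the same substitution observations the paper dismisses as clear (its \cref{def:str:global'}$\Rightarrow$\cref{def:str:fiber'} is your computation packaged as ``regard $\phi$ as having an extra free variable''). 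The one genuine divergence is how \cref{def:str:param} is recovered: the paper proves \cref{def:str:fiber}$\Rightarrow$\cref{def:str:param} directly, observing that the preimage under $z \mapsto g_z^{-1}\cdot\@M_{[f(z)]_E}$ of the $\sigma$-algebra generator $\{\@M \mid R^\@M(\vec{a})\}$ of $\Mod_Y(\@T)$ is $\{z \mid \~R^\@M(f(z), g_z(\vec{a}))\}$, Borel by \cref{def:str:fiber}; you instead prove \cref{def:str:fiber}$\Rightarrow$\cref{def:str:enum} by the same kind of coordinatewise computation and then \cref{def:str:enum}$\Rightarrow$\cref{def:str:param} by factoring $g_z^{-1} = k_z^{-1}\circ h_{f(z)}^{-1}$ through the transition cocycle $k_z$. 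Your route does close up --- $z \mapsto k_z$ is Borel because $(x,x') \mapsto h_x^{-1}(x')$ has Borel graph with countable codomain, and the logic action $\Sym(Y,N)\times\Mod_N(\@L) \to \Mod_Y(\@L)$ is jointly Borel since each basic set pulls back to a countable union of rectangles --- but it costs these two extra verifications that the paper's generator argument avoids. In exchange, your cycle makes the ``for some $\iff$ for any'' clause of \cref{def:str:enum} fully explicit, which the paper handles only implicitly through its ``\cref{def:str:param}$\Rightarrow$\cref{def:str:enum} for any $(g_x)_x$'' step.
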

\begin{proof}
\cref{def:str:fiber'}$\implies$\cref{def:str:fiber}$\iff$\cref{def:str:global} is clear, as is
\cref{def:str:fiber'}$\implies$\cref{def:str:global'}.

\cref{def:str:global'}$\implies$\cref{def:str:fiber'}:
Regard $\phi(x_0,\dotsc,x_{n-1})$ as having an extra free variable.

\cref{def:str:fiber}$\implies$\cref{def:str:param}:
A generator for the Borel $\sigma$-algebra of $\Mod_Y(\@T) \subseteq \Mod_Y(\@L) = \prod_{R \in \@L^n} 2^{Y^n}$ is $\{\@M \mid R^\@M(\vec{a})\}$ for some $n$-ary $R \in \@L$ and $\vec{a} \in Y^n$; the preimage of this set under the map in \cref{def:str:param} is
\begin{eqalign*}
\set[\big]{z \in Z}{R^{g_z^{-1} \cdot \@M_{[f(z)]_E}}(\vec{a})}
&= \set[\big]{z \in Z}{R^{\@M_{[f(z)]_E}}(g_z(\vec{a}))}
= \set[\big]{z \in Z}{\~R^\@M(f(z),g_z(\vec{a}))}.
\end{eqalign*}

\cref{def:str:param}$\implies$\cref{def:str:enum}, for any $(g_x)_x$:
It suffices to verify Borelness of the map in \cref{def:str:enum} restricted to the union of all $E$-classes of each size $N \le \omega$, which is immediate from \cref{def:str:param} (for $Z =$ said union).

\cref{def:str:enum} for some $(g_x)_x \implies$\cref{def:str:fiber'}:
\begin{align*}
\~\phi^\@M = \set[\big]{(x, x_0, \dotsc, x_{n-1})}{x \mathrel{E} x_0 \mathrel{E} \dotsb \mathrel{E} x_{n-1} \AND R^{g_x^{-1} \cdot \@M_{[x]_E}}(g_x^{-1} \cdot x_0, \dotsc, g_x^{-1} \cdot x_{n-1})}.
&\qedhere
\end{align*}
\end{proof}

\begin{definition}[{see \cite[3.1]{CK}}]
\label{def:str-classbij}
Let $(X,E), (Y,F)$ be two CBERs.
A \defn{(Borel) class-bijective homomorphism} $f : (X,E) -> (Y,F)$ is a Borel map $f : X -> Y$ which restricts to a bijection $f : [x]_E \cong [f(x)]_F$ for each $x \in X$.

Given such $f$, and a $\@T$-structuring $\@M = (\@M_D)_{D \in Y/F}$ of $F$, the \defn{pullback $\@T$-structuring} $f^{-1} \cdot \@M$ of $E$ is defined by
\begin{align*}
(f^{-1} \cdot \@M)_C := (f|C : C \cong f(C))^{-1} \cdot \@M_{f(C)}
    \quad \text{for each $C \in X/E$}.
\end{align*}
The Borelness conditions \labelcref{def:str}\cref{def:str:fiber} or \cref{def:str:param} are easily seen.
\end{definition}

\begin{definition}[{see \cite[\S B.3]{CK}}]
\label{def:str-interp}
Let $(\@L,\@T), (\@L',\@T')$ be two theories, $\alpha : \@T -> \@T'$ be an interpretation, and $(X,E)$ be a CBER with a $\@T'$-structuring $\@M = (\@M_C)_{C \in X/E}$.
Recall from \cref{def:interp} the notion of \emph{$\alpha$-reduct} of a model of $\@T'$.
The \defn{$\alpha$-reduct} of $\@M$ is the $\@T$-structuring $\alpha^*\@M$ where
\begin{eqalign*}
(\alpha^*\@M)_C := \alpha^*(\@M_C)
    \quad \text{for each $C \in X/E$}.
\end{eqalign*}
The Borelness condition \labelcref{def:str}\cref{def:str:param} is again easily seen (just compose the map $Z -> \Mod_Y(\@T')$ with $\alpha^*_Y : \Mod_Y(\@T') -> \Mod_Y(\@T)$ from \cref{eq:interp-mod}).
\end{definition}

Thus, the operation
\begin{align*}
\yesnumber
\label{eq:str}
\{\text{CBERs}\} \times \{\text{theories}\} &--> \{\text{sets}\} \\
(E, \@T) &|--> \Mod_E(\@T)
\end{align*}
is jointly (contravariantly) functorial in both variables, with respect to class-bijective homomorphisms, respectively interpretations.
To verify functoriality, note that we have the obvious identities
\begin{align*}
f^{-1} \cdot g^{-1} \cdot \@K = (g \circ f)^{-1} \cdot \@K, &&
\alpha^* \beta^* \@K = (\beta \circ \alpha)^* \@K, &&
f^{-1} \cdot \alpha^*\@K = \alpha^*(f^{-1} \cdot \@K)
\end{align*}
(as well as $\id^{-1} \cdot \@K = \@K$ and $\id^* \@K = \@K$).

\begin{remark}
\label{rmk:str-homom}
\Cref{def:str}\cref{def:str:enum,def:str:param} are manifestly syntax-independent, depending only on the spaces of models of $\@T$ and not any particular axiomatization.
We now give reformulations of the notion of structuring in terms of the other two ways of presenting a theory from \cref{rmk:interp-types}, namely \cref{rmk:interp-types:homom} the algebra of formulas modulo $\@T$-equivalence and \cref{rmk:interp-types:types} the spaces of types.

Recall from \cref{ex:interp-mod} that a countable model $\@M \in \Mod_Y(\@T)$ is essentially by definition a homomorphism $\phi |-> \phi^\@M : \@L_{\omega_1\omega}^n/\@T -> 2^{Y^n}$ taking formulas to relations for each $n$, preserving $\bigwedge, \exists$, etc.
Thus, a structuring $\@M = (\@M_C)_{C \in X/E} \in \Mod_E(\@T)$ is a family of such homomorphisms
\begin{align*}
\yesnumber
\label{eq:str-homom-classwise}
(\@L_{\omega_1\omega}^n/\@T &--> 2^{C^n})_{n < \omega, C \in X/E} \\
\shortintertext{or}
(\@L_{\omega_1\omega}^n/\@T &--> 2^{[x]_E^n})_{n < \omega, x \in X} \\
\phi &|--> \phi^{\@M_{[x]_E}}
\end{align*}
which is $E$-invariant in $x$.
Now a family of relations $\phi^{\@M_{[x]_E}} \in 2^{[x]_E^n} \subseteq 2^{X^n}$, for each $x \in X$, is equivalently a single relation in $2^{X^{n+1}}$ (since $(2^{X^n})^X \cong 2^{X^{n+1}}$), which is contained in
\begin{align*}
E^n_X :=& \set{(x, x_0, \dotsc, x_{n-1}) \in X^{n+1}}{(x_0,\dotsc,x_{n-1}) \in [x]_E^n} \\
=& \set{(x, x_0, \dotsc, x_{n-1}) \in X^{n+1}}{x \mathrel{E} x_0 \mathrel{E} \dotsb \mathrel{E} x_{n-1}}.
\end{align*}
(Here, the notation $E^n_X$ refers to the fiber product of $n$ copies of $E$ over $X$ via the first projection.)
The family $(\phi^{\@M_{[x]_E}} \in 2^{[x]_E^n})_{x \in X}$ corresponds to the single Borel relation $\~\phi^\@M \subseteq E^n_X$ from \labelcref{def:str}\cref{def:str:fiber'}.
Thus, a structuring $\@M \in \Mod_E(\@T)$ is equivalently a homomorphism
\begin{align*}
\yesnumber
\label{eq:str-homom}
(\@L_{\omega_1\omega}^n/\@T &--> \@B^E(E^n_X))_{n < \omega} \\
\phi &|--> \~\phi^\@M,
\end{align*}
where $\@B^E(E^n_X) \subseteq \@B(E^n_X)$ is the $\sigma$-algebra of Borel sets $R \subseteq E^n_X$ which are $E$-invariant in the first coordinate; these may be thought of as ``Borel families of $n$-ary relations in $E$-classes''.

Note that here, ``homomorphism'' means that countable Boolean operations are preserved, while variable substitutions, quantifiers, and equality operate on the last $n$ coordinates only (corresponding to the usual operations in $2^{C^n}$ from \cref{eq:str-homom-classwise}).
Thus for $s : m -> n$, variable substitution along $s$ maps
\begin{align*}
\@B^E(E^m_X) &--> \@B^E(E^n_X) \\
R &|--> \{(x,x_0,\dotsc,x_{n-1}) \mid R(x,x_{s(0)},\dotsc,x_{s(m-1)})\};
\end{align*}
while $\exists$ is given by image along the projection $E^{n+1}_X -> E^n_X$, and likewise $=$ is given by the image of the diagonal embedding $E = E^1_X -> E^2_X$.
In the following subsection, we will show that the family of $\sigma$-algebras $(\@B^E(E^n_X))_{n < \omega}$ equipped with these operations is isomorphic to the Lindenbaum--Tarski hyperdoctrine of a theory, analogous to the theory of $Y$-enumerated sets from \cref{ex:interp-mod}, so that a $\@T$-structuring, like a countable $\@T$-model, may be understood as an interpretation.

Note also that $\@B^E(E^n_X)$ is isomorphic to the Borel sets in the quotient space $E^n_X/E$ (meaning the quotient by the equivalence relation of $E$-equivalence in the first coordinate).
For $n \ge 1$, we have $E^n_X/E \cong E^{n-1}_X$, along which the $E$-invariant $\~\phi^\@M \subseteq E^n_X$ descends to $\phi^\@M \subseteq E^{n-1}_X$ from \labelcref{def:str}\cref{def:str:global'}.
(For $n = 0$, we instead have $E^n_X/E = X/E$.)
Since $\@L_{\omega_1\omega}^n/\@T$ is isomorphic to the Borel sets $\sqsqbr{\phi}$ in the space of types $\@S_n(\@T)$ (\cref{def:types}), \cref{eq:str-homom} becomes preimage under the Borel maps
\begin{align*}
\yesnumber
\label{eq:str-types}
\tp_\@M^n : E^n_X/E &--> \@S_n(\@T) \\
[(x,x_0,\dotsc,x_{n-1})] &|--> \tp(\@M_{[x]_E},x_0,\dotsc,x_{n-1}),
\end{align*}
with $(\tp_\@M^n)^{-1}(\sqsqbr{\phi}) = \~\phi^\@M$.
Thus a structuring $\@M \in \Mod_E(\@T)$ is also equivalently given by a family of such Borel maps $(\tp_\@M^n)_{n < \omega}$, subject to commutativity with image/preimage under the variable projections $E^n_X/E -> E^m_X/E$ for each $s : m -> n$ (analogous to the conditions in \cref{rmk:interp-types}\cref{rmk:interp-types:types}).
\end{remark}

\subsection{Scott theories of CBERs}
\label{sec:scott}

We now define an operation $E |-> \@T_E$ that associates to each CBER $E$ a canonical theory $\@T_E$ that ``completely encodes'' $E$, thereby allowing the concepts of ``structurability'', ``class-bijective homomorphism'', and (as shown in the next subsection) even ``CBER'' to be subsumed by equivalent model-theoretic concepts.
This construction is essentially from \cite[\S4.2, B.2]{CK}, from which we also adopt the name ``Scott sentence of a CBER'', tweaked to fit our convention of working with theories instead of sentences.
The main novelty of our approach here is to do everything in terms of the following intrinsic definition in terms of $E$, rather than a specific syntactic encoding as in \cite[\S4.2]{CK} (we will also review a version of that encoding below; see \cref{cst:scott-explicit}).

\begin{definition}
\label{def:scott}
Let $(X,E)$ be a CBER.
The \defn{Scott theory} of $E$ is the countable $\@L_{\omega_1\omega}$ theory $(\@L_E,\@T_E)$ defined uniquely up to bi-interpretability via \cref{thm:mod-equiv} by declaring its models on any countable set $Y$ to be bijections between $Y$ and some $E$-class:
\begin{align*}
\Mod_Y(\@T_E) :=& \{u \in X^Y \mid u : Y \cong C \text{ for some $C \in X/E$}\} \\
\yesnumber
\label{eq:scott-borel}
=& \set*{u \in X^Y}{\begin{aligned}
& \forall y \ne z \in Y\, (u(y) \ne u(z)), \\
& \forall y, z \in Y\, (u(y) \mathrel{E} u(z)), \\
& \forall x \mathrel{E} u(y_0)\, \exists y \in Y\, (u(y) = x)
\end{aligned}} \quad \text{for arbitrary $y_0 \in Y$}
\end{align*}
(and $\Mod_Y(\@T_E) := \emptyset$ if $Y = \emptyset$).
This clearly defines a standard Borel subspace of $X^Y$.
For two countable sets $Y, Z$, the logic action $\Sym(Y,Z) \times \Mod_Y(\@T_E) -> \Mod_Z(\@T_E)$ is given by relabeling: for $g : Y \cong Z$ and $u \in \Mod_Y(\@T_E)$,
\begin{eqalign*}
g \cdot u :=& u \circ g^{-1}.
\end{eqalign*}
We have a \defn{tautological $\@T_E$-structuring} $\@H_E$ of $E$, given by the bijections $(\@H_E)_C := \id_C : C \cong C$.
\end{definition}

Informally speaking, the models of $\@T_E$ are just the $E$-classes, with every element labeled.
This is made precise by the following alternate characterizations of $\@T_E$, in terms of \cref{rmk:interp-types}\cref{rmk:interp-types:types,rmk:interp-types:homom}:

\begin{proposition}
\label{thm:scott-types-formulas}
For each $n \in \#N$,
the $n$-types of $\@T_E$ are determined by the Borel isomorphism
\begin{align*}
\yesnumber
\label{eq:scott-types}
\tp^n_{\@H_E} : E^n_X/E &\cong \@S_n(\@T_E) \\
[(x,x_0,\dotsc,x_{n-1})] &|-> \tp(\id_{[x]_E}, x_0,\dotsc,x_{n-1}),
\end{align*}
where $E^n_X/E$ is the space of $n$-tuples in $E$-classes ($\cong E^{n-1}_X$ for $n \ge 1$) from \cref{eq:str-types}.

Thus, the algebra of $n$-ary formulas modulo $\@T_E$ is determined by the isomorphism
\begin{alignat*}{4}
\yesnumber
\label{eq:scott-formulas}
(\@L_E)_{\omega_1\omega}^n/\@T_E &\cong \@B(\@S_n(\@T_E)) &&\cong \@B(E^n_X/E) &&\cong \@B^E(E^n_X), \\
\phi &|-> \sqsqbr{\phi} &&|-> \phi^{\@H_E} &&|-> \~\phi^{\@H_E},
\end{alignat*}
where $\@B^E(E^n_X)$ is the algebra of ``Borel families of $n$-ary relations in $E$-classes'' from \cref{eq:str-homom}.
\end{proposition}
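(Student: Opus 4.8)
The second display \cref{eq:scott-formulas} will follow formally from the first \cref{eq:scott-types} together with earlier identifications, so the crux is to show that $\tp^n_{\@H_E}$ is a Borel isomorphism. The plan is to use the Lopez-Escobar \cref{thm:mod-types} to identify $\@S_n(\@T_E)$ with the space of isomorphism classes of $n$-pointed models of $\@T_E$, and then to exhibit $\tp^n_{\@H_E}$ as a bijection onto it with an explicitly Borel inverse. I deliberately avoid appealing to the fact that a Borel bijection of standard Borel spaces is an isomorphism, since $\@S_0(\@T_E) = X/E$ need not be standard Borel; the explicit-inverse argument works uniformly in $n$, including $n = 0$.

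First I would pin down the isomorphism classes of $n$-pointed models. By \cref{def:scott}, a model of $\@T_E$ on a countable set $Y$ is a bijection $u : Y \cong C$ onto some $E$-class $C$, and the logic action is $g \cdot u = u \circ g^{-1}$. Hence two $n$-pointed models $(u : Y \cong C, \vec a)$ and $(u' : Z \cong C', \vec a')$ are isomorphic iff there is $g \in \Sym(Y,Z)$ with $u \circ g^{-1} = u'$ and $g(\vec a) = \vec a'$; comparing images forces $C = C'$, whence $g = u'^{-1} \circ u$, and the remaining condition $g(\vec a) = \vec a'$ reduces to $u(\vec a) = u'(\vec a')$. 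Thus the isomorphism class of $(u, \vec a)$ is determined precisely by the pair $(\mathrm{im}(u), u(\vec a))$ consisting of an $E$-class $C$ and a tuple in $C^n$, and every such pair arises (take $u = \id_C$). This is exactly the data of a point of $E^n_X/E$, namely an $E$-class $C = [x]_E$ together with a tuple $(x_0,\dotsc,x_{n-1}) \in C^n$; and under these identifications the resulting bijection $E^n_X/E \to \@S_n(\@T_E)$ is $[(x,x_0,\dotsc,x_{n-1})] \mapsto \tp(\id_{[x]_E}, x_0,\dotsc,x_{n-1})$, i.e.\ $\tp^n_{\@H_E}$.

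For Borelness in both directions: $\tp^n_{\@H_E}$ is Borel because $\@H_E$ is a structuring, so \cref{eq:str-types} applies. For the inverse, I would descend the evaluation map sending $(u, \vec a)$ to the class of $(u(a_0), u(a_0), \dotsc, u(a_{n-1}))$ in $E^n_X/E$ (for $n = 0$, send $u$ to its image class in $X/E$) from $\bigsqcup_Y \Mod_Y^n(\@T_E)$ to $E^n_X/E$. This map is manifestly Borel and invariant under the logic action, so by \cref{thm:mod-types} it descends to a Borel map $\bar\Phi_n : \@S_n(\@T_E) \to E^n_X/E$; a direct check against the descriptions in the previous paragraph shows $\bar\Phi_n$ is a two-sided inverse to $\tp^n_{\@H_E}$, establishing \cref{eq:scott-types}.

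Finally, \cref{eq:scott-formulas} follows by reading off three isomorphisms: the leftmost $(\@L_E)_{\omega_1\omega}^n/\@T_E \cong \@B(\@S_n(\@T_E))$, $\phi \mapsto \sqsqbr\phi$, is \cref{def:types}; the middle one is preimage under the Borel isomorphism $\tp^n_{\@H_E}$, which by $(\tp^n_{\@H_E})^{-1}(\sqsqbr\phi) = \~\phi^{\@H_E}$ and the descent $E^n_X/E \cong E^{n-1}_X$ of \cref{rmk:str-homom} sends $\sqsqbr\phi \mapsto \phi^{\@H_E}$; and the rightmost $\@B(E^n_X/E) \cong \@B^E(E^n_X)$ is the standard identification of Borel sets of a quotient with their invariant lifts, sending $\phi^{\@H_E}$ to $\~\phi^{\@H_E}$. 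I expect the main obstacle to be the isomorphism analysis of the second paragraph, namely extracting the precise equivalence $(u,\vec a) \cong (u',\vec a') \iff \mathrm{im}(u) = \mathrm{im}(u') \text{ and } u(\vec a) = u'(\vec a')$ from the superficially unfamiliar description of the models of $\@T_E$ and the logic action $g \cdot u = u \circ g^{-1}$; once this is in hand, both displays are essentially bookkeeping.
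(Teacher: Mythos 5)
Your proposal is correct, and its core — the analysis showing that the isomorphism class of an $n$-pointed model $(u,\vec a)$ of $\@T_E$ is determined by the pair $(\operatorname{im}(u), u(\vec a))$, so that $\tp^n_{\@H_E}$ is a bijection with $[(x,u(\vec a))]$ the unique preimage of $\tp(u,\vec a)$ — is exactly the paper's argument. The only divergence is in how Borelness of the inverse is obtained. The paper disposes of it in one line by noting that both $E^n_X/E$ and $\@S_n(\@T_E)$ are quotients of Polish group actions and invoking \cref{thm:polgrpact}; note that this applies to such quotients even when they are not standard Borel, so your worry about $n=0$ does not actually force a detour (the relevant fact is not ``a Borel bijection of standard Borel spaces is an isomorphism'' but the stronger statement for Polish-group-action quotients). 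Your alternative — descending the manifestly Borel, isomorphism-invariant evaluation map $(u,\vec a) \mapsto [(u(a_0), u(\vec a))]$ through \cref{thm:mod-types} to get an explicit Borel inverse — is also valid and uniform in $n$, and is somewhat more self-contained at the cost of a few extra lines. Your reading of \cref{eq:scott-formulas} as bookkeeping on top of \cref{eq:scott-types} matches the paper, which likewise offers no separate argument for it.
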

\begin{proof}
For any model $u \in \Mod_Y(\@T_E)$, i.e., $u : Y \cong u(Y) \in X/E$, and $\vec{a} \in Y^n$, the unique preimage of $\tp(u,\vec{a}) \in \@S_n(\@T_E)$ under \cref{eq:scott-types} is easily seen to be $[(x,u(\vec{a}))]$ for any $x \in u(Y)$.
So $\tp^n_{\@H_E}$ is a bijection; since $E^n_X/E, \@S_n(\@T_E)$ are quotients by Polish group actions, this implies Borel isomorphism.
\end{proof}

\begin{corollary}[{see \cite[B.2]{CK}}]
\label{thm:str-interp}
For any CBER $(X,E)$ and theory $(\@L,\@T)$, we have a bijection%
\begin{align*}
\{\text{interpretations } \@T -> \@T_E\} &\cong \Mod_E(\@T) = \{\text{$\@T$-structurings of $E$}\} \\
\alpha &|-> \alpha^*(\@H_E).
\end{align*}
\end{corollary}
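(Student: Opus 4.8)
The plan is to exhibit the claimed bijection by unwinding both sides into the common language of families of Borel maps from $\@S_n$-spaces, as organized in \cref{rmk:interp-types} and \cref{rmk:str-homom}, and checking that the two descriptions literally coincide.

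Recall from \cref{rmk:interp-types}\cref{rmk:interp-types:types} that an interpretation $\alpha : \@T -> \@T_E$ is the same data as a family of Borel maps $(\alpha^*_n : \@S_n(\@T_E) -> \@S_n(\@T))_{n < \omega}$ commuting with the projections $\partial_s$ for all $s : m -> n$, taken up to the identification of $\@S_n$ with its Borel $\sigma$-algebra. On the other hand, by the last paragraph of \cref{rmk:str-homom}, a $\@T$-structuring $\@N \in \Mod_E(\@T)$ is precisely a family of Borel maps $(\tp^n_\@N : E^n_X/E -> \@S_n(\@T))_{n < \omega}$ commuting with the image/preimage operations under the variable projections $E^n_X/E -> E^m_X/E$. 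Now \cref{thm:scott-types-formulas}, specifically the isomorphism \cref{eq:scott-types}, gives canonical Borel isomorphisms $\tp^n_{\@H_E} : E^n_X/E \cong \@S_n(\@T_E)$; and by the discussion of projection maps in \cref{rmk:types-simplicial}, these isomorphisms intertwine the variable projections on the $E^n_X/E$ with the projections $\partial_s$ on the $\@S_n(\@T_E)$, since both are induced by the same substitutions $s : m -> n$ applied to the last coordinates. Precomposing a structuring-datum $(\tp^n_\@N)_n$ with $(\tp^n_{\@H_E})^{-1}$ therefore sends it to a family $(\alpha^*_n)_n$ with exactly the commutativity required of an interpretation, and conversely; this is the desired bijection at the level of the two abstract descriptions.

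It remains only to check that this abstract bijection is given concretely by $\alpha |-> \alpha^*(\@H_E)$ as asserted. For this I would trace through the definitions: given $\alpha$, its reduct structuring $\alpha^*(\@H_E)$ has, by \cref{def:str-interp}, classes $(\alpha^*\@H_E)_C = \alpha^*((\@H_E)_C) = \alpha^*(\id_C)$, and so its type maps are $\tp^n_{\alpha^*\@H_E} = \alpha^*_n \circ \tp^n_{\@H_E}$, since taking an $\alpha$-reduct corresponds on types precisely to postcomposition with $\alpha^*_n$ (this is the passage from \cref{rmk:interp-types}\cref{rmk:interp-types:mod} to \cref{rmk:interp-types:types}, applied fiberwise over $E$-classes). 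This is exactly the composite prescribed by the abstract bijection above, so the two maps agree.

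The only genuinely delicate point—and the step I would expect to require the most care—is the verification that the isomorphisms $\tp^n_{\@H_E}$ of \cref{eq:scott-types} really do commute with the full system of variable-substitution maps $\partial_s$ for \emph{all} $s : m -> n$, not merely the coordinate projections $\partial_n$; one must confirm that the substitution, existential-quantifier, and equality operations described on $\@B^E(E^n_X)$ at the end of \cref{rmk:str-homom} match, term by term, the operations $\partial_s$ on $\@S_n(\@T_E)$ under the identification $\@B(\@S_n(\@T_E)) \cong \@B^E(E^n_X)$ of \cref{eq:scott-formulas}. Since every $s$ factors into inclusions, surjections, and permutations (as in \cref{rmk:types-simplicial}), it suffices to check the three generating cases, which reduce to the computations \cref{eq:types-subst}, \cref{eq:types-exists}, and \cref{eq:types-eq} on the syntactic side against the corresponding geometric operations on $E^n_X/E$. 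Once this compatibility is in hand, everything else is the formal bookkeeping sketched above, and the bijectivity is immediate from that of each $\tp^n_{\@H_E}$.
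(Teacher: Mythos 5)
Your proposal is correct and follows essentially the same route as the paper: both identify interpretations with families of maps on type spaces (\cref{rmk:interp-types}), identify structurings with families of type maps on $E^n_X/E$ (\cref{rmk:str-homom}), transfer one to the other via the isomorphisms $\tp^n_{\@H_E}$ of \cref{thm:scott-types-formulas}, and verify the concrete formula through the computation $\tp^n_{\alpha^*(\@H_E)} = \alpha^*_n \circ \tp^n_{\@H_E}$. The compatibility with variable substitutions that you flag as the delicate point is exactly what the paper's setup in \cref{rmk:str-homom} and \cref{rmk:types-simplicial} is designed to make automatic, so your extra care there is sound but not a departure from the paper's argument.
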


\begin{proof}
Note that $\alpha^*(\@H_E)$ is the structuring such that the type map \cref{eq:str-types} is, via \cref{eq:scott-types},
\begin{alignat*}{2}
\tp_{\alpha^*(\@H_E)}^n = \alpha^*_n \circ \tp_{\@H_E}^n : E^n_X/E &\cong \@S_n(\@T_E) &&--> \@S_n(\@T) \\
[(x,x_0,\dotsc,x_{n-1})] &|-> \tp(\id_{[x]_E},\vec{x}) &&|--> \tp(\alpha^*((\@H_E)_{[x]_E}), \vec{x}).
\end{alignat*}
Or via \cref{eq:scott-formulas}, $\alpha^*(\@H_E)$ interprets formulas as
\begin{alignat*}{2}
\@L_{\omega_1\omega}^n/\@T &--> (\@L_E)_{\omega_1\omega}^n/\@T_E &&\cong \@B^E(E^n_X) \\
[\phi(x_0,\dotsc,x_{n-1})] &|--> \alpha(\phi) &&|-> \~{\alpha(\phi)}^{\@H_E}.
\end{alignat*}
Thus the inverse of the bijection takes a structuring $\@M$ to $\alpha : \@T -> \@T_E$ such that $\smash{\~{\alpha(\phi)}^{\@H_E}} = \smash{\~\phi^\@M}$, or $\alpha^*_n \circ \tp_{\@H_E}^n = \tp_\@M^n$; this determines a unique interpretation $\alpha$ by \cref{thm:scott-types-formulas} and \cref{rmk:interp-types}.
\end{proof}

\begin{proposition}[{see \cite[4.7]{CK}}]
\label{thm:str-classbij}
For any CBERs $(X,E), (Y,F)$, we have a bijection
\begin{align*}
\{\text{class-bijective homomorphisms } E -> F\} &\cong \Mod_E(\@T_F) = \{\text{$\@T_F$-structurings of $E$}\} \\
f &|-> f^{-1} \cdot \@H_F.
\end{align*}
\end{proposition}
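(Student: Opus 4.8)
The plan is to verify directly that the stated map $f \mapsto f^{-1} \cdot \@H_F$ and its evident inverse are mutually inverse bijections, the only real content being the translation between bijection-valued $\@T_F$-structurings and class-bijective homomorphisms, together with the attendant Borelness. First I would unwind the structuring $f^{-1} \cdot \@H_F$. By \cref{def:scott} the tautological structuring has $(\@H_F)_{f(C)} = \id_{f(C)}$, and the logic action is precomposition with the inverse, $g \cdot u = u \circ g^{-1}$; so by \cref{def:str-classbij}, applying $g = (f|C)^{-1} : f(C) \cong C$ yields
\[
(f^{-1} \cdot \@H_F)_C = (f|C)^{-1} \cdot \id_{f(C)} = \id_{f(C)} \circ (f|C) = f|C : C \cong f(C)
\]
for each $E$-class $C$. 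This is a bijection onto the $F$-class $f(C)$, hence a genuine model of $\@T_F$ on $C$. Thus $f^{-1} \cdot \@H_F$ records precisely the restricted bijections $f|C$, and it is a Borel $\@T_F$-structuring by \cref{def:str-classbij}.

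Next I would construct the inverse. By \cref{def:scott}, a $\@T_F$-structuring $\@M = (\@M_C)_{C \in X/E}$ consists of bijections $\@M_C : C \cong D_C$ onto $F$-classes $D_C \in Y/F$, one per $E$-class. I would set $f_\@M(x) := \@M_{[x]_E}(x)$, so that $f_\@M|C = \@M_C$ is a bijection of $C$ onto the $F$-class $f_\@M(C) = D_C$; thus $f_\@M$ is a class-bijective homomorphism once Borelness is checked. For the latter I would fix a Borel family of enumerations $(g_x : \abs{[x]_E} \cong [x]_E)$ as in \cref{thm:lusin-novikov-cber}. By \cref{def:str}\cref{def:str:enum}, the map $x \mapsto g_x^{-1} \cdot \@M_{[x]_E} = \@M_{[x]_E} \circ g_x$ is Borel into $\bigsqcup_{N \le \omega} \Mod_N(\@T_F)$; composing with the Borel evaluation $(u, i) \mapsto u(i) : \bigsqcup_N (\Mod_N(\@T_F) \times N) \to Y$ and the Borel index $x \mapsto g_x^{-1}(x)$ gives
\[
f_\@M(x) = (\@M_{[x]_E} \circ g_x)(g_x^{-1}(x)),
\]
which is therefore Borel. (Here a model of $\@T_F$ on a countable set is literally a bijection onto an $F$-class, so it can be evaluated at a point of its domain; this is the observation that makes the formulation \cref{def:str:enum} applicable.)

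Finally I would confirm the two assignments are mutually inverse, which is now immediate from the two displayed computations: starting from a class-bijective homomorphism $f$ we get $f_{f^{-1} \cdot \@H_F}(x) = (f^{-1} \cdot \@H_F)_{[x]_E}(x) = (f|[x]_E)(x) = f(x)$, while starting from a structuring $\@M$ we get $(f_\@M^{-1} \cdot \@H_F)_C = f_\@M|C = \@M_C$. I expect the only nonroutine step to be the Borelness of $f_\@M$, i.e.\ that evaluating the bijection-valued structuring along the diagonal produces a Borel map $X \to Y$; everything else is a bookkeeping of definitions. As an alternative more conceptual route, one could instead compose the bijection $\Mod_E(\@T_F) \cong \{\text{interpretations } \@T_F \to \@T_E\}$ of \cref{thm:str-interp} with the description (via \cref{thm:mod-equiv}) of such interpretations as equivariant Borel families $u \mapsto f \circ u : \Mod_W(\@T_E) \to \Mod_W(\@T_F)$, but the direct verification above is shorter.
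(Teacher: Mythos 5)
Your proposal is correct and follows essentially the same route as the paper: identify a class-bijective homomorphism $f$ with the family of bijections $(f|C)_C$, observe these are exactly the models $(f^{-1}\cdot\@H_F)_C$, and match up the Borelness conditions. The paper dismisses the Borelness correspondence as ``easily seen,'' whereas you verify the nontrivial direction explicitly via the enumeration formulation \labelcref{def:str}\cref{def:str:enum} and the evaluation map; that is a welcome elaboration, not a different argument.
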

\begin{proof}
A class-bijective homomorphism $f : E -> F$ is clearly the same thing as a family of bijections
$
(f|C : C \cong f(C) \in Y/F)_{C \in X/E},
$
i.e., a family of models of $\@T_F$
\begin{equation*}
\paren[\big]{f|C = \id_{f(C)} \circ f|C = (f|C)^{-1} \cdot (\@H_F)_{f(C)} = (f^{-1} \cdot \@H_F)_{f(C)} \in \Mod_C(\@T_F)}_{C \in X/E};
\end{equation*}
the Borelness of $f$ is easily seen to correspond to the Borelness of the structuring.
\end{proof}

Note that here, $f^{-1} \cdot \@H_F$ is the structuring whose types are, via \cref{eq:scott-types},
\begin{alignat*}{2}
\tp_{f^{-1} \cdot \@H_F}^n : E^n_X/E &--> F^n_Y/F &&\cong \@S_n(\@T_F) \\
[(x,x_0,\dotsc,x_{n-1})] &|--> [(f(x),f(\vec{x}))] &&|-> \tp(\id_{[f(x)]_F}, f(\vec{x})),
\end{alignat*}
Combining the two preceding results, we have

\begin{corollary}[{see \cite[B.3]{CK}}]
\label{thm:scott-full-faithful}
For any CBERs $(X,E), (Y,F)$, we have a bijection
\begin{align*}
\{\text{class-bijective homomorphisms } E -> F\} &\cong \Mod_E(\@T_F) \cong \{\text{interpretations } \@T_F -> \@T_E\}.
\end{align*}
Namely, each $f : E -> F$ corresponds to the unique $\alpha : \@T_F -> \@T_E$ such that $\alpha^*(\@H_E) = f^{-1} \cdot \@H_F$, or such that $\alpha^*_n : \@S_n(\@T_E) \cong E^n_X/E -> F^n_Y/F \cong \@S_n(\@T_F)$ is the map induced by $f$.
\qed
\end{corollary}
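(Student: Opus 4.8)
The plan is to read the statement as nothing more than the composite of the two bijections just established, both of which factor through the common object $\Mod_E(\@T_F)$. Applying \cref{thm:str-interp} with $\@T := \@T_F$ gives a bijection $\{\text{interpretations } \@T_F -> \@T_E\} \cong \Mod_E(\@T_F)$ sending $\alpha |-> \alpha^*(\@H_E)$, while \cref{thm:str-classbij} gives a bijection $\{\text{class-bijective homomorphisms } E -> F\} \cong \Mod_E(\@T_F)$ sending $f |-> f^{-1} \cdot \@H_F$. Composing the latter with the inverse of the former produces the asserted bijection between class-bijective homomorphisms $E -> F$ and interpretations $\@T_F -> \@T_E$. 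No separate existence or uniqueness argument is needed, since both constituent maps are already bijections.

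Next I would extract the explicit correspondence. Under the composite, $f$ and $\alpha$ are matched precisely when they have the same image in $\Mod_E(\@T_F)$, i.e.\ when $f^{-1} \cdot \@H_F = \alpha^*(\@H_E)$; this is exactly the first equation in the ``Namely'' clause, and it determines $\alpha$ uniquely from $f$ (and conversely) because each of the two bijections is injective. To obtain the type-space reformulation I would compare the two families of type maps attached to these structurings. The computation following \cref{thm:str-classbij} identifies $\tp^n_{f^{-1} \cdot \@H_F} : E^n_X/E -> F^n_Y/F \cong \@S_n(\@T_F)$ with the map induced by $f$ on tuples (post-composed with the isomorphism $\tp^n_{\@H_F}$ of \cref{eq:scott-types}), while the proof of \cref{thm:str-interp} gives $\tp^n_{\alpha^*(\@H_E)} = \alpha^*_n \circ \tp^n_{\@H_E}$, where $\tp^n_{\@H_E} : E^n_X/E \cong \@S_n(\@T_E)$ is again the isomorphism of \cref{eq:scott-types}. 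Since a structuring is determined by its family of type maps (\cref{rmk:str-homom}), the equality $f^{-1} \cdot \@H_F = \alpha^*(\@H_E)$ holds iff these type maps agree for every $n$, which under the identifications $E^n_X/E \cong \@S_n(\@T_E)$ and $F^n_Y/F \cong \@S_n(\@T_F)$ says exactly that $\alpha^*_n : \@S_n(\@T_E) -> \@S_n(\@T_F)$ is the map induced by $f$, as claimed.

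I do not expect a genuine obstacle here: the entire content reduces to composing two previously proven bijections and matching their explicit descriptions, so the only work is the bookkeeping of the type maps, which the cited results and their surrounding remarks already supply. This is why the statement is afforded an immediate (\qed) proof.
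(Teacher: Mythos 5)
Your proposal is correct and is exactly the paper's argument: the corollary is stated immediately after the sentence ``Combining the two preceding results,'' and its content is precisely the composite of the bijections from \cref{thm:str-interp} and \cref{thm:str-classbij} through $\Mod_E(\@T_F)$, with the type-map identification read off from \cref{eq:scott-types} and the computations displayed in those two proofs. Nothing further is needed.
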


In other words, the construction of Scott theories $E |-> \@T_E$ extends to a contravariant functor
\begin{align*}
\{\text{CBERs, class-bijective homomorphisms}\} &--> \{\text{$\@L_{\omega_1\omega}$ theories, interpretations}\}
\end{align*}
which is a full and faithful embedding (i.e., restricts to a bijection on each hom-set).
We will characterize its essential image (i.e., image-up-to-isomorphism) in the following subsection.

\medskip
We conclude this subsection by briefly recalling the explicit syntactic axiomatization of Scott theories $\@T_E$ from \cite[\S4.2]{CK}.
We do this in order to keep this paper self-contained, and because some additional light may be shed on this axiomatization from our current perspective.

\begin{definition}
\label{def:Tsep}
The \defn{theory of countable separating families} $(\@L_\sep,\@T_\sep)$ is given by
\begin{align*}
    \@L_\sep & := \{ U_i \}_{i \in \#N} \quad \text{(each $U_i$ unary)}, \\
    \@T_\sep & := \{ \forall y \forall z\, [y \neq z \to \bigvee_{i \in \#N} (U_i(y) <-> \neg U_i(z))] \}.
\end{align*}
Note that for any countable set $Y$, we have a canonical bijection
\begin{align*}
\yesnumber
\label{eq:Tsep}
\Mod_Y(\@L_\sep) = \prod_{i \in \#N} 2^Y &\cong (2^\#N)^Y \\
\@M = (U_i^\@M)_{i \in \#N} &|-> (u_\@M : y |-> (U_i^\@M(y))_i), \\
\shortintertext{which restricts to}
\Mod_Y(\@T_\sep) &\cong \{u : Y -> 2^\#N \mid u \text{ injective}\}.
\end{align*}
Moreover, the logic action on the left is easily seen to correspond on the right to the relabeling action $g \cdot u = u \circ g^{-1}$ as in \cref{def:scott} of the Scott theory of a CBER.
\end{definition}

\begin{construction}
\label{cst:scott-explicit}
Now given a CBER $(X,E)$, we may assume without loss that $X \subseteq 2^\#N$ is a Borel subspace.
Then $\Mod_Y(\@T_E)$, as defined abstractly in \cref{eq:scott-borel}, embeds via \cref{eq:Tsep} into $\Mod_Y(\@T_\sep)$, and so by Lopez-Escobar, $\@T_E$ must be axiomatizable over the language $\@L_\sep$ by $\@T_\sep$ plus some additional axioms.
Namely, we need axioms whose interpretations in a model $\@M \in \Mod_Y(\@L_\sep)$ will correspond via the above bijection \cref{eq:Tsep} to the second and third conditions in \cref{eq:scott-borel}.

To axiomatize the second condition $\forall y, z \in Y\, (u(y) \mathrel{E} u(z))$, note that since $E \subseteq 2^\#N \times 2^\#N$ is Borel, we may write ``$u(y) \mathrel{E} u(z)$'' as a countable Boolean combination of assertions of the form ``$u(y)_i = 1$'' or ``$u(z)_i = 1$'' for various $i \in \#N$, which correspond via \cref{eq:Tsep} to $U_i^\@M(y)$ or $U_i^\@M(z)$ respectively.
Thus letting $\Phi_E(y,z)$ be the corresponding quantifier-free Boolean combination of atomic formulas $U_i(y), U_i(z)$, the sentence
\begin{equation}
\label{eq:scott-explicit-homom}
\forall y, z\, \Phi_E(y, z)
\end{equation}
axiomatizes those $\@M \in \Mod_Y(\@L_\sep)$ corresponding via \cref{eq:Tsep} to $u$ obeying $\forall y, z \in Y\, (u(y) \mathrel{E} u(z))$.

To axiomatize the third condition in \cref{eq:scott-borel}, first write
$
E = \bigcup_{i \in \#N} f_i
$
for Borel functions $f_i : X -> X$ by Lusin--Novikov.
Then rewrite the third condition in \cref{eq:scott-borel} as
\begin{alignat*}{2}
&\exists y_0 \in Y\, \forall x \mathrel{E} u(y_0)\, &&\exists y \in Y\, (u(y) = x) \\
\iff&  \exists y_0 \in Y\, \forall i \in \#N &&\exists y \in Y\, (u(y) = f_i(u(y_0))).
\end{alignat*}
Now similarly to before, using that the graphs of $f_i \subseteq 2^\#N \times 2^\#N$ are Borel, find quantifier-free $\@L_\sep$-formulas $\phi_i(y_0, y)$ axiomatizing ``$u(y) = f_i(u(y_0))$''; then
\begin{align}
\label{eq:scott-explicit-classsurj}
\exists y_0\, \bigwedge_{i \in \#N} \exists y\, \phi_i(y_0, y)
\end{align}
works.
So we may axiomatize $\@T_E$ over $\@L_E := \@L_\sep$ via $\@T_\sep \cup \{\cref{eq:scott-explicit-homom}, \cref{eq:scott-explicit-classsurj}\}$.
\end{construction}

\subsection{Lusin--Novikov functions}
\label{sec:TLN}

We now aim to characterize those $\@L_{\omega_1\omega}$ theories bi-interpretable to a Scott theory of some CBER, i.e., the essential image of the full functorial embedding from \cref{thm:scott-full-faithful}.
The key ingredient is to isolate the role played by the formulas $\phi_i$ obtained from Lusin--Novikov in \cref{cst:scott-explicit}.

\begin{definition}
\label{def:TLN}
The \defn{theory of Lusin--Novikov functions} $(\@L_\LN,\@T_\LN)$ is given by
\begin{eqalign*}
    \@L_\LN := \{ & f_i \}_{i \in \#N} \quad \text{(each $f_i$ a unary function)}, \\
    \@T_\LN := \{ & \forall x \forall y \bigvee_i (f_i(x) = y) \}.
\end{eqalign*}
(Following \cref{cvt:relational}, we formally encode each $f_i$ as its graph relation $F_i(x,y) \iff f_i(x) = y$, and add to $\@T_\LN$ axioms saying that each $F_i$ is a graph of a function.)
\end{definition}

\begin{example}
\label{ex:TLN-scott}
In \cref{cst:scott-explicit} of the Scott theory $\@T_E$ of a CBER, we have an interpretation $\alpha : \@T_\LN -> \@T_E$ mapping $f_i |-> \phi_i$ (formally, mapping the graph $f_i(x) = y$ to $\phi_i(x,y)$).
Indeed, note that the formula $\Phi_E$ defined there may be given in terms of the $\phi_i$ as
\begin{eqalign*}
\Phi_E(x,y)  :=  \bigvee_i \phi_i(x,y).
\end{eqalign*}
Then the axiom \cref{eq:scott-explicit-homom} in $\@T_E$ becomes precisely $\alpha(\@T_\LN)$.
\end{example}

\begin{proposition}
\label{thm:interp-LN}
A theory $(\@L,\@T)$ interprets $\@T_\LN$ iff the following equivalent conditions hold:
\begin{enumerate}[label=(\roman*)]
\item
The space of 1-types $\@S_1(\@T)$ is standard Borel, and for any pointed model $(\@M,a) \in \Mod_Y^1(\@T)$, the map $b |-> \tp(\@M,a,b) : Y -> \@S_2(\@T)$ is injective.
\item
For any countable set $Y$, the logic action $\Sym(Y) \actson \Mod_Y^1(\@T)$ is free and smooth.
\item
There is a countable set $\@F \subseteq \@L_{\omega_1\omega}^1$ of formulas in one variable, such that every pointed model of $\@T$ is rigid and $\@F$-categorical.
\end{enumerate}
\end{proposition}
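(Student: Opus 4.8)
A theory $(\@L,\@T)$ interprets $\@T_\LN$ iff (i) $\@S_1(\@T)$ is standard Borel and for each pointed model $(\@M,a)$ the map $b \mapsto \tp(\@M,a,b)$ is injective, iff (ii) for each countable $Y$ the logic action $\Sym(Y) \actson \Mod_Y^1(\@T)$ is free and smooth, iff (iii) there is a countable $\@F \subseteq \@L_{\omega_1\omega}^1$ such that every pointed model is rigid and $\@F$-categorical.

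The plan is to prove the equivalence of the three numbered conditions first, and then tie them to interpretability of $\@T_\LN$ via the semantic reformulation of interpretations from \cref{rmk:interp-types}\cref{rmk:interp-types:mod}. The key conceptual point is that an interpretation $\@T_\LN \to \@T$ is, by \cref{rmk:interp-types}, exactly an equivariant Borel family of maps $\alpha^*_Y : \Mod_Y(\@T) \to \Mod_Y(\@T_\LN)$; and a point of $\Mod_Y(\@T_\LN)$ is a sequence of unary functions $(f_i)_{i \in \#N}$ on $Y$ whose graphs cover $Y^2$, i.e.\ for each $a \in Y$ the map $i \mapsto f_i(a)$ surjects onto $Y$. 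Thus an interpretation of $\@T_\LN$ should amount to a Borel, equivariant way of writing, in every model $\@M$ with every basepoint $a$, an enumeration of $M$ by $\@L_{\omega_1\omega}$-definable functions of $a$. I expect this semantic picture to drive the whole argument.

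First I would show (i)$\iff$(ii). By \cref{thm:mod-smooth-cat}, $\@S_1(\@T)$ standard Borel is equivalent to smoothness of $\cong$ on pointed models, which (since the pointed-model isomorphism relation is the orbit equivalence relation of $\Sym(Y) \actson \Mod_Y^1(\@T)$) is smoothness of that action. For freeness, I would observe that a nontrivial stabilizer of $(\@M,a)$ is a nonidentity $g \in \Sym(Y)$ fixing $a$ with $g \cdot \@M = \@M$; applying such $g$ to a point $b$ with $g(b) \neq b$ produces $\tp(\@M,a,b) = \tp(\@M,a,g(b))$, contradicting injectivity in (i), and conversely a failure of injectivity yields (via homogeneity of the $\@L_{\omega_1\omega}$ type, i.e.\ Scott's theorem applied to the two-pointed models) an automorphism of $(\@M,a)$ moving $b$, hence a nontrivial stabilizer. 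For (ii)$\iff$(iii): freeness says each pointed model is rigid, and smoothness of a free action is exactly the existence of a countable separating family of invariant Borel sets, which by Lopez-Escobar corresponds to a countable $\@F \subseteq \@L_{\omega_1\omega}^1$ separating isomorphism types of pointed models — and for rigid models, separating isomorphism types is the same as $\@F$-categoricity (each pointed model being the unique model of $\tp(\@M,a) \cap \@F$).

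The main work, and the step I expect to be the real obstacle, is the equivalence of interpretability with these conditions. For the forward direction, given $\alpha : \@T_\LN \to \@T$, the induced $\alpha^*_Y$ produces on each $(\@M,a)$ a covering family $(f_i^{\alpha^*_Y(\@M)})$; equivariance forces each $f_i$ to be $\Aut(\@M,a)$-equivariant, and coverage forces the $\Aut(\@M)$-orbit of $a$ together with all $f_i(a)$ to be all of $M$ — from this one extracts rigidity and the injectivity/smoothness of (i)–(ii). For the converse, assuming (iii), the strategy is to build the functions $f_i$ directly. Using that $\@S_1(\@T)$ is standard Borel and $\partial_2 : \@S_2(\@T) \to \@S_1(\@T)$ is countable-to-$1$ (by the injectivity in (i), each fiber is the image of $Y$ under the injection $b \mapsto \tp(\@M,a,b)$, hence exactly indexed by the model), I would apply the Lusin--Novikov \cref{thm:lusin-novikov} to $\partial_2$ to obtain countably many Borel sections $\@S_1(\@T) \to \@S_2(\@T)$ covering $\@S_2(\@T)$. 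Each such section, pulled back through the type maps, defines an $\@L_{\omega_1\omega}$-formula $\phi_i(x,y)$ that picks out a definable unary function $y = f_i(x)$ in every pointed model, and coverage of $\partial_2$ translates exactly into $\@T \vdash \forall x\, \forall y\, \bigvee_i (f_i(x) = y)$, i.e.\ $\alpha : f_i \mapsto \phi_i$ is an interpretation $\@T_\LN \to \@T$. The delicate point here is checking that the Borel sections, which live on the quotient type spaces, genuinely descend to $\@L_{\omega_1\omega}$-definable graphs of \emph{functions} (single-valued, total) uniformly across all models — this is where one must use both the standard Borelness of $\@S_1(\@T)$ and the fiberwise injectivity to guarantee that each section hits each type fiber in a way that assembles, model by model, into an honest function rather than a multifunction.
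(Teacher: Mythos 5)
Your proposal is correct in substance and much of its architecture matches the paper's: the equivalence of (i)--(iii) via \cref{thm:mod-types,thm:mod-smooth-cat} together with the rigidity/freeness correspondence, and the forward direction's rigidity computation ($g(b)=g(f_i(a))=f_i(g(a))=f_i(a)=b$) are exactly what the paper does. The backward direction is where you take a genuinely different route. The paper picks a Borel transversal $D_N$ of the free smooth action $\Sym(N) \actson \Mod_N^1(\@T)$ and sets $F_{N,i} := \Sym(N)\cdot(D_N\times\{i\})$, an invariant Borel graph of a covering function, then invokes Lopez-Escobar to obtain the formulas $\phi_i$. You instead apply Lusin--Novikov to the countable-to-one projection $\partial_2 : \@S_2(\@T) \to \@S_1(\@T)$ to get countably many Borel sections covering $\@S_2(\@T)$, and read each off as a formula via $\@B(\@S_2(\@T)) \cong \@L_{\omega_1\omega}^2/\@T$. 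This works: you need $\@S_2(\@T)$ standard Borel, which is \cref{thm:types-smooth}, and the ``delicate point'' you flag is resolved by noting that the fiber $\partial_2^{-1}(\tp(\@M,a))$ is exactly $\{\tp(\@M,a,b) : b\in M\}$ --- every $2$-type over $\tp(\@M,a)$ is realized in $(\@M,a)$ by Scott's isomorphism theorem, and distinct $b$ give distinct types by (i) --- so ``meets each fiber exactly once'' literally says that $\phi_i$ is the graph of a total function in every pointed model. The two constructions are essentially dual (the paper's transversal produces an explicit such family of sections after descending to types), but yours is more abstract and arguably cleaner, while the paper's is more elementary, needing only Lopez-Escobar.

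The one thin spot is the forward direction: you assert that smoothness (equivalently, a countable categorical fragment $\@F$) is ``extracted'' from the interpretation, but give no mechanism, and this is the only step of that direction requiring an idea beyond the rigidity computation. The paper's argument: adjoin the definable $f_i$ to the language; then every existential $\exists y\,\psi(\vec{x},y)$ with $\vec{x}$ nonempty is $\@T$-equivalent to $\bigvee_i \psi(\vec{x}, f_i(x_0))$, so $\@T$ eliminates quantifiers in formulas with at least one free variable, and $\@F$ may be taken to be the atomic formulas in one variable (the atomic $1$-type of $a$ records the full diagram of $\@M$ on the enumeration $\{f_i(a)\}_i$, hence determines $(\@M,a)$ up to isomorphism). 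This fits naturally into your semantic picture of definable enumerations from a basepoint, but as written your proposal does not supply it.
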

\begin{proof}
These conditions are equivalent by \cref{thm:mod-types,thm:mod-smooth-cat}.

$\Longrightarrow$:
Let $(\@M,a) \in \Mod_Y(\@T)$ be a pointed model, and let $g : \@M \cong \@M$ be an automorphism fixing $a$.
Then for any $b \in Y$, there is a Lusin--Novikov function $f_i : Y -> Y$ that is part of $\@M$ such that $f_i(a) = b$, whence $g(b) = g(f_i(a)) = f_i(g(a)) = f_i(a) = b$.
This shows that $(\@M,a)$ is rigid.

To show categoricity over a countable fragment $\@F$, we may as well assume that the Lusin--Novikov functions $f_i$ are part of the language $\@L$ (and $\@T |- \@T_\LN$); if not, then consider the expansion of $\@T$ with such unary functions $f_i$ defined to be equal to the function graphs $\phi_i$ in the image of the interpretation $\@T_\LN -> \@T$.
Then $\@T$ admits quantifier elimination for formulas with at least one free variable, since every existential $\exists y\, \psi(x_0,\dotsc,x_{n-1},y)$, $n \ge 1$, is $\@T$-provably equivalent to $\bigvee_i \psi(x_0,\dotsc,x_{n-1},f_i(x_0))$.
So we may take $\@F$ to consist of just the atomic formulas.
(To undo the expansion of $\@T$, replace these with existential formulas over the function graph relations $\phi_i$.)

$\Longleftarrow$:
Suppose the logic action $\Sym(N) \actson \Mod_N^1(\@T)$ is free and smooth for each $N \le \omega$; let $D_N \subseteq \Mod_N^1(\@T)$ be a Borel transversal.
For each $i < N$, let
\begin{align*}
F_{N,i} :=& \Sym(N) \cdot (D_N \times \{i\}) \subseteq \Mod_N^2(\@T) \\
=& \{(\@M,a,b) \mid g(b) = i \text{ for the unique $g \in \Sym(N)$ such that $g(\@M,a) \in D_N$}\}.
\end{align*}
These are Borel $\Sym(N)$-invariant and form graphs of functions of the first two variables $\@M,a$ covering $\Mod_N^2(\@T)$ as $i$ varies, hence by Lopez-Escobar, are defined by formulas $\phi_{N,i}(x,y)$ such that $\@T |- \text{``there are $N$ elements''} -> \text{``$\phi_{N,i}$ is a function''} \wedge \forall x \forall y \bigvee_i \phi_{N,i}(x,y)$.
For $N \le i < \omega$, let $\phi_{N,i}(x,y) := (x = y)$ be the identity function, and put $\phi_i := \bigvee_{N \le \omega} (\text{``there are $N$ elements''} \wedge \phi_{N,i})$.
Then the $\phi_i$ form an interpretation $\@T_\LN -> \@T$.
\end{proof}

The above conditions may be verified abstractly for a Scott theory $\@T = \@T_E$ of a CBER $(X,E)$, without resorting to the explicit axiomatization in \cref{cst:scott-explicit}.
Indeed, every (unpointed) model of $\@T_E$, i.e., bijection with an $E$-class, is clearly rigid.
And by \cref{thm:scott-types-formulas}, $\@S_1(\@T_E) \cong E^0_X = X$ is standard Borel.
Note also that $\@S_0(\@T_E) \cong X/E$, with the projection $\partial_1 : \@S_1(\@T_E) -> \@S_0(\@T_E)$ (from \cref{rmk:types-simplicial}) corresponding to the quotient map $X ->> X/E$:
\begin{equation*}
\begin{tikzcd}
\mathllap{E \rightrightarrows {}}
X \dar[phantom, "\cong"{rotate=-90}] \rar[two heads] &
X/E \dar[phantom, "\cong"{rotate=-90}]
\\
\mathllap{\ker(\partial_1) \rightrightarrows {}}
\@S_1(\@T) \rar["\partial_1"] &
\@S_0(\@T)
\end{tikzcd}
\end{equation*}
We may thus recover the CBER $(X,E)$ canonically up to isomorphism from $\@T = \@T_E$, as the kernel of $\partial_1 : \@S_1(\@T) -> \@S_0(\@T)$.

\begin{theorem}
\label{thm:scott-LNsep}
For a countable $\@L_{\omega_1\omega}$ theory $\@T$, the following are equivalent:
\begin{enumerate}[label=(\roman*)]
\item \label{thm:scott-LNsep:types}
$\partial_1 : \@S_1(\@T) -> \@S_0(\@T)$ is surjective, $\@S_1(\@T)$ is standard Borel, and for any model $\@M \in \Mod_Y(\@T)$, the map $\tp_\@M : Y -> \@S_1(\@T)$ is injective.
\item \label{thm:scott-LNsep:mod}
$\@T$ has no empty models, and for any countable set $Y$, the logic action $\Sym(Y) \actson \Mod_Y(\@T)$ is free and the logic action $\Sym(Y) \actson \Mod_Y^1(\@T)$ on pointed models is smooth.
\item \label{thm:scott-LNsep:logic}
Every model of $\@T$ is nonempty rigid, and there is a countable set $\@F \subseteq \@L_{\omega_1\omega}^1$ of formulas in one variable such that every pointed model is $\@F$-categorical.
\item \label{thm:scott-LNsep:LNsep}
$\@T$ has no empty models and interprets $\@T_\LN \sqcup \@T_\sep$.
\item \label{thm:scott-LNsep:scott}
$\@T$ is bi-interpretable with the Scott theory $\@T_E$ of a CBER $E$, namely $E = \ker(\partial_1)$.
\end{enumerate}
Thus we have a dual equivalence of categories
\begin{align*}
\{\text{CBERs, class-bijective homomorphisms}\} &\simeq \{\text{$\@L_{\omega_1\omega}$ theories obeying \cref*{thm:scott-LNsep:types}--\cref*{thm:scott-LNsep:scott}, interpretations}\}
\end{align*}
taking a CBER $E$ to its Scott theory $\@T_E$, with the correspondence on morphisms as in \cref{thm:scott-full-faithful}.
\end{theorem}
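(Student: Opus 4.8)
The plan is to establish the cycle of implications \ref{thm:scott-LNsep:types}$\Rightarrow$\ref{thm:scott-LNsep:mod}$\Rightarrow$\ref{thm:scott-LNsep:logic}$\Rightarrow$\ref{thm:scott-LNsep:LNsep}$\Rightarrow$\ref{thm:scott-LNsep:scott}$\Rightarrow$\ref{thm:scott-LNsep:types}, after which the stated dual equivalence of categories follows by combining \ref{thm:scott-LNsep:scott} with the full faithfulness of $E\mapsto\@T_E$ already proved in \cref{thm:scott-full-faithful}. Several of these implications are mild repackagings of \cref{thm:interp-LN}: the equivalence of \ref{thm:scott-LNsep:types}, \ref{thm:scott-LNsep:mod}, \ref{thm:scott-LNsep:logic} among themselves is essentially the content of that proposition (freeness/rigidity and smoothness/categoricity via \cref{thm:mod-types,thm:mod-smooth-cat}), augmented only by the ``no empty models''/surjectivity of $\partial_1$ bookkeeping and by the passage from \emph{pointed} models to \emph{unpointed} models for the freeness of $\Sym(Y)\actson\Mod_Y(\@T)$. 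For this last point I would use that on a pointed model rigidity of $(\@M,a)$ already forces freeness of the unpointed action, since a nontrivial $g\in\Sym(Y)$ fixing $\@M$ would act nontrivially on some point, contradicting rigidity after adjoining that point as a constant.

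The genuinely new content, and the main obstacle, is the implication \ref{thm:scott-LNsep:LNsep}$\Rightarrow$\ref{thm:scott-LNsep:scott} (or equivalently closing the loop from \ref{thm:scott-LNsep:LNsep} back to the type-space conditions and then producing a CBER): given only that $\@T$ has no empty models and interprets $\@T_\LN\sqcup\@T_\sep$, I must manufacture a CBER $E$ and an honest bi-interpretation $\@T\cong\@T_E$. Following the outline in the introduction, I would set $X:=\@S_1(\@T)$ and define $E$ on $X$ as $\ker(\partial_1)$, i.e. $p\mathrel{E}q\iff\partial_1(p)=\partial_1(q)$, so that two $1$-types are $E$-equivalent iff they project to the same $0$-type (equivalently, are realized in isomorphic models). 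The interpretation of $\@T_\sep$ supplies a countable separating family $(U_i)$ of unary formulas; via \cref{thm:interp-LN} (applied to the $\@T_\LN$ factor) the $1$-type space is standard Borel, and the separating family witnesses that $X=\@S_1(\@T)$ is standard Borel with $E$ having a countable separating family of $E$-invariant-complemented Borel sets on each class. The interpretation of $\@T_\LN$ provides unary functions $f_i$ whose graphs cover all pairs within a model, which is exactly what forces each $E$-class to be countable and $E$ itself to be Borel (the fibers of $\partial_1$ are covered by countably many Borel partial bijections induced by the $\partial_{s}$ maps from \cref{rmk:types-simplicial}), so that $E$ is a genuine CBER.

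Having built the CBER $E=\ker(\partial_1)$, the remaining and most delicate task is to verify $\@T\cong\@T_E$, i.e. that the tautological type maps assemble into a bi-interpretation. The strategy is the quantifier-elimination argument signposted in the introduction: exactly as in the proof of \cref{thm:interp-LN}, once the Lusin--Novikov functions $f_i$ are present in (an expansion of) the language, every existential quantifier over a formula with at least one free variable is $\@T$-provably equivalent to a countable disjunction $\bigvee_i\psi(\dots,f_i(x_0))$, so $\@T$ has quantifier elimination down to formulas in one free variable. This reduces the whole hyperdoctrine $(\@L_{\omega_1\omega}^n/\@T)_n$ to the single algebra $\@L_{\omega_1\omega}^1/\@T\cong\@B(\@S_1(\@T))=\@B(X)$ together with the substitution/projection operations $\partial_s$, and I would check that these operations match precisely those defining $\@B^E(E^n_X)$ in \cref{thm:scott-types-formulas} (the $\exists$-operation corresponding to image along the coordinate projections of $E^n_X$, which are countable-to-one by local countability of $E$). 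Concretely, the map $\alpha^*_n:\@S_n(\@T_E)\cong E^n_X/E\to\@S_n(\@T)$ induced by the tautological identifications is a Borel isomorphism commuting with all $\partial_s$; invoking the characterization of interpretations in \cref{rmk:interp-types}\cref{rmk:interp-types:types}, this exhibits a bi-interpretation $\@T\cong\@T_E$. Finally, \ref{thm:scott-LNsep:scott}$\Rightarrow$\ref{thm:scott-LNsep:types} is immediate from \cref{thm:scott-types-formulas}, since a Scott theory visibly satisfies the $1$-type conditions ($\@S_1(\@T_E)\cong X$ standard Borel, $\tp_{\@M}$ injective by rigidity of $E$-classes, and $\partial_1$ surjective onto $\@S_0(\@T_E)\cong X/E$), closing the cycle.
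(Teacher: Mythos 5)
Your overall architecture is sound, and your route to the key implication \cref{thm:scott-LNsep:LNsep}$\Rightarrow$\cref{thm:scott-LNsep:scott} --- identifying the type spaces $\@S_n(\@T) \cong E^n_X/E \cong \@S_n(\@T_E)$ compatibly with the $\partial_s$ via quantifier elimination and then invoking \cref{rmk:interp-types}\cref{rmk:interp-types:types} --- is a legitimate alternative to the paper's argument, which instead builds the equivariant Borel map $\@M \mapsto \tp_\@M : \Mod_Y(\@T) \to \Mod_Y(\@T_E)$ and proves it bijective using freeness of both logic actions plus a bijection on orbits ($\@S_0$'s). Your route is essentially the specialization to CBERs of the paper's later proof of \cref{thm:gpd-scott-LN}, which does go through the simplicial type spaces.

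There is, however, a genuine error: the claim that rigidity of the pointed models $(\@M,a)$ ``already forces freeness of the unpointed action'' is false. A nontrivial $g \in \Aut(\@M)$ with $g(a) = b \ne a$ is not an automorphism of $(\@M,a)$ --- it is an isomorphism $(\@M,a) \cong (\@M,b)$ --- so no contradiction with pointed rigidity arises. Concretely, a transitive $\#Z$-action regarded as a model of $\@T_\LN$ has every pointed model rigid, yet has $\#Z$ many automorphisms. This is exactly the dividing line between this theorem and its groupoid generalization \cref{thm:gpd-scott-LN}: theories interpreting only $\@T_\LN$ have rigid \emph{pointed} models but need not have free unpointed logic actions, and correspond to locally countable groupoids rather than CBERs; if your implication were true, the groupoid case would collapse to the CBER case. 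The correct source of unpointed freeness is the injectivity of $\tp_\@M : Y \to \@S_1(\@T)$ in \cref{thm:scott-LNsep:types} (equivalently the explicit rigidity hypothesis in \cref{thm:scott-LNsep:logic}, which is where $\@T_\sep$ enters via Scott's isomorphism theorem). The same omission resurfaces in your quantifier-elimination step: $\@T_\LN$ alone reduces the hyperdoctrine to the one-variable algebra \emph{together with} the binary graphs $f_i(x) = y$, yielding $\@S_n(\@T) \cong G^{n-1}_X$ for the groupoid $G = \@S_2(\@T)$; you need rigidity of unpointed models to make $\@S_2(\@T) \to \@S_1(\@T)^2$ injective, so that $G$ collapses to the equivalence relation $E = \ker(\partial_1)$ and your ``tautological identifications'' are in fact isomorphisms. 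Both points are repairable, but as written the proposal never isolates the step where $\@T_\sep$ does its work, and the one argument offered in its place is invalid.
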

\begin{proof}
As in \cref{thm:interp-LN}, the equivalence between \cref{thm:scott-LNsep:types,thm:scott-LNsep:mod,thm:scott-LNsep:logic} follows from \cref{thm:mod-types,thm:mod-smooth-cat}, noting that to say $\partial_1 : \@S_1(\@T) -> \@S_0(\@T)$ is surjective means that every model is nonempty.

These conditions are equivalent to \cref{thm:scott-LNsep:LNsep} by \cref{thm:interp-LN}, the fact that models of $\@T_\sep$ are clearly rigid, and that conversely, if models are rigid, then the family $\@F$ from \cref{thm:scott-LNsep:logic} must form a countable separating family of unary relations in every model by Scott's isomorphism theorem.

As noted above, by \cref{cst:scott-explicit} or an easy check, a Scott theory of a CBER obeys \cref{thm:scott-LNsep:types}--\cref{thm:scott-LNsep:LNsep}.

Finally, suppose the theory $\@T$ obeys \cref{thm:scott-LNsep:types}; we verify \cref{thm:scott-LNsep:scott} for $E = \ker(\partial_1)$.
Note first that $E \subseteq \@S_1(\@T)^2$ is Borel, since its lift in $\Mod_Y^1(\@T)^2$ is the preimage under the projection of ${\cong} \subseteq \Mod_Y(\@T)^2$ which is Borel by \cref{thm:types-borel}.
Clearly $E$ is also countable, since we are assuming models to be countable.
So $E = \ker(\partial_1) \subseteq \@S_1(\@T)^2$ is a CBER.

For any countable model $\@M \in \Mod_Y(\@T)$, by \cref{thm:scott-LNsep:types}, we have an injection $\tp_\@M : Y `-> \@S_1(\@T)$, whose image is clearly the $E$-class $\partial_1^{-1}(\tp(\@M))$.
This defines a map
\begin{align*}
f_Y : \Mod_Y(\@T) &--> \Mod_Y(\@T_E) \subseteq \@S_1(\@T)^Y \\
\@M &|--> \tp_\@M,
\end{align*}
which is easily seen to be Borel
(since $f_Y(\@M)(a) = \tp(\@M,a) \in \sqsqbr{\phi} \iff \phi^\@M(a)$)
and equivariant under bijections $g : Y \cong Z$
(since $f_Y(g \cdot \@M)(a) = \tp(g \cdot \@M, a) = \tp(\@M, g^{-1} \cdot a) = (f_Y(\@M) \circ g^{-1})(a)$).
Thus by \cref{rmk:interp-types}\cref{rmk:interp-types:mod}, $f_Y = \alpha_Y^*$ for an interpretation $\alpha : \@T_E -> \@T$.
Since both $\@T, \@T_E$ have rigid models, the logic actions on both $\Mod_Y(\@T), \Mod_Y(\@T_E)$ are free; thus to check that $f_Y$ is bijective, it suffices to check that it is bijective on orbits, i.e., it induces a bijection $\@S_0(\@T) \cong \@S_0(\@T_E)$.

By \cref{thm:scott-types-formulas}, we have a bijection $\@S_0(\@T) \cong \@S_1(\@T)/E \cong \@S_0(\@T_E)$ taking $C |-> \tp(\id_C)$ for each $E$-class $C \in \@S_1(\@T)/E$.
Let the 1-types in $C$ be realized in $\@M \in \Mod_Y(\@T)$; then $\tp_\@M : Y \cong C \subseteq \@S_1(\@T)$.
So $\tp_\@M : \tp_\@M \cong \id_C$ as models of $\@T_E$, and so $\tp(\id_C) = \tp(\tp_\@M) = \tp(f_Y(\@M))$.
\end{proof}

\subsection{Structurability via interpretations}
\label{sec:str-interp}

Using \cref{thm:scott-LNsep}, as well as the universal properties of Scott theories from \cref{sec:scott}, we may now translate questions of structurability into purely model-theoretic terms.

To avoid having to restate this assumption repeatedly, below we will assume that all theories under consideration have no empty models.

\begin{corollary}
\label{thm:str-impl-interp}
For two $\@L_{\omega_1\omega}$ theories $\@T, \@T'$ (in respective languages), the following are equivalent:
\begin{enumerate}[label=(\roman*)]
\item \label{thm:str-impl-interp:str}
Every $\@T$-structurable CBER is $\@T'$-structurable.
\item \label{thm:str-impl-interp:interp}
There exists an interpretation $\@T' -> \@T \sqcup \@T_\LN \sqcup \@T_\sep$.
\end{enumerate}
\end{corollary}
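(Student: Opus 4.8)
The plan is to deduce this purely formally from the structuring--interpretation correspondence \cref{thm:str-interp}, the dual equivalence \cref{thm:scott-LNsep}, and the universal property of the coproduct theory (\cref{def:thy-coprod}); the only genuinely new idea is that \cref{thm:str-impl-interp:str} need only be tested against a single \emph{universal} $\@T$-structurable CBER.

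For the implication \cref{thm:str-impl-interp:interp}$\implies$\cref{thm:str-impl-interp:str}, I would fix an interpretation $\beta : \@T' -> \@T \sqcup \@T_\LN \sqcup \@T_\sep$ and let $E$ be an arbitrary $\@T$-structurable CBER. By \cref{thm:str-interp} the given $\@T$-structuring is an interpretation $\@T -> \@T_E$, and since $\@T_E$ is a Scott theory it satisfies condition \cref{thm:scott-LNsep:LNsep} of \cref{thm:scott-LNsep}, so it also admits an interpretation $\@T_\LN \sqcup \@T_\sep -> \@T_E$. By the coproduct universal property these assemble into a single interpretation $\@T \sqcup \@T_\LN \sqcup \@T_\sep -> \@T_E$, which I compose with $\beta$ to obtain $\@T' -> \@T_E$. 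Applying \cref{thm:str-interp} once more, this interpretation is exactly a $\@T'$-structuring of $E$, so $E$ is $\@T'$-structurable.

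For the converse \cref{thm:str-impl-interp:str}$\implies$\cref{thm:str-impl-interp:interp}, I would instantiate the hypothesis at the generic $\@T$-structurable CBER. Consider $\@T \sqcup \@T_\LN \sqcup \@T_\sep$: it interprets $\@T_\LN \sqcup \@T_\sep$ via the coproduct inclusion, and it has no empty models, since the standing assumption that $\@T$ has none gives $\Mod_\emptyset(\@T \sqcup \@T_\LN \sqcup \@T_\sep) \cong \Mod_\emptyset(\@T) \times \Mod_\emptyset(\@T_\LN) \times \Mod_\emptyset(\@T_\sep) = \emptyset$. Hence by the equivalence \cref{thm:scott-LNsep:LNsep}$\iff$\cref{thm:scott-LNsep:scott} of \cref{thm:scott-LNsep}, it is bi-interpretable with the Scott theory $\@T_E$ of a CBER $E$. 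The coproduct inclusion $\@T -> \@T \sqcup \@T_\LN \sqcup \@T_\sep \cong \@T_E$ is, by \cref{thm:str-interp}, a $\@T$-structuring of $E$, so $E$ is $\@T$-structurable. By hypothesis \cref{thm:str-impl-interp:str}, $E$ is then $\@T'$-structurable, and \cref{thm:str-interp} converts this structuring back into an interpretation $\@T' -> \@T_E \cong \@T \sqcup \@T_\LN \sqcup \@T_\sep$, which is what we want.

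The argument is short precisely because \cref{thm:scott-LNsep,thm:str-interp} have already done the heavy lifting. I expect the only point requiring real care to be the recognition that the entire hypothesis \cref{thm:str-impl-interp:str} is captured by its single instance at the universal CBER $E \cong \ker(\partial_1)$ attached to $\@T \sqcup \@T_\LN \sqcup \@T_\sep$, together with the small verification that this coproduct inherits the ``no empty models'' condition so that \cref{thm:scott-LNsep} is applicable to it.
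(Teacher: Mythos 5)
Your proof is correct and is essentially the paper's own argument, just with the two directions unpacked: the paper phrases it as a chain of equivalences via \cref{thm:str-interp} and \cref{thm:scott-LNsep}, concluding because $\@T \sqcup \@T_\LN \sqcup \@T_\sep$ is the least theory interpreting both $\@T$ and $\@T_\LN \sqcup \@T_\sep$, which is exactly your combination of ``test \cref{thm:str-impl-interp:str} at the universal instance'' and ``compose through the coproduct.'' Your explicit check that the coproduct inherits the no-empty-models standing assumption is a correct (if minor) point the paper leaves implicit.
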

\begin{proof}
We have
\begin{align*}
\text{\cref{thm:str-impl-interp:str}}
&\iff  \forall \text{ CBERs } E,\, (\@T -> \@T_E \implies \@T' -> \@T_E)
    &&\text{by \cref{thm:str-interp}} \\
&\iff  \forall \text{ theories } \@T_E <- \@T_\LN \sqcup \@T_\sep,\, (\@T -> \@T_E \implies \@T' -> \@T_E)
    &&\text{by \cref{thm:scott-LNsep}};
\end{align*}
since the least theory $\@T_E$ (under the interpretability preorder) interpreting both $\@T_\LN \sqcup \@T_\sep$ and $\@T$ is their least upper bound $\@T \sqcup \@T_\LN \sqcup \@T_\sep$ (recall \cref{def:thy-coprod}), this is equivalent to \cref{thm:str-impl-interp:interp}.
\end{proof}

Recall that in \cref{ex:finsub-pt}, we showed how the well-known fact that selecting a finite nonempty set in each class of a CBER ($\@T_\finsub$-structurability) implies smoothness ($\@T_\pt$-structurability) amounts to an interpretation $\@T_\pt -> \@T_\finsub \sqcup \@T_\LN \sqcup \@T_\sep$.
We will exhibit many more such concrete instances of \cref{thm:str-impl-interp} in \cref{sec:examples}.

We note that \cref{thm:str-impl-interp}, as a statement only about \emph{existence} of structurings, contains strictly less information than \cref{thm:scott-LNsep} and \cref{thm:str-interp}, which precisely characterize the set of all possible structurings.
This becomes relevant for problems in Borel combinatorics, rather than just CBERs, where the input data is something richer than an equivalence relation:

\begin{definition}
Given two theories $\@T, \@T'$, an interpretation $\alpha : \@T -> \@T'$, and a $\@T$-structuring $\@M$ of a CBER $E$, an \defn{$\alpha$-expansion} of $\@M$ is a $\@T'$-structuring $\@M'$ such that $\alpha^*(\@M') = \@M$.
\end{definition}

For example, consider the problem of Borel coloring a (locally countable) Borel graph $G \subseteq X^2$.
Letting $E$ be the connectedness relation generated by $G$, this problem may be described as starting with a structuring $\@M$ of $E$ by the theory $\@T_\graph$ of graphs, and seeking an expansion of $\@M$ to a structuring by the theory $\@T_\!{graph+color}$ of graphs equipped with a coloring, i.e., an $\alpha$-expansion for $\alpha : \@T_\graph `-> \@T_\!{graph+color}$ the inclusion.
See \cref{thm:kst-lfcolor} for a concrete instance of this.

\begin{corollary}
\label{thm:str-expan-interp}
For any interpretation $\alpha : \@T -> \@T'$, the following are equivalent:
\begin{enumerate}[label=(\roman*)]
\item \label{thm:str-expan-interp:str}
Every $\@T$-structuring of a CBER admits an $\alpha$-expansion to a $\@T'$-structuring.
\item \label{thm:str-expan-interp:interp}
There exists an interpretation $\beta : \@T' -> \@T \sqcup \@T_\LN \sqcup \@T_\sep$ such that $\beta \circ \alpha : \@T -> \@T \sqcup \@T_\LN \sqcup \@T_\sep$ is the inclusion (up to provable equivalence as usual).
\end{enumerate}
\end{corollary}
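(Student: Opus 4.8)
The plan is to follow the template of \cref{thm:str-impl-interp}, using the bijection between structurings and interpretations into Scott theories (\cref{thm:str-interp}) together with the characterization of Scott theories (\cref{thm:scott-LNsep}), but now tracking the interpretation $\alpha$ instead of merely recording the existence of structurings.

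First I would reformulate \cref{thm:str-expan-interp:str} purely in terms of interpretations. Fix a CBER $E$. By \cref{thm:str-interp}, $\@T'$-structurings of $E$ correspond bijectively to interpretations $\gamma' : \@T' -> \@T_E$ via $\gamma' \mapsto (\gamma')^*(\@H_E)$, and likewise $\@T$-structurings to interpretations $\gamma : \@T -> \@T_E$. Under these bijections the $\alpha$-reduct operation $\@M' \mapsto \alpha^*(\@M')$ becomes precomposition $\gamma' \mapsto \gamma' \circ \alpha$, since $\alpha^*((\gamma')^*(\@H_E)) = (\gamma' \circ \alpha)^*(\@H_E)$ by the identities recorded after \cref{eq:str}. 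Hence an $\alpha$-expansion of the structuring attached to $\gamma$ is precisely an interpretation $\gamma' : \@T' -> \@T_E$ lifting $\gamma$ along $\alpha$, i.e.\ with $\gamma' \circ \alpha = \gamma$. Since by \cref{thm:scott-LNsep} the theories $\@T_E$ range, up to bi-interpretability, over exactly the theories $\@S$ that interpret $\@T_\LN \sqcup \@T_\sep$ and have no empty models, condition \cref{thm:str-expan-interp:str} is equivalent to the lifting statement: \emph{for every such $\@S$ and every $\gamma : \@T -> \@S$ there is $\gamma' : \@T' -> \@S$ with $\gamma' \circ \alpha = \gamma$.}

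Granting this reformulation, both implications are short diagram chases. For \cref{thm:str-expan-interp:interp} $\Rightarrow$ \cref{thm:str-expan-interp:str}, given $\beta$ with $\beta \circ \alpha = \iota$ (the inclusion into $\@T \sqcup \@T_\LN \sqcup \@T_\sep$), and given $\@S$ and $\gamma : \@T -> \@S$ as above, I would use that $\@S$ interprets $\@T_\LN \sqcup \@T_\sep$, say by $\delta$, to combine $\gamma$ and $\delta$, via the coproduct universal property (\cref{def:thy-coprod}), into a single $\bar\gamma : \@T \sqcup \@T_\LN \sqcup \@T_\sep -> \@S$ with $\bar\gamma \circ \iota = \gamma$; then $\gamma' := \bar\gamma \circ \beta$ satisfies $\gamma' \circ \alpha = \bar\gamma \circ \iota = \gamma$. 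Conversely, for \cref{thm:str-expan-interp:str} $\Rightarrow$ \cref{thm:str-expan-interp:interp}, I would apply the reformulation to the single theory $\@S := \@T \sqcup \@T_\LN \sqcup \@T_\sep$ (which interprets $\@T_\LN \sqcup \@T_\sep$ via the coproduct inclusion and has no empty models because $\@T$ does) and to $\gamma := \iota$; the resulting lift $\gamma'$ is exactly the desired $\beta$ with $\beta \circ \alpha = \iota$.

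The only place requiring care—and the crux of the argument—is the reformulation step: checking that the $\alpha$-reduct of structurings matches precomposition by $\alpha$ on interpretations, and that $\@T \sqcup \@T_\LN \sqcup \@T_\sep$ really falls under \cref{thm:scott-LNsep} (no empty models, interprets $\@T_\LN \sqcup \@T_\sep$), so that it occurs as a Scott theory and the reformulation can be applied to it. Once these are in place the two implications are formal. As throughout, all equalities of interpretations are understood up to provable equivalence.
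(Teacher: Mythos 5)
Your proposal is correct and follows essentially the same route as the paper: the same reformulation of \cref{thm:str-expan-interp:str} via \cref{thm:str-interp} and \cref{thm:scott-LNsep} into a lifting statement over all theories interpreting $\@T_\LN \sqcup \@T_\sep$, with the forward direction obtained by specializing to $\@T \sqcup \@T_\LN \sqcup \@T_\sep$ and the converse by combining $\gamma$ with the given interpretation of $\@T_\LN \sqcup \@T_\sep$ via the coproduct universal property. Your explicit check that $\alpha$-reducts correspond to precomposition by $\alpha$ is a worthwhile detail the paper leaves implicit in its citation of \cref{thm:str-interp}.
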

\begin{proof}
We again have
\begin{align*}
\text{\cref{thm:str-expan-interp:str}}
&\iff  \forall \text{ CBERs } E,\, \forall \text{ interpretations } \gamma : \@T -> \@T_E,\, \exists \gamma' : \@T' -> \@T_E\, (\gamma' \circ \alpha = \gamma)
    &&\text{by \labelcref{thm:str-interp}} \\
&\iff  \forall \text{ theories } \@T_E <- \@T_\LN \sqcup \@T_\sep,\, \forall \gamma : \@T -> \@T_E,\, \exists \gamma' : \@T' -> \@T_E\, (\gamma' \circ \alpha = \gamma)
    &&\text{by \labelcref{thm:scott-LNsep}}.
\end{align*}
If this holds, then taking $\@T_E := \@T \sqcup \@T_\LN \sqcup \@T_\sep$ with $\gamma$ the inclusion yields \cref{thm:str-expan-interp:interp}.
Conversely, given $\beta$ from \cref{thm:str-expan-interp:interp}, then for any other $\@T_E, \gamma$ as above, we may combine the interpretation $\gamma$ and the interpretation $\@T_\LN \sqcup \@T_\sep$ into an interpretation from the coproduct $\delta : \@T \sqcup \@T_\LN \sqcup \@T_\sep -> \@T_E$ (recall again \cref{def:thy-coprod}); then $\gamma' := \delta \circ \beta$ works.
\end{proof}

Theories of the form $\@T \sqcup \@T_\LN \sqcup \@T_\sep$ featured above play a special role in structurability.
Note that up to \emph{mutual} interpretability, they are just all the Scott theories of CBERs by \cref{thm:scott-LNsep}, since $\sqcup$ is join in the preorder $->$.
But up to \emph{bi-}interpretability, we may characterize them as follows.

\begin{definition}[{see \cite[4.13]{CK}}]
A class-bijective homomorphism $f : (X,E) -> (Y,F)$ between CBERs which is also injective is called an \defn{invariant (Borel) embedding}, denoted $f : E \sqle^i F$.

Given a class $\@K$ of CBERs, we say $E \in \@K$ is \defn{invariantly universal in $\@K$} if it is $\sqle^i$-largest among (i.e., contains invariant isomorphic copies of all other) CBERs in $\@K$.
By the Borel Schröder--Bernstein theorem \cite[15.7]{Kcdst}, such an $E$ is unique up to Borel isomorphism if it exists.

In particular, there is an \defn{invariantly universal CBER} among all CBERs, denoted $\#E_\infty$.

More generally, for any theory $\@T$, there is an invariantly universal $\@T$-structurable CBER, denoted $\#E_{\infty\@T}$.%
\footnote{Note that in this paper, unlike many other works including \cite{JKL}, \cite{CK}, $\@T$ denotes an arbitrary countable $\@L_{\omega_1\omega}$ theory and \emph{not} the class of trees.}
A CBER $E$ which is isomorphic to some $\#E_{\infty\@T}$ is called \defn{universally structurable}.
In that case, as follows from \cref{thm:einfT} below, we may always take $\@T = \@T_E$.
\end{definition}

\begin{remark}
We may characterize those class-bijective homomorphisms $f : (X,E) -> (Y,F)$ which are invariant embeddings model-theoretically, in terms of the corresponding $\@T_F$-structurings $\@M$ of $E$ from \cref{thm:str-classbij}, or the corresponding interpretations $\alpha : \@T_F -> \@T_E$ from \cref{thm:scott-full-faithful}, as follows.
Note that since $f$ is class-bijective, it is an invariant embedding iff it is also a reduction, i.e., descends to an injection $X/E `-> Y/F$.
Via \cref{thm:str-classbij}, this means that
\begin{enumerate}[label=(\roman*)]
\item
Distinct $E$-classes $C \ne D \in X/E$ receive non-isomorphic $\@T_F$-structures $\@M_C \not\cong \@M_D$.
\end{enumerate}
Since $E$-classes are isomorphism types of $\@T_E$-models, this means
\begin{enumerate}[resume*]
\item
The interpretation $\alpha$ induces an injection $\alpha^*_0 : \@S_0(\@T_E) -> \@S_0(\@T_F)$.
\end{enumerate}
Or dually, between algebras of sentences $\@B(\@S_0(\@T_E)) \cong (\@L_E)_{\omega_1\omega}^0/\@T_E$,
\begin{enumerate}[resume*]
\item
$\alpha$ is a surjection on provable equivalence classes of sentences.
\end{enumerate}
We may also replace sentences with arbitrary formulas here, or 0-types with $n$-types, since $f$ being injective clearly implies the same for the induced maps $E^n_X -> F^n_Y$ which descend to the induced maps $\alpha^*_n : \@S_n(\@T_E) -> \@S_n(\@T_F)$ by \cref{thm:scott-full-faithful}.
We call such $\alpha : \@T_F -> \@T_E$ a \defn{surjective interpretation}.
\end{remark}

\begin{remark}
\label{thm:interp-schroeder-bernstein}
It follows that we have a Schröder--Bernstein theorem for interpretations:
if $\@T, \@T'$ are both Scott theories of CBERs, and there exist surjective interpretations $\@T -> \@T'$ and $\@T' -> \@T$, then there exists a bi-interpretation $\@T \cong \@T'$.
\end{remark}

\begin{proposition}
\label{thm:einf-LNsep}
$\@T_\LN \sqcup \@T_\sep$ is the Scott theory of the invariantly universal CBER $\#E_\infty$.
\end{proposition}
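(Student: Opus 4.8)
The plan is to realize $\@T_\LN \sqcup \@T_\sep$ as the Scott theory of a concrete CBER by \cref{thm:scott-LNsep}, and then to show that this CBER is invariantly universal. First I would set $\@T := \@T_\LN \sqcup \@T_\sep$ and note that it interprets $\@T_\LN \sqcup \@T_\sep$ via the identity, so (restricting throughout to nonempty models, to match the convention $\Mod_\emptyset(\@T_E) = \emptyset$ for Scott theories from \cref{def:scott}) condition \cref{thm:scott-LNsep:LNsep} of \cref{thm:scott-LNsep} holds. Hence $\@T$ is bi-interpretable with the Scott theory $\@T_E$ of the CBER $E := \ker(\partial_1 \colon \@S_1(\@T) \to \@S_0(\@T))$, by \cref{thm:scott-LNsep}\cref{thm:scott-LNsep:scott}. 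It therefore suffices to prove that $E \cong \#E_\infty$, i.e.\ that $E$ is $\sqle^i$-largest, whence $\@T_{\#E_\infty} \cong \@T_E \cong \@T_\LN \sqcup \@T_\sep$.

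By the characterization of invariant embeddings as surjective interpretations (the remark just before \cref{thm:interp-schroeder-bernstein}), for a CBER $F$ we have $F \sqle^i E$ iff there is a \emph{surjective} interpretation $\@T_E \cong \@T_\LN \sqcup \@T_\sep \to \@T_F$ (the correspondence runs this way: a class-bijective homomorphism $F \to E$ corresponds to an interpretation $\@T_E \to \@T_F$, which is surjective exactly when the homomorphism is an invariant embedding). So the task reduces to producing, for every CBER $F$, a surjective interpretation $\@T_\LN \sqcup \@T_\sep \to \@T_F$. This is exactly what the explicit axiomatization in \cref{cst:scott-explicit} supplies: there $\@T_F$ is presented over the language $\@L_F = \@L_\sep$ with $\@T_\sep \subseteq \@T_F$, and \cref{ex:TLN-scott} gives the interpretation $\@T_\LN \to \@T_F$ sending $f_i \mapsto \phi_i$. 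Combining these via the coproduct universal property (\cref{def:thy-coprod}) yields $\gamma \colon \@T_\LN \sqcup \@T_\sep \to \@T_F$ acting by $U_i \mapsto U_i$ and $f_i \mapsto \phi_i$. The point is that $\gamma$ is automatically surjective: its $\@T_\sep$-component is the identity-on-symbols inclusion $\@T_\sep \to \@T_F$ (legitimate since $\@T_F \vdash \@T_\sep$), whose image already contains every generator $U_i$ of $\@L_F = \@L_\sep$, and the image of an interpretation is closed under all $\@L_{\omega_1\omega}$-operations, hence is all of $(\@L_F)_{\omega_1\omega}/\@T_F$. Thus $F \sqle^i E$ for every CBER $F$, so $E$ is invariantly universal and $E \cong \#E_\infty$.

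The genuinely delicate points are bookkeeping rather than substance. One must keep straight the direction of the dual equivalence — invariant embeddings $F \sqle^i E$ correspond to surjective interpretations \emph{out of} $\@T_E$, not into it — and one must use the specific Scott-theory representative of \cref{cst:scott-explicit}, whose full language is exactly $\@L_\sep$; it is precisely this coincidence of languages that makes surjectivity immediate rather than something to be checked formula-by-formula. The remaining wrinkle is the empty-model discrepancy (the literal $\@T_\LN \sqcup \@T_\sep$ admits a model on $\emptyset$, whereas Scott theories do not), which is absorbed into the standing restriction to nonempty models noted in the first paragraph.
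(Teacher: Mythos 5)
Your proof is correct and follows essentially the same route as the paper: both identify $\@T_\LN \sqcup \@T_\sep$ as a Scott theory via \cref{thm:scott-LNsep} and then obtain, for each CBER $F$, the surjective interpretation $\@T_\LN \sqcup \@T_\sep \to \@T_F$ from the explicit axiomatization of \cref{cst:scott-explicit}, with surjectivity coming from the identity on the $\@T_\sep$ part. Your version just spells out the bookkeeping (direction of the duality, the empty-model convention) that the paper leaves implicit.
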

\begin{proof}
For any other CBER $E$, the interpretation $\@T_\LN \sqcup \@T_\sep -> \@T_E$ given by the explicit \cref{cst:scott-explicit} of $\@T_E$ is obviously surjective (on atomic formulas, hence on all formulas), since it is the identity on the $\@T_\sep$ part.

Or in other words, we may obviously find a $(\@T_\LN \sqcup \@T_\sep)$-structuring $\@M$ of $E$ such that distinct $E$-classes $C, D$ receive non-isomorphic $\@M_C \not\cong \@M_D$, by taking the $\@T_\sep$ part of the structuring to come from an embedding into $2^\#N$ (and the $\@T_\LN$ part from Lusin--Novikov applied to $E$).
\end{proof}

\begin{corollary}
\label{thm:einfT-LNsep}
For any theory $\@T$, $\@T \sqcup \@T_\LN \sqcup \@T_\sep$ is the Scott theory of the invariantly universal $\@T$-structurable CBER $\#E_{\infty\@T}$.
\end{corollary}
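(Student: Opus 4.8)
The plan is to mirror the proof of \cref{thm:einf-LNsep}, translating the three defining features of $\#E_{\infty\@T}$ — that it is a CBER, that it is $\@T$-structurable, and that it is $\sqle^i$-largest among $\@T$-structurable CBERs — into properties of the theory $\@T \sqcup \@T_\LN \sqcup \@T_\sep$ via the dictionary of \cref{thm:scott-LNsep}. First, since $\@T$ has no empty models (our standing assumption), neither does the coproduct $\@T \sqcup \@T_\LN \sqcup \@T_\sep$, whose models are triples of models on a common underlying set; and it interprets $\@T_\LN \sqcup \@T_\sep$ via the coproduct injection. Hence by \cref{thm:scott-LNsep} (\cref{thm:scott-LNsep:LNsep}$\Rightarrow$\cref{thm:scott-LNsep:scott}) it is bi-interpretable with the Scott theory $\@T_E$ of the CBER $E := \ker(\partial_1)$ for $\@T \sqcup \@T_\LN \sqcup \@T_\sep$. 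It remains to identify $E$ with $\#E_{\infty\@T}$.

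That $E$ is $\@T$-structurable is immediate: the coproduct injection $\@T -> \@T \sqcup \@T_\LN \sqcup \@T_\sep \cong \@T_E$ is an interpretation, which corresponds to a $\@T$-structuring of $E$ under the bijection of \cref{thm:str-interp}. For invariant universality, let $E'$ be any $\@T$-structurable CBER. Then \cref{thm:str-interp} supplies an interpretation $\@T -> \@T_{E'}$, while the argument of \cref{thm:einf-LNsep} supplies a \emph{surjective} interpretation $\@T_\LN \sqcup \@T_\sep -> \@T_{E'}$. By the universal property of the coproduct (\cref{def:thy-coprod}), these assemble into a single interpretation $\beta : \@T \sqcup \@T_\LN \sqcup \@T_\sep -> \@T_{E'}$ restricting to each factor. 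The key observation is that $\beta$ is again surjective: its precomposition with the coproduct injection $\@T_\LN \sqcup \@T_\sep -> \@T \sqcup \@T_\LN \sqcup \@T_\sep$ is the surjective interpretation above, and since on formulas composition of interpretations is covariant (\cref{rmk:interp-types}\cref{rmk:interp-types:homom}), surjectivity of the composite $\beta_n \circ \iota_n$ on every algebra $\@L_{\omega_1\omega}^n/\@T$ of formulas modulo provable equivalence forces surjectivity of $\beta_n$. Thus $\beta$ is a surjective interpretation $\@T_E -> \@T_{E'}$, which by the correspondence between surjective interpretations and invariant embeddings (the remark preceding \cref{thm:interp-schroeder-bernstein}) yields an invariant embedding $E' \sqle^i E$.

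Therefore $E$ is a $\@T$-structurable CBER containing an invariant copy of every $\@T$-structurable CBER, i.e.\ it is invariantly universal $\@T$-structurable; by uniqueness of such objects up to Borel isomorphism (Borel Schr\"oder--Bernstein, \cite[15.7]{Kcdst}), $E \cong \#E_{\infty\@T}$, and correspondingly $\@T_{\#E_{\infty\@T}} \cong \@T_E \cong \@T \sqcup \@T_\LN \sqcup \@T_\sep$. I expect the only genuinely delicate point to be the surjectivity of $\beta$: one must verify that ``surjective interpretation'' really is inherited by a factor of a surjective composite, which is why I isolate it as the crux. Everything else is a direct application of the universal property of $\sqcup$ as join in the interpretability preorder together with the already-established equivalences, running exactly parallel to the proof of \cref{thm:einf-LNsep} (which is the degenerate case where the $\@T$-factor contributes nothing, apart from the empty-model bookkeeping).
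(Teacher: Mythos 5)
Your proposal is correct and follows essentially the same route as the paper: the inclusion $\@T \to \@T \sqcup \@T_\LN \sqcup \@T_\sep$ gives $\@T$-structurability of the corresponding CBER, and for any other $\@T$-structurable $E'$ one combines the interpretation $\@T \to \@T_{E'}$ from \cref{thm:str-interp} with the surjective interpretation $\@T_\LN \sqcup \@T_\sep \twoheadrightarrow \@T_{E'}$ from \cref{thm:einf-LNsep} into a surjective interpretation out of the coproduct, yielding $E' \sqle^i \#E_{\infty\@T}$. The ``crux'' you isolate --- that surjectivity of the composite with a coproduct injection forces surjectivity of the combined interpretation --- is exactly the step the paper performs implicitly, and your justification of it is sound.
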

\begin{proof}
The inclusion $\@T -> \@T \sqcup \@T_\LN \sqcup \@T_\sep$ shows that $\@T \sqcup \@T_\LN \sqcup \@T_\sep$ is the Scott theory of a $\@T$-structurable CBER $\#E_{\infty\@T}$.
For any other $\@T$-structurable CBER $E$, we get an interpretation $\@T -> \@T_E$ by \cref{thm:str-interp}; combined with the surjective interpretation $\@T_\LN \sqcup \@T_\sep ->> \@T$ given by the preceding result, we get a surjective interpretation $\@T \sqcup \@T_\LN \sqcup \@T_\sep ->> \@T_E$, corresponding to an invariant embedding $E \sqle^i \#E_{\infty\@T}$.
\end{proof}

\begin{example}
\label{ex:einf-aper}
Let $\@T_\inf$ be the \defn{theory of infinite sets} (in the empty language):
\begin{eqalign*}
\@T_\inf := \bigwedge_n \exists x_0 \dotsb \exists x_{n-1} \bigwedge_{i \neq j} (x_i \neq x_j).
\end{eqalign*}
Then $\@T_\inf \sqcup \@T_\LN \sqcup \@T_\sep$ is the Scott theory of the invariantly universal \defn{aperiodic} (i.e., every class is infinite) CBER, denoted $\#E_{\infty\!{aper}}$ in \cref{fig:cber-interp,fig:cber-interp-big}.
\end{example}

\begin{remark}
More generally, for any theory $\@T$ and any Scott theory $\@T_E$, the coproduct $\@T \sqcup \@T_E$ is also a Scott theory, namely of the \defn{universal $\@T$-structurable CBER over $E$} denoted $E \ltimes \@T$ from \cite[4.1]{CK}, with the defining property that a class-bijective homomorphism $F -> E \ltimes \@T$ is the same thing as a class-bijective homomorphism $F -> E$ together with a $\@T$-structuring of $F$.

In particular, if $\@T = \@T_F$ is also a Scott theory, then $\@T_E \sqcup \@T_F$ is the Scott theory of the \defn{class-bijective product} $E \otimes F$ from \cite[4.17]{CK}, the product in the category of CBERs and class-bijective homomorphisms; this is clear from the full embedding given by \cref{thm:scott-full-faithful}.
\end{remark}

The following is the model-theoretic counterpart of \cite[4.13]{CK}:

\begin{corollary}
\label{thm:einfT}
For a countable $\@L_{\omega_1\omega}$ theory $\@T$, the following are equivalent:
\begin{enumerate}[label=(\roman*)]
\item \label{thm:einfT:self}
$\@T \cong \@T \sqcup \@T_\LN \sqcup \@T_\sep$, i.e., $\@T$ is the Scott theory of $\#E_{\infty\@T}$.
\item \label{thm:einfT:other}
$\@T \cong \@T' \sqcup \@T_\LN \sqcup \@T_\sep$ for some theory $\@T'$, i.e., $\@T$ is the Scott theory of some universally structurable CBER.
\item \label{thm:einfT:surj}
$\@T_\LN \sqcup \@T_\sep -> \@T$, and for any $\@T'$ interpreting $\@T$, there is a surjective interpretation $\@T -> \@T'$.
\end{enumerate}
\end{corollary}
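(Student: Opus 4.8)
The plan is to prove the cycle \cref{thm:einfT:self} $\implies$ \cref{thm:einfT:other} $\implies$ \cref{thm:einfT:surj} $\implies$ \cref{thm:einfT:self}, working throughout with the dictionary supplied by \cref{thm:scott-LNsep} (theories with no empty models interpreting $\@T_\LN \sqcup \@T_\sep$ are exactly the Scott theories of CBERs) and by the preceding remark (surjective interpretations $\@T_E \to \@T_F$ correspond to invariant embeddings $F \sqle^i E$). The implication \cref{thm:einfT:self} $\implies$ \cref{thm:einfT:other} is immediate on taking $\@T' := \@T$.

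For \cref{thm:einfT:other} $\implies$ \cref{thm:einfT:surj}, I would first read off from $\@T \cong \@T' \sqcup \@T_\LN \sqcup \@T_\sep$ that the coproduct inclusion gives an interpretation $\@T_\LN \sqcup \@T_\sep \to \@T$, which is the first clause of \cref{thm:einfT:surj} and (with the blanket no-empty-model hypothesis) lets me write $\@T = \@T_E$ for a CBER $E$ via \cref{thm:scott-LNsep}. Applying \cref{thm:einfT-LNsep} to $\@T'$ then identifies $E \cong \#E_{\infty\@T'}$, the invariantly universal $\@T'$-structurable CBER. Now given any $\@S$ with an interpretation $\@T \to \@S$, the composite $\@T_\LN \sqcup \@T_\sep \to \@T \to \@S$ shows $\@S$ also interprets $\@T_\LN \sqcup \@T_\sep$, so $\@S = \@T_F$ is a Scott theory as well; and composing the inclusion $\@T' \to \@T$ with $\@T \to \@T_F$ exhibits a $\@T'$-structuring of $F$, so $F$ is $\@T'$-structurable. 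Invariant universality of $\#E_{\infty\@T'} = E$ then yields an invariant embedding $F \sqle^i E$, which is precisely a surjective interpretation $\@T = \@T_E \to \@T_F = \@S$.

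For \cref{thm:einfT:surj} $\implies$ \cref{thm:einfT:self}, the first clause of \cref{thm:einfT:surj} again makes $\@T = \@T_E$ a Scott theory, so both $\@T$ and $\@T \sqcup \@T_\LN \sqcup \@T_\sep$ are Scott theories of CBERs. Feeding $\@S := \@T \sqcup \@T_\LN \sqcup \@T_\sep$ (which interprets $\@T$ via the coproduct inclusion) into \cref{thm:einfT:surj} produces a surjective interpretation $\@T \to \@T \sqcup \@T_\LN \sqcup \@T_\sep$. For the reverse direction I would take the surjective interpretation $\@T_\LN \sqcup \@T_\sep \to \@T$ furnished by \cref{thm:einf-LNsep} and pair it with $\id_\@T : \@T \to \@T$ using the coproduct universal property (\cref{def:thy-coprod}) to obtain $\@T \sqcup \@T_\LN \sqcup \@T_\sep \to \@T$; this is surjective because its restriction to the $\@T$-summand is the identity, so its image already contains every provable-equivalence class of $\@T$-formulas. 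Surjective interpretations in both directions between two Scott theories then give $\@T \cong \@T \sqcup \@T_\LN \sqcup \@T_\sep$ by the Schröder--Bernstein theorem for interpretations (\cref{thm:interp-schroeder-bernstein}), which is exactly \cref{thm:einfT:self}.

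The step I expect to be most delicate is \cref{thm:einfT:other} $\implies$ \cref{thm:einfT:surj}: an arbitrary interpretation $\@T \to \@S$ only manifestly makes the associated CBER $F$ be $\@T$-structurable, whereas to invoke invariant universality I need $E$ to be recognized as the universal $\@T'$-structurable CBER (not the a priori different universal $\@T$-structurable one, since $\@T \cong \@T \sqcup \@T_\LN \sqcup \@T_\sep$ is what we are trying to prove). Threading the argument through the decomposition $\@T \cong \@T' \sqcup \@T_\LN \sqcup \@T_\sep$, so that the relevant universality and $\@T'$-structurability of $F$ are brought to bear, is the crux; by contrast the surjectivity bookkeeping in the final implication is routine once one recalls that an interpretation restricting to the identity on a coproduct summand is automatically surjective.
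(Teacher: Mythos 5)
Your proposal is correct and follows essentially the same route as the paper's proof: (i)$\Rightarrow$(ii) trivially, (ii)$\Rightarrow$(iii) by recognizing any theory interpreting $\@T$ as the Scott theory of a $\@T'$-structurable CBER and invoking the invariant universality from \cref{thm:einfT-LNsep}, and (iii)$\Rightarrow$(i) by producing surjective interpretations in both directions and applying the Schröder--Bernstein property \labelcref{thm:interp-schroeder-bernstein}. The only cosmetic difference is that for the surjection $\@T \sqcup \@T_\LN \sqcup \@T_\sep \to\joinrel\to \@T$ the paper simply combines the identity with the (not necessarily surjective) interpretation $\@T_\LN \sqcup \@T_\sep \to \@T$ given by (iii), whereas you reach for the surjective one from \cref{thm:einf-LNsep} — both work, since surjectivity already follows from the identity on the $\@T$-summand, as you note.
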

\begin{proof}
\cref{thm:einfT:self}$\implies$\cref{thm:einfT:other} is obvious, and
\cref{thm:einfT:other}$\implies$\cref{thm:einfT:surj} by \cref{thm:einfT-LNsep}, since if $\@T -> \@T'$ then $\@T'$ is the Scott theory of a $\@T$-structurable CBER (by \cref{thm:scott-LNsep} and \cref{thm:str-interp}).
Now suppose \cref{thm:einfT:surj}; then from $\@T -> \@T \sqcup \@T_\LN \sqcup \@T_\sep$ we get a surjective such interpretation, and we clearly also have a surjective $\@T \sqcup \@T_\LN \sqcup \@T_\sep ->> \@T$ by combining the identity with the given $\@T_\LN \sqcup \@T_\sep -> \@T$, whence $\@T \cong \@T \sqcup \@T_\LN \sqcup \@T_\sep$ by the Schröder--Bernstein property \labelcref{thm:interp-schroeder-bernstein}.
\end{proof}

\section{Examples of (non-)interpretations}
\label{sec:examples}


Many fundamental theorems about CBERs can be naturally understood in terms of structurability by particular $\@L_{\omega_1\omega}$ theories, and the proofs of these theorems often amount to interpretations between these theories.
In this section, we give several examples of such interpretations to illustrate this perspective.
These examples include the Feldman--Moore theorem, its generalization to $\omega$-coloring the intersection graph on finite subsets of $E$-classes, and the Slaman--Steel marker lemma.

Moreover, we will show several \emph{non-interpretability} results between these theories, thereby making precise the idea that certain kinds of Borel combinatorial structures are ``more powerful'' than others.
For example, we will show that an $\omega$-coloring of finite subsets is ``more powerful'' than the Feldman--Moore theorem, despite both being available on every CBER; see \cref{ex:color3-cex}.

The theories we will consider are listed in \cref{tbl:theories}.
(Recall our \cref{cvt:relational}, that function symbols occurring in examples are implicitly encoded as their graph relations.)

\begin{table}[htbp]
\centering
\makeatletter
\def\makename#1{$\mathsf{#1}$}
\def\maketheory#1{\maketheory@#1}
\def\maketheory@#1{\begingroup\def\arraystretch{1}$\begin{array}[t]{>{}l}#1\end{array}$\endgroup}
\def\thyrule{\cmidrule{2-3}}
\makeatother
\begin{tabular}{
    >{\collectcell\makename}l<{\endcollectcell}
    l
    >{\collectcell\maketheory}l<{\endcollectcell}
}
\toprule
     \ang{name} \; \text{(ref)} & $ \mathcal{L}_{\ang{\!{name}}}$ & {\mathcal{T}_{\ang{\!{name}}}}
\\ \thyrule
    \pt \; \text{(\labelcref{ex:finsub-pt})}
    & constant $c$ (unary rel $C$)
    & {\emptyset \; (\exists! x\, C(x))}
\\ \thyrule
    \finsub \; \text{(\labelcref{ex:finsub-pt})}
    & unary relation $D$
    & {
       \bigvee_{n \ge 1} \exists z_0, \dotsc, z_{n-1}\, \forall w\, ( D(w) <-> \bigvee_{i < n} (w = z_i))}
\\ \thyrule
    \LO \; \text{(\labelcref{ex:finsub-pt})}
    & binary relation $<$
    & {\forall x\, (x \not< x), \;
       \forall x \forall y \forall z\, ((x < y < z) -> (x < z)) \\
       \forall x \forall y\, ((x = y) \vee (x < y) \vee (y < x))}
\\ \thyrule
    \sep \; \text{(\labelcref{def:Tsep})}
    & unary relations $\{U_i\}_{i \in \#N}$
    & {\forall x \forall y\, [ x \neq y \rightarrow \bigvee_i ( U_i(x) \leftrightarrow \neg U_i(y))]}
\\ \thyrule
    \LN \; \text{(\labelcref{def:TLN})}
    & unary functions $\{f_i\}_{i \in \#N}$
    & {\forall x \forall y \bigvee_i (f_i(x) = y)}
\\ \thyrule
    \LNbij \; \text{\labelcref{eq:LNFM}}
    & unary functions $\{f_i\}_{i \in \#N}$
    & {\forall x \forall y \bigvee_i (f_i(x) = y), \;
       \forall y \bigwedge_i \exists ! x \, (f_i(x) = y)}
\\ \thyrule
    \LNinv \; \text{\labelcref{eq:LNFM}}
    & unary functions $\{f_i\}_{i \in \#N}$
    & {\forall x \forall y \bigvee_i (f_i(x) = y), \;
       \forall x \bigwedge_i (f_i(f_i(x)) = x)}
\\ \thyrule
    \text{\begin{tabular}[t]{@{}>{}l<{}@{}} $\Gamma$ (a monoid) \\ \quad\labelcref{eq:LNFM} \end{tabular}}
    & unary functions $\{a_\gamma\}_{\gamma \in \Gamma}$
    & {\forall x \forall y \bigvee_\gamma (a_\gamma(x) = y) \\
       \forall x\, (a_1(x) = x), \;
       \forall x \bigwedge_{\gamma, \gamma'} (a_{\gamma \gamma'}(x) = a_\gamma(a_{\gamma'}(x)))}
\\ \thyrule
    \FMfun \; \text{\labelcref{eq:LNFM}}
    & unary functions $\{a_\gamma\}_{\gamma \in \#N^{<\omega}}$
    & { \@T_{{\#N^{<\omega}}} }
\\ \thyrule
    \FMbij \; \text{\labelcref{eq:LNFM}}
    & unary functions $\{a_\gamma\}_{\gamma \in \#F_\omega}$
    & { \@T_{{\#F_\omega}} }
\\ \thyrule
    \FMinv \; \text{\labelcref{eq:LNFM}}
    & unary functions $\{a_\gamma\}_{\gamma \in \#Z_2^{*\omega}}$
    & { \@T_{{\#Z_2^{*\omega}}} }
\\ \thyrule
    \coloreq2 \; \text{(\labelcref{thm:FM-color2})}
    & binary relations $\{C_k\}_{k \in \#N}$
    & {\bigwedge_k \forall x \forall y \, (C_k(x,y) <-> C_k(y,x)) \\
        \forall x \forall y \, [ x \neq y \to \bigvee_k (C_k(x,y) \wedge \bigwedge_{l \neq k} \neg C_l(x,y)) ] \\
    \forall x_0,x_1, y_0,y_1 \\ \qquad [ \bigvee_{i,j \in 2} (x_i = y_j) \wedge \bigvee_{i \in 2} \bigwedge_{j \in 2} (x_i \neq y_j) \\ \qquad \to \bigwedge_l \neg (C_l(x_0,x_1) \wedge C_l(y_0,y_1)) ] }
\\ \thyrule
    \colorlt\omega \; \text{(\labelcref{thm:KM-color})}
    & $n$-ary relations $\{C_{nk}\}_{\substack{2 \le n \in \#N \\ k \in \#N}}$
    & {\bigwedge_{n,k} \bigwedge_{\sigma \in \Sym(n)} \forall x_0, \dots, x_{n-1} \\ \qquad [C_{nk}(\vec{x}) <-> C_{nk}(x_{\sigma(0)},\dots,x_{\sigma(n-1)})] \\
        \bigwedge_n \forall x_0, \dotsc, x_{n-1} \, [\bigwedge_{i \neq j < n} (x_i \ne x_j) \\ \qquad \to \bigvee_k (C_{nk}(\vec{x}) \wedge \bigwedge_{l \ne k} \neg C_{nl}(\vec{x}))] \\
    \bigwedge_n \forall x_0,\dots,x_{n-1}, y_0,\dots,y_{n-1} \\ \qquad [ \bigvee_{i,j < n} (x_i = y_j) \wedge \bigvee_{i < n} \bigwedge_{j < n} (x_i \neq y_j) \\ \qquad \to \bigwedge_k  \neg (C_{nk}(\vec{x}) \wedge C_{nk}(\vec{y})) ] }
\\ \thyrule
    \text{\begin{tabular}[t]{@{}>{}l<{}@{}} $Y$ (a set) (\labelcref{ex:interp-mod}) \end{tabular}}
    & constants $\{c_y\}_{y \in Y}$
    & {\forall x \bigvee_{y \in Y} (x = c_y), \;
       \bigwedge_{y \neq z \in Y} (c_y \neq c_z)}
\\ \thyrule
    \enum \; \text{(\labelcref{thm:enum-LNpt})}
    & $\bigoplus_{Y \leq \omega} \@L_Y$
    & {\bigoplus_{Y \leq \omega} \@T_Y}
\\ \thyrule
    \inf \; \text{(\labelcref{ex:einf-aper})}
    & $\emptyset$
    & {\bigwedge_n \exists x_0 \dotsb \exists x_{n-1} \bigwedge_{i \neq j} (x_i \neq x_j)}
\\ \thyrule
    \marker \; \text{(\labelcref{thm:marker})}
    & unary relations $\{A_i\}_{i \in \#N}$
    & {\bigwedge_i \forall x (A_{i+1}(x) \rightarrow A_i(x)) \\
       \bigwedge_i \exists x \, A_i(x), \;
       \forall x \bigvee_i \neg A_i(x)}
\\ \thyrule
    lfgraph \; \text{(\labelcref{thm:kst-lfcolor})}
    & binary relation $G$
    & {
       \forall x\, \neg (xEx), \;
       \forall x \forall y\, (xEy \to yEx) \\
       \forall x \bigvee_n \exists y_0 \dotsb y_{n-1} \forall z\, (xGz \to \bigvee_{i < n} (z = y_i))}
\\ \thyrule
    \text{\begin{tabular}[t]{@{}>{}l<{}@{}} $\!{lfgraph{+}\$\omega color}$ \\ \quad(\labelcref{thm:kst-lfcolor}) \end{tabular}}
    & \begin{tabular}[t]{@{}>{}l<{}@{}} binary relation $G$ \\ unary relations $\{C_i\}_{i \in \omega}$ \end{tabular}
    & {\@T_\!{lfgraph} \\
       \forall x \bigvee_i (C_i(x) \wedge \bigwedge_{j \neq i} \neg C_j(x)) \\
       \forall x \forall y\, [xGy \to \bigwedge_i \neg (C_i(x) \wedge C_i(y))]}
\\ \bottomrule
\end{tabular}
\caption{Theories of some common combinatorial structures (with reference to first mention).}
\label{tbl:theories}
\end{table}

\subsection{Variants of the Feldman--Moore theorem}
\label{sec:FM-variants}

A key result in the theory of CBERs is the Feldman--Moore theorem \cite{FM}, which states that every CBER is generated by a Borel action of a countable group.
In fact, the usual proof of Feldman--Moore yields an \emph{a priori} stronger conclusion, namely that the Borel functions covering $E = \bigcup_i f_i$ given by the Lusin--Novikov theorem \labelcref{thm:lusin-novikov-cber}\cref{thm:lusin-novikov-cber:LN} can be upgraded to a family of Borel involutions that still cover $E$.
Indeed, this stronger statement is nowadays often understood, especially in the context of Borel combinatorics, as the real content of the Feldman--Moore theorem (see e.g., \cite[3.4]{Kcber}).

We may distinguish between these variants of the Feldman--Moore and Lusin--Novikov theorems, by restating their conclusions as structurability by different $\@L_{\omega_1\omega}$ theories.
Recall from \cref{ex:TLN-scott} that Lusin--Novikov applied to a CBER $E$ yields structurability by the theory $\@T_\LN$ in the language with countably many unary functions $\@L_\LN = \{f_i\}_{i \in \#N}$ asserting that they cover all pairs of elements.
We may strengthen this by requiring the $f_i$ to be bijections, or even involutions.
On the other hand, the original statement of Feldman--Moore is slightly different in flavor, in that it gives a transitive action on each $E$-class of a countable group (which we may assume to be the free group $\#F_\omega$, without loss of generality).
We may strengthen this to an action of a group generated by involutions (which we may assume to be the free product $\#Z_2^{*\omega}$), or weaken it to only an action of a monoid (e.g., the free monoid $\#N^{<\omega}$).
All told, we have 6 theories capturing variants of Lusin--Novikov and Feldman--Moore:
\begin{align*}
\@T_\LN = \@T_\LNfun &:= \text{covering family of functions},
& \@T_\FMfun = \@T_{{\#N^{< \omega}}} &:= \text{transitive $\#N^{< \omega}$ action}, \\
\yesnumber\label{eq:LNFM}
\@T_\LNbij &:= \text{covering family of bijections},
& \@T_\FMbij = \@T_{{\#F_\omega}} &:= \text{transitive $\#F_\omega$ action}, \\
\@T_\LNinv &:= \text{covering family of involutions},
& \@T_\FMinv = \@T_{{\#Z_2^{*\omega}}} &:= \text{transitive $\#Z_2^{*\omega}$ action}.
\end{align*}
Here by a \emph{covering} family of functions $f_i$ on a set $Y$, we mean that their graphs cover $Y^2$; and by $\@T_\Gamma$ in general for a countable monoid $\Gamma$, we mean the theory of sets equipped with a transitive $\Gamma$-action.
See \cref{tbl:theories} for precise axiomatizations of each of these theories.

Our somewhat pedantic naming scheme for these theories is that ``$\LN_{\ang{\!{property}}}$'' refers to a covering family of functions each with said property, while ``$\FM_{\ang{\!{property}}}$'' means that only the closure under composition/inverse of said functions is required to cover, i.e., we have a transitive action of the group/monoid generated by said functions.
Thus, the Lusin--Novikov theorem says that every CBER is structurable by $\@T_\LNfun$ ($= \@T_\LN$ from \cref{def:TLN}); while the Feldman--Moore theorem asserts, depending on the variant, structurability by $\@T_\FMbij$, $\@T_\FMinv$, or $\@T_\LNinv$.

Note that these theories are not \emph{a priori} equivalent in strength: for instance, it is not clear if a covering family of bijections may be canonically turned into involutions.
The following diagram shows the \emph{a priori} obvious interpretations (solid arrows) between the above 6 theories:
\begin{equation}
\label{diag:LNFM}
\begin{tikzcd}[column sep=4em]
\mathllap{\@T_{{\#Z_2^{*\omega}}} ={}}
\@T_\FMinv \rar &
\@T_\LNinv
\\
\mathllap{\@T_{{\#F_\omega}} ={}}
\@T_\FMbij \rar \uar&
\@T_\LNbij \uar \lar[shift right=2]  \dar[dashed, bend right=30]
\\
\mathllap{\@T_{{\#N^{<\omega}}} ={}}
\@T_\FMfun \rar \uar &
\@T_\LNfun \mathrlap{{}= \@T_\LN} \uar \lar[shift right=2]
\end{tikzcd}
\end{equation}
The vertical arrows on the right are identity interpretations; e.g. a covering family of involutions is already a covering family of bijections.
The two right-to-left interpretations are also essentially identity interpretations; e.g., a transitive $\#F_\omega$-action is already a covering family of bijections.
(Note that we do not have a similar interpretation $\@T_\LNinv \to \@T_\FMinv$, as compositions of involutions may no longer be involutions.)
The vertical arrows on the left are induced by surjective homomorphisms between the acting groups/monoids; e.g., a transitive $\#Z_2^{*\omega}$-action becomes a transitive $\#F_\omega$-action, by regarding $\#Z_2^{*\omega}$ as a quotient of $\#F_\omega$.
And the left-to-right interpretations define a transitive action by closing a covering family of functions under composition (and inverses for $\@T_\LNbij$); e.g., given a covering family of functions $f_i$, we get a transitive $\#N^{< \omega}$-action by defining the action of an element $s = s_0 s_1 \dots s_{n-1} \in \#N^{< \omega}$ to be the function $f_{s_0} \circ f_{s_1} \circ \dots \circ f_{s_{n-1}}$.

The proof of the Feldman--Moore theorem \cite{FM} (see also \cite[7.1.4]{GaoIDST}) now amounts to:

\begin{proposition}[Feldman--Moore]
\label{thm:FM}
There is an interpretation $\@T_\LNinv \to \@T_\LN \sqcup \@T_\sep$.
\end{proposition}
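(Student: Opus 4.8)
The plan is to read the desired interpretation, via \cref{rmk:interp-types}\cref{rmk:interp-types:mod}, as a uniform Borel recipe: from a countable set $M$ carrying a covering family of functions $(f_i)_{i \in \#N}$ (a model of $\@T_\LN$) together with a separating family $(U_i)_{i \in \#N}$ (a model of $\@T_\sep$), I must define, by fixed $\@L_{\omega_1\omega}$ formulas $\psi_j(x,y)$ in the language $\@L_\LN \sqcup \@L_\sep$, the graphs of involutions $(g_j)_j$ whose union is all of $M^2$. Since $\@T_\LN$ asserts the $f_i$ cover $M^2$, every pair is an edge of the complete graph $K_M$ on $M$, so the task is exactly to produce a \emph{covering family of matchings} of $K_M$: each matching extends to a total involution by fixing the unmatched points, and the diagonal is covered by the identity involution. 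First I would extract from $\@T_\sep$ a linear order $<$ on $M$ together with an injection $M \hookrightarrow 2^\#N$, using the interpretation $\@T_\LO \to \@T_\sep$ of \cref{ex:finsub-pt}.

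The key definable gadget is the assignment $n(x,y) := \min\{ i \in \#N : f_i(x) = y\}$, which is total (the $f_i$ cover $M^2$) and, because each $f_i$ is a function, is \emph{injective in $y$ for each fixed $x$}: it is a proper colouring of the out-edges at every vertex, and fixing $x$ it enumerates $M$ in a definable way, playing the role that a Lusin--Novikov enumeration of an $E$-class plays in the classical proof. The formulas ``$n(x,y) = k$'' are visibly $\@L_{\omega_1\omega}$, being Boolean combinations of the atomic formulas $f_i(x) = y$, and the whole construction will be assembled from such formulas together with comparisons for $<$.

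The hard part --- and the genuine content of the involution form of Feldman--Moore --- is symmetrisation: $n(x,y)$ need not equal $n(y,x)$, so the directed colouring does not directly give \emph{symmetric} matchings, and naive symmetric combinations of $n(x,y)$ and $n(y,x)$ fail to be proper, since two edges at a vertex can collide across the two orientations of an edge. I would resolve this as in the Borel proof: orient each edge $\{x,y\}$ by $<$ and group the edges according to the value $(n(x,y),n(y,x))$ computed on the oriented representative. Each such group is then a Borel graph of degree at most two (a vertex can occur at most once as the smaller and once as the larger endpoint within a group), hence a disjoint union of paths and cycles, which splits into finitely many further matchings using $<$ to orient along each component; concretely this is the standard fact that a locally countable Borel graph --- here the line graph of $K_M$ --- admits a Borel $\aleph_0$-colouring, a consequence of Lusin--Novikov together with the standard Borel structure furnished by a countable separating family. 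It is essential that $\@T_\sep$ is used and not merely the $f_i$: by \cref{intro:thm:FM} the involution form is provably not interpretable from $\@T_\LN$ alone, and relatedly one may use no definable well-ordering of $M$, since a definable least element would yield an interpretation $\@T_\pt \to \@T_\LN \sqcup \@T_\sep$ and hence smoothness of every CBER, contradicting the non-smoothness of $\#E_\infty$ (\cref{thm:einf-LNsep}). Once the matchings $(N_j)_j$ are defined by explicit formulas, setting $\psi_j(x,y) := N_j(x,y) \vee (x = y \wedge \neg \exists z\, N_j(x,z))$ defines the covering involutions and completes the interpretation.
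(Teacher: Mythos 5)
Your proposal is correct and does prove the statement, but it handles the crucial symmetrization step by a genuinely different (and longer) route than the paper. The shared skeleton is the passage to partial injections: your grouping of ordered pairs by $(n(x,y),n(y,x))$ is a disjointified refinement of the paper's $f_{ij} := f_i \cap f_j^{-1}$. Where you diverge is in how $\@T_\sep$ is spent. The paper uses the $U_k$ directly: restricting $f_{ij}$ to inputs in $U_k$ and outputs in the complement of $U_k$ yields partial injections with \emph{disjoint domain and range}, so each one together with its inverse, extended by the identity off its domain and range, is immediately a total involution, and the family covers because every pair of distinct points is separated by some $U_k$ in one direction or the other --- no linear order, no degree-$2$ graphs, and no colouring lemma are needed. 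You instead convert $\@T_\sep$ into a linear order, produce degree-$\le 2$ graphs $N_{ij}$, and then split each into matchings. The one step of yours that needs care is ``finitely many further matchings using $<$ to orient along each component'': orienting along a bi-infinite path requires choosing an origin on each component, which is not definable from the order alone, so that phrase does not literally work; what saves you is precisely your fallback to the KST-style $\aleph_0$-edge-colouring, which consumes the separating family a second time and which must be carried out for a graph living on \emph{pairs} of elements, so that \cref{thm:kst-lfcolor} (a one-sorted interpretation on the base sort) cannot be composed with directly --- the colouring formulas have to be rewritten with an extra free variable, which is routine. With that understood there is no gap; the trade-off is that the paper's disjoint-domain-and-range trick reaches the involutions in one line, while your route makes the connection to edge-colourings of bounded-degree Borel graphs explicit at the cost of an auxiliary lemma.
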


\begin{proof}
We must uniformly define a model of $\@T_\LNinv$, i.e., a covering family of involutions, given a family of Lusin--Novikov functions $(f_i)_{i \in \#N} |= \@T_\LN$ and a family of separating unary predicates $(U_i)_{i \in \#N} |= \@T_\sep$.
First we define from the $f_i$'s a covering family of partial injections $(f_{ij})_{i, j \in \#N}$ by
\begin{align*}
f_{ij} := f_i \cap f_j^{-1},
\quad \text{i.e.,} \quad
    f_{ij}(x) = y \coloniff (f_i(x) = y) \wedge (f_j(y) = x).
\end{align*}
Now we use the separating family of subsets $U_k$ to define a covering family of partial injections $(f_{ijk})_{i, j, k \in \#N}$, each with disjoint domain and range:
\begin{align*}
    f_{ijk}(x) = y \coloniff (f_{ij}(x) = y) \wedge U_k(x) \wedge \neg U_k(y).
\end{align*}
Finally, we extend the domain of each $f_{ijk}$ to get a covering family of involutions
\begin{align*}
    g_{ijk}(x) = y \coloniff (f_{ijk}(x) = y) \vee (f_{jik}(y) = x) \vee (x \not \in \dom(f_{ijk}) \wedge y \not \in \dom(f_{jik}) \wedge x = y),
\end{align*}
where of course $x \in \dom(f) \coloniff \exists z (f(x) = z)$.
\end{proof}

Thus, when combined with $\@T_\sep$, all 6 theories in \cref{diag:LNFM} become mutually interpretable.
This follows abstractly from \cref{thm:str-impl-interp} and the fact that all 6 theories structure every CBER (by the Feldman--Moore theorem); the point of the above \namecref{thm:FM} is that the \emph{proof} of Feldman--Moore is already essentially an interpretation.

Less obviously, the following shows that we may in fact interpret $\@T_\LNbij$ in $\@T_\LN$ alone, without $\@T_\sep$ (dashed arrow in \cref{diag:LNFM}).
In other words, the original statement of the Feldman--Moore theorem \cite{FM}, in terms of Borel group actions, may be proved using only the Lusin--Novikov theorem, without also using a countable separating family of Borel sets.
(We will show in the following subsection that this is false for $\@T_\LNinv$, i.e., the stronger version of Feldman--Moore yielding Borel involutions \emph{cannot} be proved using just Lusin--Novikov.)
It follows that the bottom 4 theories in \cref{diag:LNFM} are mutually interpretable.

\begin{proposition}
\label{thm:FMbij}
    There is an interpretation $\@T_\LNbij \to \@T_\LN$.
\end{proposition}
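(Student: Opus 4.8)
The plan is to follow the opening move of the proof of \cref{thm:FM} --- passing from the covering functions to covering partial injections --- but then to complete these to genuine bijections by a component-by-component analysis that, crucially, never needs to orient a cycle or a bi-infinite line. That orientation step is exactly the place where \cref{thm:FM} was forced to invoke the separating family $\@T_\sep$. Starting from a covering family $(f_i)_{i \in \#N} \models \@T_\LN$, I would first form the partial injections
\[
f_{ij} := f_i \cap f_j^{-1}, \qquad f_{ij}(x) = y \coloniff (f_i(x) = y) \wedge (f_j(y) = x),
\]
which cover all pairs: given $(x,y)$, choose $i$ with $f_i(x) = y$ and $j$ with $f_j(y) = x$, so that $f_{ij}(x) = y$. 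It then suffices to define, uniformly from each partial injection $p := f_{ij}$, finitely many bijections of the underlying set whose graphs together contain that of $p$; reindexing the resulting countable family, together with the identity (to cover the diagonal), yields the desired model of $\@T_\LNbij$.

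Fix such a $p$. Since $p$ is a partial injection, every vertex of its functional graph has in- and out-degree at most $1$, so each connected component is a finite cycle, a bi-infinite $\#Z$-line, a finite directed path, or a one-way infinite ray. The essential point is that these fall into two qualitatively different cases. On the union of the cyclic and bi-infinite-line components, $p$ is \emph{already} a bijection of the relevant vertices, so extending it by the identity elsewhere packages all of those edges into a single bijection, with no choices made at all. A finite path, by contrast, has a definable source (the unique vertex of its component on which $p$ is undefined when read backwards, i.e.\ not of the form $p(z)$) and a definable sink (on which $p$ is undefined), and I would close it into a cycle by sending the sink to the source while keeping $x \mapsto p(x)$ otherwise, again definably turning it into a bijection of its support. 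A one-way ray similarly has a definable endpoint --- a source for a forward ray, a sink for a backward ray --- relative to which each vertex acquires a well-defined finite position; splitting the edges of the ray according to the parity of this position produces two matchings, each of which extends to an involution by swapping matched pairs and fixing all other points. Every relation used here (``$x$ lies on a cycle of $p$'', ``$x$ lies on a finite path with source $s$'', ``$x$ lies at even distance from the endpoint of its ray'', and so on) is a countable disjunction of statements about iterates of $p$, hence an $\@L_\LN$-formula, and none of them mentions the predicates $U_i$.

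I expect the ray components to be the only genuine obstacle, and the reason is instructive: a forward ray $x_0 \to x_1 \to \cdots$ behaves like the non-invertible successor map on $\#N$, so it cannot be completed to a bijection of its own support in isolation, which is precisely why a parity splitting is needed there rather than the wrap-around used for finite paths. It is worth stressing where the asymmetry with \cref{intro:thm:FM} lives: producing mere \emph{bijections} never forces us to pick an orientation or a $2$-coloring on a cycle or bi-infinite line, since we may simply reuse $p$ there, whereas producing \emph{involutions} would demand exactly such a choice, and there is no definable way to make it from $\@T_\LN$ alone. Assembling, for each pair $(i,j)$, the single ``cyclic'' bijection together with the two ``ray-parity'' involutions gives a countable covering family of bijections defined purely from $(f_i)$, i.e.\ an interpretation $\@T_\LNbij \to \@T_\LN$ --- and the phenomenon just isolated is what will make $\@T_\LNinv$ \emph{fail} to be so interpretable in the following subsection.
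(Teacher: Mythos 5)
Your proposal is correct and follows essentially the same route as the paper: reduce to a single partial injection $f_{ij} = f_i \cap f_j^{-1}$, classify its orbits into cycles, bi-infinite lines, finite paths, and one-way rays, reuse $f_{ij}$ (with a wrap-around on finite paths) where it is already bijective, and handle the rays by a parity splitting into two involutions, all definably via countable disjunctions over iterates. The only cosmetic difference is bookkeeping — you package three total bijections per pair $(i,j)$ where the paper folds the same data into two formulas $g, h$ — and your closing remark about why this cannot be pushed to involutions matches the paper's subsequent discussion.
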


\begin{proof}
We must define a covering family of bijections given a covering family of functions $(f_i)_{i \in \#N} |= \@T_\LN$.
As in \cref{thm:FM}, start by taking the covering family of partial bijections $f_{ij} := f_i \cap f_j^{-1}$.
It now suffices to show how, from a single partial bijection $f$, we may define two total bijections $g, h$ such that $f \subseteq g \cup h$ (as graphs of functions); we may then apply this to each of the $f_{ij}$'s.

Informally, we consider the orbits $\{\dotsc, f^{-1}(x), x, f(x), f^2(x), \dotsc\}$ of $f$.
Each orbit is either
\begin{enumerate}[label=(\roman*)]
\item \label{thm:FMbij:periodic}
finite and periodic, in which case $f$ is already a bijection on that orbit; or
\item \label{thm:FMbij:finline}
finite and aperiodic, i.e., $\{x, f(x), \dotsc, f^n(x)\}$ where $x \not\in \im(f)$ and $f^n(x) \not\in \dom(f)$; or
\item \label{thm:FMbij:infline}
bi-infinite, with $f$ already a bijection on that orbit; or
\item \label{thm:FMbij:inf+}
forward-infinite, i.e., $\{x, f(x), f^2(x), \dotsc\}$ where $x \not\in \im(f)$; or
\item \label{thm:FMbij:inf-}
backward-infinite, i.e., $\{\dotsc, f^{-2}(x), f^{-1}(x), x\}$ where $x \not\in \dom(f)$.
\end{enumerate}
On orbits of types \cref{thm:FMbij:periodic,thm:FMbij:infline}, we take $g$ to be $f$; on orbits of type \cref{thm:FMbij:finline}, we take $g$ to be $f$ extended with $f^n(x) |-> x$.
Then $g$ already covers $f$ on these orbits, so we may take $h$ to be the identity.
On orbits of type \cref{thm:FMbij:inf+}, we take $g$ to be the involution swapping $x$ with $f(x)$, $f^2(x)$ with $f^3(x)$, etc., and $h$ to be the involution fixing $x$ and swapping $f(x)$ with $f^2(x)$, $f^3(x)$ with $f^4(x)$, etc; then $f \subseteq g \cup h$ on these orbits.
On orbits of type \cref{thm:FMbij:inf-}, we perform the reverse construction.

Formally, the above-described $g, h$ are defined in terms of $f$ by the formulas
\begin{align*}
g(x) = y
\coloniff&  \Bigl( \sqbr[\big]{\bigvee_{n \in \#N} (x \not\in \dom(f^n)) <-> \bigvee_{m \in \#N} (x \not\in \im(f^m))} \\
    & \qquad \wedge \sqbr[\big]{(f(x) = y) \vee \bigvee_{n \ge 0} ((f^n(y) = x) \wedge (x \not\in \dom(f) \wedge y \not\in \im(f))} \Bigr) \\
{}\vee&  \Bigl( \bigwedge_{n \in \#N} (x \in \dom(f^n)) \wedge \bigvee_{m \in \#N} \sqbr[\big]{ \paren[\big]{x \in \im(f^m) \setminus \im(f^{m+1})}
    \wedge \paren[\big]{f^{(-1)^m}(x) = y} } \Bigr) \\
{}\vee&  \Bigl( \bigwedge_{m \in \#N} (y \in \im(f^n)) \wedge \bigvee_{n \in \#N} \sqbr[\big]{ \paren[\big]{y \in \dom(f^n) \setminus \dom(f^{n+1})}
    \wedge \paren[\big]{f^{(-1)^n}(y) = x} } \Bigr),
\displaybreak[0]\\
h(x) = y
\coloniff&  \Bigl( \sqbr[\big]{\paren[\big]{\bigvee_{n \in \#N} (x \not\in \dom(f^n)) <-> \bigvee_{m \in \#N} (x \not\in \im(f^m))} \vee (x \not\in \dom(f) \cap \im(f))}
    \wedge (x = y) \Bigr) \\
{}\vee&  \Bigl( \bigwedge_{n \in \#N} (x \in \dom(f^n)) \wedge \bigvee_{m \in \#N} \sqbr[\big]{ \paren[\big]{x \in \im(f^m) \setminus \im(f^{m+1})}
    \wedge \paren[\big]{f^{(-1)^m}(y) = x} } \Bigr) \\
{}\vee&  \Bigl( \bigwedge_{m \in \#N} (y \in \im(f^n)) \wedge \bigvee_{n \in \#N} \sqbr[\big]{ \paren[\big]{y \in \dom(f^n) \setminus \dom(f^{n+1})}
    \wedge \paren[\big]{f^{(-1)^n}(x) = y} } \Bigr).
\end{align*}
The resulting interpretation $\@T_\LNbij -> \@T_\LN$ is thus given by $\omega \times \omega$ many copies $g_{ij}, h_{ij}$ of the above formulas for each $i, j < \omega$, with $f$ above replaced by the partial bijections $f_{ij}$.
\end{proof}

\subsection{Edge-colorings and the intersection graph on finite subsets}
\label{sec:FM-color}

The Feldman--Moore theorem is sometimes stated with a more combinatorial flavor, namely, that any CBER $E \subseteq X^2$ has a Borel edge-coloring $c : E \setminus (=_X) -> \omega$ of the complete graph on each $E$-class; see e.g., \cite[3.5]{Kcber}.
Indeed, such a coloring $c$ can be obtained from a countable family of Borel involutions $g_i$ covering $E$ by taking
$c(x, y) := \text{the least $i \in \omega$ such that $g_i(x) = y$}$.
Conversely, the color classes of any such coloring $c$ (extended by the identity to make them total) clearly yield a covering family of involutions.

We formally encode such an edge-coloring $c$ of a complete graph into a theory $\@T_{\coloreq2}$, in a language with binary relations $C_k$ consisting of all edges colored $k$, i.e., $c(x,y) = k \iff C_k(x,y)$; see \cref{tbl:theories} for a precise axiomatization.
The equivalence to Feldman--Moore now amounts to

\begin{proposition}
\label{thm:FM-color2}
$\@T_{\coloreq2}$ and $\@T_\LNinv$ are mutually interpretable.
\qed
\end{proposition}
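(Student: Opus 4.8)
The plan is to exhibit explicit interpretations in both directions, following the informal recipe sketched just before the statement. Both constructions are first-order (finitary) in the relevant symbols, with only the indexing over the countably many symbols being infinitary, so verifying that they land in the correct class of models reduces to checking each axiom of the target theory one at a time.

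For the interpretation $\@T_{\coloreq2} \to \@T_\LNinv$, I would start from a covering family of involutions $(f_i)_{i \in \#N} \models \@T_\LNinv$ and color the edge $\{x,y\}$ by the least index of an involution swapping its endpoints:
\[ C_k(x,y) \coloniff (x \neq y) \wedge (f_k(x) = y) \wedge \bigwedge_{l < k} (f_l(x) \neq y). \]
Symmetry of each $C_k$ is immediate from $f_l(x) = y \iff f_l(y) = x$ (as each $f_l$ is an involution); that every off-diagonal pair receives exactly one color follows from the covering axiom together with the minimality built into the definition; and properness, that edges sharing a vertex receive distinct colors, follows because $f_k$ is an injective involution, so the $k$-colored edges are exactly its $2$-element orbits, which are pairwise disjoint.

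For the reverse interpretation $\@T_\LNinv \to \@T_{\coloreq2}$, I would start from an edge-coloring $(C_k)_{k \in \#N} \models \@T_{\coloreq2}$ and define involutions swapping the endpoints of each monochromatic edge and fixing everything else:
\[ f_{k+1}(x) = y \coloniff (x \neq y \wedge C_k(x,y)) \vee (x = y \wedge \neg \exists z\,(z \neq x \wedge C_k(x,z))). \]
The properness axiom guarantees at most one $k$-colored edge at each vertex $x$, which makes $f_{k+1}$ a well-defined total function, and symmetry of $C_k$ then gives $f_{k+1}(f_{k+1}(x)) = x$. The step I expect to be the genuine obstacle is covering the \emph{diagonal}: the axiom $\forall x \forall y\, \bigvee_i (f_i(x) = y)$ demands that some $f_i$ fix each point $x$, yet a vertex of a complete graph may be incident to an edge of every color, so no color-involution fixes it. I would resolve this by reserving one extra involution for the identity, setting $f_0(x) = y \coloniff x = y$ and shifting the color-involutions to indices $k+1$; then $f_0$ covers all pairs $(x,x)$ while the $f_{k+1}$ cover every off-diagonal pair via its unique color, so all axioms of $\@T_\LNinv$ hold and the second interpretation is complete.
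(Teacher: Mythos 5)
Your proof is correct and follows essentially the same construction the paper gives in the two sentences preceding the proposition (color each edge by the least index of an involution swapping its endpoints; conversely, turn each color class into an involution). Your explicit treatment of the diagonal case of the covering axiom, via the extra identity involution $f_0$, is a genuinely necessary detail that the paper's phrase ``extended by the identity'' glosses over, since a vertex of an infinite complete graph can be incident to edges of every color.
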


A well-known lemma due to Kechris--Miller \cite[7.3]{KM} shows how to extend a $\@T_{\coloreq2}$-structuring of a CBER $E$ to a Borel $\omega$-coloring $c$ of the intersection graph on \textit{all} finite subsets of $E$-classes.
We encode such a coloring into a theory $\@T_{\colorlt\omega}$, formally consisting again of $n$-ary relations $C_{nk}$ satisfied by subsets of cardinality $n$ that are colored $k$, i.e., $c(\{x_0,\dots,x_{n-1}\}) = k \iff C_{nk}(x_0,\dots,x_{n-1})$; see \cref{tbl:theories} for a precise axiomatization.
The Kechris--Miller argument amounts to

\begin{proposition}[Kechris--Miller]
\label{thm:KM-color}
There is an interpretation $\@T_{\colorlt\omega} \to \@T_{\coloreq2} \sqcup \@T_\LO$.
\end{proposition}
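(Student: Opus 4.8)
The plan is to realize the Kechris--Miller construction as an explicit interpretation sending each symbol $C_{nk}$ to a formula over $\{C_k\}_{k\in\#N}\cup\{<\}$ that reads off a canonical edge-colour matrix of an $n$-set. Write $c(u,v)$ for the colour of the edge $\{u,v\}$, i.e.\ the unique $k$ with $C_k(u,v)$ (existence and uniqueness being guaranteed by the axioms of $\@T_{\coloreq2}$). Fix, for each $n\ge 2$, a bijective enumeration $(M^n_k)_{k\in\#N}$ of the countable set $\#N^{\{(a,b):0\le a<b<n\}}$ of colour patterns on $n$ points. Given distinct $x_0,\dots,x_{n-1}$, let $\sigma\in\Sym(n)$ be the unique permutation with $x_{\sigma(0)}<\dots<x_{\sigma(n-1)}$, and declare the colour of $\{x_0,\dots,x_{n-1}\}$ to be the index $k$ of its matrix $(a,b)\mapsto c(x_{\sigma(a)},x_{\sigma(b)})$. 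This is expressed by
\[
C_{nk}(\vec x)\ :=\ \bigwedge_{a\ne b}(x_a\ne x_b)\ \wedge\ \bigvee_{\sigma\in\Sym(n)}\Bigl(\bigwedge_{a<b}(x_{\sigma(a)}<x_{\sigma(b)})\ \wedge\ \bigwedge_{a<b}C_{M^n_k(a,b)}(x_{\sigma(a)},x_{\sigma(b)})\Bigr),
\]
a finite formula in the language of $\@T_{\coloreq2}\sqcup\@T_\LO$. The axiom schemas of $\@T_{\colorlt\omega}$ other than properness — $\Sym(n)$-invariance and that exactly one colour is assigned to each distinct tuple — are immediate, since reading off the matrix in the increasing arrangement is manifestly permutation-invariant and $(M^n_k)_k$ is a bijection.

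The entire content is the properness axiom, which reduces to the combinatorial lemma: if $F\ne G$ are $n$-element sets sharing a point, then their canonical matrices differ. I would prove this directly, with no induction. Write $F=\{x_0<\dots<x_{n-1}\}$ and $G=\{y_0<\dots<y_{n-1}\}$, suppose both matrices equal a common $M$, and let $p=x_i=y_{i'}$ be a shared point. The crux is to examine the single entry in position $(i,i')$: by symmetry of $M$ and equality of the two matrices, $M_{ii'}$ equals both $c(p,x_{i'})$ (an edge at $p$ inside $F$) and $c(p,y_i)$ (an edge at $p$ inside $G$); since the edge-colouring is proper, all edges at $p$ carry distinct colours, forcing $x_{i'}=y_i=:r$. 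If $i\ne i'$, say $i<i'$, then reading positions in $F$ gives $r=x_{i'}>x_i=p$, while reading positions in $G$ gives $r=y_i<y_{i'}=p$, a contradiction. Hence $i=i'$, and then $c(p,x_b)=M_{ib}=c(p,y_b)$ for every $b\ne i$, so properness yields $x_b=y_b$ throughout and $F=G$.

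Granting the lemma, properness of the defined colouring is immediate: two distinct $n$-sets sharing a point have distinct matrices, hence satisfy distinct $C_{nk}$. Intersecting sets of different cardinalities require no attention, since the axioms of $\@T_{\colorlt\omega}$ constrain only pairs of equal size, and colours across cardinalities are automatically disjoint as they are indexed by distinct arities. The main obstacle is pinpointing the right invariant: colouring a set by $p\mapsto\{c(p,q):q\in F\setminus\{p\}\}$, or by its unordered edge-colour isomorphism type, fails precisely when the shared point sits in different order-positions in the two sets, and it is exactly the linear order supplied by $\@T_\LO$ — used to pin the shared point to a numerical position — that powers the contradiction in the displayed step. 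Everything else (countability of the matrix enumeration, symmetry, functionality) is routine bookkeeping.
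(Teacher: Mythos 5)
Your proof is correct and takes essentially the same route as the paper's: sort the $n$-set by the linear order, record the matrix of pairwise edge-colours in sorted position, and verify properness by first showing the shared point must occupy the same order-position in both sets (otherwise the order relations to the forced-equal partner elements contradict each other) and then using properness of the edge-colouring at that point to match all remaining elements. The only cosmetic difference is that you index the colour matrix by the strict upper triangle rather than by all of $n^2$ as the paper does.
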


\begin{proof}
Let $c: Y^2 \setminus (=_Y) \to \omega$ be an edge coloring of the complete graph on a set $Y$ equipped with a linear order.
We define the color of a finite set $\{x_0, \dots, x_{n-1}\}$ by first arranging the elements in increasing order (i.e., choose the unique $\sigma \in \Sym(n)$ such that $x_{\sigma(i)} < x_{\sigma(j)}$ for all $i < j$), and then recording the colors of all pairs, keeping track of this order:
\begin{align*}
    c(\{x_0, \dots, x_{n-1}\}) := ( c(x_{\sigma(i)}, x_{\sigma(j)}) )_{(i,j) \in n^2} \in \#N^{n^2}.
\end{align*}
To verify that this is indeed a coloring, let $(a_0 < \dots < a_{n-1}), (b_0 < \dots < b_{n-1})$ be tuples that intersect (so $a_i = b_j$ for some $i,j \in n$) but have the same color (so $c(a_i, a_j) = c(b_i,b_j)$ for all $i,j \in n$). Then $a_i = b_j \implies c(a_i, a_j) = c(b_i,b_j) = c(b_i,a_i) \implies a_j = b_i$, so $i = j$. But then for any $k \in n$, $c(a_i, a_k) = c(b_i,b_k) = c(a_i,b_k) \implies a_k = b_k$, so in fact $\vec{a} = \vec{b}$.

Fixing bijections $b_n: \#N^{n^2} \cong \#N$ for each $n$, we can therefore interpret $\@T_{\colorlt\omega}$ in $\@T_{\coloreq2} \sqcup \@T_\LO$ by
\begin{align*}
    C_{n, b_n(\gamma)}(x_0, \dots, x_{n-1})
    \coloniff \bigvee_{\sigma \in \Sym(n)} \paren[\big]{\bigwedge_{i < j} (x_{\sigma(i)} < x_{\sigma(j)}) 
    \wedge \bigwedge_{i,j \in n} (c(x_{\sigma(i)}, x_{\sigma(j)}) = \gamma_{ij})}
\end{align*}
for every $n \in \#N, \gamma \in \#N^{n^2}$.
\end{proof}

Together with \cref{thm:FM-color2,thm:FM,eq:finsub-pt:interp-LO}, this yields $\@T_{\colorlt\omega} -> \@T_\LN \sqcup \@T_\sep$, i.e., every CBER $E$ is $\@T_{\colorlt\omega}$-structurable, i.e., has a Borel $\omega$-coloring of the intersection graph of finite subsets of $E$-classes, as the result is usually stated.

Note that the argument above uses $\@T_\LO$ rather than $\@T_\sep$; the former theory is strictly weaker than the latter, by \cref{ex:Zline-cex} below.
Nonetheless, in the presence of $\@T_{\coloreq2}$, they turn out to be mutually interpretable:

\begin{proposition}
\label{thm:sep-colorLO}
    There is an interpretation $\@T_\sep \to \@T_{\coloreq2} \sqcup \@T_\LO$.
\end{proposition}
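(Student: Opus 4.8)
The plan is to give an explicit one-sorted interpretation, defining the countably many unary predicates $U_k$ of $\@T_\sep$ directly from the linear order $<$ and the edge-coloring relations $C_k$. The key preliminary observation is that the properness axiom of $\@T_{\coloreq2}$ forces each point to have \emph{at most one} neighbor of any given color $k$: if $x$ had two distinct $k$-colored neighbors $w \ne w'$, then the edges $\{x,w\}$ and $\{x,w'\}$ would be distinct edges sharing the vertex $x$ yet both colored $k$, contradicting the properness axiom. Thus the totality-of-coloring axiom (every distinct pair receives exactly one color) together with this ``at most one $k$-neighbor'' property lets each point canonically identify its $k$-colored neighbor whenever it exists.

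Given this, I would define the interpretation $\alpha : \@T_\sep -> \@T_{\coloreq2} \sqcup \@T_\LO$ by sending, for each $k \in \#N$, the unary predicate $U_k$ to the formula
\begin{align*}
\alpha(U_k)(x) := \exists w\, (C_k(x,w) \wedge x < w),
\end{align*}
which says ``the (unique) $k$-colored neighbor of $x$ is $<$-above $x$''. Here the linear order plays the essential role of \emph{orienting} each edge, breaking the symmetry between its two endpoints; note that the coloring alone cannot separate points, since for distinct $y, z$ with color $k$ on the edge $\{y,z\}$, both endpoints have a $k$-colored neighbor, so the ``undirected'' predicate $\exists w\, C_k(x,w)$ would fail to distinguish them. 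This matches the expectation that $\@T_\sep$ is strictly stronger than $\@T_\LO$, so that the coloring must supply the extra content.

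The verification that these predicates form a separating family is then immediate, and is the only remaining step. Given distinct points $y \ne z$, let $k$ be the unique color of the edge $\{y,z\}$; assuming without loss $y < z$, the $k$-neighbor of $y$ is $z$, which is $<$-above $y$, so $\alpha(U_k)(y)$ holds, whereas the $k$-neighbor of $z$ is $y$, which is $<$-below $z$, so by uniqueness of the $k$-neighbor $\alpha(U_k)(z)$ fails. Hence $\alpha(U_k)(y) <-> \neg \alpha(U_k)(z)$, witnessing the separating axiom of $\@T_\sep$ via its $k$-th disjunct. By the completeness theorem, this semantic check over all countable models of $\@T_{\coloreq2} \sqcup \@T_\LO$ shows $\@T_{\coloreq2} \sqcup \@T_\LO |- \alpha(\@T_\sep)$, so $\alpha$ is the desired interpretation. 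I do not anticipate a genuine obstacle: once the orientation trick is spotted, the only real content is the uniqueness of the $k$-neighbor, which is exactly the properness of the coloring.
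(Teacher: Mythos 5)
Your interpretation $\alpha(U_k)(x) := \exists w\,(C_k(x,w) \wedge x < w)$ is exactly the formula the paper uses, and your verification (uniqueness of the $k$-colored neighbor via properness of the coloring, plus the orientation from $<$) is the paper's argument verbatim. Correct, same approach.
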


\begin{proof}
    Define for each $k \in \#N$ a formula 
    \begin{align*}
        U_k(x) \coloniff \exists z ((x < z) \wedge (c(x,z) = k)).
    \end{align*}
    It suffices to show that the $U_k$'s separate points in any model $(c, \leq)$ of $\@T_{\coloreq2} \sqcup \@T_\LO$ (where $c$ is the coloring of pairs).
    So, let $x \neq y$ and say without loss of generality that $x < y$.
    Then for $k = c(x,y)$ we have $U_k(x)$ (choose $z = y$), but $\neg U_k(y)$, as $c(y,z) = k = c(x,y) \implies z = x < y$.
\end{proof}

In fact, it is ``almost'' true that $\@T_\sep -> \@T_{\colorlt\omega}$ (which implies the above by \cref{thm:KM-color}).
The only counterexample is a model of $\@T_{\colorlt\omega}$ with only 2 indistinguishable points; see \cref{ex:LO-color-cex}.
But in models of size $3$ or larger, we may define a countable separating family from a coloring of finite subsets, or indeed a coloring of subsets of size $\le 3$, whose theory we call $\@T_{\colorle3}$ (which can be axiomatized similarly to $\@T_{\colorlt\omega}$ in \cref{tbl:theories}).

\begin{proposition}
\label{thm:sep-color}
    There is an interpretation $\@T_\sep \to \@T_{\colorle3} \sqcup \{\exists x_0, x_1, x_2 \bigwedge_{i \neq j \in 3} (x_i \neq x_j) \}$.
\end{proposition}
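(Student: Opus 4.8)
The plan is to deduce the interpretation abstractly from two facts about models $\@M$ of $\@T_{\colorle3} \sqcup \{\exists x_0, x_1, x_2\, \bigwedge_{i \neq j \in 3}(x_i \neq x_j)\}$ (call this theory $\@T^{\ge 3}$ for brevity): that every such $\@M$ is \emph{rigid}, and that its pointed models are $\@F$-categorical for a single countable family $\@F \subseteq \@L_{\omega_1\omega}^1$. Recall that an interpretation $\@T_\sep \to \@T^{\ge 3}$ is exactly a countable family of unary formulas that, provably in $\@T^{\ge 3}$, separates points; so it suffices to produce a countable family of unary formulas separating the points of every model.

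For categoricity, I would first interpret $\@T_\LN$ in $\@T_{\colorle3}$ (even without the size hypothesis), using that a proper edge-coloring is injective at each vertex. Concretely, for each color $k$ set $F_k(x,y) \coloniff C_{2k}(x,y) \vee (x = y \wedge \neg \exists z\, C_{2k}(x,z))$; the properness axiom for pairs forces $c(x,-)$ to be injective, so each $F_k$ is the graph of a total function picking out the color-$k$ neighbour of $x$ when it exists (and fixing $x$ otherwise). Together with the diagonal, these cover all pairs, giving an interpretation $\@T_\LN \to \@T_{\colorle3}$, hence also $\@T_\LN \to \@T^{\ge 3}$. By \cref{thm:interp-LN} this makes $\@S_1(\@T^{\ge 3})$ standard Borel, equivalently (\cref{thm:mod-smooth-cat}) yields a countable $\@F \subseteq \@L_{\omega_1\omega}^1$ over which every pointed model is $\@F$-categorical.

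The heart of the argument is rigidity, and this is where the triple colors (and the hypothesis of at least three elements) are essential. Let $g \in \Aut(\@M)$ move a point, say $gx = x' \neq x$. Since $g$ preserves the color $p := c(x,x')$ and sends the edge $\{x,x'\}$ to $\{x', gx'\}$, and same-colored edges are disjoint by pair-properness, these two intersecting edges must coincide, forcing $g^2 x = x$; thus $g$ is an involution swapping $x$ and $x'$. Now pick a third point $w \notin \{x,x'\}$ (available since the model has at least three elements). If $gw = w$, then $g$ maps $\{x,w\}$ to $\{x',w\}$, two distinct intersecting edges of equal color, contradicting pair-properness. Otherwise $g$ also swaps $w$ with some $w' \notin \{x,x'\}$, and $g$ maps the triple $\{x,x',w\}$ to $\{x,x',w'\}$; these are distinct (as $w \neq w'$) intersecting triples of equal color, contradicting triple-properness. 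Hence $g$ fixes every point, i.e.\ $\@M$ is rigid. I expect this case analysis, and in particular the appeal to triple-properness to rule out the four-point swap, to be the main obstacle; it is also exactly what fails for two-point models (cf.\ \cref{ex:LO-color-cex}).

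Finally I would combine the two ingredients. Given distinct $a, b$ in a model $\@M$, rigidity gives $(\@M,a) \not\cong (\@M,b)$, since any isomorphism of these pointed models would be a nontrivial automorphism of $\@M$; so by $\@F$-categoricity $a$ and $b$ realize different $\@F$-types, whence some $\phi \in \@F$ separates them. Thus $\@F$ is, by the completeness theorem, a provably separating family in $\@T^{\ge 3}$, and sending the predicates $U_i$ of $\@T_\sep$ to an enumeration of $\@F$ yields the desired interpretation $\@T_\sep \to \@T_{\colorle3} \sqcup \{\exists x_0, x_1, x_2\, \bigwedge_{i \neq j \in 3}(x_i \neq x_j)\}$.
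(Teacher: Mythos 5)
Your proof is correct, but it takes a genuinely different route from the paper's. The paper's proof is a direct two-line construction: it defines the separating formulas explicitly as $U_{ij}(x) := \exists z, z'\, ((c(x,z) = i) \wedge (c(x,z,z') = j))$ and verifies separation by the same disjointness-of-equally-colored-sets argument that you use, but applied to the formulas rather than to an automorphism. You instead prove the two hypotheses of \cref{thm:scott-LNsep}\cref{thm:scott-LNsep:logic} -- that $\@T_{\colorle3} \sqcup \{\text{``$\ge 3$ elements''}\}$ interprets $\@T_\LN$ (your $F_k$'s are correct: pair-properness makes $y \mapsto c(x,y)$ injective, so each $F_k$ is a total function, and together with the diagonal they cover all pairs), and that all its models are rigid -- and then extract the separating family abstractly from rigidity plus $\@F$-categoricity of pointed models, exactly as in the paper's proof that \labelcref{thm:scott-LNsep}\cref{thm:scott-LNsep:logic}$\implies$\cref{thm:scott-LNsep:LNsep}. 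Your rigidity argument is sound, including the key use of triple-properness to exclude the case where an automorphism swaps two disjoint pairs $\{x,x'\}$ and $\{w,w'\}$ (which maps the triple $\{x,x',w\}$ to the distinct intersecting equally-colored triple $\{x,x',w'\}$), and the use of the third point $w$ mirrors exactly why the two-element model of \cref{ex:LO-color-cex} is the unique obstruction. What the paper's approach buys is brevity and fully explicit, simple existential separating formulas over the original language; what yours buys is the sharper structural fact that every model of the theory with $\ge 3$ elements is rigid, which (given the $\@T_\LN$-interpretation) is actually \emph{equivalent} to interpretability of $\@T_\sep$ by \cref{thm:scott-LNsep}, and which connects this positive result to the automorphism-based method used throughout \cref{sec:examples} for the non-interpretability results. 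The cost is that your separating formulas, obtained through the quantifier-elimination argument of \cref{thm:interp-LN}, are less transparent than the paper's $U_{ij}$.
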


\begin{proof}
    Define for each $i,j \in \#N$ a formula
    \begin{align*}
        U_{ij}(x) := \exists z, z' ( (c(x,z) = i) \wedge (c(x,z,z') = j) ).
    \end{align*}
    Let $c$ be an $\omega$-coloring of the subsets of size $\le 3$ of some underlying set with $\ge 3$ elements.
    Let $x \neq y$ and fix any $w \neq x,y$.
    Put $i = c(x,w)$ and $j = c(x,w,y)$, so that $U_{ij}(x)$.
    If $U_{ij}(y)$ as well, then there exist $z,z'$ such that $c(y,z) = i$ and $c(y,z,z') = j$.
    But then $c(y,z,z') = j = c(x,w,y) \implies \{z,z'\} = \{x,w\}$, so $\{x,w\}$ intersects $\{y,z\}$, and $c(x,w) = i = c(y,z) \implies \{x,w\} = \{y,z\}$, contradicting that $x,y,w$ are all distinct.
\end{proof}

\begin{corollary}
\label{thm:color3}
$\@T_{\colorle3}$ and $\@T_{\colorlt\omega}$ are mutually interpretable.
\end{corollary}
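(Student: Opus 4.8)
The plan is to exhibit interpretations in both directions, of which only one requires genuine work.

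The direction $\@T_{\colorle3} \to \@T_{\colorlt\omega}$ is immediate. The language of $\@T_{\colorle3}$ consists of exactly the relation symbols $C_{nk}$ of $\@T_{\colorlt\omega}$ with $n \in \{2,3\}$, and (by the ``axiomatized similarly'' remark) its axioms are precisely the $n \in \{2,3\}$ instances of the symmetry, totality, and properness axioms of $\@T_{\colorlt\omega}$. Hence the identity assignment $C_{nk} \mapsto C_{nk}$ (for $n \le 3$) is an interpretation: every model of $\@T_{\colorlt\omega}$ restricts to a model of $\@T_{\colorle3}$ by forgetting the higher-arity relations.

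For the converse $\@T_{\colorlt\omega} \to \@T_{\colorle3}$, I would recycle interpretations already established. First, $C_k \mapsto C_{2k}$ gives $\@T_{\coloreq2} \to \@T_{\colorle3}$, since the size-$2$ fragment of $\@T_{\colorle3}$ is exactly $\@T_{\coloreq2}$. Next, by \cref{thm:sep-color} we have $\@T_\sep \to \@T_{\colorle3} \sqcup \{\exists x_0,x_1,x_2\,\bigwedge_{i\neq j}(x_i \neq x_j)\}$, and composing with $\@T_\LO \to \@T_\sep$ from \cref{eq:finsub-pt:interp-LO} yields a formula $\lambda(x,y)$ in the language of $\@T_{\colorle3}$ defining a linear order in every model with at least $3$ elements. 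Feeding this order together with the edge-coloring $C_{2k}$ into the Kechris--Miller construction of \cref{thm:KM-color} produces coloring formulas for each arity. Concretely, I would set $\alpha(C_{2k}) := C_{2k}$ and $\alpha(C_{3k}) := C_{3k}$, and for $n \ge 4$ define $\alpha(C_{n,b_n(\gamma)})$ by the Kechris--Miller formula with $<$ replaced by $\lambda$ and the edge-color predicate replaced by $C_{2}$.

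The one point needing care --- and the only place the ``$\ge 3$ points'' hypothesis of \cref{thm:sep-color} intervenes --- is verifying that these formulas define a model of $\@T_{\colorlt\omega}$ in \emph{every} model of $\@T_{\colorle3}$, including those of size $\le 2$. This works out because the properness axiom of $\@T_{\colorlt\omega}$ only compares $n$-subsets of equal size $n$, so the coloring may be defined arity-by-arity with no cross-arity constraints. For $n \in \{2,3\}$ the relations are inherited directly from $\@T_{\colorle3}$ and the axioms hold outright. For $n \ge 4$, any $n$-tuple of distinct elements lives in a model of size $\ge 4 \ge 3$, where $\lambda$ is a genuine linear order, so the Kechris--Miller argument applies verbatim to give a proper total coloring of the $n$-subsets; in models of size $< n$ there are no such tuples, so $\alpha(C_{nk})$ is vacuously the empty relation and the axioms are satisfied trivially. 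Thus no separate case split by model size is required, and the assignment is a single well-defined interpretation. Combined with the easy direction, this establishes that $\@T_{\colorle3}$ and $\@T_{\colorlt\omega}$ are mutually interpretable.
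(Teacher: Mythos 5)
Your proof is correct and takes essentially the same route as the paper: the forward direction is the trivial restriction, and the converse is the composite of Kechris--Miller (\cref{thm:KM-color}) with the linear order obtained from \cref{thm:sep-color} via $\@T_\LO \to \@T_\sep$. The only divergence is in handling models of size $\le 2$ (where \cref{thm:sep-color} fails): the paper disjuncts on model size and inserts a trivial coloring there, whereas you keep the original $C_{2k}, C_{3k}$ for arities $2,3$ and note that the arity-$\ge 4$ formulas are vacuous in such models --- both work, since the axioms of $\@T_{\colorlt\omega}$ impose no cross-arity constraints.
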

\begin{proof}
Clearly $\@T_{\colorle3} -> \@T_{\colorlt\omega}$; conversely, we have $\@T_{\colorlt\omega} -> \@T_{\colorle3}$ using the composite of $\@T_{\colorlt\omega} -> \@T_{\coloreq2} \sqcup \@T_\LO$ from \cref{thm:KM-color}, $\@T_\LO -> \@T_\sep$ from \cref{ex:finsub-pt:LO}, and $\@T_\sep -> \@T_\colorle3$ from \cref{thm:sep-color} in models of size $\ge 3$, and the trivial $\@T_\colorlt\omega$-coloring in models of size $\le 2$.
\end{proof}

\Cref{fig:cber-interp-big} shows the interpretability relations between the theories we have considered thus far.
Note that these theories all describe structures available ``for free'' on every CBER.
The strongest such theory is $\@T_\LN \sqcup \@T_\sep$, by \cref{thm:str-impl-interp}.
It follows from \cref{thm:sep-colorLO} that this theory is mutually interpretable with $\@T_\coloreq2 \sqcup \@T_\LO$, which hence also interprets every theory available for free on every CBER.
Moreover, by \cref{thm:sep-color}, the theory $\@T_\colorlt\omega$ of countable colorings of finite subsets is ``almost'' equivalent in strength as well; we indicate this with the `$\approx$' in \cref{fig:cber-interp-big}.

We now verify that no other interpretability relations (not implied by \cref{fig:cber-interp-big} and transitivity) hold between these theories.
Our primary tool for showing the non-existence of an interpretation $\alpha : \@T \to \@T'$ is to examine the possible automorphisms of models:
recall \cref{eq:interp-mod} that $\alpha$ induces a ``reduct'' map $\alpha^*: \Mod(\@T') \to \Mod(\@T)$ equivariant under the logic action, hence in particular
\begin{equation*}
\Aut(\@M) \subseteq \Aut(\alpha^* \@M)
    \quad \forall \@M |= \@T'.
\end{equation*}
So to show $\@T \not\to \@T'$, it suffices to produce a model $\@M$ of $\@T'$ along with an automorphism $g \in \Aut(\@M)$ that cannot be an automorphism of any model of $\@T$.
More generally, it suffices to have a model $\@M$ of $\@T'$ such that for any model $\@N$ of $\@T$ on the same set, there is some $g \in \Aut(\@M) \setminus \Aut(\@N)$.

We minimize the number of non-interpretations needed to verify \cref{fig:cber-interp-big} by noting that it is enough to show non-interpretations from weak theories to strong theories. That is, to prove $\@T_1 \not\to \@T_2$, it suffices to prove $\@T_1' \not\to \@T_2'$, for any $\@T_1' -> \@T_1$ and any $\@T_2' <- \@T_2$.
\begin{center}
\begin{tikzcd}
                        & \@T'_2          &            &                                      & \@T'_2          \\
\@T_1 \arrow[r, dotted, "/"{anchor=center,sloped}] & \@T_2 \arrow[u] & \impliedby & \@T_1                                & \@T_2 \arrow[u] \\
\@T'_1 \arrow[u]        &                 &            & \@T'_1 \arrow[u] \arrow[ruu, dotted, "/"{anchor=center,sloped}] &                
\end{tikzcd}
\end{center}
Here and in \cref{fig:cber-interp-big}, dotted arrows between theories indicate non-existence of interpretations. 
We also do not need to check non-interpretations from coproduct theories, as (by the universal property of the coproduct, see \cref{def:thy-coprod}) an interpretation $\@T_1 \sqcup \@T_2 \to \@T$ is equivalent to a pair of interpretations $\@T_1 \to \@T <- \@T_2$. 
It suffices then to verify the six non-interpretations shown in \cref{fig:cber-interp-big} (below the dashed line, and excluding $\@T_\marker \oplus \@T_\pt$ for which see \cref{sec:examples-other}):

\begin{proposition}
\label{ex:LO-color-cex}
    There is no interpretation $\@T_\LO \to \@T_\colorlt\omega$.
\end{proposition}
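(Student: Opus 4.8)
The plan is to apply the automorphism criterion recalled immediately before the statement. An interpretation $\alpha : \@T_\LO \to \@T_\colorlt\omega$ would produce, for every model $\@M \models \@T_\colorlt\omega$, a reduct $\alpha^*\@M \models \@T_\LO$ on the same underlying set, with the containment $\Aut(\@M) \subseteq \Aut(\alpha^*\@M)$. So it suffices to exhibit a single model of $\@T_\colorlt\omega$ carrying an automorphism that no linear order on the same set can admit.

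First I would take the underlying set to be $2 = \{0,1\}$ and define $\@M \models \@T_\colorlt\omega$ by coloring its unique two-element subset with a single color (say declaring $C_{2,0}(0,1)$ and $C_{2,0}(1,0)$) and leaving every relation $C_{nk}$ with $n \ge 3$ empty. Verifying the axioms of $\@T_\colorlt\omega$ is immediate: there is exactly one unordered pair of distinct elements to color, so the symmetry and totality axioms hold; and there are no $n$-tuples of distinct elements for $n \ge 3$, so the proper-coloring axiom holds vacuously for those, while for $n = 2$ it holds trivially since there is only one pair to consider.

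Next I would observe that the transposition $g = (0\,1)$ lies in $\Aut(\@M)$, since the only nonempty relation of $\@M$ is symmetric under $\Sym(2)$. On the other hand, every linear order on $\{0,1\}$ is rigid: the two possible orders $0 < 1$ and $1 < 0$ are interchanged by $g$, so $g$ is an automorphism of neither. Consequently, for the putative reduct $\alpha^*\@M \models \@T_\LO$ we would have $g \in \Aut(\@M) \subseteq \Aut(\alpha^*\@M)$, yet $g \notin \Aut(\alpha^*\@M)$ because $\alpha^*\@M$ is a linear order on a two-element set. This contradiction shows no such $\alpha$ exists.

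The argument is short, so there is no real obstacle to overcome; the only subtlety worth flagging is the \emph{choice of witness}, and in particular why two points is both sufficient and essentially necessary. By \cref{thm:sep-color}, on any underlying set with at least three elements a coloring of finite subsets already interprets a separating family (hence a linear order via \cref{ex:finsub-pt:LO}), so the counterexample is forced to live on a two-element set of indistinguishable points, exactly as anticipated in the remark preceding the statement.
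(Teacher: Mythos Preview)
Your proof is correct and takes essentially the same approach as the paper: exhibit the trivially colored two-element model of $\@T_\colorlt\omega$ and note that the transposition is an automorphism of it but of no linear order on $\{0,1\}$. Your closing remark about why the witness must have size two (via \cref{thm:sep-color}) is exactly the observation the paper makes immediately after its proof.
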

\begin{proof}
    We may trivially color the subsets of $2 = \{0,1\}$ so that we have an automorphism flipping $0$ and $1$, which clearly is not an automorphism of any linear order on $2$.
\end{proof}

The preceding model of size $2$ is the only possible witness to $\@T_\LO -/> \@T_\colorlt\omega$, by \cref{thm:sep-color}.
Since finite equivalence classes of CBERs are usually considered ``trivial'', in order for a non-interpretation to be viewed as comparing the relative strengths of Borel combinatorial structures, it would be preferable to have infinite models as counterexamples, which we provide in the following.

\begin{proposition} \label{ex:color2-cex}
There is no interpretation $\@T_\LO \to \@T_{\coloreq2}$.
\end{proposition}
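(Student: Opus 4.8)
The plan is to use the automorphism obstruction set up just above the statement: an interpretation $\alpha : \@T_\LO \to \@T_{\coloreq2}$ would yield an equivariant reduct map $\alpha^* : \Mod(\@T_{\coloreq2}) \to \Mod(\@T_\LO)$, hence $\Aut(\@M) \subseteq \Aut(\alpha^*\@M)$ for every edge-colored complete graph $\@M$. So it suffices to exhibit a single infinite model $\@M \models \@T_{\coloreq2}$ admitting an automorphism $g$ that can never be an automorphism of a linear order. Since any automorphism of a linear order has only singleton or infinite orbits (if $g(a) > a$ then $a < g(a) < g^2(a) < \cdots$ are all distinct), it is enough for $g$ to have a finite orbit of size $\ge 2$; the cleanest choice is a nontrivial involution.

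Thus I would construct an infinite proper edge-coloring of a complete graph that is invariant under a fixed-point-free involution, the key idea being to put a group structure on the vertex set so that both the coloring and the involution become algebraic. Take $V := \bigoplus_{n \in \#N} \#Z_2$, the countable group of finite-support binary sequences under coordinatewise addition, in which every nonzero element has order $2$. Color each edge by the ``difference'' of its endpoints: set $C_k(x,y) \coloniff (x \ne y \wedge x + y = k)$, after identifying the countable color set with the nonzero elements of $V$. This assignment is symmetric and gives each distinct pair a unique color, and it is \emph{proper} because two edges $\{x,y\}, \{x,z\}$ sharing the vertex $x$ receive colors $x+y, x+z$, which agree iff $y = z$; so $\@M \models \@T_{\coloreq2}$.

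For the involution, I would fix any $t \in V \setminus \{0\}$ and let $g(x) := x + t$. Then $g$ is a fixed-point-free involution (as $t + t = 0$), and it preserves the coloring since $g(x) + g(y) = x + y$; hence $g \in \Aut(\@M)$. On the other hand, for every $x$ the set $\{x, x+t\}$ is a $2$-element $g$-orbit, so on any linear order on $V$ the relation $x < x+t$ (or its reverse) would be flipped by $g$, whence $g \notin \Aut(\@N)$ for any $\@N \models \@T_\LO$ on $V$. Combining with the first paragraph, $\alpha^*(\@M)$ would be a linear order on $V$ having $g$ as an automorphism, a contradiction, so no interpretation $\@T_\LO \to \@T_{\coloreq2}$ exists.

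The only subtlety, and the one place where a naive attempt fails, is arranging the coloring to be simultaneously proper and invariant under a finite-order symmetry: an ``obvious'' coloring such as $c(\{i,j\}) = i+j$ on $\#Z$ is proper but is not preserved by $i \mapsto -i$. The group-theoretic trick of coloring by the sum $x+y$ in an elementary abelian $2$-group makes invariance under translations automatic while keeping properness, and this is the real content of the argument; everything else is a routine check of the $\@T_{\coloreq2}$ axioms.
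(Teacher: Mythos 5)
Your proof is correct and takes essentially the same approach as the paper's: both exhibit an infinite model of $\mathcal{T}_{\mathsf{color}_{=2}}$ admitting a color-preserving involution with $2$-element orbits, which no linear order can have as an automorphism. The paper's witness is $\mathbb{Q}\setminus\{0\}$ with $c(x,y)=xy$ and $x\mapsto -x$, whereas your witness $\mathbb{Z}_2^{\oplus\omega}$ with $c(x,y)=x+y$ is in fact the very model the paper uses for a later non-interpretability result ($\mathcal{T}_{\mathsf{marker}}\oplus\mathcal{T}_{\mathsf{pt}}\not\to\mathcal{T}_{\mathsf{color}_{=2}}$); both work equally well here.
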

\begin{proof}
The complete graph on $\#Q - \{0\}$ with the coloring $c(x,y) := xy$ has an automorphism $x \mapsto -x$, which is not an automorphism of any linear order on $\#Q - \{0\}$.
\end{proof}

\begin{remark} \label{ex:color3-cex}
Since the above counterexample is an infinite model, it follows from \cref{thm:sep-color} that also $\@T_\colorle3 -/> \@T_\coloreq2$.
\end{remark}

\begin{proposition} \label{ex:Zline-cex}
$\@T_\FMinv \sqcup \@T_\LO$ does not interpret $\@T_\sep$ or $\@T_{\coloreq2}$.
\end{proposition}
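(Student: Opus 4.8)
The plan is to use the automorphism obstruction set up in the paragraph preceding the statement: an interpretation $\alpha : \@T \to \@T_\FMinv \sqcup \@T_\LO$ would produce, for every $\@M \models \@T_\FMinv \sqcup \@T_\LO$, a reduct $\alpha^*\@M \models \@T$ on the same underlying set with $\Aut(\@M) \subseteq \Aut(\alpha^*\@M)$. So it suffices to exhibit a single model $\@M \models \@T_\FMinv \sqcup \@T_\LO$ carrying an automorphism that no model of $\@T_\sep$ nor of $\@T_{\coloreq2}$ can admit. I would exploit two facts: every model of $\@T_\sep$ is rigid (as used in the proof of \cref{thm:scott-LNsep}), so its only automorphism is the identity; and every automorphism of a model of $\@T_{\coloreq2}$ has order at most $2$. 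Both obstructions are then defeated at once by arranging an automorphism of infinite order.

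For the model I would take the underlying set to be the group $G := \#Z_2^{*\omega}$ itself, with the left regular action $a_\gamma(w) := \gamma w$. This is a transitive $\#Z_2^{*\omega}$-action whose generators act as involutions, hence a model of $\@T_\FMinv = \@T_{\#Z_2^{*\omega}}$, and its automorphisms (bijections commuting with the left action) are exactly the right multiplications $\rho_\delta : w \mapsto w\delta$. I then fix $\gamma_0 := s_0 s_1$, which has infinite order in $G$ (it generates the $\#Z$ inside the copy of $\#Z_2 * \#Z_2 = D_\infty$ spanned by $s_0, s_1$); thus $\rho_{\gamma_0}$ is an automorphism of infinite order acting freely, with $\#Z$-orbits (the left cosets of $\langle \gamma_0 \rangle$). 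Since any free $\#Z$-action on a countable set admits an invariant linear order — order each orbit as $\#Z$ and order the orbits arbitrarily — I can choose a linear order $<$ on $G$ preserved by $\rho_{\gamma_0}$. Then $\@M := (G, (a_\gamma)_\gamma, <)$ is a model of $\@T_\FMinv \sqcup \@T_\LO$ with $\rho_{\gamma_0} \in \Aut(\@M)$ of infinite order.

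It then remains to establish the order-$\leq 2$ bound for $\@T_{\coloreq2}$-automorphisms, which is the one genuinely new point. Given $\@N \models \@T_{\coloreq2}$ with proper edge-coloring $c$ and an automorphism $\phi$, for any $x$ with $\phi(x) \neq x$ the edges $\{x, \phi x\}$ and $\{\phi x, \phi^2 x\}$ have the same color $c(x,\phi x)$ (apply the color-preserving $\phi$) and share the vertex $\phi x$; the properness axiom of $\@T_{\coloreq2}$ (distinct edges sharing a vertex get distinct colors) then forces these two edges to coincide, whence $x = \phi^2 x$ since $x \neq \phi x$. Thus $\phi^2 = \id$. With $\@M$ in hand both non-interpretations follow: an interpretation $\@T_\sep \to \@T_\FMinv \sqcup \@T_\LO$ would make the nontrivial $\rho_{\gamma_0}$ an automorphism of a rigid $\@T_\sep$-model, and an interpretation $\@T_{\coloreq2} \to \@T_\FMinv \sqcup \@T_\LO$ would make the infinite-order $\rho_{\gamma_0}$ an automorphism of order $\leq 2$, both absurd.

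The main obstacle I anticipate is the construction of the model, specifically reconciling a transitive involution-generated action with a linear order that shares a common nontrivial automorphism. A naive choice such as $\#Z$ with the translation $n \mapsto n+1$ fails, since commuting with a translation forces each generating involution to itself be a translation, hence trivial, destroying transitivity. Passing to the regular representation — so that the automorphisms are the abundant right multiplications — together with an element of infinite order circumvents this, and order-preservation comes for free because the chosen automorphism generates a free $\#Z$-action. The lemma that $\@T_{\coloreq2}$-automorphisms are involutions is short but essential: it is precisely what separates $\@T_{\coloreq2}$ (which, unlike $\@T_\sep$, is not rigid) from theories interpreting $\@T_\LO$.
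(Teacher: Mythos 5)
Your proof is correct, and it follows the same overall strategy as the paper: exhibit a single model of $\@T_\FMinv \sqcup \@T_\LO$ with an automorphism that no model of $\@T_\sep$ or $\@T_{\coloreq2}$ on the same set can admit. The differences are in the witness and in how the $\@T_{\coloreq2}$ obstruction is packaged. The paper takes $(\#Z,\le)$ with the path-graph edge-coloring $c(n,m) = \min(n,m) \bmod 2$, letting the $i$th generator of $\#Z_2^{*\omega}$ flip the edges of color $i$; the automorphism $x \mapsto x+2$ then kills $\@T_\sep$ by rigidity and kills $\@T_{\coloreq2}$ by the computation $c'(0,2) \ne c'(2,4) = c'(h(0),h(2))$. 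You instead use the left regular representation of $\#Z_2^{*\omega}$, whose automorphisms are all right translations, pick the infinite-order element $s_0s_1$, and build an invariant linear order by ordering each $\#Z$-orbit of $\rho_{s_0s_1}$ as $\#Z$ and the orbits arbitrarily --- all of which checks out, though it costs you an extra construction that the paper's concrete model gets for free. Your general lemma that every automorphism of a $\@T_{\coloreq2}$-model is an involution is exactly the paper's shared-vertex computation run in the abstract (with $x$, $\phi x$, $\phi^2 x$ in place of $0,2,4$), and is a nice reusable formulation; note that for the $\@T_\sep$ half only nontriviality of the automorphism is needed, so the infinite order is doing work only in the $\@T_{\coloreq2}$ half, where order $\ge 3$ would already suffice.
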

\begin{proof}
The connected graph $xGy \coloniff |x - y| = 1$ on $(\#Z, \leq)$ with the edge coloring $c(n,m) := \min(n,m) \bmod 2$ yields a model of $\@T_\FMinv \sqcup \@T_\LO$, where the $i$th generator in $\#Z_2^{*\omega}$ flips edges of color $i$, and it has a nontrivial automorphism $h: x \mapsto x+2$.
But models of $\@T_\sep$ are rigid, so there is no interpretation $\@T_\sep \to \@T_\FMinv \sqcup \@T_\LO$.

Moreover, $h$ cannot be an automorphism of an edge coloring $c'$ of the complete graph on $\#Z$, since we would have $c'(h(0), h(2)) = c'(2,4) \neq c'(0,2)$.
So there is also no interpretation $\@T_\coloreq2 \to \@T_\FMinv \sqcup \@T_\LO$.
\end{proof}

\begin{proposition} \label{ex:Ztrans-cex}
There is no interpretation $\@T_\FMinv \to \@T_\LN \sqcup \@T_\LO$.
\end{proposition}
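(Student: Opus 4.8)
The plan is to apply the automorphism criterion recalled just above: to show $\@T_\FMinv \not\to \@T_\LN \sqcup \@T_\LO$, I will exhibit a single model $\@M \models \@T_\LN \sqcup \@T_\LO$ together with an automorphism $g \in \Aut(\@M)$ that is not an automorphism of any $\@T_\FMinv$-model on the same underlying set. I take $\@M$ to have underlying set $\#Z$, equipped with the usual linear order and with Lusin--Novikov functions $f_n(x) \coloniff x + n$ for $n \in \#Z$ (reindexed by $\#N$); these cover all pairs since $f_{y-x}(x) = y$, so indeed $\@M \models \@T_\LN \sqcup \@T_\LO$. Let $g \colon x \mapsto x+1$, which manifestly preserves both the order and each $f_n$, so $g \in \Aut(\@M)$. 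The key structural feature to exploit is that $\langle g \rangle \cong \#Z$ already acts freely and transitively on the set $\#Z$.

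The heart of the argument is to show $g$ cannot be an automorphism of any transitive $\#Z_2^{*\omega}$-action on $\#Z$. Suppose $\@N$ were such a model: a transitive action whose generators $s_i$ act by involutions $a_{s_i} \colon \#Z \to \#Z$, and suppose $g \in \Aut(\@N)$, so that $g$ commutes with the entire interpreted action, in particular with each $a_{s_i}$. Then $a_{s_i}(x+1) = a_{s_i}(x)+1$ for all $x$, and hence, writing any $x$ as a power of $g$ applied to $0$, we get $a_{s_i}(x) = x + a_{s_i}(0)$; that is, each $a_{s_i}$ is itself a translation, say by $c_i \coloniff a_{s_i}(0)$. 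But $a_{s_i}$ is an involution, so composing it with itself gives translation by $2c_i$, whence $2c_i = 0$ and thus $c_i = 0$, i.e.\ $a_{s_i} = \id$. Then every generator acts trivially, so the whole $\#Z_2^{*\omega}$-action is trivial and hence not transitive on the infinite set $\#Z$ — a contradiction. (Equivalently, one can phrase this as: $\langle g\rangle$ acting freely and transitively would force the acting quotient group to be $\#Z$, but $\#Z_2^{*\omega}$ admits no surjection onto the torsion-free group $\#Z$.)

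I expect the only genuine subtlety is in choosing the model so that this commutation argument bites, and in pinpointing exactly where the involution hypothesis is used. The crucial point is that the freeness-and-transitivity of $\langle g \rangle$ rigidifies any commuting self-map into a translation, after which being an involution collapses it to the identity. This is precisely the step that fails for the weaker theories $\@T_\FMbij$ and $\@T_\LNbij$, where the commuting bijections $a_{s_i}$ may be nontrivial translations with $c_i \neq 0$ — consistent with the interpretation $\@T_\FMbij \to \@T_\LN$ established in \cref{thm:FMbij}. It is worth recording explicitly that an automorphism of a $\@T_\FMinv$-model, being equivariant for the interpreted action, does commute with all the $a_\gamma$; this is what lets us reduce the whole question to the behaviour of $g$ against the generating involutions.
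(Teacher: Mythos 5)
Your proof is correct and follows essentially the same route as the paper: both use the model $(\#Z,\leq)$ with the translation functions as the witness, and both exploit that a translation-commuting involution of $\#Z$ must be the identity. The only cosmetic difference is that you fix the single automorphism $x \mapsto x+1$ and conclude all generators act trivially, whereas the paper chooses, for a given putative $\#Z_2^{*\omega}$-action, the translation by $b_\gamma(x)-x$ for a generator $\gamma$ acting nontrivially; both are valid instances of the same automorphism criterion.
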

\begin{proof}
Consider $\#Z$ equipped with the usual linear order $\leq$ and the translation action $a: (\#Z,+) \actson \#Z$.
Then $\@M = (\#Z,a,\leq)$ is a model of $\@T_\LN \sqcup \@T_\LO$. Note that automorphisms of $\@M$ are translations by elements of $\#Z$ and therefore all automorphism orbits are infinite.
On the other hand, for any transitive action $b : \#Z_2^{*\omega} \actson \#Z$, there is some order-$2$ generator $\gamma \in \#Z_2^{*\omega}$ with non-trivial action $b_\gamma \neq \id$.
So in particular, there exists $x \in \#Z$ with $b_\gamma(x) \neq x$, but then $b_\gamma$ is not preserved by the automorphism of $\@M$ that translates by $b_\gamma(x) - x$.
\end{proof}

\begin{proposition}
There is no interpretation $\@T_\LN \to \@T_\sep$.
\end{proposition}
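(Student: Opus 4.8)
The plan is to prove this via the model-theoretic characterization of interpretability \emph{into} $\@T_\LN$ given in \cref{thm:interp-LN}, rather than the automorphism method used for the preceding non-interpretations. That method is useless here: since the separating family pins down every point, each model of $\@T_\sep$ is rigid, so $\Aut(\@M) = \{\id\}$ yields no obstruction. Instead I would invoke \cref{thm:interp-LN}: an interpretation $\@T_\LN \to \@T_\sep$ exists iff $\@T_\sep$ interprets $\@T_\LN$, which by condition (i) requires in particular that $\@S_1(\@T_\sep)$ be standard Borel, equivalently (by \cref{thm:mod-smooth-cat}) that isomorphism of \emph{pointed} models of $\@T_\sep$ be smooth. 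So it suffices to show that this smoothness fails. (Note that the second clause of \cref{thm:interp-LN}(i), injectivity of $b \mapsto \tp(\@M,a,b)$, actually \emph{does} hold for $\@T_\sep$, since distinct points have distinct bit-patterns; the obstruction lives entirely in the standard-Borelness clause.)

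The key computation is to identify isomorphism of $\@T_\sep$-models concretely. Via \cref{eq:Tsep}, a model on a countable set $Y$ is an injection $u : Y \hookrightarrow 2^{\#N}$, and an isomorphism must preserve every $U_i$, hence preserve all coordinates of $u$; thus two models are isomorphic exactly when they enumerate the same subset of $2^{\#N}$. So the isomorphism relation on $\Mod_Y(\@T_\sep)$ is precisely ``$u,u'$ have the same image'', i.e. the Friedman--Stanley jump $=^+$ of equality on $2^{\#N}$, restricted to injective enumerations of countably infinite sets. This is the canonical non-smooth equivalence relation: by the Friedman--Stanley theorem $\mathrm{id}_{2^{\#N}} <_B {=^+}$ strictly, so $=^+$ is not Borel reducible to equality, i.e. not smooth (a standard fact in \cite{GaoIDST}). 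Hence $\@S_0(\@T_\sep)$ is not standard Borel.

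To land the failure at the pointed level demanded by \cref{thm:interp-LN}(i), I would restrict to the Borel, isomorphism-closed subclass of pointed models $(u,a)$ whose distinguished point satisfies $\neg U_i$ for all $i$, i.e. $u(a) = 0^\infty$. On this subclass pointed isomorphism reduces to equality of images (the distinguished point carries no extra data), so it is again $=^+$ restricted to countable sets containing $0^\infty$, still non-smooth; the assignment $u \mapsto (u, u^{-1}(0^\infty))$ is a Borel reduction witnessing this. Since a non-smooth isomorphism relation on a Borel invariant subclass forces non-smoothness on all pointed models, \cref{thm:mod-smooth-cat} gives that $\@S_1(\@T_\sep)$ is not standard Borel, and \cref{thm:interp-LN} then rules out any interpretation $\@T_\LN \to \@T_\sep$. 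The one substantive input is the non-smoothness of $=^+$; everything else is bookkeeping with earlier results. Conceptually this is the precise statement that a separating family alone provides no canonical way to single out and enumerate the elements of a class from a single point --- exactly what $\@T_\LN$-covering would demand --- and the failure of smoothness is what quantifies the absence of such a canonical choice. The main point requiring care is the pointed-versus-unpointed passage, which the subclass restriction handles cleanly.
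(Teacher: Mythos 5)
Your argument is correct, but it is a genuinely different (and much heavier) route than the paper's. The paper disposes of this in one line by a cardinality argument: $2^{\#N}$ itself, with the canonical separating family $U_i(x) :\iff x_i = 1$, is an (uncountable) model of $\@T_\sep$, whereas every model of $\@T_\LN$ is countable (the $f_i$ applied to any fixed point enumerate the whole underlying set); since an interpretation is required to produce a $\@T_\LN$-model on the \emph{same} underlying set as any $\@T_\sep$-model, none can exist. Your proof instead stays entirely within countable models and runs through \cref{thm:interp-LN} and \cref{thm:mod-smooth-cat}, reducing the question to the non-smoothness of isomorphism of pointed $\@T_\sep$-models; the identification of unpointed isomorphism with the Friedman--Stanley jump ${=^+}$ is right, your observation that the injectivity clause of \cref{thm:interp-LN}(i) holds and the obstruction is purely the standard-Borelness clause is right, and your handling of the pointed-versus-unpointed passage (restricting to the isomorphism-invariant Borel subclass $u(a) = 0^\infty$, on which pointed isomorphism collapses to ``same image,'' still ${=^+}$-hard) correctly avoids the trap that \cref{thm:types-smooth} only propagates smoothness upward, not downward. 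What each approach buys: the paper's proof is essentially free but leans on the fact that interpretations, being defined via provability, constrain uncountable models as well (a point sitting slightly outside the paper's blanket countability convention); yours needs the nontrivial external input that ${=^+}$ is not smooth, but in exchange yields the stronger and more intrinsic information that $\@S_1(\@T_\sep)$ fails to be standard Borel, which is the ``real'' reason a separating family gives no canonical enumeration.
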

\begin{proof}
$\@T_\sep$ has $2^\#N$ as a model, whereas all models of $\@T_\LN$ are countable.
\end{proof}

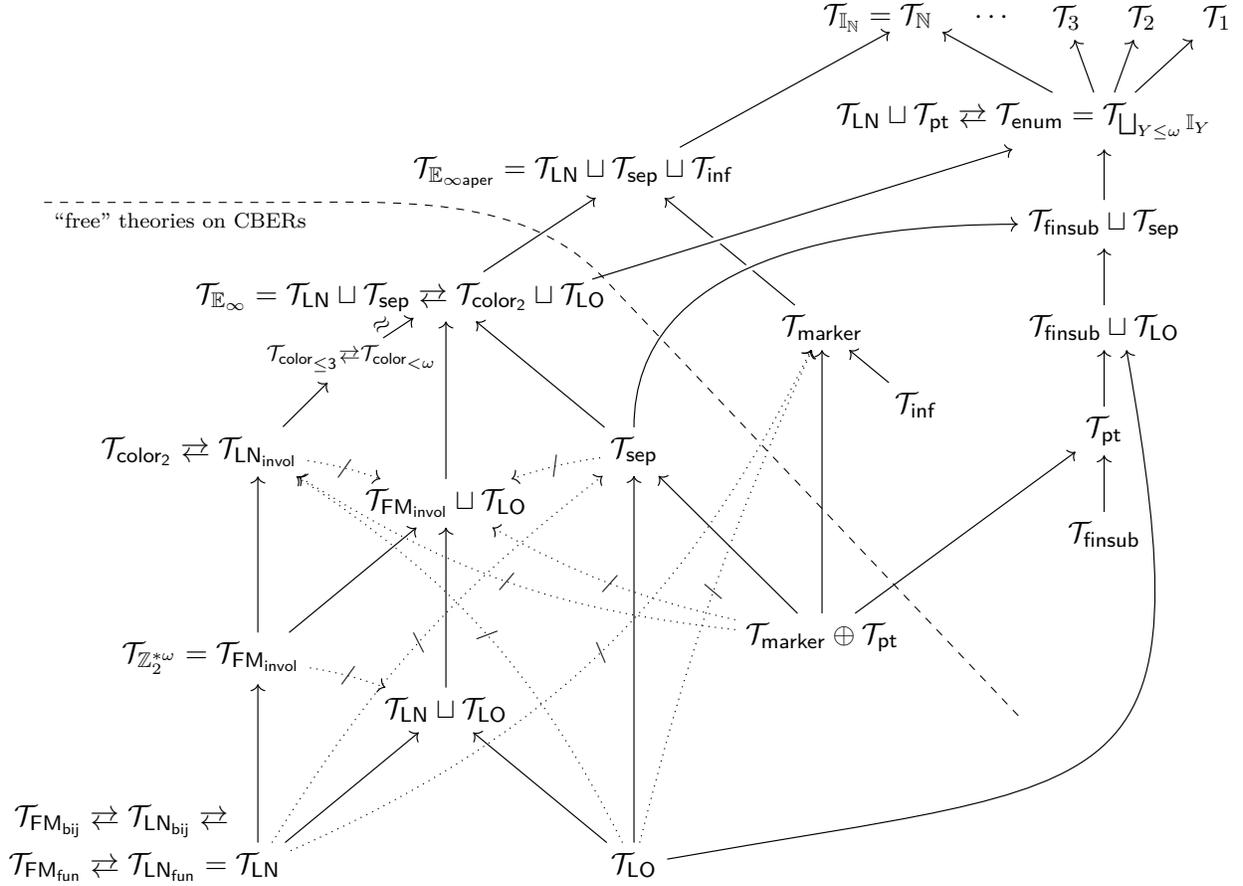
\begin{figure}
\centering
\hphantom{$\@T_\FMbij \rightleftarrows \@T_\LNbij \;\;$}
\begin{tikzpicture}[x=2.5cm, y=2.75cm]

\node[theory] (LNsep)
{
    \mathllap{
    \@T_{\#E_\infty} =
    {}}
    \@T_\LN \sqcup \@T_\sep \rightleftarrows
    \@T_{\coloreq2} \sqcup \@T_\LO
};

\node[theory] (color)
    at ($(LNsep) + (-0.5, -0.3)$)
{
    \scriptstyle
    \@T_{\colorle3} \rightleftarrows
    \@T_{\colorlt\omega}
}
    edge[
        interp,
        shorten <=-.5ex,
        shorten >=-.5ex,
        "\approx"{sloped, pos=0},
    ] (LNsep);

\node[theory] (LNinv)
    at ($(LNsep) + (-1, -.75)$)
{
    \mathllap{
    \@T_{\coloreq2} \rightleftarrows
    {}}
    \@T_\LNinv
}
    edge[interp] (color);

\node[theory] (FMLO)
    at ($(LNsep) + (0, -1)$)
{
    \@T_\FMinv \sqcup \@T_\LO
}
    edge[interp] (LNsep);

\node[theory] (FMinv)
    at ($(LNinv) - (LNsep) + (FMLO)$)
{
    \mathllap{
    \@T_{\#Z_2^{*\omega}} =
    {}}
    \@T_\FMinv
}
    edge[interp] (LNinv)
    edge[interp] (FMLO);

\node[theory] (LNLO)
    at ($(FMLO) - (LNsep) + (FMLO)$)
{
    \@T_\LN \sqcup \@T_\LO
}
    edge[interp] (FMLO);

\node[theory] (LN)
    at ($(FMinv) - (FMLO) + (LNLO)$)
{
    \mathllap{\smash{\begin{aligned}[b]
    \@T_\FMbij &\rightleftarrows
    \@T_\LNbij \rightleftarrows \\
    \@T_\FMfun &\rightleftarrows
    \@T_\LNfun =
    {}\end{aligned}}}
    \@T_\LN
}
    edge[interp] (FMinv)
    edge[interp] (LNLO);


\node[theory] (sep)
    at ($(LNsep) + (1, -.75)$)
{
    \@T_\sep
}
    edge[interp] (LNsep);

\node[theory] (LO)
    at (sep |- LN)
{
    \@T_\LO
}
    edge[interp] (sep)
    edge[interp] (LNLO);


\node[theory] (LNsepinf)
    at ($(LNsep) + (1, 0.6)$)
{
    \mathllap{
    \@T_{\#E_{\infty\!{aper}}} =
    {}}
    \@T_\LN \sqcup \@T_\sep \sqcup \@T_\inf
}
    edge[interp, <-] (LNsep);

\node[theory] (marker)
    at ($(sep) + (1, 0.6)$)
{
    \@T_\marker
}
    edge[interp] (LNsepinf);

\node[theory] (marker or pt)
    at ($(marker) + (0, -1.5)$)
{
    \@T_\marker \oplus \@T_\pt
}
    edge[interp] (sep)
    edge[interp] (marker);


\node[theory] (inf)
    at ($(LNsepinf)!1.5!(marker)$)
{
    \@T_\inf
}
    edge[interp, shorten <=-.0ex] (marker);


\node[theory] (pt)
    at ($(marker or pt) + (1.5, 1)$)
{
    \@T_\pt
}
    edge[interp, <-] (marker or pt);

\node[theory] (finsub)
    at ($(pt) + (0, -0.5)$)
{
    \@T_\finsub
}
    edge[interp] (pt);

\node[theory] (LOpt)
    at ($(pt) + (0, 0.5)$)
{
    \@T_\finsub \sqcup \@T_\LO
}
    edge[interp, <-] (pt);
\node also (LO)
    edge[interp, out=10, in=-80, looseness=1.75] (LOpt.-45);

\node[theory] (seppt)
    at ($(LOpt) + (0, 0.5)$)
{
    \@T_\finsub \sqcup \@T_\sep
}
    edge[interp, <-] (LOpt);
\node also (sep)
    edge[interp, out=90, in=180, looseness=1, cross] (seppt);

\node[theory] (N)
    at ($(seppt) + (-1, 1)$)
{
    \mathllap{
    \@T_{\#I_\#N} =
    {}}
    \@T_\#N
}
    edge[interp, <-] (LNsepinf);

\node[theory] (ndots)
    at ($(N) + (0.4, 0)$)
{
    \dotsm
};

\node[theory] (n3)
    at ($2*(ndots) - (N)$)
{
    \@T_3
};

\node[theory] (n2)
    at ($2*(n3) - (ndots)$)
{
    \@T_2
};

\node[theory] (n1)
    at ($2*(n2) - (n3)$)
{
    \@T_1
};

\node[theory] (enum)
    at ($(seppt) + (0, 0.5)$)
{
    \mathllap{
    \@T_\LN \sqcup \@T_\pt \rightleftarrows
    {}}
    \@T_\enum
    = \@T_{\bigsqcup_{Y \le \omega} \#I_Y}
}
    edge[interp, <-] (seppt)
    edge[interp, <-, shorten >=-2ex, cross] (LNsep.north east)
    edge[interp] (N)
    edge[interp] (n3)
    edge[interp] (n2)
    edge[interp] (n1);


\useasboundingbox;

\draw[dashed, use Hobby shortcut, shorten >=-5em]
    ($(LNsep.north west) + (-8em,2.4em)$)
        node(s)[coordinate]{}
        node[below right]{\scriptsize``free'' theories on CBERs}
    -- ([out angle=0] s -| LNsep.west)
    .. ([tension in=0.25, tension out=5]$(LNsep.north east) + (-1ex,-.5ex)$)
    .. ([tension in=5]$(marker or pt.north east) + (1ex,0ex)$);


\node also (LN)
    edge[noninterp, bend left=10] (sep)
    edge[noninterp, bend right=20] (marker);

\node also (FMinv)
    edge[noninterp, bend left=5] (LNLO);

\node also (LNinv)
    edge[noninterp, bend left=5] (FMLO);

\node also (LO)
    edge[noninterp, bend right=15] (LNinv)
    edge[noninterp, bend left=5] (marker);

\node also (sep)
    edge[noninterp, bend right=5] (FMLO);

\node also (marker or pt)
    edge[noninterp, bend left=13] (LNinv)
    edge[noninterp, bend left=8] (FMLO);

\end{tikzpicture}
\caption{Interpretability relations between some $\@L_{\omega_1\omega}$ theories commonly used to structure CBERs (flipped version of right half of \cref{fig:cber-interp}), along with non-interpretabilities witnessed by models (see text).}
\label{fig:cber-interp-big}
\end{figure}

\subsection{Other examples}
\label{sec:examples-other}

Another important result in the theory of CBERs is the Slaman--Steel marker lemma \cite[Lemma~1]{SlStr} (see also \cite[7.1.5]{GaoIDST}), which says that every aperiodic CBER $E$ (i.e., every $E$-class is infinite) has a \defn{vanishing sequence of markers}: a decreasing sequence of Borel subsets $X \supseteq A_0 \supseteq A_1 \supseteq A_2 \supseteq \dotsb$ which are nonempty in each $E$-class but have $\bigcap_{n \in \omega} A_n = \emptyset$.
We axiomatize such a sequence by a theory $\@T_\marker$; see \cref{tbl:theories}.
The proof of the marker lemma amounts to:

\begin{proposition}[Slaman--Steel]
\label{thm:marker}
There is an interpretation
$\@T_\marker -> \@T_\LN \sqcup \@T_\sep \sqcup \@T_\inf$.
\end{proposition}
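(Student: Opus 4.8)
The plan is to exhibit the interpretation $\alpha : \@T_\marker -> \@T_\LN \sqcup \@T_\sep \sqcup \@T_\inf$ by defining, uniformly in $i \in \#N$, an $\@L_{\omega_1\omega}$ formula $A_i(x)$ in the combined language of the $f_i$ and $U_i$ (over an infinite underlying set), so that the three axioms of $\@T_\marker$ become provable consequences of $\@T_\LN \sqcup \@T_\sep \sqcup \@T_\inf$. By the completeness theorem it is enough to verify the marker axioms in every countable model $M$. Such an $M$ is a countably infinite set which, since the $f_i$ cover $M^2$, forms a single ``class'', and which the $U_i$ inject into $2^\#N$; so concretely I must define a decreasing sequence of subsets of $M$, each nonempty, with empty intersection. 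The plan is to build a definable \emph{nested} sequence of ``maximal separated sets'' $D_0 \supseteq D_1 \supseteq \dotsb$ and then delete a single exceptional point.

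First I would replace the covering functions $f_i$ by a covering family of \emph{bijections} $g_i$, using the interpretation $\@T_\LNbij -> \@T_\LN$ of \cref{thm:FMbij}; with $g_0 = \id$ this yields a definable integer distance $d(x,y) := \min\{i : g_i(x) = y \vee g_i(y) = x\}$, whose balls $B(x,r) := \{y : d(x,y) \le r\}$ are \emph{finite} (of size $\le 2r+1$) and exhaust the class. Thus for each $r$ the graph $H_r$ with $x \mathrel{H_r} y \coloniff 0 < d(x,y) \le r$ is locally finite, and by \cref{thm:kst-lfcolor} (i.e. \cite{KST}) it admits a definable Borel $\omega$-coloring; feeding this coloring into the usual greedy ``include a vertex iff no lower-colored neighbour is already included'' rule produces, definably and uniformly, a maximal $H_r$-independent (i.e. maximal $r$-separated) set. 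Running this greedy construction \emph{within} $D_{m-1}$ at scale $2^m$ defines the nested family $D_m$, with ``$x \in D_m$'' an $\@L_{\omega_1\omega}$ formula built by recursion on $m \in \#N$. Each $D_m$ is infinite in the class, since a finite separated set cannot be maximal: its finitely many balls miss points of the infinite class.

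The key observation is that $S := \bigcap_m D_m$ contains \emph{at most one} point of the class: two distinct $x \ne y$ have finite distance $d(x,y)$, so cannot both lie in $D_m$ once $2^m > d(x,y)$. I would therefore set
\begin{align*}
A_m(x) \coloniff D_m(x) \wedge \bigvee_{k} \neg D_k(x),
\end{align*}
deleting the single possible ``survivor'' $S$ from every $D_m$. Then all three marker axioms hold: the family is decreasing because $D_{m+1} \subseteq D_m$; each $A_m$ is nonempty since $D_m$ is infinite and we remove at most one point; and $\bigcap_m A_m = (\bigcap_m D_m) \setminus S = S \setminus S = \emptyset$. This defines the interpretation, which is exactly the Slaman--Steel construction of a vanishing marker sequence on any aperiodic CBER, now packaged syntactically.

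The main obstacle is precisely the combination of ``decreasing'', ``nonempty in each class'', and ``vanishing'' on bi-infinite ($\#Z$-like) classes. Naive recipes fail there: a lexicographic ``local minimum'' need not exist (in $\#Z$ every point has a smaller neighbour), and separating-family constructions are defeated by accumulation points of the class inside $2^\#N$. Passing to the intrinsic finite-ball metric supplied by $\@T_\LN$ and building \emph{maximal} separated sets --- definable via a well-founded greedy on an $\omega$-coloring rather than on the non-well-founded lexicographic order --- is what overcomes this; the only residue is the at-most-one fixed survivor per class, which the trailing $\bigvee_k \neg D_k$ removes. The remaining care is to confirm that maximal independent sets in locally finite Borel graphs are genuinely $\@L_{\omega_1\omega}$-definable, and that the $D_m$ stay infinite so that the deletion preserves completeness.
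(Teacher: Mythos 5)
Your construction is correct, but it is a genuinely different argument from the one in the paper. The paper's proof works directly from the lexicographic order induced by $\@T_\sep$: it splits into the case where a $\leq_\lex$-least element exists (then $\@T_\LN$ enumerates the class from that point and the markers are the final segments of the enumeration) and the case where it does not (then the markers are the pullbacks of the basic clopen neighborhoods of the unique lexicographic infimum $\xi \in \overline{u(M)} \setminus u(M)$). That case split is what yields the paper's companion corollary that $\@T_\marker \oplus \@T_\pt -> \@T_\sep$, i.e., $\@T_\sep$ \emph{alone} already interprets ``a marker sequence or a distinguished point''; your route cannot recover this, since the finite-ball distance $d$ is built from $\@T_\LN$ and is used throughout, not just on the smooth part. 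In exchange, your argument avoids any case analysis and produces the geometrically meaningful markers (nested maximal $2^m$-separated sets) that appear in hyperfiniteness arguments, at the cost of routing through \cref{thm:FMbij} and \cref{thm:kst-lfcolor}. The two points you flagged do check out: the greedy maximal independent set is $\@L_{\omega_1\omega}$-definable by an external recursion on the colors (membership of a vertex of color $k$ is determined by membership of its finitely many lower-colored neighbours, so one defines formulas $\phi_{m,k}$ by recursion on $(m,k)$), and each $D_m$ is infinite by induction because a finite $2^m$-separated set has finitely many (finite) balls and so cannot dominate the infinite set $D_{m-1}$, using $\@T_\inf$ at the base case. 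Two small points of hygiene: your $d$ is symmetric with finite balls but need not satisfy the triangle inequality --- nothing in the argument uses it, but you should not call it a metric; and the survivor set $S = \bigcap_m D_m$ must be excised by the formula $\bigvee_k \neg D_k(x)$ exactly as you do, since ``$x \in S$'' itself is a perfectly good countable conjunction.
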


Here, as in \cref{ex:einf-aper}, $\@T_\inf$ asserts that the underlying set is infinite; see again \cref{tbl:theories}.

\begin{proof}
Let $\@M = (M, (f_i)_{i \in \#N}, (U_i)_{i \in \#N}) |= \@T_\LN \sqcup \@T_\sep \sqcup \@T_\inf$.
Recall from \cref{ex:finsub-pt:LO} that the $U_i$'s correspond to an injection $u: M `-> 2^\#N$, along which we can pull back the lexicographical ordering to $M$:
\begin{align*}
    x <_\!{lex} y & \coloniff \bigvee_i (\neg U_i(x) \wedge U_i(y) \wedge \bigwedge_{j < i} (U_j(x) <-> U_j(y))).
\end{align*}
Similarly, for any $n \in \omega$ we may define a lexicographical pre-order comparing the first $n$ bits:
\begin{align*}
    x <^n_\!{lex} y & \coloniff \bigvee_{i < n} (\neg U_i(x) \wedge U_i(y) \wedge \bigwedge_{j < i} (U_j(x) <-> U_j(y))).
\end{align*}

Now, there are two cases to consider.
In Case $1$, $M$ contains a $\leq_\lex$-least element, i.e., an element $y \in M$ such that $\@M \models \forall z (y \leq_\lex z)$.
Then the Lusin--Novikov functions $f_i$ give an enumeration of $M$, namely $(f_i(y))_{i \in \omega}$, which we may disjointify into a bijective enumeration $(g_i(y))_{i \in \omega}$ using $\@T_\inf$:
\begin{align}
\label{eq:marker-enum}
    g_i(y) = x  &\coloniff  \bigvee_{j < \omega} \paren[\big]{(f_j(y) = x) \wedge \text{``$j$ is the $i$th natural s.t.\ $\bigwedge_{k < j} (f_j(y) \ne f_k(y))$''}}
\end{align}
The final segments of this enumeration then form a vanishing sequence of markers $(A_n)_n$:
\begin{align*}
A_n(x)  \coloniff  \exists y\, (\forall z\, (y \leq_\lex z) \wedge \bigvee_{i \ge n} (g_i(y) = x)).
\end{align*}

In Case $2$, $M$ contains no $\leq_\lex$-least element.
Then there is some $\leq_\lex$-least element $\xi \in \overline{u(M)}$, which is not in $u(M)$.
We may then define the marker sequence $(A_n)_n$ by taking the basic clopen neighborhoods of $\xi \in 2^\#N$, pulled back along $u$.
The $n$th basic neighborhood of $\xi$ consists of strings agreeing with $\xi$ up to the first $n$ bits; this holds for $u(x)$ iff $u(x)|n \leq_\lex u(z)|n$ for every $z \in M$.
So
\begin{align*}
    A_n(x) \coloniff \forall z\, (x \le^n_\lex z).
\end{align*}

Putting the two cases together, we can interpret $\@T_\marker$ in $\@T_\LN \sqcup \@T_\sep \sqcup \@T_\inf$ by
\begin{align*}
    A_n(x) \coloniff
    & \exists y\, (\forall z\, (y \leq_\lex z) \wedge \bigvee_{i \ge n} (g_i(y) = x)) \\
    & \vee [\neg \exists y\, \forall z\, (y \leq_\lex z) \wedge \forall z\, (x \le^n_\lex z)].
    \qedhere
\end{align*}
\end{proof}

Note that in this argument, we only used the Lusin--Novikov functions in Case $1$ in order to define an enumeration from a point.
If we stop Case $1$ after defining the $\leq_\lex$-least point, the argument now shows that from $\@T_\sep$ alone, we may define \emph{either} a marker sequence, \emph{or} a single point, i.e., a model of the product theory $\@T_\marker \oplus \@T_\pt$ (recall \cref{def:thy-prod}),
thereby allowing us to encompass the main content of the marker lemma within the ``free'' region of \cref{fig:cber-interp-big}:

\begin{corollary}[of proof]
There is an interpretation $\@T_\marker \oplus \@T_\pt \to \@T_\sep$.
\qed
\end{corollary}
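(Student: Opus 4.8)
The plan is to work semantically, via the dual equivalence of \cref{thm:mod-equiv} (equivalently \cref{rmk:interp-types}\cref{rmk:interp-types:mod}): an interpretation $\@T_\marker \oplus \@T_\pt \to \@T_\sep$ is the same datum as a Borel family of maps $\Mod_Y(\@T_\sep) \to \Mod_Y(\@T_\marker \oplus \@T_\pt)$, over all countable $Y$, equivariant under the logic action. Since $\oplus$ is the categorical product (\cref{def:thy-prod}), we have $\Mod_Y(\@T_\marker \oplus \@T_\pt) \cong \Mod_Y(\@T_\marker) \sqcup \Mod_Y(\@T_\pt)$, so it suffices to produce, from a model of $\@T_\sep$ on $Y$---equivalently (\cref{eq:Tsep}) an injection $u : Y \hookrightarrow 2^\#N$ with pulled-back lexicographic order $\leq_\lex$---\emph{either} a vanishing marker sequence \emph{or} a single distinguished point, in a uniform Borel equivariant way, together with the bit recording which of the two we produce.

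First I would split $\Mod_Y(\@T_\sep)$ along the isomorphism-invariant sentence $\exists y\, \forall z\, (y \leq_\lex z)$ asserting that $\leq_\lex$ has a least element; this is exactly the Case $1$ / Case $2$ dichotomy from the proof of \cref{thm:marker}, and it determines which summand of the disjoint union we land in. On the piece where a $\leq_\lex$-least element exists, I map into the $\@T_\pt$ factor by taking the distinguished point to be that least element, $C(x) \coloniff \forall z\, (x \leq_\lex z)$. On the complementary piece, where $\leq_\lex$ has no least element, I map into the $\@T_\marker$ factor by the Case $2$ construction of \cref{thm:marker}: taking $\xi$ to be the $\leq_\lex$-least point of $\overline{u(Y)} \setminus u(Y)$ and setting $A_n(x) \coloniff \forall z\, (x \le^n_\lex z)$, the pullback of the $n$th basic clopen neighborhood of $\xi$.

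The remaining verification is routine and copies \cref{thm:marker}. In the first case the $\leq_\lex$-least element is unique, so $C$ is a genuine $\@T_\pt$-model. In the second case one checks that the $A_n$ are decreasing, each nonempty (since $\xi$ is a non-attained infimum of $u(Y)$, hence points of $u(Y)$ accumulate just above it), and satisfy $\bigcap_n A_n = \emptyset$ (as $\xi \notin u(Y)$), so they form a genuine $\@T_\marker$-model; the infinitude of $Y$ needed here is automatic, since any finite set is linearly well-ordered by $\leq_\lex$ and so falls into the first case. Both assignments are manifestly definable from $\leq_\lex$, hence Borel and equivariant under the relabeling action, and they respect the case split, so together they glue into the required map valued in the coproduct of model spaces.

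The main point---and the reason this lands in $\@T_\sep$ alone rather than in $\@T_\LN \sqcup \@T_\sep \sqcup \@T_\inf$ as in \cref{thm:marker}---is that we halt Case $1$ immediately after isolating the $\leq_\lex$-least point, instead of invoking the Lusin--Novikov functions to enumerate its class into a marker sequence, while Case $2$ never used $\@T_\LN$ or $\@T_\inf$ in the first place. The only genuine delicacy, rather than a real obstacle, is bookkeeping: confirming that the case split is an isomorphism invariant, so that the two partial maps combine into a single equivariant Borel map into the disjoint union, which is precisely what an interpretation into the product theory $\@T_\marker \oplus \@T_\pt$ amounts to.
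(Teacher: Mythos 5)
Your proposal is correct and is essentially the paper's own argument: the paper obtains this corollary by rerunning the proof of the marker lemma, halting Case~1 as soon as the $\leq_\lex$-least point is defined (so that the Lusin--Novikov functions and $\@T_\inf$ are never invoked) and keeping Case~2 verbatim, which yields exactly your case split into the $\@T_\pt$ and $\@T_\marker$ summands of the product theory. Your additional bookkeeping (the case split being an isomorphism-invariant sentence, finite models falling into Case~1, and the semantic packaging via $\Mod_Y$) is a faithful elaboration of what the paper leaves implicit.
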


The formulas \cref{eq:marker-enum} in Case $1$ above show more generally that we may define from a single distinguished point $y$ and a family of Lusin--Novikov functions $(f_i)_i$ a bijective $\omega$-enumeration, provided the underlying set is infinite; in other words, $\@T_\LN \sqcup \@T_\pt \sqcup \@T_\inf$ interprets the theory $\@T_\omega$ of $\omega$-enumerated sets from \cref{ex:interp-mod}.
It is also easily seen that if the underlying set has finite size $Y < \omega$, then the same formulas yield an interpretation of $\@T_Y$.
Thus, letting $\@T_\enum := \bigoplus_{Y \leq \omega} \@T_Y$ be the \defn{theory of enumerated sets}, Case $1$ of the above argument essentially shows that

\begin{proposition}
\label{thm:enum-LNpt}
There is an interpretation $\@T_\enum \to \@T_\LN \sqcup \@T_\pt$.
\qed
\end{proposition}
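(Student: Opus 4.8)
The plan is to reuse the disjointification formula \cref{eq:marker-enum} from Case~1 of the proof of \cref{thm:marker}, now reading it as the definition of a full bijective enumeration rather than merely its final segments. First I would unwind the relevant spaces of models. By \cref{def:thy-prod}, a model of $\@T_\enum = \bigoplus_{Y \le \omega} \@T_Y$ on a countable set $Z$ is an element of $\bigsqcup_{Y \le \omega} \Mod_Z(\@T_Y)$; since $\Mod_Z(\@T_Y)$ is empty unless $\abs{Y} = \abs{Z}$, this amounts to a single bijection $\abs{Z} \cong Z$, i.e.\ an enumeration of $Z$ by its own cardinal. A model of $\@T_\LN \sqcup \@T_\pt$ on $Z$, on the other hand, is a tuple $(Z, (f_i)_i, c)$ in which the graphs of the $f_i$ cover $Z^2$ and $c \in Z$ is a distinguished point. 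By \cref{rmk:interp-types}\cref{rmk:interp-types:mod} (equivalently \cref{thm:mod-equiv}), producing the interpretation is the same as producing a Borel, logic-action-equivariant family of maps $\Mod_Z(\@T_\LN \sqcup \@T_\pt) \to \Mod_Z(\@T_\enum)$.

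Next I would note that $i \mapsto f_i(c)$ surjects $\omega$ onto $Z$: given $x \in Z$, the covering axiom of $\@T_\LN$ applied to the pair $(c, x)$ yields some $i$ with $f_i(c) = x$. Applying the formula \cref{eq:marker-enum}, which sets $g_i(c)$ equal to the $i$th \emph{new} value occurring in the sequence $(f_j(c))_j$, disjointifies this surjection into a bijection from an initial segment of $\omega$ onto $Z$. The key bookkeeping observation is that $g_i(c)$ is defined precisely when $(f_j(c))_j$ attains at least $i+1$ distinct values, which, by surjectivity, happens exactly when $i < \abs{Z}$. Hence the enumeration $i \mapsto g_i(c)$ runs through the ordinal $\abs{Z}$ and no further, terminating at the correct stage in the finite case and exhausting $\omega$ in the infinite case. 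Interpreting the $i$th constant of the enumeration as $g_i(c)$ therefore lands the output in the summand $\Mod_Z(\@T_{\abs{Z}})$, hence in $\Mod_Z(\@T_\enum)$; note that, unlike the interpretation of $\@T_\omega$ discussed just before, no appeal to $\@T_\inf$ is needed, since the product theory absorbs the cardinality case split.

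Finally I would verify the two conditions of \cref{rmk:interp-types}\cref{rmk:interp-types:mod}. Borelness is immediate, as the entire construction is packaged by the $\@L_{\omega_1\omega}$ formula \cref{eq:marker-enum}. For equivariance under a relabeling $g : Z \cong W$, the essential point is that \cref{eq:marker-enum} refers only to the pattern of equalities among the values $f_j(c)$; since $g$ sends $(f_i, c)$ to $(g f_i g^{-1}, g(c))$ and hence each $f_j(c)$ to $g(f_j(c))$, it preserves all such equalities, so the enumeration built from $g \cdot (f_i, c)$ is exactly $g$ post-composed with the enumeration built from $(f_i, c)$. The step I expect to require the most care is the product-theory bookkeeping in the previous paragraph: one must confirm that \cref{eq:marker-enum} defines a value at index $i$ on exactly the set $\{i : i < \abs{Z}\}$, so that the resulting object is a genuine bijection $\abs{Z} \cong Z$ --- a bona fide model of the single theory $\@T_{\abs{Z}}$ --- rather than a merely partial or merely injective map, which is precisely what makes the target the product $\@T_\enum$ and not some fixed $\@T_Y$.
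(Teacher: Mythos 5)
Your proposal is correct and takes essentially the same route as the paper: the paper also obtains this proposition by rereading the disjointification formula \cref{eq:marker-enum} from Case~1 of the marker-lemma proof as a bijective enumeration $i \mapsto g_i(c)$ of the underlying set by its own cardinality, with the product theory $\bigoplus_{Y \le \omega} \@T_Y$ absorbing the finite/infinite case split so that no appeal to $\@T_\inf$ is needed. Your extra bookkeeping (that $g_i(c)$ is defined exactly for $i < \abs{Z}$, and the equivariance check) just spells out what the paper leaves implicit.
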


This formalizes a common pattern in many Borel combinatorics arguments, of which the above proof of the marker lemma is an example: one attempts to define some kind of structure (axiomatized by a theory $\@T$) on all classes of a CBER, which fails on a set on which the CBER is smooth (so one has only defined an interpretation of $\@T \oplus \@T_\pt$), on which one instead enumerates every class using Lusin--Novikov (yielding an interpretation of $\@T \oplus \@T_\enum$ by composing with the preceding result), allowing one to easily define any structure in a Borel way.
This last step is made precise by

\begin{proposition}
\label{thm:enum-arb}
$\@T_\enum$ interprets any theory $\@T$ with models of every countable cardinality.
\end{proposition}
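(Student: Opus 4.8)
The plan is to exploit the universal property of the product theory $\@T_\enum = \bigoplus_{Y \le \omega} \@T_Y$ from \cref{def:thy-prod}, which reduces the task to interpreting $\@T$ in each factor $\@T_Y$ separately, and then to read off each of those interpretations from a single fixed model of $\@T$ of the appropriate cardinality via \cref{ex:interp-mod}. The cardinality hypothesis on $\@T$ is what makes these model choices available, and that is really the only content of the statement.

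First I would unwind the universal property. By \cref{def:thy-prod}, an interpretation $\@T \to \bigoplus_{Y \le \omega} \@T_Y$ is precisely the same datum as a family of interpretations $(\alpha_Y : \@T \to \@T_Y)_{Y \le \omega}$, one per factor; so it suffices to build an interpretation $\@T \to \@T_Y$ for each countable $Y$ independently. For a fixed $Y$, the hypothesis that $\@T$ has models of every countable cardinality gives $\Mod_Y(\@T) \ne \emptyset$, so I may choose some $\@N_Y \in \Mod_Y(\@T)$. By \cref{ex:interp-mod}, a model living on the set $Y$ \emph{is} an interpretation $\@T \to \@T_Y$: the chosen $\@N_Y$ yields $\alpha_Y$ whose reduct map sends a $\@T_Y$-model on any set $Z$, i.e.\ a bijection $g : Y \cong Z$, to the transported structure $g \cdot \@N_Y \in \Mod_Z(\@T)$. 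Assembling the $\alpha_Y$ over all $Y$ then produces the desired $\alpha : \@T \to \@T_\enum$.

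To double-check the construction I would also pass to the semantic picture via \cref{thm:mod-equiv} (equivalently \cref{rmk:interp-types}\cref{rmk:interp-types:mod}), under which $\alpha$ corresponds to the family of maps $\alpha^*_Z : \Mod_Z(\@T_\enum) \to \Mod_Z(\@T)$ that reads a model of $\@T_\enum$ on $Z$ as a bijective enumeration $g$ of $Z$ by its own cardinality and outputs $g \cdot \@N_{\abs{Z}}$. Here the verifications are routine rather than obstructive: Borelness holds because $R^{g \cdot \@N_{\abs{Z}}}(\vec z) \iff R^{\@N_{\abs{Z}}}(g^{-1}(\vec z))$ merely reads a fixed relation through the Borel parameter $g$, and equivariance under the logic action is the associativity identity $(h \circ g) \cdot \@N = h \cdot (g \cdot \@N)$. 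I expect no genuine difficulty anywhere in the argument; the single substantive step, and the only place the hypothesis is used, is the choice of the models $\@N_Y$, and the assumption that $\@T$ has models of every countable cardinality is exactly what guarantees that every summand $\@T_Y$ can be hit so that the assembled $\alpha$ is defined on all of $\@T_\enum$.
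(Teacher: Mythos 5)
Your proposal is correct and is essentially identical to the paper's proof: both choose, for each countable cardinality $Y$, a model of $\@T$ on $Y$, read it as an interpretation $\@T \to \@T_Y$ via \cref{ex:interp-mod}, and assemble these through the universal property of the product theory $\bigoplus_{Y \le \omega} \@T_Y = \@T_\enum$ from \cref{def:thy-prod}. The extra semantic verification you include is routine and consistent with the paper's framework.
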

\begin{proof}
For each $Y \le \omega$, take a model of $\@T$ on $Y$, which gives by \cref{ex:interp-mod} an interpretation $\@T -> \@T_Y$; these combine into an interpretation into the product theory $\@T -> \bigoplus_{Y \le \omega} \@T_Y = \@T_\enum$.
\end{proof}

It follows that $\@T_\enum$ is mutually interpretable with $\@T_\LN \sqcup \@T_\pt$, as shown in \cref{fig:cber-interp-big}.

\begin{remark}
The four theories immediately below $\@T_\enum$ in \cref{fig:cber-interp-big} all yield smoothness on CBERs, as discussed in the introduction (see \cref{ex:finsub-pt}).
As noted there, in the absence of $\@T_\LN$, they are strictly increasing in strength, and strictly weaker than $\@T_\enum$, by arguments like those in the preceding subsection: for example, $\@T_\finsub \sqcup \@T_\LO \rightleftarrows \@T_\pt \sqcup \@T_\LO$ by \cref{ex:finsub-pt}, but does not interpret $\@T_\enum$ since it has non-rigid models.
\end{remark}

\cref{fig:cber-interp-big} also shows how $\@T_\marker$ and $\@T_\marker \oplus \@T_\pt$ relate to the other theories describing ``free'' structures on CBERs considered in the preceding subsections.
Note that no such ``free'' theory interprets $\@T_\marker$, since not every CBER is aperiodic.
Conversely, none of the free theories in \cref{fig:cber-interp-big} are interpreted by $\@T_\marker$ (hence also not by $\@T_\marker \sqcup \@T_\pt$): it suffices to show

\begin{proposition}
There are no interpretations $\@T_\LO \to \@T_\marker$ or $\@T_\LN \to \@T_\marker$.
\end{proposition}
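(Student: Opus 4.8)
The plan is to use the automorphism criterion set out in the text preceding \cref{ex:LO-color-cex}: to show $\@T \not\to \@T'$ it suffices to produce a model $\@M \models \@T'$ carrying an automorphism that cannot lie in $\Aut(\@N)$ for any model $\@N \models \@T$ on the same underlying set, since any interpretation $\alpha : \@T \to \@T'$ would yield $\Aut(\@M) \subseteq \Aut(\alpha^*\@M)$ with $\alpha^*\@M \models \@T$. I would lean on two structural facts about the targets. First, an automorphism $\sigma$ of a linear order has no nontrivial finite orbits: if $x < \sigma(x)$ then $x < \sigma(x) < \sigma^2(x) < \dotsb$ is strictly increasing, so the orbit of $x$ is infinite (and symmetrically if $\sigma(x) < x$). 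Second, by the rigidity of pointed models of $\@T_\LN$ from \cref{thm:interp-LN}, every nonidentity automorphism of a $\@T_\LN$-model is fixed-point-free (an automorphism $g$ fixing a point $a$ is the identity, since $g(b) = g(f_i(a)) = f_i(g(a)) = f_i(a) = b$ whenever $f_i(a) = b$). Hence it suffices to exhibit a single marker model possessing both an automorphism with a nontrivial finite orbit and a nonidentity automorphism with a fixed point.

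For the model, I would take $M := \#N \times 2$ with markers $A_i := \set{(n,j) \in M}{n \ge i}$. These form a decreasing sequence of nonempty subsets (as $(i,0) \in A_i$) with $\bigcap_i A_i = \emptyset$ (since $(n,j) \notin A_{n+1}$), so $\@M := (M, (A_i)_i) \models \@T_\marker$. The key feature is that each level set $A_i \setminus A_{i+1} = \{(i,0),(i,1)\}$ has exactly two elements, so a bijection of $M$ preserves every $A_i$ iff it permutes each pair $\{(i,0),(i,1)\}$ within itself; that is, $\Aut(\@M)$ is the full product $\prod_i \Sym(\{(i,0),(i,1)\})$.

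I would then read off the two required automorphisms. The ``full swap'' $g(n,j) := (n, 1-j)$ lies in $\Aut(\@M)$ and has every orbit of size $2$; as a nontrivial finite orbit is impossible for an order automorphism, $g$ cannot belong to $\Aut(\@N)$ for any linear order $\@N$ on $M$, ruling out $\@T_\LO \to \@T_\marker$. The single transposition $h$ swapping $(0,0) \leftrightarrow (0,1)$ and fixing every other point also lies in $\Aut(\@M)$ (both swapped points lie in $A_0 \setminus A_1$, and the fixed points are untouched), yet $h$ is a nonidentity automorphism with a fixed point, so it cannot belong to $\Aut(\@N)$ for any $\@T_\LN$-model $\@N$ on $M$, ruling out $\@T_\LN \to \@T_\marker$.

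There is no serious obstacle once the model is chosen with marker levels of size $2$, as this makes the entire automorphism group transparent and the two facts above immediately disqualify the chosen automorphisms. The only point needing care is to keep the two witnesses distinct in role: the finite-orbit witness $g$ happens to be fixed-point-free (hence useless against $\@T_\LN$), while the fixed-point witness $h$ has infinitely many fixed points (hence would be a perfectly valid order automorphism), so I would present them as two \emph{different} elements of the same $\Aut(\@M)$.
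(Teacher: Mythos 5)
Your proof is correct and follows essentially the same route as the paper's: exhibit a $\@T_\marker$-model with an automorphism having a nontrivial finite orbit (impossible for a linear order) and a nonidentity automorphism with a fixed point (impossible for a $\@T_\LN$-model, by rigidity of pointed models from \cref{thm:interp-LN}). The only difference is where you place the freely permutable points — the paper permutes points of $M \setminus A_0$, while you permute within the two-element level sets $A_i \setminus A_{i+1}$ — which is immaterial since, as the paper notes, automorphisms of a marker model act arbitrarily within each piece of the induced partition.
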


\begin{proof}
Note that in a model $\@M = (M, A_0, A_1, \dotsc)$ of $\@T_\marker$, there may be points that are not in the first marker set $A_0$.
Any permutation $g \in \Sym(M \setminus A_0)$ extends to an automorphism of $\@M$ (e.g., define $g(x) = x$ for $x \in A_0$).
If there are at least $2$ points $x \neq y$ in $M \setminus A_0$, then the automorphism swapping $x,y$ shows that $\@T_\LO \not\to \@T_\marker$.
Similarly, if there are $3$ distinct points $x, y, z \in M \setminus A_0$, then an automorphism $g$ fixing $x$ and swapping $y,z$ cannot be an automorphism of a model of $\@T_\LN$, since a Lusin--Novikov function $f_i$ cannot map $x$ to both $y, z$.
\end{proof}

\begin{remark}
A model $\@M |= \@T_\marker$ is essentially just a labeled countable partition (the differences of adjacent marker sets) with infinitely many nonempty pieces.
These pieces are the automorphism orbits of $\@M$; within each piece, an automorphism can act arbitrarily.
So more generally, in order for a theory $\@T$ to be interpretable in $\@T_\marker$, it must admit a model $\@M$ with a countable partition within which all permutations are automorphisms of $\@M$.
\end{remark}

Using this observation, we can also see that, while the marker lemma can essentially (modulo $\@T_\pt$) be proved from $\@T_\sep$, it cannot be proved from any of the other ``free'' theories in \cref{fig:cber-interp-big}:

\begin{proposition}
There is no interpretation $\@T_\marker \oplus \@T_\pt \to \@T_\FMinv \sqcup \@T_\LO$.
\end{proposition}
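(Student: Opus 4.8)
The plan is to apply the automorphism criterion used throughout this subsection: to refute an interpretation $\@T_\marker \oplus \@T_\pt \to \@T_\FMinv \sqcup \@T_\LO$, it suffices to exhibit a single model $\@M \models \@T_\FMinv \sqcup \@T_\LO$ carrying an automorphism $h$ that cannot lie in $\Aut(\@N)$ for any model $\@N \models \@T_\marker \oplus \@T_\pt$ on the same underlying set. I would reuse the model already built for \cref{ex:Zline-cex}: the set $\#Z$ with its usual order $\le$, together with the transitive $\#Z_2^{*\omega}$-action factoring through the infinite dihedral group, in which the first two generators act as the involutions swapping $2k \leftrightarrow 2k+1$ and $2k+1 \leftrightarrow 2k+2$ respectively (and all remaining generators act trivially). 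The translation $h \colon x \mapsto x+2$ preserves $\le$ and commutes with both involutions, so $h \in \Aut(\@M)$; crucially, $h$ is fixed-point-free and has exactly two orbits, the even and the odd integers, both infinite.

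Now suppose $\alpha \colon \@T_\marker \oplus \@T_\pt \to \@T_\FMinv \sqcup \@T_\LO$ were an interpretation. Then $\@N := \alpha^*\@M$ is a model of $\@T_\marker \oplus \@T_\pt$ on $\#Z$, and by equivariance of $\alpha^*$ under the logic action we have $\Aut(\@M) \subseteq \Aut(\@N)$, so in particular $h \in \Aut(\@N)$. By the description of the product theory (\cref{def:thy-prod}), $\@N$ is either a model of $\@T_\pt$ or a model of $\@T_\marker$, and I would treat these cases separately. If $\@N \models \@T_\pt$ with distinguished point $c$, then $h \in \Aut(\@N)$ forces $h(c) = c$, contradicting that $h$ has no fixed points. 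If $\@N \models \@T_\marker$ with marker sets $(A_i)_i$, then $h$ preserves each $A_i$, so the level function $\ell(x) := \abs{\{ i \mid A_i(x) \}}$ (finite-valued by the vanishing axiom $\forall x \bigvee_i \neg A_i(x)$) is $h$-invariant, hence constant on each of the two $h$-orbits and therefore bounded; but the axiom $\bigwedge_i \exists x\, A_i(x)$ produces points of arbitrarily large level, a contradiction.

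The one point demanding care—and the closest thing to an obstacle—is ensuring that a single automorphism simultaneously defeats both halves of the product theory. This is precisely why $h \colon x \mapsto x+2$ is the right choice rather than, say, the translation by $1$ (which conjugates the two boundary involutions into each other and so is not an automorphism of $\@M$ at all): $h$ must be fixed-point-free to rule out the $\@T_\pt$ case, yet have only finitely many orbits to rule out the $\@T_\marker$ case, since an automorphism with infinitely many orbits would leave room for an unbounded invariant level function. Once the orbit structure of $h$ is pinned down, both cases are immediate and no further combinatorics is required.
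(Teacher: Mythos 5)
Your proof is correct and follows essentially the same route as the paper's: the same model $(\#Z,\le)$ with the dihedral-type $\#Z_2^{*\omega}$-action from \cref{ex:Zline-cex}, the same automorphism $h\colon x\mapsto x+2$, and the same two-case split over the product theory, with fixed-point-freeness killing the $\@T_\pt$ case and the finiteness of the orbit decomposition killing the $\@T_\marker$ case. Your explicit level-function argument is just an unpacking of the paper's observation that a $\@T_\marker$-model must have infinitely many automorphism orbits, whereas $\@M$ has only two.
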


\begin{proof}
Consider the structure $\@M = (\#Z, \leq, c)$ described in \cref{ex:Zline-cex}, which has only two automorphism orbits and therefore cannot have a model of $\@T_\marker$ as a reduct.
Moreover, $\@M$ has automorphisms with no fixed points, so cannot have a model of $\@T_\pt$ as a reduct.
\end{proof}

\begin{proposition}
There is no interpretation $\@T_\marker \oplus \@T_\pt \to \@T_\coloreq2$.
\end{proposition}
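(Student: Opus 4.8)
The plan is to reuse the automorphism method from the preceding non-interpretation results: to rule out an interpretation $\@T_\marker \oplus \@T_\pt \to \@T_\coloreq2$, I will exhibit a single model $\@M \models \@T_\coloreq2$ such that for every model $\@N$ of $\@T_\marker \oplus \@T_\pt$ on the same underlying set there is an automorphism $g \in \Aut(\@M) \setminus \Aut(\@N)$. Since $\Mod_Y(\@T_\marker \oplus \@T_\pt) \cong \Mod_Y(\@T_\marker) \sqcup \Mod_Y(\@T_\pt)$ (recall \cref{def:thy-prod}), such an $\@N$ is either a vanishing marker sequence or a single distinguished point, so it suffices to produce an $\@M$ whose automorphism group is rich enough to defeat both possibilities at once. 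Concretely, I will arrange for $\Aut(\@M)$ to act transitively on $\@M$ with $\abs{V} \ge 2$, where $V$ is the underlying set: transitivity immediately forbids a fixed point (killing the $\@T_\pt$ summand), and it forces every $\Aut(\@M)$-invariant subset to be $\emptyset$ or all of $V$, so the only invariant decreasing sequence of nonempty sets is the constant sequence $V \supseteq V \supseteq \dotsb$, whose intersection is nonempty (killing the $\@T_\marker$ summand, whose models require empty intersection).

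The model I will use is the canonical $1$-factorization of the complete graph on an elementary abelian $2$-group. Let $V := \bigoplus_{\#N} \#Z_2$ be a countably infinite vector space over the two-element field, fix an enumeration $(v_k)_{k \in \#N}$ of $V \setminus \{0\}$, and define $\@M$ on $V$ by $C_k(x,y) \coloniff x + y = v_k$. I will check that $\@M \models \@T_\coloreq2$: each $C_k$ is symmetric since $x + y = y + x$; every pair $x \ne y$ receives the unique color $v_k = x + y \ne 0$; and the coloring is proper, since two edges sharing a vertex, say $\{x,y\}$ and $\{x,z\}$ with $y \ne z$, get colors $x + y$ and $x + z$, which coincide only if $y = z$. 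The translations $t_v : w |-> w + v$ preserve each $C_k$, as $(x + v) + (y + v) = x + y$, and they act transitively (indeed freely) on $V$, so $\Aut(\@M)$ acts transitively with $\abs{V} \ge 2$ as required.

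With this $\@M$ in hand the two cases go through as sketched. For a point-reduct $(V, c)$, any nonzero translation $t_v$ moves $c$, so $t_v \in \Aut(\@M) \setminus \Aut(\@N)$. For a marker-reduct $(V, (A_i)_i)$, were every translation to preserve every $A_i$, then transitivity together with the nonemptiness axiom $\bigwedge_i \exists x\, A_i(x)$ would force each $A_i = V$, whence $\bigcap_i A_i = V \ne \emptyset$, contradicting the vanishing axiom $\forall x \bigvee_i \neg A_i(x)$; so some translation again lies in $\Aut(\@M) \setminus \Aut(\@N)$. The only even mildly delicate point is recognizing that transitivity of the automorphism group is exactly the property that simultaneously defeats both summands of the product theory, and then producing a proper edge-colored complete graph that is vertex-transitive; the elementary abelian $2$-group furnishes this essentially for free, so no real obstacle remains.
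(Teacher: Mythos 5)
Your proof is correct and takes essentially the same route as the paper, which also uses the complete graph on $\#Z_2^{\oplus\omega}$ with edge coloring $c(x,y) := x+y$ and observes that vertex-transitivity of the translation automorphisms rules out reducts modeling either $\@T_\pt$ or $\@T_\marker$. Your write-up just makes explicit the enumeration of colors and the two-case analysis that the paper leaves implicit.
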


\begin{proof}
Consider the complete graph on $\#Z_2^{\oplus \omega}$ with edge coloring $c(x,y) := x + y$.
This is a model of $\@T_\coloreq2$ with only one automorphism orbit, since translation by each $a \in \#Z_2^{\oplus \omega}$ is an automorphism, so its reducts can model neither $\@T_\pt$ nor $\@T_\marker$.
\end{proof}

\begin{remark}
Similarly, there is no interpretation $\@T_\marker \oplus \@T_\pt \to \@T_\finsub$ (as shown in \cref{fig:cber-interp-big}), since any model $\@M$ of $\@T_\finsub$ has only two automorphism orbits, and if the finite set $D$ contains at least two points then $\@M$ will have automorphisms with no fixed points.

It is also easy to see $\@T_\LO -/> \@T_\pt$.
Together with the aforementioned non-interpretations, it follows that (the transitive closure of) \cref{fig:cber-interp-big} gives all interpretabilities between the shown theories.
\end{remark}

We close this section with a simple example of a different nature: a Borel combinatorial construction encoded not by structurability via a theory $\@T$, but rather \emph{expandability} along an interpretation $\@T -> \@T'$, as in \cref{thm:str-expan-interp}.

A basic result in Borel combinatorics, due to Kechris--Solecki--Todorcevic \cite[4.5]{KST}, shows that every locally finite Borel graph has a Borel $\omega$-coloring (of the vertices).
We can formalize this by defining a theory $\@T_\!{lfgraph}$ of locally finite graphs (in the language $\@L_\graph$ with an edge relation $G$), as well as an expanded theory $\@T_\!{lfgraph+\omega color}$ of \emph{$\omega$-colored} locally finite graphs; see \cref{tbl:theories}.
The result now states that given a Borel locally finite graph $G \subseteq X^2$, which we may treat as a $\@T_\!{lfgraph}$-structuring of any CBER $E \supseteq G$ (e.g., the connectedness relation of $G$), there exists an expansion to a $\@T_\!{lfgraph+\omega color}$-structuring.
Via \cref{thm:str-expan-interp}, this is witnessed by

\begin{proposition}[Kechris--Solecki--Todorcevic]
\label{thm:kst-lfcolor}
There is an interpretation $\@T_\!{lfgraph+\omega color} \to \break \@T_\!{lfgraph} \sqcup \@T_\sep$, whose restriction to the language $\@L_\graph$ is the identity $\@T_\!{lfgraph} -> \@T_\!{lfgraph} \sqcup \@T_\sep$.
\end{proposition}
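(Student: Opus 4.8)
The plan is to directly exhibit the required interpretation by recalling the standard greedy coloring argument of Kechris--Solecki--Todorcevic and observing that it factors through the lexicographic order available from $\@T_\sep$. The key point is that the interpretation must send the edge relation $G$ to itself (so that its restriction to $\@L_\graph$ is the identity), and must additionally define the coloring predicates $C_i$ from $G$ together with the separating family $(U_j)_{j \in \#N}$.

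First I would extract a linear order from $\@T_\sep$. As in \cref{ex:finsub-pt:LO} and the proof of \cref{thm:marker}, the separating predicates $(U_j)_j$ induce an injection $u : M `-> 2^\#N$, along which we pull back the lexicographic order $<_\lex$. This gives an interpretation $\@T_\LO -> \@T_\sep$, hence we may freely use a linear order in the remainder of the construction. Second, I would run the greedy coloring along this order: color each vertex $x$ with the least color not used by any already-colored neighbor. The subtlety is that $<_\lex$ is not a well-order, so we cannot literally recurse; instead the KST argument colors each vertex $x$ by the length of the longest $<_\lex$-increasing path in $G$ ending at $x$ consisting entirely of vertices $<_\lex x$, which is finite by local finiteness, and then refines this finite-valued coloring finitely many times. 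Concretely, one defines an auxiliary coloring by ``$G$-neighbor rank below $x$'' and then applies the finite greedy recoloring within each rank class. Each of these steps is expressible by an $\@L_{\omega_1\omega}$ formula in $G$ and $<_\lex$ (using $\bigvee_n$ over path-lengths), since $G$ is locally finite.

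I would then assemble these formulas into the definition of the predicates $C_i(x)$, verifying that the resulting coloring is proper: adjacent vertices receive distinct colors because the recoloring step explicitly avoids the colors of already-processed neighbors. Crucially, the interpretation leaves $G$ untouched, so the restriction to $\@L_\graph$ is literally the identity interpretation $\@T_\!{lfgraph} -> \@T_\!{lfgraph} \sqcup \@T_\sep$, as required. By \cref{thm:str-expan-interp}, this interpretation witnesses that every $\@T_\!{lfgraph}$-structuring of a CBER (i.e.\ every Borel locally finite graph) expands to a $\@T_\!{lfgraph+\omega color}$-structuring, recovering the semantic KST theorem.

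The main obstacle is encoding the KST coloring scheme purely syntactically without a well-order to recurse along. The resolution is the standard observation that in a locally finite graph one can stratify vertices by the supremum of path-lengths into them along the order (a finite quantity), reducing the infinite greedy recursion to finitely many rounds of finite greedy recoloring; each round, and the stratification itself, is then a countable disjunction over finite data and so lies in $\@L_{\omega_1\omega}$. Writing out these formulas explicitly is tedious but routine once the stratification is set up, and I would present the coloring predicates schematically rather than grinding through every disjunction.
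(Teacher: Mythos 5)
There is a genuine gap, and it already occurs at your first step: your coloring formulas are all written in $G$ and $<_\lex$ alone, so they would constitute an interpretation $\@T_\!{lfgraph+\omega color} -> \@T_\!{lfgraph} \sqcup \@T_\LO$ fixing $G$. No such interpretation exists. Take the graph on $\#Z$ joining consecutive integers, with its usual order; this order is realizable as the lexicographic order of a separating family (every countable linear order embeds into $(2^\#N,<_\lex)$), so it arises from a genuine model of $\@T_\!{lfgraph} \sqcup \@T_\sep$. The shift $n |-> n+1$ is an automorphism of $(\#Z, G, <)$, and since your $C_i$'s are formulas in $G$ and $<_\lex$ only, their interpretations are invariant under every automorphism of that reduct; hence each $C_i$ is $\emptyset$ or all of $\#Z$, and the ``coloring'' is constant, so not proper. (Compare \cref{ex:Zline-cex}, which uses the same structure.) So the construction must use the predicates $U_i$ themselves, not merely the order they induce. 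A second, independent error is the claim that the length of the longest $<_\lex$-increasing path ending at $x$ is ``finite by local finiteness'': local finiteness bounds degrees, not path lengths, and in the same $\#Z$ example every vertex is the endpoint of increasing paths of every finite length, so your stratification assigns no finite rank and the ``finitely many rounds of greedy recoloring'' never gets started.

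For comparison, the paper's proof uses no order and no recursion. For adjacent $x,y$ let $i(x,y)$ be the least $i$ such that exactly one of $U_i(x), U_i(y)$ holds, and let $A_x \subseteq \#N$ be the set of those $i(x,y)$, ranging over neighbors $y$ of $x$, for which $U_{i(x,y)}(x)$ holds; local finiteness makes $A_x$ finite. If $xGy$ then $i(x,y)$ lies in exactly one of $A_x, A_y$ (namely on the side that $U_{i(x,y)}$ contains), so $A_x \ne A_y$, and coloring $x$ by $b(A_x)$ for a fixed bijection $b : \@P_\!{fin}(\#N) \cong \#N$ gives a proper coloring defined by an explicit formula in $G$ and the $U_i$. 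Your observation that the restriction to $\@L_\graph$ must be the identity, and the appeal to \cref{thm:str-expan-interp}, are both correct; it is only the mechanism for defining the $C_i$ that fails.
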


\begin{proof}
Let $\@M = (G, U_i)_{i \in \omega} |= \@T_\!{lfgraph} \sqcup \@T_\sep$, a locally finite graph $G$ together with a separating family $(U_i)_i$.
Then for any $xGy$ there is some least $i \in \#N$ such that $U_i$ separates $x$ and $y$, call it $i(x,y)$.
For each $x \in M$, let $A_x = \set{i(x,y)}{yGx,\, x \in U_{i(x,y)}}$, a finite subset of $\#N$ by $\@T_\!{lfgraph}$.
If $xGy$, then clearly $i(x,y) \in A_x \triangle A_y$.
So fixing a bijection $b: \@P_\!{fin}(\#N) \cong \#N$, we can define a coloring of $G$ by
\begin{align*}
C_{b(A)}(x)  \coloniff
\bigwedge_{i \in \#N} \paren[\big]{(i \in A) <-> \exists y G x\, [U_i(x) \wedge \neg U_i(y) \wedge \bigwedge_{j < i} (U_j(x) <-> U_j(y))]}
\end{align*}
(where ``$i \in A$'' denotes the propositional constant $\top$ if $i \in A$, else $\bot$).
\end{proof}

\begin{remark}
The above interpretation is only interesting given the last condition on the restriction to $\@L_\graph$, which ensures that it specifies a coloring of the \emph{original} graph.
(There is trivially an interpretation $\@T_\!{lfgraph+\omega color} -> \emptyset$, since we may take the empty graph.)
Thus, it does not make sense to draw this interpretation as part of \cref{fig:cber-interp-big}.
\end{remark}

\section{Structurability of groupoids}
\label{sec:gpd}

We now generalize the correspondence given by \cref{thm:scott-LNsep} between CBERs and theories interpreting $\@T_\LN \sqcup \@T_\sep$, to theories interpreting just $\@T_\LN$.
We will show that these correspond to locally countable Borel groupoids, which admit a corresponding theory of ``structurability''.
Since this is a less well-studied concept than for CBERs, and since the correspondence between $\@T_\LN$ and groupoids is a bit more involved, we will approach things in a different order here than in \cref{sec:cber}.
First we will introduce groupoids and their ``Scott theories'', and show that they are precisely the theories interpreting $\@T_\LN$, and then we will introduce structurability and discuss some examples.

\subsection{Groupoids}

\begin{definition}
\label{def:gpd}
A \defn{groupoid} $(X,G,\dom,\cod,\id,{\circ},{}^{-1})$ is a category with inverses, consisting of:
\begin{itemize}
\item  a collection $X$ of \defn{objects};
\item  a collection $G$ of \defn{morphisms} or \defn{arrows};
\item  two maps $\dom, \cod : G \rightrightarrows X$ (\defn{domain} and \defn{codomain}), where for $g \in G$ with $\dom(g) = x$ and $\cod(g) = y$, we write $g : x -> y$, and we write $G(x,y) := \dom^{-1}(x) \cap \cod^{-1}(y)$ for the \defn{hom-set} of all such $g : x -> y$;
\item  a map $x |-> \id_x = 1_x : X -> G$ (\defn{identity}), such that $1_x : x -> x$;
\item  a map ${\circ} : G \times_X G := \{(g,h) \in G \times G \mid \dom(g) = \cod(h)\} -> G$ (\defn{composition}), taking $g : y -> z$ and $h : x -> y$ to $g \circ h : x -> z$, obeying the associativity and identity laws;
\item  a map $g |-> g^{-1} : G -> G$ (\defn{inverse}), such that for $g : x -> y$, $g^{-1} \circ g = 1_x$ and $g \circ g^{-1} = 1_y$.
\end{itemize}
A \defn{standard Borel groupoid} is a groupoid such that $X,G$ are standard Borel spaces and $\dom, \cod, {\circ}, 1, {}^{-1}$ are Borel maps.
A \defn{locally countable Borel groupoid} is a standard Borel groupoid such that $\dom$ is countable-to-1; equivalently, each hom-set as well as each connected component is countable.%
\footnote{Note that this conflicts with another common usage of ``locally'' in category theory, to mean that each hom-set obeys said condition (e.g., \emph{locally small category}).}

A \defn{functor} $f : (X,G) -> (Y,H)$ between groupoids is a homomorphism of groupoids, i.e., a pair of maps $f : X -> Y$ and $f : G -> H$ preserving all of the groupoid structure.

For background on category theory and groupoids, see \cite{MacLane}, \cite{Leinster}; for groupoids in the topological and Borel contexts, see \cite{Ramsay}, \cite{Alvarez}, \cite{Carderi}, \cite{Bowen}, \cite{Cgpd}, \cite{TW}.
\end{definition}

\begin{example}
\label{ex:gpd-cber}
Each CBER $(X,E)$ is a locally countable Borel groupoid, with $\dom, \cod : E \rightrightarrows X$ given by the projections.
Up to isomorphism, these are precisely the groupoids such that each hom-set has at most one element (sometimes called \emph{thin} groupoids), or equivalently, each isotopy group $G(x,x)$ is trivial.
\end{example}

\begin{example}
\label{ex:gpd-grp}
A one-object locally countable groupoid is just a countable group.
\end{example}

\begin{example}
\label{ex:grp-action}
More generally, given a countable group $\Gamma$ and a Borel action $\Gamma \actson X$, the \defn{action groupoid} $\Gamma \ltimes X$ on the space of objects $X$ has, informally, morphisms $x -> y$ consisting of all group elements $\gamma \in \Gamma$ such that $\gamma \cdot x = y$.
Formally, such a morphism is encoded by the pair $(\gamma,x)$ with $y = \gamma \cdot x$.
We thus take $\Gamma \ltimes X := \Gamma \times X$, with $\dom : \Gamma \ltimes X -> X$ given by the projection and $\cod$ given by the action map.
The groupoid operations are given by
\begin{align}
\label{eq:grp-action}
\id_x &:= (1_\Gamma,x), &
(\delta,\gamma x) \circ (\gamma,x) &:= (\delta\gamma,x), &
(\gamma,x)^{-1} &:= (\gamma^{-1},\gamma x).
\end{align}
When $X = 1$ with the trivial $\Gamma$-action, this recovers the previous example of a one-object groupoid.

In general, for any countable group action $\Gamma \actson X$, we have two canonical functors
\begin{equation*}
\begin{tikzcd}
& (X, \Gamma \ltimes X)
    \dlar["{\gamma \mapsfrom (\gamma,x)}"']
    \drar["{(\gamma,x) \mapsto (x,\gamma x)}"] \\
(1, \Gamma) && (X, \#E_\Gamma^X)
\end{tikzcd}
\end{equation*}
The latter functor is bijective on objects and surjective on morphisms, hence exhibits the orbit equivalence relation $\#E_\Gamma^X$ as a quotient of the action groupoid; it is an isomorphism iff the action is free.
The first functor is sometimes called the \emph{cocycle associated with the action $\Gamma \actson X$}.
\end{example}

\begin{example}
\label{ex:gpd-action}
A \defn{(left) action} of an arbitrary (Borel) groupoid $(X,G)$ consists of a (Borel) space $Y$ equipped with a (Borel) map $p : Y -> X$, thought of as a ``bundle'' over $X$, as well as a map
\begin{equation*}
a : G \times_X Y := \{(g,y) \in G \times Y \mid \dom(g) = p(y)\} --> Y
\end{equation*}
taking $g : x -> x' \in G$ and $y \in p^{-1}(x)$ to $a(g,y) = g \cdot y \in p^{-1}(x')$ and obeying the usual associativity and identity laws.
A \defn{right action} is defined analogously using $Y \times_X G := \{(y,g) \mid p(y) = \cod(g)\}$.

Given a left action $a$ on $p : Y -> X$ as above, the \defn{action groupoid} has space of objects $Y$ and space of morphisms $G \ltimes Y := G \times_X Y$, with domain map $p$ and codomain map $a$ and groupoid operations as in \cref{eq:grp-action}.
We again have canonical functors
\begin{equation*}
\begin{tikzcd}
& (Y, G \ltimes Y)
    \dlar["{g \mapsfrom (g,y)}"']
    \drar["{(g,y) \mapsto (y,gy)}"] \\
(X, G) && (Y, \#E_G^Y)
\end{tikzcd}
\end{equation*}
where $\#E_G^Y$ is the connectedness relation of the groupoid $G \ltimes Y$, called the \defn{orbit equivalence relation} of the action; the latter functor to it is bijective on objects and surjective on morphisms.

The first functor $p : (Y,G \ltimes Y) -> (X,G)$ is a \defn{(discrete) fibration} of groupoids, meaning that it restricts to a bijection $\dom^{-1}(y) \cong \dom^{-1}(p(y))$ for each object $y \in Y$, i.e., each morphism in the codomain has a unique lift given any lift of its domain.
In fact, the data of a fibration over $G$ is essentially equivalent to an action of $G$, in that for any bundle $p : Y -> X$, the action groupoid construction yields a bijection between actions of $G$ on $Y$, and isomorphism classes of groupoids $H$ on $Y$ equipped with an extension of $p$ to a fibration $p : (Y,H) -> (X,G)$.
\end{example}

\subsection{Simplicial nerves}
\label{sec:simplicial}

\begin{definition}
A \defn{symmetric simplicial set} $S = (S_n)_{0 < n < \omega}$ consists of a sequence of sets $S_1, S_2, S_3, \dotsc$, equipped with, for each function $s : m -> n$ between $0 < m, n < \omega$, a map
\begin{align*}
\partial_s : S_n -> S_m
\end{align*}
which is contravariantly functorial: $\partial_{s \circ t} = \partial_t \circ \partial_s$ and $\partial_\id = \id$.
In short, $S$ is a contravariant functor from the category of positive finite ordinals (with arbitrary maps) to the category of sets.
A \defn{simplicial map} $f : S -> S'$ between symmetric simplicial sets is a family of maps $(f_n : S_n -> S'_n)_n$ commuting with the $\partial_s$, i.e., a natural transformation between functors.

We may think of each $n+1 = \{0,\dotsc,n\}$ as the vertices of a combinatorial $n$-simplex, and a function $s : m+1 -> n+1$ as a simplicial map.
For example, the inclusion $s : 2+1 `-> 3+1$:
\begin{center}
\begin{tikzpicture}[every node/.append style={inner sep=1pt}]
\coordinate(m);
\node(m0) at (45:1cm) {0};
\node(m1) at (165:1cm) {1} edge (m0);
\node(m2) at (285:1cm) {2} edge (m0) edge (m1);
\begin{scope}[xshift=6cm]
\coordinate(n);
\node(n0) at (45:1cm) {0};
\node(n1) at (165:1cm) {1} edge (n0);
\node(n2) at (285:1cm) {2} edge (n0) edge (n1);
\node(n3) at (165:-1cm) {3} edge (n0) edge (n2) edge[dotted] (n1);
\end{scope}
\draw[commutative diagrams/hookrightarrow, shorten <=1cm, shorten >=1.5cm] (m) to["$\scriptstyle s$"{above,inner sep=2pt}] (n);
\end{tikzpicture}
\end{center}
Now for a symmetric simplicial set $S$, we think of $S$ as a complex obtained by gluing $n$-simplices, where each $S_{n+1}$ is the set of $n$-simplices in $S$.
We thus call $S_1$ the \defn{vertices} of $S$, $S_2$ the \defn{edges}, etc.
For $s : m+1 -> n+1$, the map $\partial_s : S_{n+1} -> S_{m+1}$ restricts each $n$-simplex along $s$ to an $m$-simplex.
For example, when $s = (0,1,2) \in (3+1)^{2+1}$ is the above inclusion, $\partial_{012} : S_{3+1} -> S_{2+1}$ takes each 3-simplex in $S$ to its face spanned by vertices $0,1,2$.
In general, when $s : n `-> n+1$ is an injection, we call $\partial_s : S_{n+1} -> S_n$ a \defn{face map}.
When $s : n+1 ->> n$ is a surjection, we call $\partial_s : S_n -> S_{n+1}$ a \defn{degeneracy map}; these provide a way to regard an $(n-1)$-simplex as a degenerate or ``flat'' $n$-simplex where one of the edges (namely between the two vertices collapsed by $s$) is really a point.
Note that every $s : m -> n$ is a composite of one-step injections, surjections, and permutations; thus the face and degeneracy maps, along with vertex permutations, determine all of the $\partial_s$.
\end{definition}

For background on (symmetric) simplicial sets, which play a fundamental role in abstract homotopy theory, see \cite{GJ}, \cite{Cisinski}.
For our purposes in this paper, simplicial sets are relevant, because on the one hand, they are precisely the structure on the (positive-arity) type spaces $\@S_n(\@T)$ of a theory required to determine it up to bi-interpretability; see \cref{rmk:types-simplicial,rmk:interp-types}.
On the other hand, a simplicial set can also encode a groupoid, via the following well-known construction:

\begin{definition}
\label{def:nerve}
For each $0 < n < \omega$, let $\#I_n = n^2$ denote the indiscrete equivalence relation on $n$.
Given a groupoid $(X,G)$, its \defn{simplicial nerve} $\@N(G)$ is the symmetric simplicial set with
\begin{align*}
\@N(G)_n := \{\text{functors } \#I_n -> G\}
\end{align*}
and $\partial_s : \@N(G)_n -> \@N(G)_m$ for $s : m -> n$ given by precomposition with $s$.
See e.g., \cite[\S1.4]{Cisinski}.

Thus, the vertices $\@N(G)_1$ are functors $\#I_1 -> G$, which are just objects $x \in X$.
The edges $\@N(G)_2$ are functors $f : \#I_2 -> G$, which are uniquely determined by the single morphism $f(0,1) \in G$.
The triangles $f : \#I_3 -> G$ are commuting triangles
\begin{equation}
\label{eq:nerve-triangle}
\begin{tikzcd}
& f(1) \drar["{f(1,2) = hg^{-1}}"] \\
f(0) \urar["{g = f(0,1)}"] \ar[rr, "{f(0,2) = h}"] && f(2)
\end{tikzcd}
\end{equation}
which are uniquely determined by a pair of morphisms $f(0,1), f(0,2)$ sharing a domain.

More generally, it is easily seen that for each $n \ge 2$, we have a bijection
\begin{align*}
(\partial_{01},\dotsc,\partial_{0n}) : \@N(G)_{n+1} &\cong (\@N(G)_2)^n_{\@N(G)_1} \cong G^n_X = \{(g_1,\dotsc,g_n) \in G^n \mid \dom(g_1) = \dotsb = \dom(g_n)\} \\
f &|-> (f(0,1), \dotsc, f(0,n))
\end{align*}
between $n$-simplices and $n$-tuples of morphisms sharing a domain ($G^n_X$ denotes the $n$-fold fiber product of $\dom : G -> X$).
A symmetric simplicial set $S$ for which $(\partial_{01},\dotsc,\partial_{0n}) : S_{n+1} -> (S_2)^n_{S_1}$ is a bijection for each $n$ is said to satisfy the \defn{Grothendieck--Segal condition}.%
\footnote{There are different versions of the Grothendieck--Segal condition, all equivalent for groupoids, where instead of $n$ morphisms sharing a domain, we may specify $f : \#I_n -> G$ by its restriction to any spanning tree of $\#I_n$.}
\end{definition}

\begin{proposition}[Grothendieck]
\label{thm:nerve-segal}
The operation $\@N$ is an equivalence of categories
\begin{align*}
\@N : \{\text{groupoids}\} \simeq \{\text{symmetric simplicial sets satisfying the Grothendieck--Segal condition}\}.
\end{align*}
\end{proposition}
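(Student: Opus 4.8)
The plan is to verify that $\@N$ is fully faithful and essentially surjective onto the symmetric simplicial sets obeying the Grothendieck--Segal condition; that $\@N(G)$ satisfies this condition, and that $\@N$ is functorial, have already been recorded in \cref{def:nerve}. Functoriality is immediate: a functor $\phi : (X,G) -> (Y,H)$ induces the simplicial map with components $\@N(\phi)_n : \@N(G)_n -> \@N(H)_n$ given by postcomposition $\phi \circ (-)$, which commutes with every $\partial_s$ since the latter act by precomposition with $s$.

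For full faithfulness, I would exploit that a functor is recoverable from its underlying maps on objects and morphisms, which are exactly its effect on the vertices $\@N(G)_1 \cong X$ and edges $\@N(G)_2 \cong G$; this already shows $\@N$ is injective on hom-sets. Conversely, given a simplicial map $\psi : \@N(G) -> \@N(H)$, its components $\psi_1, \psi_2$ furnish maps $X -> Y$ and $G -> H$, and naturality with respect to the face and degeneracy maps $\partial_s$ for $s$ between the ordinals $1$ and $2$ forces these to commute with $\dom$, $\cod$, and $\id$. Naturality with respect to the triangle face $\partial_{12} : \@N(G)_3 -> \@N(G)_2$, together with the Grothendieck--Segal identification $\@N(G)_3 \cong G^2_X$, then forces preservation of composition (cf.\ \cref{eq:nerve-triangle}); hence $(\psi_1,\psi_2)$ is a functor, and $\@N$ is a bijection on hom-sets.

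The substantial direction is essential surjectivity. Given a symmetric simplicial set $S$ obeying the Grothendieck--Segal condition, I would reconstruct a groupoid $G$ by taking the objects to be the vertices $X := S_1$ and the morphisms to be the edges $G := S_2$, with $\dom, \cod : S_2 \rightrightarrows S_1$ the two face maps, identities $\id : S_1 -> S_2$ the degeneracy map induced by the surjection $2 ->> 1$, and inverses $G -> G$ given by $\partial_\tau$ for the transposition $\tau = (0\,1) : 2 -> 2$ (it is precisely this symmetric structure that makes $S$ a \emph{groupoid} rather than merely a category). Composition is read off from triangles: for composable $g : x -> y$ and $h : y -> z$, the pair $(g^{-1}, h) \in (S_2)^2_{S_1}$ of edges sharing the domain $y$ corresponds, via the Grothendieck--Segal bijection $S_3 \cong (S_2)^2_{S_1}$, to a unique triangle whose remaining face $\partial_{12}$ I define to be $h \circ g$, matching the relation in \cref{eq:nerve-triangle}.

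The main obstacle will be verifying the groupoid axioms purely from the simplicial identities, and I expect associativity to be the crux. This I would deduce from the Grothendieck--Segal condition one dimension higher: the bijection $S_4 \cong (S_2)^3_{S_1}$ encodes a tetrahedron determined by its three edges out of vertex $0$, and comparing the two induced factorizations of one of its $2$-dimensional faces yields $h \circ (g \circ f) = (h \circ g) \circ f$; the unit and inverse laws follow analogously by inspecting degenerate triangles arising from the degeneracy maps. Finally I would check $\@N(G) \cong S$: by construction the $n$-simplices of $\@N(G)$ are tuples of morphisms sharing a domain, i.e.\ $G^n_X = (S_2)^n_{S_1}$, which the Grothendieck--Segal condition identifies with $S_{n+1}$, and functoriality of the $\partial_s$ shows this identification is simplicial, completing the essential surjectivity and hence the equivalence.
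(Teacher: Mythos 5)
Your proposal is correct and follows essentially the same route as the paper, which only sketches the inverse construction (objects $=S_1$, morphisms $=S_2$, composition and inverses read off from $S_3\cong (S_2)^2_{S_1}$ via $\partial_{12}$) and defers the verification of the groupoid axioms to the cited reference; your use of $\partial_\tau$ for the transposition to define inverses is an equivalent cosmetic variant of the paper's ``take $h=\id$ in the triangle'' recipe. The only substantive content you add is the explicit plan for full faithfulness and for deriving associativity from $S_4\cong(S_2)^3_{S_1}$, which is exactly the standard argument the paper omits.
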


The inverse-up-to-isomorphism takes a symmetric simplicial set $S$ to the groupoid with objects $S_1$, morphisms $S_2$, identity given by the degeneracy map $\partial_0 : S_2 -> S_1$ (where $0$ here denotes the constant map $0 : 2 -> 1$), and inverse and composition both given by the face map $\partial_{12} : S_2 \times_{S_1} S_2 \cong S_3 -> S_2$ (by taking $h = \id$ in the above triangle \cref{eq:nerve-triangle} to get $g^{-1}$, and then replacing $g$ with $g^{-1}$ to get $hg$).
For details, see \cite[1.4.11]{Cisinski}.

All of the above also makes sense in the Borel context, yielding an equivalence between locally countable Borel groupoids and \defn{locally countable standard Borel symmetric simplicial sets} $S$, meaning that each $S_n$ is standard Borel and each face map is Borel and countable-to-1, which satisfy the Grothendieck--Segal condition.

\subsection{Scott theories of groupoids}

\begin{definition}
\label{def:gpd-scott}
Let $(X,G)$ be a locally countable Borel groupoid.
The \defn{Scott theory} of $G$ is the countable $\@L_{\omega_1\omega}$ theory $(\@L_G,\@T_G)$ defined uniquely up to bi-interpretability via \cref{thm:mod-equiv} by declaring its models on any nonempty countable set $Y$ to be the standard Borel space
\begin{align*}
\Mod_Y(\@T_G) :=& \{u : (Y,\#I_Y) -> (X,G) \mid u \text{ is a fibration}\},
\end{align*}
where $\#I_Y$ is the indiscrete equivalence relation on $Y$; put also $\Mod_\emptyset(\@T_G) := \emptyset$.
The logic action $\Sym(Y,Z) \times \Mod_Y(\@T_G) -> \Mod_Z(\@T_G)$ is given by relabeling: for $h : Y \cong Z$ and $u \in \Mod_Y(\@T_G)$,
\begin{equation*}
h \cdot u := u \circ h^{-1}.
\end{equation*}
\end{definition}

\begin{remark}
\label{rmk:gpd-scott-torsor}
Recalling (\cref{ex:gpd-action}) that fibrations over $G$ correspond to actions, the fibrations from an indiscrete equivalence relation correspond to the free transitive $G$-actions, sometimes called \emph{principal $G$-actions} or \emph{$G$-torsors}; $\@T_G$ is the theory of all such.

For each $x \in X$, we have a canonical model $(\@H_G)_x$ of $\@T_G$ on $\dom^{-1}(x)$, given by the fibration
\begin{align*}
(\@H_G)_x := \cod_x : (\dom^{-1}(x), \#I_{\dom^{-1}(x)}) &--> (X, G) \\
\dom^{-1}(x)^2 \ni (g_0,g_1) &|--> g_1 g_0^{-1} : \cod(g_0) -> \cod(g_1).
\end{align*}
Any other $u \in \Mod_Y(\@T_G)$ is isomorphic to such a canonical model; namely, for any $y_0 \in Y$,
\begin{align*}
u_{y_0} : Y &--> \dom^{-1}(u(y_0)) \\
y &|--> u(y_0,y)
\end{align*}
is an isomorphism $u_{y_0} : u \cong (\@H_G)_{u(y_0)}$.
Moreover, all isomorphisms $h : u \cong (\@H_G)_x$ are of this form for a unique $y_0 \in Y$, namely $y_0 = h^{-1}(1_x)$; this fact is the \defn{Yoneda lemma}.
\end{remark}

Below in \cref{def:gpd-str}, we will define a ``structuring'' of a groupoid $G$, and then explain how the models $(\@H_G)_x$ together form a ``tautological $\@T_G$-structuring'' $\@H_G$, with the same universal properties as in the case of CBERs (\cref{def:scott}).

Following \cref{thm:scott-types-formulas}, we now compute the $n$-types of $\@T_G$:

\begin{proposition}
\label{thm:gpd-scott-types}
The $n$-types of $\@T_G$ are determined by the Borel isomorphism
\begin{align*}
\tp_{\@H_G}^n : G^n_X/G &\cong \@S_n(\@T_G) \\
[(x,g_0,\dotsc,g_{n-1})] &|-> \tp(\cod_x, g_0,\dotsc,g_{n-1}),
\end{align*}
where $G^n_X$ is the set of $n$-tuples in some $\dom^{-1}(x)$, equipped with the right translation $G$-action:
\begin{gather*}
G^n_X = \{(x,g_0,\dotsc,g_{n-1}) \in X \times G^n \mid x = \dom(g_0) = \dotsb = \dom(g_{n-1})\}, \\
(x,g_0,\dotsc,g_{n-1}) \cdot (g : x' -> x) = (x',g_0g,\dotsc,g_{n-1}g).
\end{gather*}
(The first coordinate $x$ is only needed for the case $n = 0$, yielding $G^0_X/G = X/G$.)
\end{proposition}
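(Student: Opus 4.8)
The plan is to mirror the proof of the CBER analogue \cref{thm:scott-types-formulas} essentially verbatim, replacing the rigidity of models of $\@T_E$ by the Yoneda lemma of \cref{rmk:gpd-scott-torsor}. Concretely, I would reduce the statement to showing that $\tp_{\@H_G}^n$ is a well-defined Borel bijection $G^n_X/G -> \@S_n(\@T_G)$; that it is then a Borel \emph{iso}morphism follows formally via \cref{thm:polgrpact}, since both sides are quotients of standard Borel spaces by Polish group actions. Indeed, $\@S_n(\@T_G)$ is such a quotient by \cref{thm:mod-types}, while the orbit equivalence relation of the right-translation $G$-action on $G^n_X$ is a CBER (as $G$ is locally countable), hence a quotient by a countable — thus Polish — group action via Feldman--Moore. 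A Borel bijection between two such quotients is a Borel reduction, hence a Borel embedding onto its (surjective) image by \cref{thm:polgrpact}, i.e.\ a Borel isomorphism.

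For well-definedness, I would check that right translation induces isomorphisms of the canonical models: for $g : x' -> x$, the map $h |-> h g^{-1} : \dom^{-1}(x') -> \dom^{-1}(x)$ is an isomorphism $(\@H_G)_{x'} \cong (\@H_G)_x$ of pointed models carrying $g_i g |-> g_i$, so $\tp(\cod_{x'}, g_0 g,\dotsc,g_{n-1}g) = \tp(\cod_x, g_0,\dotsc,g_{n-1})$, and the map descends to $G^n_X/G$. For bijectivity, I would compute the full preimage of an arbitrary type $\tp(u, \vec{a})$ with $u \in \Mod_Y(\@T_G)$ and $\vec{a} \in Y^n$. By the Yoneda lemma (\cref{rmk:gpd-scott-torsor}), an isomorphism $((\@H_G)_x, \vec{g}) \cong (u, \vec{a})$ exists iff $x = u(y_0)$ and $g_i = u(y_0, a_i)$ for some $y_0 \in Y$; thus the preimage is exactly the set of orbits $[(u(y_0), (u(y_0, a_i))_i)]$ as $y_0$ ranges over $Y$. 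Finally, functoriality of $u$ on $\#I_Y$ shows these are a single orbit: right-translating by $u(y_1, y_0) : u(y_1) -> u(y_0)$ carries $(u(y_0), (u(y_0, a_i))_i)$ to $(u(y_1), (u(y_1, a_i))_i)$, using $(y_0, a_i) \circ (y_1, y_0) = (y_1, a_i)$ and hence $u(y_0, a_i) \circ u(y_1, y_0) = u(y_1, a_i)$. So every type has a unique preimage, giving the claimed bijection.

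It remains only to note that $\tp_{\@H_G}^n$ is itself Borel, being induced by the Borel map $(x, \vec{g}) |-> \tp(\cod_x, \vec{g})$ on $G^n_X$, which factors through a Borel family of enumerations of the countable fibers $\dom^{-1}(x)$ (available by Lusin--Novikov) followed by the Borel quotient map to types of \cref{thm:mod-types}. I anticipate the main obstacle to be precisely the bijectivity argument, specifically the bookkeeping around non-rigid models. Unlike the CBER case, where each $\@T_E$-model is rigid so that the preimage of a type is visibly a single point, here the canonical models $(\@H_G)_x$ are in general \emph{not} rigid — their automorphisms are the right translations by the isotropy group $G(x,x)$ — so there are genuinely many isomorphisms $((\@H_G)_x, \vec{g}) \cong (u, \vec{a})$. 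The content of the proposition is exactly that the right-translation $G$-action collapses all the resulting representatives into one orbit, which is what the Yoneda lemma of \cref{rmk:gpd-scott-torsor} encodes.
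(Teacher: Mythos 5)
Your proposal is correct and follows essentially the same route as the paper's proof: surjectivity via the canonical isomorphisms $u_{y_0} : (u,\vec a) \cong (\cod_{u(y_0)}, u_{y_0}(\vec a))$ from \cref{rmk:gpd-scott-torsor}, injectivity by using the Yoneda lemma to see that all such representatives are identified under right translation, and upgrading the resulting Borel bijection to a Borel isomorphism via the fact that both sides are quotients of Polish group actions. The paper's proof is simply a terser version of the same argument, and your more explicit bookkeeping (well-definedness, the single-orbit computation via $u(y_1,y_0)$) fills in exactly the steps the paper leaves implicit.
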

\begin{proof}
For any model $u \in \Mod_Y(\@T_G)$, i.e., fibration $u : \#I_Y -> G$, and tuple $\vec{a} \in Y^n$, we have $u_{y_0} : (u,\vec{a}) \cong (\cod_{u(y_0)},u_{y_0}(\vec{a}))$ for any $y_0 \in Y$ by the preceding remark; and all isomorphisms between such $n$-pointed models are realized by a right translation by the Yoneda lemma, hence they become identified in $G^n_X/G$.
Thus $\tp_{\@H_G}^n$ is a bijection; since $G^n_X/G, \@S_n(\@T_G)$ are quotients by Polish group actions (the former being a quotient by a CBER $\#E_G^{G^n_X}$), this implies Borel isomorphism.
\end{proof}

\begin{remark}
\label{rmk:gpd-scott-nerve}
For $n \ge 1$, we have a bijection
\begin{alignat*}{2}
\@N(G)_n = \{\text{functors } \#I_n -> G\} &\cong G^{n-1}_X &&\cong G^n_X/G \\
f &|-> (f(0),f(0,1),\dotsc,f(0,n-1)) &&|-> [(f(0),f(0,0),\dotsc,f(0,n-1))] \\
((i,j) |-> g_jg_i^{-1}) &\mathrlap{{}<---{\hphantom{\textstyle\quad\;(f(0),f(0,1),\dotsc,f(0,n-1))}}} &&\relbar\mapsfromchar [(x,g_0,\dotsc,g_{n-1})]
\end{alignat*}
where the first bijection witnesses the Grothendieck--Segal condition of the nerve (\cref{def:nerve}) and the second bijection is given by quotienting out the first factor of $G$.
It is easily seen that the simplicial boundary maps $\partial_s : \@N(G)_n -> \@N(G)_m$ of $\@N(G)$ correspond to the variable substitutions $G^n_X -> G^m_X$ for each $s : m -> n$; in other words, we have an isomorphism of symmetric simplicial sets.
Combined with the preceding result, we get a composite isomorphism
\begin{align*}
(\@N(G)_n &\cong \@S_n(\@T_G))_{n \ge 1} \\
f &|-> \tp(\cod_{f(0)},f(0,0),\dotsc,f(0,n-1)).
\end{align*}
In particular, we get that the positive-arity type spaces $(\@S_n(\@T_G))_{n \ge 1}$ of a Scott theory $\@T_G$ satisfy the Grothendieck--Segal condition.
\end{remark}

\begin{theorem}
\label{thm:gpd-scott-LN}
For a countable $\@L_{\omega_1\omega}$ theory $\@T$, the following are equivalent:
\begin{enumerate}[label=(\roman*)]
\item \label{thm:gpd-scott-LN:types}
$\partial_1 : \@S_1(\@T) -> \@S_0(\@T)$ is surjective, $\@S_1(\@T)$ is standard Borel, and for any pointed model $(\@M,a) \in \Mod_Y^1(\@T)$, the map $b |-> \tp(\@M,a,b) : Y -> \@S_2(\@T)$ is injective.
\item \label{thm:gpd-scott-LN:mod}
$\@T$ has no empty models, and for any countable set $Y$, the logic action $\Sym(Y) \actson \Mod_Y^1(\@T)$ is free and smooth.
\item \label{thm:gpd-scott-LN:logic}
$\@T$ has no empty models, and there is a countable set $\@F \subseteq \@L_{\omega_1\omega}^1$ of formulas in one variable, such that every pointed model of $\@T$ is rigid and $\@F$-categorical.
\item \label{thm:gpd-scott-LN:LN}
$\@T$ has no empty models and interprets $\@T_\LN$.
\item \label{thm:gpd-scott-LN:scott}
$\@T$ is bi-interpretable with the Scott theory $\@T_G$ of a locally countable Borel groupoid $G$, namely the groupoid whose nerve is $(\@S_n(\@T))_{n \ge 1}$.
\end{enumerate}
\end{theorem}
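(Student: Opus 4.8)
The plan is to mirror the proof of \cref{thm:scott-LNsep}, replacing the ``free'' theory $\@T_\LN \sqcup \@T_\sep$ by $\@T_\LN$ alone, and the CBER $\ker(\partial_1)$ by the groupoid reconstructed from the nerve $(\@S_n(\@T))_{n \ge 1}$. First I would dispatch the equivalence of \cref{thm:gpd-scott-LN:types,thm:gpd-scott-LN:mod,thm:gpd-scott-LN:logic,thm:gpd-scott-LN:LN}: these are exactly the three equivalent conditions of \cref{thm:interp-LN} (which already capture the pointed-model content of interpreting $\@T_\LN$), each augmented with the requirement that $\@T$ have no empty models; and surjectivity of $\partial_1 : \@S_1(\@T) -> \@S_0(\@T)$ is precisely this requirement, since it says every $0$-type, i.e.\ every model, carries a $1$-type. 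For \cref{thm:gpd-scott-LN:scott}$\implies$\cref{thm:gpd-scott-LN:LN}, I would use \cref{thm:gpd-scott-types}: for a Scott theory $\@T_G$ we have $\@S_1(\@T_G) \cong X$ standard Borel, and the map $g_1 |-> \tp(\cod_x, g_0, g_1) : \dom^{-1}(x) -> \@S_2(\@T_G) \cong G^2_X/G$ is injective, since $(x,g_0,g_1) \cdot g = (x,g_0,g_1')$ forces $g = 1_x$; thus the first condition of \cref{thm:interp-LN} holds, and $\@T_G$ has no empty models by definition.

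The substance is \cref{thm:gpd-scott-LN:types}$\implies$\cref{thm:gpd-scott-LN:scott}. I would first verify that $(\@S_n(\@T))_{n \ge 1}$, with the substitution maps $\partial_s$ of \cref{rmk:types-simplicial}, is a locally countable standard Borel symmetric simplicial set. Standard Borelness of each $\@S_n(\@T)$, $n \ge 1$, follows by induction from the hypothesis that $\@S_1(\@T)$ is standard Borel together with \cref{thm:types-smooth}; the face maps are Borel by \cref{rmk:types-simplicial}, and countable-to-$1$ because each fiber of $\partial_n$ over $\tp(\@M, a_0, \dots, a_{n-2})$ consists of the types $\tp(\@M, a_0, \dots, a_{n-1})$ as $a_{n-1}$ ranges over the countable model $\@M$.

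The key step, which I expect to be the main obstacle, is the Grothendieck--Segal condition: that $(\partial_{01}, \dots, \partial_{0n}) : \@S_{n+1}(\@T) -> (\@S_2(\@T))^n_{\@S_1(\@T)}$ is a bijection. Both directions I would argue semantically, using that pointed models of $\@T$ are rigid and $\@F$-categorical (\cref{thm:gpd-scott-LN:logic}) and that $b |-> \tp(\@M, a, b)$ is injective (\cref{thm:gpd-scott-LN:types}). For surjectivity, given $2$-types $p_1, \dots, p_n$ sharing the common vertex $q = \partial_0 p_i \in \@S_1(\@T)$, I would fix one pointed model $(\@M, a_0)$ realizing $q$; since each $p_i$ is realized over $q$ and pointed models realizing $q$ are isomorphic by a \emph{unique} isomorphism, I can transport a witness into $\@M$ to get $b_i$ with $\tp(\@M, a_0, b_i) = p_i$, so that $\tp(\@M, a_0, b_1, \dots, b_n)$ is a preimage. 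For injectivity, if $\tp(\@M, a_0, \dots, a_n)$ and $\tp(\@M', a_0', \dots, a_n')$ have equal images, then their common vertex gives $\tp(\@M, a_0) = \tp(\@M', a_0')$, hence a unique isomorphism $\psi : (\@M, a_0) \cong (\@M', a_0')$; from $\tp(\@M', a_0', \psi(a_i)) = p_i = \tp(\@M', a_0', a_i')$ and injectivity of $b |-> \tp(\@M', a_0', b)$ I get $\psi(a_i) = a_i'$ for each $i$, so $\psi$ witnesses equality of the $(n+1)$-types.

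Finally I would invoke \cref{thm:nerve-segal} in its Borel form to obtain a locally countable Borel groupoid $G$ with $\@N(G) \cong (\@S_n(\@T))_{n \ge 1}$ as symmetric simplicial sets; by \cref{rmk:gpd-scott-nerve} the same holds for $\@T_G$, giving an isomorphism $(\@S_n(\@T))_{n \ge 1} \cong (\@S_n(\@T_G))_{n \ge 1}$ commuting with all $\partial_s$. To upgrade this to bi-interpretability via \cref{rmk:interp-types}\cref{rmk:interp-types:types}, it remains to match $\@S_0$ compatibly with $\partial_1$. Here I would note that $\partial_1 : \@S_1(\@T) -> \@S_0(\@T)$ is a surjective, countable-to-$1$ Borel map whose kernel is exactly the connectedness relation $\#E_G^X$ of $G$ on $X = \@S_1(\@T)$: any two elements of a single model give a $2$-type, hence an edge joining their $1$-types, so $1$-types are $\#E_G^X$-connected iff realized in a common, equivalently isomorphic, model, which is iff they have the same image under $\partial_1$. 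Thus $\@S_0(\@T) \cong \@S_1(\@T)/\#E_G^X \cong \@S_0(\@T_G)$ compatibly, all type spaces and substitution maps agree, and the resulting isomorphism is the desired bi-interpretation $\@T \cong \@T_G$, identifying $G$ as the groupoid whose nerve is $(\@S_n(\@T))_{n \ge 1}$.
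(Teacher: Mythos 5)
Your proposal is correct and follows essentially the same route as the paper: dispatch (i)--(iv) via \cref{thm:interp-LN} (with surjectivity of $\partial_1$ encoding nonemptiness of models), verify the Grothendieck--Segal condition for $(\@S_n(\@T))_{n\ge 1}$ using rigidity and $\@F$-categoricity of pointed models, invoke the Borel nerve theorem \cref{thm:nerve-segal}, and match $\@S_0$ by identifying $\ker(\partial_1)$ with the connectedness relation of the resulting groupoid. Your Segal-condition argument unpacks the paper's fiberwise restriction into explicit surjectivity/injectivity checks via unique isomorphisms, but the underlying idea is identical.
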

\begin{proof}
\cref{thm:gpd-scott-LN:types}--\cref{thm:gpd-scott-LN:LN} are equivalent by \cref{thm:interp-LN}.

\cref{thm:gpd-scott-LN:scott}$\implies$\cref{thm:gpd-scott-LN:types}:
By \cref{thm:gpd-scott-types}, $\partial_1 : \@S_1(\@T_G) -> \@S_0(\@T_G)$ is isomorphic to the projection $G^1_X/G ->> G^0_X/G$ (i.e., to $X ->> X/G$ by \cref{rmk:gpd-scott-nerve}).
By \cref{rmk:gpd-scott-nerve}, $\@S_1(\@T_G) \cong G^0_X = X$ is standard Borel.
By \cref{rmk:gpd-scott-torsor}, every pointed model of $\@T_G$ is isomorphic to some $\cod_x \in \Mod_{\dom^{-1}(x)}(\@T_G)$ equipped with some $a \in \dom^{-1}(x)$, and we may assume $a = 1_x$ by otherwise applying the isomorphism $(-)a^{-1} : \cod_x \cong \cod_{\cod(a)}$; then the model $(\cod_x,a) = (\cod_x,1_x)$ is rigid by the Yoneda lemma.

\cref{thm:gpd-scott-LN:types}$\implies$\cref{thm:gpd-scott-LN:scott}:
Note first that rigidity of pointed models implies that the symmetric simplicial set $(\@S_n(\@T))_{n \ge 1}$ obeys the Grothendieck--Segal condition.
Indeed, to check that the map
\begin{align*}
(\partial_{01},\dotsc,\partial_{0n}) : \@S_{n+1}(\@T) &--> \@S_2(\@T)^n_{\@S_1(\@T)} \\
\intertext{from \cref{def:nerve} is a bijection, since this map commutes with the projection $\partial_0$ to $\@S_1(\@T)$ in the first coordinate, it suffices to restrict both domain and codomain to those types realized in a fixed pointed model $(\@M,a) \in \Mod_Y^1(\@T)$; but since pointed models are rigid, said restriction becomes simply the canonical bijection}
\{a\} \times Y^n &--> (\{a\} \times Y)^n.
\end{align*}
Also by \cref{thm:types-smooth}, $\@S_n(\@T)$ is standard Borel for all $n \ge 1$.
Thus by \cref{thm:nerve-segal}, $(\@S_n(\@T))_{n \ge 1}$ is isomorphic to the nerve $\@N(G)$ of a locally countable Borel groupoid $(X,G)$, with objects $X = \@S_1(\@T)$ and morphisms $G = \@S_2(\@T)$.
Together with \cref{rmk:gpd-scott-nerve}, we get an isomorphism with $(\@S_n(\@T_G))_{n \ge 1}$:
\begin{equation*}
\begin{tikzcd}[isom/.append style={phantom,"\cong"{sloped}}]
\dotsb &
\@S_3(\@T) \dar[isom,"\scriptstyle\text{\labelcref{thm:nerve-segal}}\;"{left}] \rar[shift left=2] \rar \rar[shift right=2] &
\@S_2(\@T) \dar[isom] \rar[shift left] \rar[shift right] &
\@S_1(\@T) \dar[isom] \rar[two heads, "\partial_1"] &
\@S_0(\@T)
\\
\dotsb &
\@N(G)_3 \dar[isom,"\scriptstyle\text{\labelcref{rmk:gpd-scott-nerve}}\;"{left}] \rar[shift left=2] \rar \rar[shift right=2] &
\@N(G)_2 = G \dar[isom] \rar[shift left,"\dom"] \rar[shift right,"\cod"']&
\@N(G)_1 = X \dar[isom] \rar[two heads] &
X/G \dar[isom,"\scriptstyle\;\text{\labelcref{thm:gpd-scott-types}}"{right}]
\\
\dotsb &
\@S_3(\@T_G) \rar[shift left=2] \rar \rar[shift right=2] &
\@S_2(\@T_G) \rar[shift left] \rar[shift right] &
\@S_1(\@T_G) \rar[two heads] &
\@S_0(\@T_G)
\end{tikzcd}
\end{equation*}
By \cref{thm:gpd-scott-types}, we also have a canonical isomorphism $X/G \cong \@S_0(\@T_G)$ (of nonstandard Borel spaces), commuting with the two face maps $G \rightrightarrows X$ and $\@S_2(\@T_G) \rightrightarrows \@S_1(\@T_G)$, so that both $X/G, \@S_0(\@T_G)$ are the spaces of connected components of the respective simplicial sets.
But the surjection $\partial_1 : \@S_1(\@T) ->> \@S_0(\@T)$ also exhibits $\@S_0(\@T)$ as the connected components of $(\@S_n(\@T))_{n \ge 1}$, since any two $1$-types projecting to the same $0$-type are realized in isomorphic models, hence may be amalgamated into a $2$-type realized in a single model.
We thus get a Borel isomorphism of type spaces $(\@S_n(\@T) \cong \@S_n(\@T_G))_n$ for all $n < \omega$, hence a bi-interpretation $\@T \cong \@T_G$ by \cref{rmk:interp-types}.
\end{proof}

\begin{remark}
\label{rmk:gpd-scott-rigid}
If $(X,G)$ is a locally countable Borel groupoid, then $G$ is a CBER (up to isomorphism) iff all models of $\@T_G$ are rigid, since a non-identity endomorphism $g : x -> x \in G$ gives rise to a non-identity right translation automorphism of the canonical model $\cod_x$ (\cref{rmk:gpd-scott-torsor}).

Thus for a theory $\@T$ interpreting both $\@T_\LN$ and $\@T_\sep$, the groupoid $G$ constructed in the above proof must be a CBER, hence by \cref{thm:scott-LNsep} must be the same CBER $E$ constructed in that earlier proof.
Note however that we previously constructed $E$ on $X = \@S_1(\@T)$ as the kernel of $\partial_1 : \@S_1(\@T) ->> \@S_0(\@T)$, whereas the above proof instead shows that the two maps $\@S_2(\@T) \rightrightarrows \@S_1(\@T)$ yield an injection $\@S_2(\@T) `-> \@S_1(\@T)^2$ whose image is then $E$.
(For a general groupoid, said image will not be injective, but will still be $\ker(\partial_1)$, which yields the connectedness relation of the groupoid $G$.)
\end{remark}

\begin{remark}
The simplicial set of types $(\@S_n(\@T))_{n \ge 1}$ and (weakenings of) the Grothendieck--Segal condition have appeared before in the finitary ($\@L_{\omega\omega}$) model theory literature, in the context of \emph{$n$-amalgamation} of types; see \cite{GKK}, \cite{Kruckman}.

In our context, we may give a homotopy-theoretic explanation of $(\@S_n(\@T))_{n \ge 1}$ as follows, using the equivalence of 2-categories between theories and standard Borel groupoids of models from \cite{Cscc} (see the following subsection).
Given a theory $\@T$ obeying the conditions of \cref{thm:gpd-scott-LN}, let $\@T_1$ be $\@T$ expanded with a constant, so that $\Mod(\@T_1) \cong \Mod^1(\@T)$ with standard Borel $\cong$-quotient $\@S_1(\@T)$.
The inclusion $\@T `-> \@T_1$ induces an essentially surjective forgetful functor between groupoids of models $\Mod(\@T_1) ->> \Mod(\@T)$, thereby covering $\Mod(\@T)$ with the essentially discrete standard Borel groupoid $\Mod(\@T_1) \simeq \@S_1(\@T)$.
The Čech nerve of this functor (see \cite[6.1.2.11]{Lurie}) is $(\@S_n(\@T))_{n \ge 1}$, from which we recover $\Mod(\@T)$ as the realization (2-categorical quotient).
\end{remark}

\subsection{Connection to essential countability and imaginaries}
\label{sec:esscount}

In this subsection, which is tangential to the rest of the paper, we discuss connections between \cref{thm:gpd-scott-LN} above and some other notions and results that have appeared in the countable model theory literature.

\begin{definition}
\label{def:esscount}
For a countable $\@L_{\omega_1\omega}$ theory $\@T$, the isomorphism relation ${\cong} \subseteq \Mod_Y(\@T)$ is said to be \defn{essentially countable} if it is Borel and there is a Borel set $D \subseteq \Mod_Y(\@T)$ picking at least one and only countably many elements from each isomorphism class; see \cite[\S4]{Kcber}.
\end{definition}

\begin{theorem}[{Hjorth--Kechris \cite[4.3]{HK}}]
\label{thm:hjorth-kechris}
For a countable $\@L_{\omega_1\omega}$ theory $\@T$, the following are equivalent:
\begin{enumerate}[label=(\roman*)]
\item \label{thm:hjorth-kechris:iso}
For any countable set $Y$, the isomorphism relation ${\cong} \subseteq \Mod_Y(\@T)$ is essentially countable.
\item \label{thm:hjorth-kechris:logic}
There is a countable set $\@F$ of $\@L_{\omega_1\omega}$ formulas (of various arities), such that for any countable model $\@M \in \Mod_Y(\@T)$, there is a finite tuple $\vec{a} \in Y^n$ such that $(\@M,\vec{a})$ is $\@F$-categorical.
\end{enumerate}
\end{theorem}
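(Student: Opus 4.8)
The plan is to prove the two implications separately. Throughout I would take $Y = \#N$ (the general case is identical), write $G := \Sym(\#N)$ for the group acting on $X := \Mod_\#N(\@T)$ by the logic action, so that $\cong = \#E_G^X$, and recall that smoothness/categoricity is governed by \cref{thm:mod-smooth-cat}.

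\medskip
\noindent\emph{The routine direction $\labelcref{thm:hjorth-kechris:logic}\Rightarrow\labelcref{thm:hjorth-kechris:iso}$.} First I would close $\@F$ off under subformulas, finitary connectives, $\exists$, and variable substitution, so that it becomes a countable fragment; this does not affect condition \labelcref{thm:hjorth-kechris:logic}. Under this hypothesis the Scott rank is uniformly bounded, so $\cong$ is Borel. For each arity $n$ the $\@F$-type map $t_n\colon \Mod_\#N^n(\@T)\to 2^{\@F\cap\@L_{\omega_1\omega}^n}$ is Borel and invariant under isomorphism of $n$-pointed models, and one checks that the set $B_n$ of $\@F$-categorical $n$-pointed models is Borel and invariant (this uses that $\@F$-categoricity of $(\@M,\vec a)$ amounts to an $\@F$-Scott-sentence pinning down its pointed isomorphism type, a point I return to below). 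On $B := \bigsqcup_n B_n$ the map $\bigsqcup_n t_n$ separates pointed isomorphism classes, so $\cong$ is \emph{smooth} on $B$; let $Q := B/{\cong}$ be the resulting standard Borel quotient, with Borel quotient map $q$. The forgetful map $\pi\colon B\to X$, $\pi(\@M,\vec a) := \@M$, is countable-to-one and surjective, so the relation $F$ on $Q$ defined by $q(\@M,\vec a)\mathrel{F} q(\@N,\vec b) \coloniff \@M\cong\@N$ has countable classes (a fixed model has only countably many finite tuples, hence realizes only countably many $\@F$-categorical types) and is Borel; thus $F$ is a CBER. A Jankov--von Neumann uniformization of the Borel set $\{(\@M,\vec a)\mid \vec a \text{ categorical}\}$ yields a Borel choice $\@M\mapsto \vec a_\@M$, and $\@M\mapsto q(\@M,\vec a_\@M)$ is a Borel reduction of $\cong$ to $F$, witnessing essential countability (the reduction to a CBER and the existence of a countable complete section as in \cref{def:esscount} being equivalent by standard facts \cite{Kcber}).

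\medskip
\noindent\emph{The hard direction $\labelcref{thm:hjorth-kechris:iso}\Rightarrow\labelcref{thm:hjorth-kechris:logic}$.} Here I would start from a Borel complete countable section $D\subseteq X$ and exploit that $G=\Sym(\#N)$ is non-archimedean: its open subgroups are exactly the pointwise stabilizers $G_{\vec a}$ of finite tuples $\vec a$, and these form a neighborhood basis at the identity. For a model $\@M$ with stabilizer $\Aut(\@M) = G_\@M$, the orbit is $G/G_\@M$ and $\{g\mid g\cdot\@M\in D\}$ is a Borel union of countably many cosets $gG_\@M$. Using the Becker--Kechris topological realization \cite{BK} to refine $X$ to a Polish topology making the action continuous (without changing the Borel sets), I would aim to show that essential countability forces, for each $\@M$, some finite-tuple stabilizer $G_{\@M,\vec a} := G_\@M\cap G_{\vec a}$ to control the position of $D$ tightly enough that the pointed orbit of $(\@M,\vec a)$ is relatively open in its closure, i.e.\ locally closed. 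By the Lopez-Escobar analysis of \cref{thm:mod-smooth-cat} (together with the transfer \cref{thm:types-borel} between arities), local closedness of the pointed orbit is precisely $\@F$-categoricity of $(\@M,\vec a)$ for a fixed countable $\@F$, which is the desired conclusion.

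\medskip
The main obstacle is the final step of this second direction: passing from the pointwise statement ``for each $\@M$ some finite tuple works'' to a \emph{single} countable fragment $\@F$ that works for all models simultaneously. This is where the substance of Hjorth--Kechris lies, and I expect it to require the Vaught-transform calculus, the meager/comeager dichotomy for orbits under the refined topology, and a tree/compactness argument bounding the relevant formulas into one countable $\@F$. The cleanest route is probably to first isolate an abstract characterization of essential countability for $\Sym(\#N)$-actions in terms of the stabilizers $G_{\@M,\vec a}$, and only then translate it through \cref{thm:mod-smooth-cat}; the deferred Borelness of $B_n$ in the first direction should fall out of the same $\@F$-Scott analysis once $\cong$ is known to be Borel.
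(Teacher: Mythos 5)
First, a point of comparison: the paper does not prove this theorem at all --- it is imported verbatim from Hjorth--Kechris \cite[4.3]{HK}, and \cref{rmk:hjorth-kechris} again defers to the proof in \cite{HK}. So the only question is whether your blind attempt is itself a proof, and it is not. Even in the direction \labelcref{thm:hjorth-kechris:logic}$\Rightarrow$\labelcref{thm:hjorth-kechris:iso}, which is genuinely the routine one, two steps are deferred rather than carried out. Borelness of the set $B_n$ of $\@F$-categorical $n$-pointed models is not automatic: naively it is the $\Pi^1_1$ condition ``every pointed model realizing $\tp_\@F(\@M,\vec a)$ is isomorphic to $(\@M,\vec a)$'', and making it Borel requires the Scott/atomicity analysis you allude to but never perform; your assertion that ``$\cong$ is Borel because the Scott rank is uniformly bounded'' reduces to the same point, so as written the argument is circular. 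Separately, Jankov--von Neumann gives only a $\sigma(\Sigma^1_1)$-measurable selector, which does not yield a Borel reduction; what saves you is that the sections $\{\vec a : (\@M,\vec a)\in B\}$ are countable, so Lusin--Novikov applies --- but again only once $B$ is known to be Borel.

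The genuine gap is the direction \labelcref{thm:hjorth-kechris:iso}$\Rightarrow$\labelcref{thm:hjorth-kechris:logic}. What you have written there is a statement of intent, not an argument: you say you would ``aim to show'' that essential countability forces some finite-tuple stabilizer to make the pointed orbit locally closed, and you then explicitly concede that passing from ``for each $\@M$ some tuple and some fragment work'' to a \emph{single} countable $\@F$ working for all models simultaneously is ``where the substance of Hjorth--Kechris lies'', listing the tools you ``expect'' it to require. That uniformization step is precisely the content of the theorem; naming the expected ingredients (Vaught transforms, the orbit dichotomy, a boundedness argument) does not supply it. Moreover the asserted equivalence ``locally closed pointed orbit $\iff$ $\@F$-categoricity for a fixed $\@F$'' conceals the same issue: Effros-type local closedness gives smoothness of the pointed orbit, hence categoricity with respect to some countable family of invariant Borel sets depending on the model, and collapsing all of these into one countable fragment is exactly the boundedness step you have not done. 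As it stands, neither direction is complete, and since the paper contains no internal proof to compare against, you would need to reproduce the actual argument of \cite[4.3]{HK} to close the gap.
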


\begin{remark}
\label{rmk:hjorth-kechris}
By the proof of \cite[4.3(iii)$\implies$(ii)]{HK}, if the above conditions hold, then in fact \cref{thm:hjorth-kechris:logic} may be ``uniformly witnessed'': there are formulas $\phi_n(x_0,\dotsc,x_{n-1}) \in \@F$ for each $n$, such that
\begin{enumerate}[label=(\alph*)]
\item  any $n$-pointed model $(\@M,\vec{a})$ of $\@T$ satisfying $\phi_n^\@M(\vec{a})$ must be $\@F$-categorical; and
\item  any model $\@M$ contains an $n$-tuple $\vec{a}$ (for some $n$) satisfying $\phi_n^\@M(\vec{a})$.
\end{enumerate}
Moreover, whenever $(\@M,\vec{a})$ satisfies (a), then so does $(\@M,\vec{b})$ for any tuple $\vec{b}$ containing all the elements of $\vec{a}$.
\end{remark}

Note the similarity between \cref{thm:hjorth-kechris} and our \cref{thm:interp-LN,thm:gpd-scott-LN}.
There are two essential differences between the two characterizations.
First, in our results we only consider single-pointed models, while \cref{thm:hjorth-kechris} considers models with some finite tuple fixed.
Second, we impose rigidity in addition to categoricity of models with respect to some countable fragment $\@F$.
We now give a detailed explanation of these two differences: the first will amount to a change of language (see \cref{rmk:esscaut}).
The second, our rigidity requirement, is more substantial:

\begin{remark}
\label{rmk:caut}
For a theory $\@T$ and $n$-pointed model $(\@M,\vec{a}) \in \Mod_Y^n(\@T)$, we have
\begin{align*}
\Aut(\@M) = \bigcup_{\vec{b} \in Y^n} \Iso((\@M,\vec{a}),(\@M,\vec{b})),
\end{align*}
where each nonempty set of isomorphisms $\Iso((\@M,\vec{a}),(\@M,\vec{a}))$ is a coset of $\Aut(\@M,\vec{a}) \subseteq \Aut(\@M)$.
Thus, the following are equivalent, for any theory $\@T$:
\begin{enumerate}[label=(\roman*)]
\item \label{rmk:caut:aut}
Every countable model of $\@T$ has only countably many automorphisms.
\item \label{rmk:caut:logic}
Every countable model $\@M \in \Mod_Y(\@T)$ becomes rigid after fixing a finite tuple $\vec{a} \in Y^n$.
\end{enumerate}
\end{remark}

\begin{lemma}
\label{thm:caut}
If the above conditions hold for a theory $\@T$, then for each $n < \omega$, the class of $n$-pointed rigid models is definable by an $\@L_{\omega_1\omega}$ formula $\phi_n(x_0,\dotsc,x_{n-1})$.
\end{lemma}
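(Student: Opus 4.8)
The plan is to reduce to the Lopez--Escobar theorem. Rigidity of an $n$-pointed model is preserved under isomorphism of $n$-pointed models (if $h : (\@M,\vec a) \cong (\@N,\vec b)$ and $g \in \Aut(\@N,\vec b)$, then $h^{-1} g h \in \Aut(\@M,\vec a)$, so $\Aut(\@N,\vec b)$ is trivial whenever $\Aut(\@M,\vec a)$ is), so the class of rigid $n$-pointed models of $\@T$ is invariant under the logic action. By \cref{thm:lopez-escobar-global} it therefore suffices to show that for each countable set $Y$ the set
\[ \{(\@M,\vec a) \in \Mod_Y^n(\@T) : (\@M,\vec a) \text{ is rigid}\} \]
is Borel; the resulting defining formula $\phi_n(x_0,\dotsc,x_{n-1})$ is then automatically independent of $Y$.

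The key point is that non-rigidity is a projection with countable fibers. I would consider
\[ R := \{(\@M,\vec a,g) \in \Mod_Y(\@T) \times Y^n \times \Sym(Y) : g \cdot \@M = \@M,\ g(\vec a) = \vec a,\ g \neq \id\}, \]
which is Borel, since the logic action is Borel and the remaining conditions $g(a_i) = a_i$ and $g \neq \id$ are evaluations in the countable set $Y$. An $n$-pointed model is non-rigid exactly when the fiber $R_{(\@M,\vec a)} = \Aut(\@M,\vec a) \setminus \{\id\}$ is nonempty, so the non-rigid models form the image of $R$ under the projection $\pi$ to $\Mod_Y^n(\@T) = \Mod_Y(\@T) \times Y^n$. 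This is where the hypothesis enters: since $\Aut(\@M,\vec a) \subseteq \Aut(\@M)$ and every countable model of $\@T$ has only countably many automorphisms, every fiber of $\pi$ restricted to $R$ is countable. By the Lusin--Novikov theorem (\cref{thm:lusin-novikov}), $R$ splits into countably many Borel pieces on each of which $\pi$ is injective; each such piece has Borel image (an injective Borel image of a Borel set is Borel), so the image of $R$ is Borel. Hence the non-rigid models, and with them their complement the rigid models, form a Borel set, and \cref{thm:lopez-escobar-global} produces the desired $\phi_n$.

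The only substantive step is the passage from the evident \emph{analytic} description of non-rigidity --- a projection along the uncountable Polish group $\Sym(Y)$ --- to a Borel one. This is precisely where the countable-automorphism hypothesis is used: it makes the projection countable-to-one, bringing Lusin--Novikov to bear. The remaining ingredients (Borelness of $R$ from Borelness of the logic action, isomorphism-invariance of rigidity, and the final appeal to Lopez--Escobar) are routine.
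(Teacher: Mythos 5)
Your proof is correct and follows essentially the same route as the paper's: both arguments use the countable-automorphism hypothesis to make a projection along $\Sym(Y)$ countable-to-one, apply Lusin--Novikov to conclude Borelness of the set of (non-)rigid $n$-pointed models, and finish with Lopez--Escobar plus isomorphism-invariance of rigidity. The only cosmetic difference is that the paper first deduces Borelness of the isomorphism relation on pointed models and then characterizes rigidity of $(\@M,\vec a)$ by $(\@M,\vec a,b)\not\cong(\@M,\vec a,c)$ for all $b\ne c$, whereas you project the automorphism set directly; the two are interchangeable.
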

\begin{proof}
For any countable set $Y$ and $n < \omega$, the isomorphism relation ${\cong} \subseteq \Mod_Y^n(\@T)^2$ is Borel by Lusin--Novikov, since the logic action $\Sym(Y) \times \Mod_Y^n(\@T) -> \Mod_Y^n(\@T)$ is countable-to-1, since each $n$-pointed model has only countably many automorphisms.
Now the set of rigid $n$-pointed models is
\begin{align*}
\{(\@M,\vec{a}) \mid \forall b, c \in Y\, (b \ne c \implies (\@M,\vec{a},b) \not\cong (\@M,\vec{a},c))\} \subseteq \Mod_Y^n(\@T),
\end{align*}
which is Borel and obviously isomorphism-invariant, so the claim follows from Lopez-Escobar.
\end{proof}

\begin{remark}
\label{rmk:esscaut}
It follows from this lemma and \cref{rmk:hjorth-kechris} that if a theory $\@T$ has both essentially countable $\cong$ and also countable automorphism groups, i.e., the analogue of our characterization \labelcref{thm:interp-LN} of theories interpreting $\@T_\LN$, with pointed models replaced by models with some fixed finite tuple, then we may find formulas $\phi_n(x_0,\dotsc,x_{n-1})$ uniformly defining these tuples:
\begin{enumerate}[label=(\alph*)]
\item  any $n$-pointed model $(\@M,\vec{a}) \in \Mod_Y^n(\@T)$ satisfying $\phi_n^\@M(\vec{a})$ is rigid and $\@F$-categorical; and
\item  any model $\@M$ contains an $n$-tuple $\vec{a}$ satisfying $\phi_n^\@M(\vec{a})$.
\end{enumerate}
Thus, any model $\@M \in \Mod_Y^n(\@T)$ becomes rigid and $\@F$-categorical after fixing an element of (not the underlying set $Y$ but) the ``uniformly definable set''
\begin{align*}
Y \times \bigsqcup_n \phi_n^\@M =: A^\@M.
\end{align*}
\end{remark}

Note that we may recover the underlying set $Y$ from this set $A^\@M$, by quotienting out the second coordinate.
Thus, if we also lift each of the relations $R^\@M$ of $\@M$ (for $R \in \@L$) to a relation on $A^\@M$, we obtain a structure $\alpha^*(\@M)$ ``uniformly defined'' from $\@M$, from which $\@M$ can in turn be ``uniformly'' recovered up to definable isomorphism, and such that $\alpha(\@M)$ obeys the conditions of \cref{thm:interp-LN} characterizing theories interpreting $\@T_\LN$.
Note, however, that this operation $\@M |-> \alpha^*(\@M)$ is not given by an interpretation $\alpha$ in the sense we have considered in this paper (\cref{def:interp}), since $\alpha^*(\@M)$ lives not on the original underlying set $Y$, but rather on the above set $A^\@M$, which is ``uniformly defined'' from $\@M$ by the formal expression
\begin{align*}
A := \top(x) \times \bigsqcup_n \phi_n(x_0,\dotsc,x_n)
\end{align*}
(here $\top(x)$ denotes a tautology in one variable, whose interpretation in $\@M$ is the underlying set $Y$).

Recall that a (model-theoretic) \defn{imaginary sort} over a theory $\@T$ is an expression such as this one, that denotes a set uniformly definable from any model $\@M |= \@T$, but that may be external to the underlying set of $\@M$; see \cite[Ch.~7]{Hodges}.
In $\@L_{\omega_1\omega}$, the natural notion of imaginary is built from formulas by taking formal Cartesian products, countable disjoint unions, and definable quotients; see \cite[\S4]{Cscc}.
A (model-theoretic) \defn{interpretation} $\alpha : (\@L,\@T) -> (\@L',\@T')$ between two (one-sorted) $\@L_{\omega_1\omega}$ theories maps the single (base) sort of $\@T$ to an imaginary sort $A$ of $\@T'$, and each $\@L$-formula $\phi(x_0,\dotsc,x_{n-1})$ to a definable subsort of the $n$-fold Cartesian product of imaginaries $A^n$; see \cite[\S10]{Cscc} (the notion was essentially introduced in \cite{HMM}).
We will also call these \defn{imaginary interpretations}, to disambiguate from the more restrictive notion from \cref{def:interp} we have considered thus far, consisting of imaginary interpretations which fix the base sort, which we will call \defn{one-sorted interpretations} for contrast.

\begin{corollary}[of \cref{thm:hjorth-kechris}, \cref{thm:interp-LN}, and \cref{thm:gpd-scott-LN,thm:scott-LNsep}]
\label{thm:esscaut-LN}
For a countable (one-sorted) $\@L_{\omega_1\omega}$ theory $\@T$, the following are equivalent:
\begin{enumerate}[label=(\roman*)]
\item \label{thm:esscaut-LN:iso}
Every countable model of $\@T$ has only countably many automorphisms, and for any countable set $Y$, the isomorphism relation ${\cong} \subseteq \Mod_Y(\@T)^2$ is essentially countable.
\item \label{thm:esscaut-LN:logic}
There is a countable set $\@F$ of $\@L_{\omega_1\omega}$ formulas, such that for any countable model $\@M \in \Mod_Y(\@T)$, there is a finite tuple $\vec{a} \in Y^n$ such that $(\@M,\vec{a})$ is rigid and $\@F$-categorical.
\item \label{thm:esscaut-LN:formulas}
There is a countable set $\@F$ of $\@L_{\omega_1\omega}$ formulas, and $\phi_n(x_0,\dotsc,x_{n-1}) \in \@F$ for each $n < \omega$, such that
\begin{enumerate*}[label=(\alph*)]
\item  any $n$-pointed model $(\@M,\vec{a}) \in \Mod_Y^n(\@T)$ satisfying $\phi_n^\@M(\vec{a})$ is rigid and $\@F$-categorical; and
\item  any model $\@M$ contains an $n$-tuple $\vec{a}$ satisfying $\phi_n^\@M(\vec{a})$.
\end{enumerate*}
\item \label{thm:esscaut-LN:LN}
$\@T$ admits an imaginary interpretation from $\@T_\LN$, taking the single base sort of $\@T_\LN$ to an imaginary sort of $\@T$ which is nonempty in every model and from which the base sort of $\@T$ may be recovered as a definable quotient.
\item \label{thm:esscaut-LN:scott}
$\@T$ admits an imaginary bi-interpretation with the Scott theory $\@T_G$ of a locally countable Borel groupoid $G$.
\end{enumerate}
If all models of $\@T$ are rigid, then $\@T_\LN$ above may be replaced with $\@T_\LN \sqcup \@T_\sep$, and the groupoid $G$ will be a CBER.
\end{corollary}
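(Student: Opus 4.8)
The plan is to treat \cref{thm:esscaut-LN:iso}--\cref{thm:esscaut-LN:formulas} as a cluster of purely model-theoretic conditions, prove them equivalent using the Hjorth--Kechris theorem and the remarks preceding this corollary, then build the imaginary (bi-)interpretations of \cref{thm:esscaut-LN:LN,thm:esscaut-LN:scott} out of \cref{thm:esscaut-LN:formulas}, and finally run each of \cref{thm:esscaut-LN:LN,thm:esscaut-LN:scott} back to \cref{thm:esscaut-LN:iso}.

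First I would establish \cref{thm:esscaut-LN:iso}$\iff$\cref{thm:esscaut-LN:logic}$\iff$\cref{thm:esscaut-LN:formulas}. The essential-countability clause of \cref{thm:esscaut-LN:iso} is, by \cref{thm:hjorth-kechris}, equivalent to $\@F$-categoricity after fixing a finite tuple, while the countable-automorphisms clause is, by \cref{rmk:caut}, equivalent to rigidity after fixing a finite tuple; merging the two fragments and concatenating the two tuples (noting that enlarging the tuple preserves rigidity and, after enlarging $\@F$, categoricity) yields \cref{thm:esscaut-LN:logic}. The strengthening to uniformly definable witnesses $\phi_n$ in \cref{thm:esscaut-LN:formulas} is exactly \cref{rmk:esscaut}, which splices together \cref{rmk:hjorth-kechris} and \cref{thm:caut}; conversely \cref{thm:esscaut-LN:formulas}$\implies$\cref{thm:esscaut-LN:logic} is immediate from clause (b), and \cref{thm:esscaut-LN:logic}$\implies$\cref{thm:esscaut-LN:iso} follows by reversing the first two biconditionals.

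Next comes the main construction, \cref{thm:esscaut-LN:formulas}$\implies$\cref{thm:esscaut-LN:scott} (and \cref{thm:esscaut-LN:LN}), which assembles the discussion following \cref{rmk:esscaut}. Given the $\phi_n$, I form the imaginary sort $A := \top(x) \times \bigsqcup_n \phi_n$ and lift every relation of $\@M$ to $A^\@M$, obtaining a structure $\alpha^*(\@M)$ whose theory $\@T^+$ has rigid, $\@F$-categorical pointed models, since fixing a point of $A^\@M$ amounts to fixing a tuple $\vec{a}$ with $\phi_n^\@M(\vec{a})$. By \cref{thm:interp-LN}, $\@T^+$ interprets $\@T_\LN$ (one-sorted), and it has no empty models as $A$ is nonempty; so by \cref{thm:gpd-scott-LN} it is one-sorted bi-interpretable with the Scott theory $\@T_G$ of a locally countable Borel groupoid $G$. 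Since $\@M$ is recovered from $\alpha^*(\@M)$ up to definable isomorphism (quotient $A ->> Y$, with the relations descending), $\alpha$ is an imaginary bi-interpretation $\@T^+ \cong \@T$, giving $\@T \cong \@T_G$ as in \cref{thm:esscaut-LN:scott}; composing the one-sorted $\@T_\LN -> \@T^+$ with $\alpha$ produces the imaginary interpretation $\@T_\LN -> \@T$ of \cref{thm:esscaut-LN:LN}, whose image sort $A$ is nonempty and has $Y$ as a definable quotient.

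Finally I would close the cycle. For \cref{thm:esscaut-LN:scott}$\implies$\cref{thm:esscaut-LN:iso}: by \cref{rmk:gpd-scott-torsor} every model of $\@T_G$ is a torsor $\cod_x$ with automorphism group the countable isotropy $G(x,x)$, and $\cong$ is Borel bireducible to the connectedness relation of $G$, which is a CBER; thus $\@T_G$ satisfies \cref{thm:esscaut-LN:iso}, and since countable automorphism groups and essential countability are invariants of the standard Borel groupoid of models up to Borel equivalence (via the $2$-equivalence of \cite{Cscc}), they transfer to $\@T$ across the imaginary bi-interpretation. For \cref{thm:esscaut-LN:LN}$\implies$\cref{thm:esscaut-LN:iso}: the reduct functor $\beta^* : \Mod(\@T) -> \Mod(\@T_\LN)$ is faithful, because an $\@L$-isomorphism is determined by its action on $A$ and hence on the quotient $Y$, so $\Aut(\@M) `-> \Aut(\beta^*\@M)$, which is countable as pointed $\@T_\LN$-models are rigid; and for essential countability, fixing $a \in A^\@M$, the $\@T_\LN$-axiom names every element of $A^\@M$ as some $f_i(a)$, hence every element of $Y$ as well, so the whole $\@L$-structure of $\@M$ is captured by the countable family of $\@L$-formulas asserting each relation among these named elements, witnessing categoricity of $(\@M,\vec{b})$ for the tuple $\vec{b}$ coding $a$. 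This last reconstruction is the step I expect to be most delicate: one must verify that the uniformly definable $\@T_\LN$-structure on $A$ genuinely pins $\@M$ down over a countable fragment. For the final clause, if all models of $\@T$ are rigid then so are all models of $\@T^+ \cong \@T$ and of $\@T_G$, whence $G$ is a CBER by \cref{rmk:gpd-scott-rigid} and $\@T^+$ interprets $\@T_\LN \sqcup \@T_\sep$ by \cref{thm:scott-LNsep}; thus $\@T_\LN$ may be replaced by $\@T_\LN \sqcup \@T_\sep$ throughout.
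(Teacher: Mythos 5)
Your proof is correct, and its core construction --- passing to the imaginary sort $A = \top(x) \times \bigsqcup_n \phi_n$, lifting the $\@L$-structure to $A$, and invoking \cref{thm:interp-LN} and \cref{thm:gpd-scott-LN} on the resulting one-sorted theory --- is the same as the paper's. The difference is in how the cycle through \cref{thm:esscaut-LN:LN} is closed: the paper proves \cref{thm:esscaut-LN:LN}$\implies$\cref{thm:esscaut-LN:scott} for an \emph{arbitrary} imaginary interpretation $\@T_\LN \to \@T$ (by forming an expanded theory on the sort $A$ in the language $\@L_\LN \sqcup \@L \sqcup \{\sim\}$ and applying \cref{thm:gpd-scott-LN}), and then only needs \cref{thm:esscaut-LN:scott}$\implies$\cref{thm:esscaut-LN:iso}; you instead prove \cref{thm:esscaut-LN:LN}$\implies$\cref{thm:esscaut-LN:iso} directly, by re-running the rigidity/categoricity argument of \cref{thm:interp-LN} inside the imaginary: automorphisms of $\@M$ embed into the countable automorphism group of the $\@T_\LN$-model on $A^\@M$, and the Lusin--Novikov functions name every element of $A^\@M$, hence of its definable quotient $Y$, from a single representative tuple $\vec{b}$, giving $\@F$-categoricity of $(\@M,\vec{b})$ for a countable fragment obtained by unwinding the imaginary. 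This works, and you rightly flag it as the delicate step: one must include in $\@F$ the formulas recording which of the naming formulas coincide and that they exhaust $Y$, exactly as in the proof of \cref{thm:interp-LN}. The one detail you leave implicit, and which the paper makes explicit, is that the kernel $\sim$ of the quotient map $A \to Y$ must itself be a relation of (or definable in) the lifted theory $\@T^+$; lifting only the relations $R \in \@L$ does not suffice (consider $\@L = \emptyset$), and without $\sim$ the quotient is not definable, so $\@M$ is not recoverable and the claimed imaginary bi-interpretation $\@T^+ \cong \@T$ would fail. With $\sim$ added, everything goes through as you describe, including the final rigidity clause via \cref{thm:scott-LNsep} and \cref{rmk:gpd-scott-rigid}.
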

\begin{proof}
\cref{thm:esscaut-LN:iso}$\iff$\cref{thm:esscaut-LN:logic}$\iff$\cref{thm:esscaut-LN:formulas} by \cref{thm:hjorth-kechris,rmk:caut,rmk:esscaut}.

\cref{thm:esscaut-LN:formulas}$\implies$\cref{thm:esscaut-LN:LN}:
Let the interpretation take the base sort of $\@T_\LN$ to $\top(x) \times \bigsqcup_n \phi_n(x_0,\dotsc,x_{n-1})$, as in \cref{rmk:esscaut}.

\cref{thm:esscaut-LN:LN}$\implies$\cref{thm:esscaut-LN:scott}:
Let $\alpha : \@T_\LN -> \@T$ be an imaginary interpretation, taking the base sort of $\@T_\LN$ to the imaginary $A$, which has a definable equivalence relation ${\sim} \subseteq A^2$ whose quotient is definably isomorphic to the base sort of $\@T$.
Consider the expanded language $\@L' := \@L_\LN \sqcup \@L \sqcup \{\sim\}$.
We may extend $\alpha$ to an interpretation $\alpha' : \@L' -> (\@L,\@T)$, by taking $\sim$ to the definable equivalence relation $\sim$, and $\alpha'(R) \subseteq A^n$ for each $n$-ary $R \in \@L$ to be a subsort whose interpretation in each $\@M |= \@T$ is the lift of $R^\@M \subseteq ((A/{\sim})^\@M)^n$.
Then for each $\@M |= \@T$, $\alpha'^*(\@M)$ will be an $\@L'$-structure with a definable equivalence relation $\sim$ and $\sim$-invariant relations for each $R \in \@L$, such that $\@M$ is recovered as the quotient structure $\alpha'^*(\@M)/{\sim}$; this quotient structure is in turn specified by the interpretation $\beta : \@L -> \@L'$ taking the base sort to the quotient by $\sim$.
So letting
\begin{eqalign*}
\@T' := \@T_\LN \sqcup \beta(\@T) \sqcup \{\text{``$\sim$ is an equivalence relation and each $R \in \@L$ is $\sim$-invariant''}\},
\end{eqalign*}
we have that $\alpha, \beta$ form an imaginary bi-interpretation $\@T' <-> \@T$, where $\@T'$ includes $\@T_\LN$ and has every model nonempty, hence is in turn one-sorted bi-interpretable with a Scott theory $\@T_G$ of a locally countable Borel groupoid $G$ by \cref{thm:gpd-scott-LN}.

\cref{thm:esscaut-LN:scott}$\implies$\cref{thm:esscaut-LN:iso}:
$\@T_G$ clearly obeys \cref{thm:esscaut-LN:iso} (its groupoid of models is Borel equivalent to $G$, by \cref{rmk:gpd-scott-torsor}); and \cref{thm:esscaut-LN:iso} is preserved under imaginary bi-interpretations $\@T <-> \@T_G$ (since such a bi-interpretation induces an Borel equivalence of groupoids of models; see \cite{Cscc}).

If every model of $\@T$ is rigid, then models of the new theory $\@T'$ defined above will still be rigid (since they are bi-interpretable with models of $\@T$); thus by \cref{thm:scott-LNsep} we have $\@T_\LN \sqcup \@T_\sep -> \@T'$ and so $\@T' \cong \@T_E$ for a CBER $E$ (which must be the same as $G$ by \cref{rmk:gpd-scott-rigid}).
\end{proof}

\begin{remark}
A less direct ``soft'' proof of \cref{thm:esscaut-LN:iso}$\implies$\cref{thm:esscaut-LN:scott} above may be given by taking $(X,G)$ to be the locally countable Borel subgroupoid of the groupoid of models $\bigsqcup_{N \le \omega} (\Sym(N) \ltimes \Mod_N(\@T))$ restricted to a countable complete section, by essential countability (\cref{def:esscount}).
We then have Borel equivalences of groupoids between the models of $\@T_G$, $G$, and the models of $\@T$, whence $\@T_G, \@T$ are imaginary bi-interpretable by \cite{Cscc}.

Thus, in some sense the abstract results of this paper may be regarded as a specialization of \cite{Cscc}, which concerns arbitrary $\@L_{\omega_1\omega}$ theories and their standard Borel groupoids of models, to the case of theories with countable automorphism groups and essentially countable isomorphism relations, which correspond instead to locally countable Borel groupoids.
In that case, a simpler recovery of the theory from the groupoid of models is possible, using the Scott theory, rather than the Becker--Kechris topological realization argument as in \cite{Cscc}.
\end{remark}

\subsection{Structurability}
\label{sec:gpd-str}

Let $(X,G)$ be a locally countable Borel groupoid, $\@T$ be a countable $\@L_{\omega_1\omega}$ theory.
Imitating \cref{thm:str-interp}, we would like a \defn{$\@T$-structuring} of $G$ to correspond to an interpretation $\alpha : \@T -> \@T_G$, with the \defn{tautological $\@T_G$-structuring} $\@H_G$ corresponding to $\alpha = \id_{\@T_G}$.

To reformulate this in explicit combinatorial terms, note that (by \cref{rmk:interp-types}) $\alpha$ amounts to a Borel way to put models of $\@T$ on models of $\@T_G$, which by \cref{rmk:gpd-scott-torsor} are up to isomorphism just the $G$-torsors $\dom^{-1}(x)$ for each $x \in X$, which is equivariant with respect to isomorphisms $\dom^{-1}(x) \cong \dom^{-1}(x')$, which by the Yoneda lemma (\cref{rmk:gpd-scott-torsor}) are given by right multiplication by $g : x' -> x \in G$.
We thus adopt the following (cf.\ \cref{def:str})

\begin{definition}
\label{def:gpd-str}
A \defn{$\@T$-structuring} of a locally countable Borel groupoid $(X,G)$ is a family of models $\@M = (\@M_x)_{x \in X}$, where each $\@M_x \in \Mod_{\dom^{-1}(x)}(\@T)$, which is right-$G$-equivariant:
\begin{align*}
\@M_x = \@M_{x'} \cdot (g : x -> x')
\end{align*}
(where $\@M_{x'} \cdot g$ denotes the logic action of $(-)g : \dom^{-1}(x') -> \dom^{-1}(x)$), and ``Borel'', meaning:
\begin{enumerate}[label=(\roman*)]
\item \label{def:gpd-str:enum}
For some (equivalently any) Borel family of enumerations $(h_x : \abs{\dom^{-1}(x)} \cong \dom^{-1}(x))_{x \in X}$,
the following map is Borel:
\begin{eqalign*}
X &--> \bigsqcup_{N \le \omega} \Mod_N(\@T) \\
x &|--> h_x^{-1} \cdot \@M_x.
\end{eqalign*}
\item \label{def:gpd-str:param}
For any countable set $Y$, standard Borel space $Z$, Borel map $f : Z -> X$, and Borel family of bijections $(h_z : Y \cong \dom^{-1}(f(z)))_{z \in Z}$, the following map is Borel:
\begin{eqalign*}
Z &--> \Mod_Y(\@T) \\
z &|--> h_z^{-1} \cdot \@M_{f(z)}.
\end{eqalign*}
\item \label{def:gpd-str:fiber}
For each $n$-ary relation symbol $R \in \@L$ (or more generally formula $\phi$),
\begin{align*}
\~R^\@M :=
\set[\big]{(x,g_0,\dotsc,g_{n-1}) \in G^n_X}{R^{\@M_x}(g_0,\dotsc,g_{n-1})}
\end{align*}
is a Borel subset of $G^n_X$ (recall \cref{thm:gpd-scott-types}).
\item \label{def:gpd-str:global}
For each $n$-ary $R \in \@L$ (assuming $\@L$ has no nullary relation symbols), or more generally formula $\phi$ in at least one variable,
\begin{align*}
R^\@M :=
\set[\big]{\text{functors } f : \#I_n -> G}{R^{\@M_{f(0)}}(f(0,0), \dotsc, f(0,n-1))}
\end{align*}
is a Borel subset of $G^{\#I_n}$ (recall \cref{rmk:gpd-scott-nerve}).
\end{enumerate}
Let $\Mod_G(\@T)$ denote the set of $\@T$-structurings of $G$.
If $\Mod_G(\@T) \ne \emptyset$, we call $G$ \defn{$\@T$-structurable}.
\end{definition}

It is easily seen that the above Borelness conditions are equivalent, following \cref{thm:str}.

\begin{definition}[cf.\ \cref{def:scott}]
The \defn{tautological $\@T_G$-structuring} $\@H_G$ of $G$ is given by $(\@H_G)_x := \cod_x : \dom^{-1}(x) -> G$ as in \cref{rmk:gpd-scott-torsor}.
\end{definition}

\begin{definition}[cf.\ \cref{def:str-classbij}]
\label{def:gpd-str-fib}
For a Borel fibration $f : (X,G) -> (Y,H)$ between locally countable Borel groupoids, and a $\@T$-structuring $\@M$ of $H$, the \defn{pullback $\@T$-structuring} $f^{-1} \cdot \@M$ of $G$ is defined by
\begin{align*}
(f^{-1} \cdot \@M)_x := (f|\dom^{-1}(x) : \dom^{-1}(x) \cong \dom^{-1}(f(x)))^{-1} \cdot \@M_{f(x)}
    \quad \text{for each $x \in X$}.
\end{align*}
\end{definition}

\begin{definition}[cf.\ \cref{def:str-interp}]
\label{def:gpd-str-interp}
For an interpretation $\alpha : \@T -> \@T'$ between theories, and a $\@T'$-structuring $\@M$ of $G$, the \defn{$\alpha$-reduct} $\alpha^* \@M$ is the $\@T$-structuring given by
\begin{align*}
(\alpha^* \@M)_x := \alpha^*(\@M_x)
    \quad \text{for each $x \in X$}.
\end{align*}
\end{definition}

Following the discussion preceding \cref{def:gpd-str} and the analogous results in \cref{sec:scott}, we now easily have:

\begin{corollary}[cf.\ \cref{thm:str-interp}]
\label{thm:gpd-str-interp}
For any locally countable Borel groupoid $(X,G)$ and theory $(\@L,\@T)$, we have a bijection
\begin{align*}
\{\text{interpretations } \@T -> \@T_G\} &\cong \Mod_G(\@T) = \{\text{$\@T$-structurings of $G$}\} \\
\alpha &|-> \alpha^*(\@H_G).
\qed
\end{align*}
\end{corollary}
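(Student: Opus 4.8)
The plan is to follow the proof of the CBER analogue \cref{thm:str-interp} essentially verbatim, replacing the Scott type-space isomorphism \cref{thm:scott-types-formulas} with its groupoid counterpart \cref{thm:gpd-scott-types}. The only preparatory ingredient needed is the groupoid version of the reformulation of structurings via type maps from \cref{rmk:str-homom}.

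First I would check that a $\@T$-structuring $\@M = (\@M_x)_{x \in X}$ of $G$ is the same data as a family of Borel maps $\tp^n_\@M : G^n_X/G -> \@S_n(\@T)$ commuting with the variable-substitution projections $\partial_s$, where
\[
\tp^n_\@M\bigl([(x,g_0,\dotsc,g_{n-1})]\bigr) := \tp(\@M_x, g_0,\dotsc,g_{n-1}).
\]
The right-$G$-equivariance built into \cref{def:gpd-str} is precisely the condition that makes this well-defined on the quotient $G^n_X/G$: a right translation $(x,\vec{g}) \mapsto (x',\vec{g}\,g)$ by $g : x' -> x$ is realized by the torsor isomorphism $\@M_{x'} \cong \@M_x$ of \cref{rmk:gpd-scott-torsor}, so the equivariance of $\@M$ translates exactly into invariance of $\tp^n_\@M$ under the translation action. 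Borelness of $\tp^n_\@M$, in the sense that $(\tp^n_\@M)^{-1}(\sqsqbr\phi) = \~\phi^\@M$ is Borel for each $\phi$, is precisely condition \cref{def:gpd-str}\cref{def:gpd-str:fiber}, and commutation with the $\partial_s$ is immediate from \cref{eq:types-subst,eq:types-exists,eq:types-eq}. Conversely, any such commuting Borel family reassembles into a structuring.

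Next, using the Borel isomorphism $\tp^n_{\@H_G} : G^n_X/G \cong \@S_n(\@T_G)$ of \cref{thm:gpd-scott-types}, I would transport these type maps to a family of Borel maps $\tp^n_\@M \circ (\tp^n_{\@H_G})^{-1} : \@S_n(\@T_G) -> \@S_n(\@T)$ commuting with all projections. By the characterization \cref{rmk:interp-types}\cref{rmk:interp-types:types} of interpretations, such a family is exactly an interpretation $\alpha : \@T -> \@T_G$ with $\alpha^*_n = \tp^n_\@M \circ (\tp^n_{\@H_G})^{-1}$; this defines the inverse correspondence. Unwinding the definitions, $\alpha^*(\@H_G)$ is the structuring whose type map is $\alpha^*_n \circ \tp^n_{\@H_G} = \tp^n_\@M$, exactly as in the CBER computation, so $\alpha \mapsto \alpha^*(\@H_G)$ and $\@M \mapsto \alpha$ are mutually inverse.

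I do not expect any serious obstacle: all the content sits in the well-definedness and Borelness of the type-map reformulation of the first step, which is routine given \cref{thm:gpd-scott-types}. The one point meriting care — as for CBERs — is recognizing that the right-$G$-equivariance of a structuring is exactly the quotient-invariance required to descend from tuples $(x,\vec{g}) \in G^n_X$ to classes in $G^n_X/G$; once this is in place, the argument is a direct transcription of \cref{thm:str-interp}.
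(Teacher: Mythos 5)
Your proposal is correct and follows essentially the route the paper intends: the paper leaves this corollary as an immediate consequence of the discussion preceding \cref{def:gpd-str} (where the definition of groupoid structuring is engineered, via \cref{rmk:gpd-scott-torsor} and the Yoneda lemma, to match interpretations) together with the proof of \cref{thm:str-interp}, and your write-up is precisely the transcription of that CBER proof — type-map reformulation of structurings, transport along $\tp^n_{\@H_G} : G^n_X/G \cong \@S_n(\@T_G)$ from \cref{thm:gpd-scott-types}, and the characterization \cref{rmk:interp-types}\cref{rmk:interp-types:types}. You also correctly isolate the one point of substance, namely that right-$G$-equivariance of the structuring is exactly the invariance needed for the type maps to descend to $G^n_X/G$.
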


\begin{proposition}[cf.\ \cref{thm:str-classbij}]
\label{thm:gpd-str-fib}
For any locally countable Borel groupoids $(X,G), (Y,H)$, we have a bijection
\begin{align*}
\{\text{fibrations } G -> H\} &\cong \Mod_G(\@T_H) = \{\text{$\@T_H$-structurings of $G$}\} \\
f &|-> f^{-1} \cdot \@H_H.
\qed
\end{align*}
\end{proposition}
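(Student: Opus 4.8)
The plan is to mirror the proof of the CBER analogue \cref{thm:str-classbij}, exhibiting $f \mapsto f^{-1} \cdot \@H_H$ together with an explicit inverse, and then checking that Borelness is preserved in both directions. First I would unwind \cref{def:gpd-str-fib} applied to $\@M = \@H_H$, using $(\@H_H)_{f(x)} = \cod_{f(x)}$ from \cref{rmk:gpd-scott-torsor}, to obtain the concrete description of the pullback: for each object $x \in X$ the model $(f^{-1} \cdot \@H_H)_x \in \Mod_{\dom^{-1}(x)}(\@T_H)$ is the fibration $\#I_{\dom^{-1}(x)} \to H$ sending $(a,b) \mapsto f(b)\,f(a)^{-1}$. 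One checks directly that this is a functor (composition and identities are preserved since $f(c)f(b)^{-1} \cdot f(b)f(a)^{-1} = f(c)f(a)^{-1}$ and $f(a)f(a)^{-1} = 1$) and a discrete fibration, because $b \mapsto f(b)f(a)^{-1}$ factors as the bijection $f|\dom^{-1}(x) : \dom^{-1}(x) \cong \dom^{-1}(f(x))$ supplied by the fibration property of $f$, followed by right translation by $f(a)^{-1}$.

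For the inverse, given a $\@T_H$-structuring $\@M = (\@M_x)_x$ of $G$, I would use the identity morphism $1_x \in \dom^{-1}(x)$ as a canonical base point (replacing the arbitrary $y_0$ of \cref{rmk:gpd-scott-torsor}) and set $f(x) := \@M_x(1_x)$ on objects and $f(g) := \@M_x(1_x, g)$ on each morphism $g : x \to x' \in \dom^{-1}(x)$. The right-$G$-equivariance $\@M_x = \@M_{x'} \cdot (g : x \to x')$ from \cref{def:gpd-str}, which unwinds to $\@M_x(a,b) = \@M_{x'}(ag^{-1}, bg^{-1})$, is exactly what makes $f$ a functor: for $g : x \to x'$ and $g' : x' \to x''$, functoriality of $\@M_x$ gives $\@M_x(1_x, g'g) = \@M_x(g, g'g)\,\@M_x(1_x, g)$, and equivariance rewrites $\@M_x(g, g'g) = \@M_{x'}(1_{x'}, g') = f(g')$, yielding $f(g'g) = f(g')\,f(g)$. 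That $f$ is a fibration follows since $\@M_x(1_x, -)$ is the Yoneda isomorphism of $\@M_x$ onto $\cod_{f(x)}$ (\cref{rmk:gpd-scott-torsor}), hence a bijection $\dom^{-1}(x) \cong \dom^{-1}(f(x))$. A short computation using $\@M_x(1_x,a)^{-1} = \@M_x(a,1_x)$ and functoriality of $\@M_x$ shows the two constructions are mutually inverse: $f(b)f(a)^{-1} = \@M_x(1_x,b)\,\@M_x(a,1_x) = \@M_x(a,b)$, while $f(x)$ and $f(g)$ are visibly recovered from $(a,b)\mapsto f(b)f(a)^{-1}$ by setting $a = 1_x$.

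The remaining point, and the one requiring the most care, is matching the Borelness conditions, which in \cref{thm:str-classbij} was merely asserted to be ``easily seen.'' The hard part will be confirming that the single Borel map $f : G \to H$ on morphism spaces carries exactly the same information as the equivariant Borel family $(\@M_x)_x$, with no coherence data lost. Concretely, I would verify that $f$ is Borel iff $f^{-1}\cdot\@H_H$ satisfies \cref{def:gpd-str}\cref{def:gpd-str:fiber}, by expressing the sets $\~R^{\@M} \subseteq G^n_X$ attached to the relation symbols of $\@L_H$ (encoding the domain, codomain, and composition of $H$) in terms of $(a,b)\mapsto f(b)f(a)^{-1}$, and invoking Borelness of the groupoid operations of $G$ and $H$ together with $\dom$ being countable-to-$1$ (\cref{thm:lusin-novikov}); the object map $x \mapsto \dom_H(f(1_x))$ is then automatically Borel. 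All the conceptual content lies in the equivariance-equals-functoriality identity above, which is the groupoid refinement, via the identity base point and Yoneda, of the elementary ``bijection-equals-model'' observation in \cref{thm:str-classbij}; combined with \cref{thm:gpd-str-interp} this also recovers the correspondence on the level of interpretations.
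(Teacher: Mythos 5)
Your argument is correct and is exactly the proof the paper intends but omits (the proposition is stated with a \qed, deferring to \cref{rmk:gpd-scott-torsor} and the CBER analogue \cref{thm:str-classbij}): the pullback $(f^{-1}\cdot\@H_H)_x$ is indeed $(a,b)\mapsto f(b)f(a)^{-1}$, the inverse via the base point $1_x$ and the Yoneda lemma is the intended one, and your equivariance-equals-functoriality computation is the key identity. The Borelness matching you flag as delicate is likewise what the paper dismisses as ``easily seen,'' and your sketch of it via \cref{def:gpd-str}\cref{def:gpd-str:fiber} is sound.
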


\begin{corollary}[of \cref{thm:gpd-scott-LN}; cf.\ \cref{thm:scott-full-faithful}]
\label{thm:gpd-scott-equiv}
We have a dual equivalence of categories
\begin{align*}
\{\text{locally countable Borel groupoids, fibrations}\} &\simeq \{\text{$\@L_{\omega_1\omega}$ theories obeying \labelcref{thm:gpd-scott-LN}, interpretations}\}
\end{align*}
taking a groupoid $G$ to its Scott theory $\@T_G$, and a fibration $f : G -> H$ to the unique $\alpha : \@T_H -> \@T_G$ such that $\alpha^*(\@H_G) = f^{-1} \cdot \@H_H$, or such that $\alpha^*_n : \@S_n(\@T_G) \cong G^n_X/G -> H^n_Y/H \cong \@S_n(\@T_H)$ (via \labelcref{thm:gpd-scott-types}) is the map induced by $f$.
\qed
\end{corollary}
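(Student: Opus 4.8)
The plan is to assemble this equivalence formally from the three preceding results of this subsection, exactly paralleling the CBER case (\cref{thm:scott-full-faithful}). First I would verify that $G \mapsto \@T_G$ is a well-defined contravariant functor. It is given on objects by \cref{def:gpd-scott}, and on morphisms a fibration $f : G -> H$ is sent to the unique interpretation $\alpha_f : \@T_H -> \@T_G$ obtained by composing the bijections of \cref{thm:gpd-str-fib} and \cref{thm:gpd-str-interp}; thus $\alpha_f$ is characterized by $\alpha_f^*(\@H_G) = f^{-1} \cdot \@H_H$. Contravariant functoriality then follows from the evident structural identities for groupoid structurings, analogous to those recorded for CBERs after \cref{eq:str}: $f^{-1} \cdot g^{-1} \cdot \@M = (g \circ f)^{-1} \cdot \@M$, $\alpha^* \beta^* \@M = (\beta \circ \alpha)^* \@M$, and $f^{-1} \cdot \alpha^* \@M = \alpha^*(f^{-1} \cdot \@M)$, all immediate from \cref{def:gpd-str-fib,def:gpd-str-interp}. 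Applying these to $\@H_G$ and using injectivity of the bijection in \cref{thm:gpd-str-interp} yields $\alpha_{g \circ f} = \alpha_f \circ \alpha_g$ and $\alpha_{\id} = \id$.

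Full faithfulness is then immediate by construction: \cref{thm:gpd-str-fib} is a bijection between fibrations $G -> H$ and $\@T_H$-structurings of $G$ (the set $\Mod_G(\@T_H)$), while \cref{thm:gpd-str-interp} is a bijection between $\Mod_G(\@T_H)$ and interpretations $\@T_H -> \@T_G$; their composite is precisely the morphism map $f \mapsto \alpha_f$, so the functor restricts to a bijection on each hom-set. The alternative description of this correspondence via the induced maps $\alpha_n^* : \@S_n(\@T_G) \cong G^n_X/G -> H^n_Y/H \cong \@S_n(\@T_H)$ follows from \cref{thm:gpd-scott-types} together with the explicit form of $f^{-1} \cdot \@H_H$ and $\alpha_f^*(\@H_G)$ as type maps, exactly as in the CBER computation following \cref{thm:str-classbij}.

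Essential surjectivity is exactly the content already established in \cref{thm:gpd-scott-LN}: by the equivalence of its conditions (in particular \cref{thm:gpd-scott-LN:scott}), a countable $\@L_{\omega_1\omega}$ theory $\@T$ satisfies \cref{thm:gpd-scott-LN} if and only if it is bi-interpretable with the Scott theory $\@T_G$ of some locally countable Borel groupoid $G$, namely the one whose nerve is $(\@S_n(\@T))_{n \ge 1}$. Hence every object of the target category lies in the essential image of $G \mapsto \@T_G$, and together with full faithfulness this gives the desired dual equivalence of categories.

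I do not expect a genuine obstacle here, since the technical heart---the simplicial-nerve and Grothendieck--Segal construction underlying essential surjectivity---has already been carried out in \cref{thm:gpd-scott-LN}. The only point demanding care is the functoriality verification, i.e.\ checking that the morphism assignment respects composition and identities; as indicated above this reduces to the three structural identities for structurings, which hold for the same purely formal reasons as in the CBER setting.
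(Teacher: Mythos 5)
Your proposal is correct and follows essentially the same route as the paper, which states this corollary with an immediate \qed precisely because it is the formal combination of \cref{thm:gpd-str-interp}, \cref{thm:gpd-str-fib}, and the essential-image characterization in \cref{thm:gpd-scott-LN}\cref{thm:gpd-scott-LN:scott}, mirroring the CBER case of \cref{thm:scott-full-faithful} and \cref{thm:scott-LNsep}. The functoriality check via the three structural identities for pullbacks and reducts is exactly the intended (and routine) verification.
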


\begin{corollary}[cf.\ \cref{thm:str-impl-interp}]
\label{thm:gpd-str-impl-interp}
For two $\@L_{\omega_1\omega}$ theories $\@T, \@T'$, the following are equivalent:
\begin{enumerate}[label=(\roman*)]
\item
Every $\@T$-structurable locally countable Borel groupoid is $\@T'$-structurable.
\item
There exists an interpretation $\@T' -> \@T \sqcup \@T_\LN$.
\end{enumerate}
More generally, expandability of every $\@T$-structuring along an interpretation $\alpha : \@T -> \@T'$ is equivalent to factorizability of $\alpha$ through the inclusion $\@T -> \@T \sqcup \@T_\LN$ (cf.\ \cref{thm:str-expan-interp}).
\qed
\end{corollary}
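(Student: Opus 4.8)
The plan is to transcribe the proofs of the CBER analogues \cref{thm:str-impl-interp} and \cref{thm:str-expan-interp} almost verbatim, replacing CBERs by locally countable Borel groupoids, the theory $\@T_\LN \sqcup \@T_\sep$ by $\@T_\LN$, and the CBER-specific tools \cref{thm:str-interp} and \cref{thm:scott-LNsep} by their groupoid counterparts \cref{thm:gpd-str-interp} and \cref{thm:gpd-scott-LN}. As usual I assume the theories involved have no empty models, so that coproducts with $\@T_\LN$ remain nonempty.

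For the first equivalence, I would first use \cref{thm:gpd-str-interp} to rephrase (i) as: for every locally countable Borel groupoid $G$, if $\@T -> \@T_G$ then $\@T' -> \@T_G$. By \cref{thm:gpd-scott-LN}, the theories arising as Scott theories $\@T_G$ are, up to bi-interpretability, exactly the (nonempty) theories interpreting $\@T_\LN$; hence this is equivalent to: for every theory $\@T_G <- \@T_\LN$, if $\@T -> \@T_G$ then $\@T' -> \@T_G$. The crux is then the observation that $\@T \sqcup \@T_\LN$ is the least theory (in the interpretability preorder) that interprets both $\@T_\LN$ and $\@T$, by the universal property of the coproduct (\cref{def:thy-coprod}), and that it is itself a legitimate value of the quantifier: it interprets $\@T_\LN$ via the coproduct inclusion and has no empty models, so by the implication \cref{thm:gpd-scott-LN:LN}$\implies$\cref{thm:gpd-scott-LN:scott} of \cref{thm:gpd-scott-LN} it is the Scott theory of a groupoid. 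Specializing the quantified condition to $\@T_G := \@T \sqcup \@T_\LN$ with $\@T -> \@T_G$ the inclusion then yields (ii); conversely, given (ii), for any $\@T_G <- \@T_\LN$ with $\@T -> \@T_G$ the coproduct property combines the two interpretations into $\@T \sqcup \@T_\LN -> \@T_G$, which precomposed with $\@T' -> \@T \sqcup \@T_\LN$ gives $\@T' -> \@T_G$.

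The expandability clause is handled just as in \cref{thm:str-expan-interp}. Via \cref{thm:gpd-str-interp} the statement ``every $\@T$-structuring of a locally countable Borel groupoid admits an $\alpha$-expansion to a $\@T'$-structuring'' unwinds to: for every groupoid $G$ and every $\gamma : \@T -> \@T_G$, there is $\gamma' : \@T' -> \@T_G$ with $\gamma' \circ \alpha = \gamma$; and by \cref{thm:gpd-scott-LN} this again quantifies precisely over the theories $\@T_G <- \@T_\LN$. Taking $\@T_G := \@T \sqcup \@T_\LN$ and $\gamma$ the inclusion produces the factorization $\beta : \@T' -> \@T \sqcup \@T_\LN$ with $\beta \circ \alpha$ the inclusion; conversely, given such $\beta$, for any $\gamma : \@T -> \@T_G$ I combine $\gamma$ with the canonical $\@T_\LN -> \@T_G$ into $\delta : \@T \sqcup \@T_\LN -> \@T_G$ and set $\gamma' := \delta \circ \beta$, so that $\gamma' \circ \alpha = \delta \circ (\text{inclusion}) = \gamma$.

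I do not expect a genuine obstacle, as every ingredient is already in place; the only step needing care is confirming that $\@T \sqcup \@T_\LN$ really is a Scott theory of a groupoid, and this is exactly where the argument is simpler than in the CBER case — no separating family $\@T_\sep$ is required, only that $\@T \sqcup \@T_\LN$ is nonempty and interprets $\@T_\LN$.
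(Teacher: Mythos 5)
Your proposal is correct and matches the paper's intended argument exactly: the corollary is stated with no proof precisely because it follows by transcribing the proofs of \cref{thm:str-impl-interp} and \cref{thm:str-expan-interp} verbatim, with \cref{thm:gpd-str-interp} and \cref{thm:gpd-scott-LN} in place of \cref{thm:str-interp} and \cref{thm:scott-LNsep}, and with $\@T \sqcup \@T_\LN$ playing the role of $\@T \sqcup \@T_\LN \sqcup \@T_\sep$. Your verification that $\@T \sqcup \@T_\LN$ is itself the Scott theory of a locally countable Borel groupoid (no empty models plus the coproduct inclusion $\@T_\LN -> \@T \sqcup \@T_\LN$, via \cref{thm:gpd-scott-LN}) is the one point that needs saying, and you say it.
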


\subsection{Some examples}

We conclude with some simple examples of structurings of groupoids.
There is undoubtably much of the general theory that remains to be developed (e.g., much of \cite{CK} for groupoids); our goal here is just to demonstrate the utility of the concept, by showing how it relates to some familiar notions.

\begin{example}
\label{ex:gpd-str-freegrp}
If $G$ is a locally countable Borel groupoid on one object $X = 1 = \{*\}$, i.e., a countable group, then a $\@T$-structuring $\@M$ of $G$ is just a single model $\@M_{*} \in \Mod_G(\@T)$ which is invariant under right translation.

For example, if $\@T$ is the theory of directed trees, i.e., connected acyclic graphs with a chosen orientation of each edge, then $G$ is $\@T$-structurable (``treeable'') iff it is a free group, in which case a $\@T$-structuring $\@M$ amounts to a choice of free generating set, namely all those $g \in G$ for which there is an edge $1 -> g$ in $\@M$, with $\@M$ then given by the Cayley graph.
(If $\@T$ is the theory of undirected trees, then a free product of copies of $\#Z_2$ would also be $\@T$-structurable.)

More generally, if $\@T$ is the theory of connected graphs, then a $\@T$-structuring of a countable group is just a Cayley graph.
\end{example}

\begin{example}
\label{ex:gpd-str-tree}
For the theory of trees $\@T$ as above, we may also consider ``treeings'' $\@M$ of a general locally countable Borel groupoid $(X,G)$.
By definition, this consists of a Borel right-translation-invariant family $(\@M_x)_{x \in X}$ of trees on each fiber $\dom^{-1}(x)$ of $\dom : G -> X$.

We may represent this in more concrete (but less structurally transparent) terms, by interpreting the edge relation $E$ as the set of morphisms $E^\@M \subseteq G^1_X = G$ from \cref{def:gpd-str}\cref{def:gpd-str:global} and \cref{rmk:gpd-scott-nerve}; the correspondence with the fiberwise relations $E^{\@M_x} \subseteq \dom^{-1}(x)^2$ is given by
\begin{eqalign*}
E^{\@M_x}(g : x -> y,\, h : x -> z)
\iff  E^{\@M_y}(1_y : y -> y,\, hg^{-1} : y -> z)
\iff  E^\@M(hg^{-1}).
\end{eqalign*}
To say that the tree $\@M_x$ on each fiber is connected means that $1_x \in \dom^{-1}(x)$ is joined by a path
\begin{eqalign*}
1_x = g_0 \mathrel{(E^{\@M_x})^{\pm1}} g_1 \mathrel{(E^{\@M_x})^{\pm1}} \dotsb \mathrel{(E^{\@M_x})^{\pm1}} g_n = g
\end{eqalign*}
to any $g \in \dom^{-1}(x)$, which means that we have morphisms $h_i = (g_{i+1} g_i^{-1})^{\pm1}$ with
\begin{eqalign*}
g = (g_n g_{n-1}^{-1}) (g_{n-1} g_{n-2}^{-1}) \dotsm (g_1 g_0^{-1}) = h_{n-1}^{\pm1} h_{n-2}^{\pm1} \dotsm h_0^{\pm1}
    &&\text{where } h_0, \dotsc, h_{n-1} \in E^\@M;
\end{eqalign*}
in other words, $E^\@M \subseteq G$ generates the groupoid $G$.
To say that each $\@M_x$ is moreover acyclic means that each $g \in G$ may be so written in a unique way, i.e., $G$ is the \defn{free groupoid} (or \emph{path groupoid}, or \emph{fundamental groupoid}) generated by the directed multigraph $E^\@M$.
See e.g., \cite{Alvarez}, \cite{Carderi}.

Likewise, a ``graphing'' (structuring by connected graphs) of $G$ just amounts to any subgraph generating $G$, not necessarily freely.
\end{example}

\begin{remark}
\label{rmk:gpd-str-tree}
In the literature (see e.g., \cite[\S30]{KM}), the term \defn{treeable group} usually refers to not just a group structurable by trees as above, i.e., a free group, but more generally a group $G$ admitting a probability-measure-preserving free action generating a treeable CBER.

This is equivalent to the existence of a free groupoid $H$ with an invariant measure and a fibration $H -> G$, by \cref{ex:gpd-action} and the coinduced action (Bernoulli shift) construction (see \cref{ex:str-gpd-freeact}).
\end{remark}

\begin{example}
\label{ex:gpd-str-FM}
Every locally countable Borel groupoid $G$ is $\@T_\LN$-structurable, by \cref{thm:gpd-scott-LN}.
For example, if $G$ is a countable group, a sequence of Lusin--Novikov functions is given by left multiplication $g_n (-)$ by each group element $g_n \in G$.

However, not every countable group $G$ is $\@T_\FMinv$-structurable, e.g., $\#Z$, by the argument in \cref{ex:Ztrans-cex}.
In other words, we do not have a ``Feldman--Moore theorem for group(oid)s'', in the form of a translation-invariant transitive action of $\#Z_2^{*\omega}$.

Nonetheless, by \cref{thm:FMbij}, we \emph{do} have a ``Feldman--Moore theorem for groupoids'', in the form of a translation-invariant transitive action of $\#F_\omega$.
That is, for a locally countable Borel groupoid $(X,G)$, we have a Borel action $\#F_\omega \actson G$, whose orbits are the $\dom$-fibers, such that
\begin{align*}
\gamma \cdot gh = (\gamma \cdot g)h
    \quad \text{for $g : x -> y$ and $h : x' -> x$}.
\end{align*}
As in \cref{ex:gpd-str-tree}, such a family of actions on the $\dom$-fibers may by right-invariance be represented more concretely by just the action on the identities $1_x$, yielding for each $\gamma \in \#F_\omega$ a Borel family of morphisms $\gamma \cdot \id_X := \{\gamma \cdot 1_x\}_{x \in X} \subseteq G$ in each $\dom$-fiber.
In fact it is easily seen that each such $\gamma \cdot \id_X \subseteq G$ also picks one morphism in each $\cod$-fiber, i.e., is a \defn{(total) Borel bisection} of $G$; and that $\delta\gamma \cdot \id_X = (\delta \cdot \id_X)(\gamma \cdot \id_X)$, so that we have a group homomorphism $\gamma |-> \gamma \cdot \id_X$ from $\#F_\omega$ into the group of Borel bisections of $G$, known as the \defn{(Borel) full group} of $G$.
Transitivity of the fiberwise $\#F_\omega$-actions means that $G = \bigcup_{\gamma \in \#F_\omega} (\gamma \cdot \id_X)$.
Thus, the ``Feldman--Moore theorem for groupoids'' says that \emph{every locally countable Borel groupoid can be covered by a countable subgroup of its full group}; in this form, the result was stated (without proof) in \cite[4.1]{TW}.
\end{example}

\begin{example}
\label{ex:str-gpd-freeact}
For a fixed locally countable Borel groupoid $G$, a structuring by its Scott theory $\@T_G$ of another groupoid $H$ is by \cref{thm:gpd-str-fib} a fibration $H -> G$, which is equivalently by \cref{ex:gpd-action} a representation of $H$ as the action groupoid of a Borel action of $G$.
If $H$ is a CBER, then the action is free.
Thus by \cref{thm:einfT-LNsep}, the theory $\@T_G \sqcup \@T_\LN \sqcup \@T_\sep$ is the Scott theory of the \defn{invariantly universal CBER $\#E_{\infty G}$ generated by a free Borel action of $G$}.

For example, if $G$ is a countable group, it is well-known that such a universal free Borel $G$-action may be constructed as the free part of the shift action on $(2^\#N)^G$, giving an explicit realization of $\#E_{\infty  G}$.
A ``fiberwise'' version of this works for arbitrary $G$, by taking the fibration $\#E_{\infty G} -> G$ whose fiber over each $x \in X$ is $(2^\#N)^{\dom^{-1}(x)}$, equipped with a natural Borel structure and action.%
\footnote{This is the \emph{coinduced action} or \emph{right Kan extension} of the universal bundle $X \times 2^\#N -> X$ along the inclusion $(X,=) -> (X,G)$.}
\end{example}

\begin{remark}
It is not true that more generally, for a locally countable Borel groupoid $G$, $\@T_G \sqcup \@T_\LN$ is the Scott theory of the invariantly universal locally countable Borel groupoid equipped with a Borel fibration to $G$ (i.e., the invariantly universal action groupoid of a $G$-action).

More generally, it is not true that (analogously to \cref{thm:einfT-LNsep}) for a theory $\@T$, $\@T \sqcup \@T_\LN$ is the Scott theory of the invariantly universal $\@T$-structurable locally countable Borel groupoid.
This would follow as in \cref{thm:einfT-LNsep} if $\@T_\LN$ were the Scott theory of the invariantly universal locally countable Borel groupoid $\#G_\infty$.
But this is false, for the rather trivial reason that the groupoid whose Scott theory is $\@T_\LN$ only has a single trivial connected component (since there is only a single model of $\@T_\LN$ on a singleton set), whereas $\#G_\infty$ must have $2^{\aleph_0}$ such components.

(Note that $\#G_\infty$ exists, by \cref{thm:einfT-LNsep}, since we may encode a locally countable Borel groupoid $(X,G)$ as a structuring by a suitable theory of the connectedness CBER on the underlying space $G$.
It is possible to explicitly construct a Scott theory for it, by combining $\@T_\LN$ with a $\@T_\sep$-like theory applied to pairs.
Such a theory would be mutually interpretable with $\@T_\LN$, and is arguably a more canonical representative for a ``theory characterizing locally countable Borel groupoids'' in results such as \cref{thm:gpd-scott-LN}; however, it seems subjectively less clean than $\@T_\LN$.)
\end{remark}

\def\MR#1{}
\bibliographystyle{amsalpha}
\bibliography{refs}

\medskip
\noindent
Department of Mathematics, Statistics, and Computer Science\\
University of Illinois Chicago\\
Chicago, IL, USA\\
\nolinkurl{rbane8@uic.edu}

\medskip
\noindent
Department of Mathematics\\
University of Michigan\\
Ann Arbor, MI, USA\\
\nolinkurl{ruiyuan@umich.edu}

\end{document}